\setlist{  
  listparindent=1.5em}
\newtheorem{theorem}{Theorem}[section]
\newtheorem{cor}[theorem]{Corollary}
\newtheorem{lemma}[theorem]{Lemma}
\newtheorem{con}[theorem]{Conjecture}
\newtheorem{prop}[theorem]{Proposition}
\newtheorem{obs}[theorem]{Observation}
\theoremstyle{definition}
\theoremstyle{remark}
\newtheorem{example}[theorem]{Example}
\newcommand{\MyQuote}[1]{\vspace{0.5cm}%
     \parbox{13cm}{\em #1}\hspace*{0.5cm}($\ast$)\\[0.5cm]}
\newcommand{\LabelQuote}[2]{\vspace{0.5cm}%
     \parbox{13cm}{\em #1}\hspace*{0.5cm}(#2)\\[0.5cm]}
\newcommand{\aut}{\operatorname{Aut}}
\newcommand{\iso}{\operatorname{Iso}}
\newcommand{\idx}{\operatorname{idx}}
\newcommand\restr[2]{{
  \left.\kern-\nulldelimiterspace 
  #1 
  \vphantom{\big|} 
  \right|_{#2} 
  }}
\title{Connected ($C_4$,Diamond)-free  Graphs Are Uniquely Reconstructible from Their Token Graphs}
\author{ Ruy Fabila-Monroy\thanks{Departamento de Matem\'aticas, CINVESTAV.} \thanks{{\tt ruyfabila@math.cinvestav.edu.mx}} \and
	Ana Laura Trujillo-Negrete\footnotemark[1] \thanks{ Partially supported by CONACYT(Mexico), Convocatoria 2021 de Estancias Posdoctorales por M\'exico en Apoyo por SARS-CoV-2(COVID-19). {\tt ltrujillo@math.cinvestav.mx.} }}
\begin{document}
\maketitle

\abstract{A diamond is the graph that is obtained
from removing an edge from the complete graph on $4$ vertices. A ($C_4$,diamond)-free graph is a graph
that does not contain a diamond or a cycle on four vertices as induced subgraphs. 
Let $G$ be a connected ($C_4$,diamond)-free graph on $n$ vertices. Let $1 \le k \le n-1$ be an integer. The $k$-token graph, $F_k(G)$, of $G$
is the graph whose vertices are all the sets of $k$ vertices of $G$; two of which are adjacent if their
symmetric difference is a pair of adjacent vertices in $G$. Let $F$ be a graph isomorphic to $F_k(G)$. In this paper
we show that given only $F$, we can construct in polynomial time a graph isomorphic to $G$. Let $\aut(G)$ be
the automorphism group of $G$. We also show
that if $k\neq n/2$, then $\aut(G) \simeq \aut(F_k(G))$; and if $k = n/2$, then
$\aut(G) \simeq \aut(F_k(G)) \times \mathbb{Z}_2$.  }

\tableofcontents

\newpage
	

\section{Introduction}

Let $G$ be a graph on $n$ vertices, and $1 \le k \le n-1$ a natural number.
The \emph{$k$-token graph} of $G$ is the graph whose vertices
are all the sets of $k$ vertices of $G$; where two such sets $A$ and $B$ are adjacent
if their symmetric difference $A \triangle B$  is a pair of adjacent vertices in $G$.
We denote this graph by $F_k(G)$. The name ``token graph'' is motivated by the following
interpretation.
 Take $k$ indistinguishable tokens and place them on the vertices of $G$
(at most one per vertex); form a new graph whose vertices are all possible token
configurations; and make  two configurations adjacent if one can be
reached from the other by taking a token and sliding it along an edge to an unoccupied vertex.
The resulting graph is isomorphic to $F_k(G)$. We often refer to the vertices of $F_k(G)$ as \emph{token configurations}.

Token graphs have been defined independently at least four times:
\begin{enumerate}

\item In 1988, in his PhD thesis,  Johns~\cite{distance} called it the {\em $k$--subgraph graph} 
of $G$. He defined it as the graph whose vertices are all the subsets
of $k$ vertices of $G$; two of which are adjacent if their distance in $G$
is equal to $1$. 

\item In 1991, Alavi, Behzad, Erd\H{o}s and Lick~\cite{double_1}
defined the $2$-token graph; they called it the \emph{double vertex} graph.
In 1992, Zhu, Liu, Lick and Alavi~\cite{n-tuple} expanded the definition
to $k$ tokens and called it $k$-tuple graph. They defined it as the graph
whose vertices are all the subsets of $k$ vertices of $G$; two of which are adjacent 
if their symmetric difference is an edge of $G$.

\item In 2002, Rudolph~\cite{rudolph} considered a cluster of $n$ interacting $q$-bits.
Each $q$-bit can be in
a \emph{ground state} $|0\rangle$ or in an \emph{excited state} $|1 \rangle$. At any given moment
exactly $k$ of the $q$-bits are in the excited state. 
He represented each $q$-bit by a vertex in a graph $G$ where two are adjacent if they interact.
The $k$-token graph of $G$ represents the possible evolution of this cluster of $q$-bits. 
His aim was to translate physical quantities of a cluster of $q$-bits to graph invariants of the token
graph. He called this construction the level $k$ matrix of $G$. 

\item In 2012, Fabila-Monroy, Flores-Pe\~naloza, Huemer, Hurtado, Urrutia and Wood~\cite{token_graph} defined 
token graphs with the  token configurations interpretation provided above. 
\end{enumerate}
Finally, the $k$-token graph of $G$ has also been named the \emph{$k$-th symmetric power} of $G$ by
Audenaert, Godsil, Royle and Rudolph~\cite{godsil}. This definition is excluded from the previous list
because in~\cite{godsil} the definition is attributed to \cite{rudolph}. In this paper we follow
the notation of~\cite{token_graph}.

In this paper we are interested in the existential and algorithmic problem of reconstructing a graph from its token graph.
Specifically, let $F$ be a graph isomorphic to $F_k(G)$. 
\begin{center}
\begin{itemize}
 \item \emph{Given only $F$, can we find in polynomial time a graph $G'$ such that $F_k(G') \simeq F$?}
 \item \emph{is $G'$ unique up to isomorphism?}
\end{itemize}
\end{center}


A diamond is the graph that results from removing an edge from a complete graph on four vertices;
 $C_4$ is the cycle on four vertices. A graph is ($C_4$,diamond)-free if it does not contain 
 a diamond or a $C_4$ as an induced subgraph.
In this paper we consider the problem of reconstructing a graph $G$ from its token graph, when
$G$ is connected and ($C_4$,diamond)-free.

The problem of reconstructing a graph from its token graph seems to be related to the Graph
Isomorphism Problem. 
The Graph Isomorphism Problem is the algorithmic problem of determining whether two given graphs are
isomorphic. The current best published algorithm for this problem was given by Babai and Luks~\cite{graph_iso_1}.
This algorithm runs in $\exp(O(\sqrt{n \log n}))$ time for graphs on $n$ vertices. In 2015, Babai~\cite{graph_iso_2}
announced a $\exp((\log)^{O(1)})$ time algorithm for the Graph Isomorphism Problem. Helfgott discovered
an error in the proof. In 2017, Babai announced a correction\footnote{\url{http://people.cs.uchicago.edu/~laci/update.html}},
which Helfgott verified\footnote{\url{https://valuevar.wordpress.com/2017/01/04/graph-isomorphism-in-subexponential-time/}}. 
 
There are many graph invariants, computable in polynomial time, 
that in many instances distinguish pairs of non isomorphic graphs. One of these is
the spectra of a graph (the eigenvalues of its adjacency matrix). Two graphs are \emph{cospectral}
if they have the same spectra. As expected there are pairs of non-isomorphic cospectral graphs.
In~\cite{rudolph}, Rudolph noted that the spectra of $2$-token graphs may help in
distinguishing two graphs. He gave an example of a pair of non-isomorphic cospectral graphs
whose $2$-token graphs are not cospectral. In~\cite{godsil}, the authors showed that the $2$-token graphs
of two strongly regular graphs with the same parameters are cospectral. Thus, yielding
a plethora of examples of pairs of non-isomorphic graphs whose $2$-token graphs are cospectral.
In the same paper it is noted that if for some constant $k$ it is the case that two graphs
are isomorphic if and only if their $k$-token graphs are cospectral, then this would
provide a polynomial time algorithm for the Graph Isomorphism Problem. This was
shown not to be the case independently by  Barghi and Ponomarenko~\cite{nonisomorph_barghi}, 
and Alzaga, Iglesias and Pignol~\cite{nonisomorph_alzaga}. 
Recently, Dalf\'o, Duque, Fabila-Monroy, Fiol, Huemer, Trujillo-Negrete and Zaragoza-Martínez~\cite{laplacian},
considered the Laplacian spectra of token graphs. They showed that the Laplacian spectra of a graph is closely
related to the Laplacian spectra of its token graphs. There is no known example of a pair of non-isomorphic 
graphs whose token graphs have the same Laplacian spectra.

Underlying the question of whether token graphs may help in distinguishing pairs of non-isomorphic graphs,
is the question of how much information from $G$ is carried out to the $k$-token graphs of $G$. 
In~\cite{token_graph} the authors made the following conjecture.
\begin{con}\label{con:rec}
 Let $G$ and $H$ be two graphs such that for some $k$ their $k$-token graphs
 are isomorphic. Then  $G$ and $H$ are isomorphic.
\end{con}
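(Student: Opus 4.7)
The plan is to prove the conjecture by induction on $k$ after standard reductions. Since $|V(F_k(G))| = \binom{n}{k}$, the pair $(n,k)$ is determined up to the ambiguity $k \leftrightarrow n-k$, which is inherent because the map $A \mapsto V(G) \setminus A$ gives $F_k(G) \simeq F_{n-k}(G)$. Also $F_1(G) \simeq G$ trivially, and $F_{n-1}(G) \simeq G$ via the unique empty vertex of a configuration, so it suffices to reconstruct $G$ for $2 \le k \le n/2$. The core strategy is to show that $F_k(G)$ determines $F_{k-1}(G)$ up to isomorphism; iterating down to $k=1$ then recovers $G$.

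The combinatorial handle driving the induction step is that, for each $(k-1)$-subset $S$ of $V(G)$, the $k$-configurations containing $S$ induce in $F_k(G)$ a subgraph isomorphic to $G - S$, whose edges record the motion of the single free token. The induction step would proceed by (i) detecting this family of induced subgraphs intrinsically from the abstract graph $F_k(G)$, (ii) recognizing when two such subgraphs correspond to $(k-1)$-cores that differ in exactly one vertex, so that they can be glued along a shared $k$-configuration, and (iii) defining the reconstructed $F_{k-1}(G)$ as the graph whose vertices are the detected subgraphs with adjacency given by the gluing relation. If (i)--(iii) are executable purely from the isomorphism type of $F_k(G)$, the induction goes through.

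The hard part is step (i): recognizing which induced subgraphs of $F_k(G)$ correspond to frozen $(k-1)$-cores. The natural candidate invariant is the clique structure, since an edge of $F_k(G)$ records a token sliding along an edge $uv$ of $G$, and the maximal cliques of $F_k(G)$ reflect the maximal cliques of $G$ together with ``rectangles'' arising from two disjoint tokens sliding along two disjoint edges. This is precisely why the paper's main theorem restricts to $(C_4,\textrm{diamond})$-free graphs: under that hypothesis every edge of $G$ lies in a unique maximal clique, so the clique-rectangle dichotomy is clean and the frozen-core subgraphs can be read off. In the general case, induced diamonds and $C_4$'s produce overlapping maximal cliques and extra $4$-cycles in $F_k(G)$ that make a frozen-core subgraph combinatorially indistinguishable, locally, from subgraphs arising differently. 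Breaking this ambiguity is the main obstacle, and it plausibly requires either a global counting argument (for instance, counting homomorphisms from small test graphs into $F_k(G)$ and relating these via inclusion-exclusion over token positions to homomorphism counts into $G$) or a spectral reconstruction of the adjacency matrix of $G$ from that of $F_k(G)$.

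Finally, the case $k = n/2$ deserves separate treatment because $A \mapsto V(G) \setminus A$ is then an involution of $F_k(G)$ that is typically not induced by any automorphism of $G$, consistent with the $\mathbb{Z}_2$ factor in the paper's automorphism statement for the $(C_4,\textrm{diamond})$-free case. A full proof of the conjecture would need to show that this involution is the unique ``accidental'' symmetry, and that any isomorphism $F_k(G) \simeq F_k(H)$ can, after possibly composing with it, be realized by an isomorphism $G \simeq H$.
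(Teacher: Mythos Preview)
This statement is a conjecture, not a theorem; the paper does not prove it and explicitly says it believes the problem to be hard. The paper's contribution is to establish the conjecture only for the restricted class of $(C_4,\textrm{diamond})$-free graphs (Theorems~\ref{thm:main}, \ref{thm:main2}, \ref{thm:main3} for the connected case and Theorem~\ref{thm:disconnected} for the disconnected case with the same $k$). There is therefore no ``paper's own proof'' to compare against.

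Your proposal is not a proof either: you yourself flag step (i)---intrinsically recognizing which induced subgraphs of $F_k(G)$ arise from freezing a $(k-1)$-core---as the hard part, and you offer only that it ``plausibly requires'' a counting or spectral argument that you do not supply. That gap is exactly the open problem. The paper's entire machinery (ladder classes, Proposition~\ref{prop:4-cycles}, Theorem~\ref{thm:prod}) is devoted to making this step work under the $(C_4,\textrm{diamond})$-free hypothesis, precisely because in general the $4$-cycles of $F_k(G)$ admit several combinatorially indistinguishable origins (Figure~\ref{fig:ways}), which destroys the clean correspondence between edges of $G$ and ladder classes. Your spectral suggestion is unlikely to close the gap: the paper cites \cite{nonisomorph_barghi,nonisomorph_alzaga} showing that for every fixed $k$ there are non-isomorphic $G,H$ with $F_k(G)$ and $F_k(H)$ cospectral, so the spectrum of $F_k(G)$ does not determine $G$. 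In short, your outline correctly locates the obstruction but does not overcome it; the conjecture remains open.
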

Conjecture~\ref{con:rec} was posed as a question for $2$-token graphs by Jacob, Goddard and Laskar~\cite{jacob}.
An equivalent formulation is that $F_k(G)$ determines $G$ completely (up to
isomorphism). If this is the case for some graph $G$ we say that $G$ can be \emph{reconstructed
from its token graph}. We believe this to be a hard problem, even in the case of only two tokens.
There are very few results in this direction. We mention some of them.
In~\cite{jacob}, it is shown that if $G$ is regular and does not contain
a $4$-cycle as a subgraph then $G$ is reconstructible from it $2$-token graph. They
also show that cubic graphs can be reconstructed from their $2$-token graphs. 
In~\cite{double_1} it is claimed (without proof) that trees can be reconstructed
from their $2$-token graphs. Trujillo-Negrete~\cite{ana_laura} in her Master's thesis gave an example
of two non-isomorphic graphs $G$ and $H$, and a pair of distinct integers $k$ and $l$, 
such that $F_k(G)$ and $F_{l}(H)$ are isomorphic (and non-trivial). For completeness
we provide this example in Section~\ref{sec:disconnected}.

\subsection{Notation}

We now provide some of the notation used throughout the paper. 
Let $G=(V,E)$ be a graph. We denote with $|G|$ and $||G||$ the number of vertices
and edges of $G$, respectively. Let $U,W$ be two sets of vertices of $G$ or two subgraphs of $G$. 
We denote with $E(U,W)$ the set of edges of $G$ with one endpoint in $U$ and the other endpoint in $W$.
If $uw$ is an edge in $E(U,W)$ we always assume that $u \in U$ and $w \in W$. We refer to the edges
in $E(U,W)$ as $U-W$ edges.

Two graphs $G$ and $H$ are \emph{isomorphic} if there exists a bijection, $\varphi$, between the vertices
of $G$ and the vertices of $H$ that satisfies the following.  A vertex $x$ is adjacent to a vertex $y$ in $G$ if and only if $\varphi(x)$
is adjacent to $\varphi(y)$ in $H$. We say that $\varphi$ is an $\emph{isomorphism}$ between $G$ and $H$. 
We write $G\simeq H$ to denote that $G$ and $H$ are isomorphic. We denote with $\iso(G,H)$ the set
of isomorphisms from $G$ to $H$.
An isomorphism of $G$ with itself
is called an \emph{automorphism}. The set of automorphisms of $G$ form a group
under function composition; we denote this group by $\operatorname{Aut}(G)$.

Let $G_1,\dots, G_n$ be graphs. The \emph{Cartesian product} of $G_1,\dots,G_n$ is the graph
$G_1 \square \cdots \square G_n$ with vertex set $V(G_1) \times \cdots \times V(G_n)$;
where $(x_1,\dots,x_n)$ is adjacent to $(y_1,\dots,y_n)$
if and only if there exists an index $1 \le i \le n$ such that $x_i$ is adjacent to $y_i$ and $x_j = y_j$ for 
all $j\neq i$. Let $v:=(x_1,\dots,x_n)$ be
a vertex of $G_1 \square \cdots \square G_n$; we denote the $i$-th coordinate of $(x_1,\dots,x_n)$ with $v(i):=x_i$.
 Cartesian products of graphs play
an important role throughout this paper. The \emph{$d$-dimensional hypercube} is the Cartesian product of $d$ copies of $K_2$. We denote it
with $Q_d$. 
A graph is \emph{composite} if it is isomorphic to the Cartesian product of two or  more nontrivial graphs.
Otherwise, we say it is a \emph{prime} graph. 

The \emph{line graph} of $G$ is the graph, $L(G)$, whose vertex set is the edge set
of $G$. Two vertices of $L(G)$ are adjacent if as edges of $G$ they are incident to the
same vertex. Whitney~\cite{whitney} showed that, except for the cases of a triangle and $K_{1,3}$, if $G$ and $G'$ are two
graphs such that $L(G) \simeq L(G')$ then $G \simeq G'$. For $|G| > 3$, Roussopoulos~\cite{line_rec_1} and Lehot~\cite{line_rec_2}
gave an $O(|G|+||G||)$ time algorithm that given a graph isomorphic to  $L(G)$ constructs a graph isomorphic to  $G$.

\subsection{Main results}\label{sec:main_results}
We mention the main results of this paper. Our first result is the following.
\begin{restatable}{theorem}{thmrec}\label{thm:main}
 Let $G$ be a connected ($C_4$,diamond)-free graph. Given only
 a graph isomorphic to $F_k(G)$, we can compute in polynomial
 time a graph isomorphic to $G$.
\end{restatable}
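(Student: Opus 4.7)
My plan is to reconstruct $G$ from $F = F_k(G)$ by finding, purely inside $F$, the $n$ \emph{vertex-slices}
\[
\mathcal{V}_v := \{A \in V(F) : v \in A\}, \qquad v \in V(G).
\]
The bijection $A \mapsto A \setminus \{v\}$ identifies the subgraph of $F$ induced by $\mathcal{V}_v$ with $F_{k-1}(G - v)$, so each slice is itself a token graph of a smaller $(C_4,\text{diamond})$-free graph. Once $\{\mathcal{V}_v\}_{v \in V(G)}$ has been recovered, $G$ is reassembled by taking these slices as vertices and declaring $\mathcal{V}_v \mathcal{V}_w$ an edge precisely when $F$ contains an edge between $\mathcal{V}_v \setminus \mathcal{V}_w$ and $\mathcal{V}_w \setminus \mathcal{V}_v$. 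By the complementation isomorphism $F_k(G) \simeq F_{n-k}(G)$ (induced by $A \mapsto V(G)\setminus A$) I may assume $k \le n/2$, and the case $k=1$ is immediate since $F_1(G) = G$.

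To pin down the slices I would first exploit the structural consequences of $(C_4,\text{diamond})$-freeness in $G$: every edge lies in a unique maximal clique, any two distinct maximal cliques meet in at most one vertex, and the common neighborhood of any two vertices is a clique. Using these I would classify the maximal cliques of $F$ into two families: (Type~1) \emph{shift cliques} $\{S \cup \{x\} : x \in K\}$ for a maximal clique $K$ of $G$ and a $(k-1)$-subset $S$ of $V(G)\setminus K$, and (Type~2) \emph{triangle cliques} $\{T \cup \{x,y\} : \{x,y\} \subseteq \{a,b,c\}\}$ arising from triangles $\{a,b,c\}$ of $G$ and $(k-2)$-subsets $T$ of $V(G) \setminus \{a,b,c\}$. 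The $(C_4,\text{diamond})$-free hypothesis is exactly what excludes further clique types and what will let me distinguish Type~1 from Type~2 cliques by their local adjacency pattern in $F$.

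Next I would group Type~1 cliques into \emph{parallel classes}, declaring two shift cliques parallel if they come from the same maximal clique $K$ of $G$. Parallelism ought to be detectable inside $F$ because the union of all shift cliques of a given direction $K$ forms an induced $K \square F_{k-1}(G - K)$ in $F$, and such Cartesian-product substructures are invariantly recognizable (prime factorizations under $\square$ are unique and computable). This gives a canonical bijection between parallel classes and maximal cliques of $G$. Each vertex $A \in V(F)$ then sits in one parallel class per token it carries (the class in the direction of the unique maximal clique of $G$ incident to that token), and the slice $\mathcal{V}_v$ is recovered as the set of vertices of $F$ whose token-labels include~$v$. Everything in the scheme — clique enumeration, Cartesian factor extraction, and the final assembly of $G$ from the slices — is polynomial in $|V(F)|$.

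The step I anticipate to be the main obstacle is proving that shift-clique parallelism is invariantly recoverable from $F$, and that the correspondence between parallel classes and maximal cliques of $G$ is uniquely determined. To settle this I would build an auxiliary graph $H$ whose vertices are the Type~1 cliques of $F$ and whose edges encode shared vertices of $F$, and then extract the parallel classes via Whitney's line-graph reconstruction cited in Section~1.1. The Whitney exceptions ($K_3$ and $K_{1,3}$) and the symmetric case $k = n/2$, where the complementation involution becomes an automorphism of $F$ and accounts for the extra $\mathbb{Z}_2$ factor in $\aut(F_k(G))$ recorded in the abstract, will force a careful case analysis. The $(C_4,\text{diamond})$-free assumption is exactly what is needed to exclude the accidental $C_4$s or diamonds in $F$ that would otherwise fuse distinct parallel classes together, and it is what makes the Whitney step go through cleanly.
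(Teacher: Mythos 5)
Your overall strategy---recover the vertex slices $\mathcal{V}_v=\{A: v\in A\}$ and then reassemble $G$ by testing for edges between $\mathcal{V}_v\setminus\mathcal{V}_w$ and $\mathcal{V}_w\setminus\mathcal{V}_v$---is sound in its final step (this is exactly the implication $4)\Rightarrow 1)$ of Theorem~\ref{thm:char}), but note that recovering the slices is \emph{equivalent} to unique $k$-token reconstructibility, i.e.\ to Theorem~\ref{thm:main3}, the strongest and hardest result of the paper. You are therefore proposing to prove the strong theorem directly, and the steps you sketch for it contain genuine gaps. First, your clique classification is incomplete: a ($C_4$,diamond)-free graph may contain arbitrarily large cliques ($K_m$ has no \emph{induced} diamond), and a maximal clique $K$ of $G$ with $|K|=m\ge 4$ produces, besides the shift cliques $\{S\cup\{x\}:x\in K\}$, the ``co-shift'' cliques $\{T\cup(K\setminus\{x\}):x\in K\}$ obtained by moving $m-1$ tokens inside $K$; your Type~2 only covers $m=3$. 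Second, the claim that each $A\in V(F)$ lies in one parallel class per token, ``the class in the direction of the unique maximal clique of $G$ incident to that token,'' is false: a vertex of $G$ generally lies in many maximal cliques (the center of a star lies in $n$ of them), and a configuration with two or more tokens inside a maximal clique $K$ lies in \emph{no} shift clique of direction $K$. So the token-labelling of vertices of $F$ does not follow from parallelism of shift cliques in the way you describe.

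The deepest gap is the one you yourself flag: invariant recovery of the parallel classes. Your proposed tool, Whitney/line-graph reconstruction, requires a bijection between the combinatorially detectable classes (the paper's ladder classes, or your parallel classes) and the edges (or maximal cliques) of $G$. That bijection holds only when $G$ is sufficiently connected: Lemma~\ref{lem:ladder_char} needs $G\setminus\{a,b\}$ connected, and the clean line-graph argument is exactly Corollary~\ref{cor:line}, which the paper can only prove for $3$-connected $G$. For a general connected ($C_4$,diamond)-free graph (e.g.\ a tree), removing an edge's endpoints disconnects the graph and a single edge of $G$ splinters into many ladder classes, so distinct parallel classes in your sense are \emph{not} fused or separated in any way Whitney reconstruction can see. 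This failure is precisely why the paper replaces the line-graph route with the \textsc{ProductSubgraph} machinery (a maximum matching in a local graph $G_{\varphi(A)}$, extension to a maximal Cartesian-product subgraph via Theorem~\ref{thm:prod}, reconstruction of the factors $G_i$, and a separate argument for the cross-edges), and why establishing the slices themselves occupies all of Section~\ref{sec:F_ureq}. As written, your argument does not close this gap.
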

%
Let $F$ be a graph. Let $\varphi$ be an isomorphism from $F$ to $F_k(G)$. 
We call the pair $(G,\varphi)$ a  \emph{$k$-token reconstruction} of $F$.
We say that a graph $G$ is \emph{$k$-token reconstructible} if for every $(G',\varphi')$,  $k$-token reconstruction
of $F_k(G)$,  we have that $G \simeq G'$. 
Thus, Conjecture~\ref{con:rec} states that all graphs are $k$-token reconstructible for every $1 \le k \le |G|-1$.
We  prove the following result, which is stronger than Theorem~\ref{thm:main}.
\begin{restatable}{theorem}{thmiso}\label{thm:main2}
Let $G$ be a connected ($C_4$,diamond)-free graph. Given a graph, $F$, isomorphic to $F_k(G)$ ($k \le n/2$), we can compute in polynomial
 time a $k$-token reconstruction of $F$. 
 \end{restatable}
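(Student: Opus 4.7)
The plan is to reconstruct $G'$ and the isomorphism $\varphi\colon F\to F_k(G')$ simultaneously by exploiting the local structure of induced $4$-cycles in $F$. The base case $k=1$ is trivial since $F_1(G)\simeq G$. For $k\ge 2$ the strategy has three phases: (i) partition $E(F)$ into \emph{parallel classes}, each of which should correspond to a single edge of $G$; (ii) build the vertex and edge sets of $G'$ from this partition; and (iii) propagate a canonical $k$-subset label from a chosen base vertex $A_0\in V(F)$ via BFS, using the parallel class of each traversed edge to dictate which single token of the label is swapped.

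\medskip

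The key structural step is a tight classification of induced $4$-cycles in $F$. For any graph $H$, a $4$-cycle $A\to B\to C\to D\to A$ in $F_k(H)$ is realized by four token-slide edges $e_1,e_2,e_3,e_4\in E(H)$ whose indicator vectors sum to zero in $\mathbb{F}_2^{V(H)}$; a short case analysis forces either (a) $e_1=e_3$ and $e_2=e_4$, with the two repeated edges disjoint in $H$ (a \emph{rectangle} $S\cup\{u,x\},S\cup\{v,x\},S\cup\{v,y\},S\cup\{u,y\}$ from two disjoint edges $uv,xy\in E(H)$), or (b) $e_1,\dots,e_4$ are the four edges of a $4$-cycle of $H$. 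Under the ($C_4$,diamond)-free hypothesis on $G$, every $4$-cycle of $G$ is forced to be a $K_4$, and the corresponding $F$-$4$-cycle picks up one of the $K_4$ chords of $G$ as a chord in $F$. Consequently every induced $4$-cycle of $F$ is a rectangle, and its two pairs of opposite edges each realize a common underlying edge of $G$.

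\medskip

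Declare two edges of $F$ to be directly related when they are opposite edges of some induced $4$-cycle of $F$, and let $\Theta$ be the transitive closure of this relation. By the preceding step each $\Theta$-class lies inside the parallel class of some edge $uv\in E(G)$; equality follows from connectivity of $F_{k-1}(G-\{u,v\})$, which is standard when $G-\{u,v\}$ is connected and can be extended to the cut-pair case via a short gluing argument that exploits the rigid structure of $2$-separations in ($C_4$,diamond)-free graphs. Once the $\Theta$-classes are in hand, fix any $A_0\in V(F)$, attach $k$ formal token names $t_1,\dots,t_k$ as $\varphi(A_0)$, and propagate labels by BFS: each $\Theta$-class acts globally as a single swap $t_x\leftrightarrow t_y$, so crossing an edge of $F$ replaces one name of the current label by another in a way determined solely by the class. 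The distinct token names produced become $V(G')$, each $\Theta$-class becomes an edge of $G'$ with endpoints read off from its swap rule, and $\varphi$ is the labeling itself; consistency around cycles of $F$ follows from the classification of induced $4$-cycles. The whole procedure is polynomial in $|V(F)|$.

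\medskip

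The main obstacle is the classification step of the second paragraph. The case analysis must rule out all configurations in which the $e_i$ share vertices, and in case (b) must exhibit a chord of the $F$-$4$-cycle for every placement of the $(k-2)$ common tokens; the ($C_4$,diamond)-free hypothesis enters exactly here, eliminating induced $C_4$s of $G$ (which would otherwise yield induced $4$-cycles of $F$ with no chord) and diamonds of $G$ (which would otherwise yield induced $4$-cycles of $F$ mixing distinct edge-classes). A secondary obstacle is the cut-pair case in the third paragraph, where $\Theta$ may naively split the parallel class of $uv$ according to how tokens distribute across the components of $G-\{u,v\}$, and these pieces must be reconciled without accidentally merging edges from different parallel classes.
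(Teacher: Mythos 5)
There is a genuine gap, and it is fatal to the approach as described. Your entire construction rests on the claim that the $\Theta$-classes (the paper's ``ladder classes'') coincide with the parallel classes of edges of $G$, i.e.\ that all edges of $F$ realizing a token slide along a fixed $uv\in E(G)$ fall into one transitive class of the ``opposite edges of an induced $4$-cycle'' relation. This is true when $G\setminus\{u,v\}$ is connected (the paper's Lemma~\ref{lem:ladder_char}), but fails badly otherwise, and the failure is not a corner case for ($C_4$,diamond)-free graphs: it occurs at every edge whose removal together with its endpoints disconnects $G$, hence throughout trees and near every cut vertex. The extreme case is $G=K_{1,n}$: since a ($C_4$,diamond)-free graph only produces induced $4$-cycles in $F_k(G)$ from \emph{two disjoint edges} of $G$, and a star has no two disjoint edges, $F_k(K_{1,n})$ has no induced $4$-cycles at all. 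Every edge of $F$ is then its own $\Theta$-class, your BFS has no constraints, and the labeling procedure manufactures a fresh pair of token names for every edge of $F$ --- it reconstructs nothing. The ``short gluing argument that exploits the rigid structure of $2$-separations'' you invoke to reconcile the split classes does not exist in this setting, because there are no $4$-cycles available to do any gluing. This is precisely why the paper devotes all of Section~\ref{sec:stars} to a bespoke, distance-layered labeling algorithm for token graphs of stars, and why its general proof takes a completely different architecture: it first extracts a maximal Cartesian-product subgraph $H\simeq H_1\square\cdots\square H_r$ of $F$ (Theorem~\ref{thm:prod} and \textsc{ProductSubgraph}), identifies each factor $H_i$ as $F_{k_i}(G_i)$ for a piece $G_i$ of a partition of $V(G)$, reconstructs each $G_i$ (using the star machinery where needed), and only then recovers the adjacencies between the pieces from the $H$--$(F\setminus H)$ edges.

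A secondary, independent gap is the BFS labeling itself. Even granting correct parallel classes, to start the propagation at $A_0$ you must decide, for each class incident to $A_0$, which of the $k$ formal names it replaces --- equivalently, you must reconstruct the incidence structure of the edges of $G$ meeting $\varphi(A_0)$ and mark which endpoints carry tokens. The paper does this via a line-graph reconstruction of the local graph $G_{\varphi(A_0)}$ (Lemma~\ref{lem:J_A}); your proposal omits it. And the assertion that ``consistency around cycles of $F$ follows from the classification of induced $4$-cycles'' is unjustified: the cycle space of $F$ is not generated by induced $4$-cycles (again, token graphs of stars have none), so agreement of the label along two BFS paths to the same vertex does not follow from the local analysis you give.
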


In Section~\ref{sec:unq_rec}, we introduce the notion of a graph  being uniquely $k$-token reconstructible as the $k$-token graph of 
$G$. Informally, a graph $F$ is uniquely reconstructible as the $k$-token graph of $G$ if all its $k$-token
reconstruction as the $k$-token graph of $G$,  are unique up to automorphisms of $G$. 
We show the following.
\begin{restatable}{theorem}{thmuni}\label{thm:main3}
Let $G$ be a connected ($C_4$,diamond)-free graph. Then $F_k(G)$ is uniquely
 reconstructible as the $k$-token graph of $G$. 
 \end{restatable}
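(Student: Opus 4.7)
The plan is to deduce Theorem~\ref{thm:main3} from a sharper statement about the automorphism group $\aut(F_k(G))$: every $\psi \in \aut(F_k(G))$ has the form $F_k(\sigma)$ for some $\sigma \in \aut(G)$, with the only additional generator being the complement map $A \mapsto V(G)\setminus A$ when $k = n/2$. Granted this, if $\varphi_1, \varphi_2 : F \to F_k(G)$ are two $k$-token reconstructions, then $\psi := \varphi_2 \circ \varphi_1^{-1} \in \aut(F_k(G))$ equals $F_k(\sigma)$ for some $\sigma \in \aut(G)$, so $\varphi_2 = F_k(\sigma)\circ \varphi_1$, which is precisely uniqueness up to $\aut(G)$.

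To control $\aut(F_k(G))$, I would leverage the canonical invariants produced in the proof of Theorem~\ref{thm:main2}. The key object is the partition of $E(F_k(G))$ into \emph{parallel classes}: for each $uv\in E(G)$, the class $P_{uv}$ consists of all edges $AA'\in E(F_k(G))$ with $A\triangle A' = \{u,v\}$. The ($C_4$,diamond)-free hypothesis is what makes these classes recoverable from the abstract graph $F_k(G)$: because $G$ has no induced $C_4$ or diamond, every $4$-cycle of $G$ is a $K_4$, and this tightly restricts the induced short cycles of $F_k(G)$ to a few identifiable patterns (parallel moves along disjoint edges, and moves inside $K_4$-subgraphs). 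Two edges of $F_k(G)$ lie in the same class $P_{uv}$ precisely when they occur together as opposite sides of many induced $4$-cycles of the ``parallel'' type, a condition defined purely in terms of the abstract graph. Therefore any $\psi \in \aut(F_k(G))$ permutes the classes and induces a permutation $\bar\psi$ of $E(G)$.

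From there, the incidence pattern of the classes ($P_{uv}$ and $P_{uw}$ share a common vertex-configuration iff $uv$ and $uw$ share an endpoint in $G$) implies $\bar\psi$ is an automorphism of the line graph $L(G)$. By Whitney's theorem~\cite{whitney}, with the exceptional cases $G=K_3$ and $G=K_{1,3}$ handled directly, $\bar\psi$ lifts to a vertex automorphism $\sigma \in \aut(G)$; evaluating $\psi$ and $F_k(\sigma)$ at a single reference configuration and propagating through $F_k(G)$ along edges confirms $\psi = F_k(\sigma)$.

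The main obstacle is the canonical identification of the parallel classes as invariants of the abstract graph $F_k(G)$; this is precisely the structural work carried out in the proof of Theorem~\ref{thm:main2}, and the success of the plan hinges on stating that proof's output in a form that is manifestly invariant under $\aut(F_k(G))$ rather than merely sufficient to reconstruct $G$ up to isomorphism. The remaining subtlety is the case $k = n/2$, where the complement map is a genuine extra automorphism of $F_k(G)$ (since $A\triangle B = (V(G)\setminus A)\triangle(V(G)\setminus B)$); this is absorbed into the definition of unique reconstructibility introduced in Section~\ref{sec:unq_rec}, which I expect to identify reconstructions differing by this canonical involution so that uniqueness modulo $\aut(G)$ still holds.
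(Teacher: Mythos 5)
Your reduction is sound and matches the paper's own framework: by Theorem~\ref{thm:char}, unique reconstructibility is equivalent to every element of $\aut(F_k(G))$ being of the form $\iota(\sigma)$ or $\mathfrak{c}\circ\iota(\sigma)$ for $\sigma\in\aut(G)$. The gap is in how you propose to establish that equivalence. Your central claim is that the partition of $E(F_k(G))$ into the classes $P_{uv}=\{AA': A\triangle A'=\{u,v\}\}$ is recoverable from the abstract graph via induced $4$-cycles of the ``parallel'' type. What is actually recoverable this way is the partition into \emph{ladder classes}, and Proposition~\ref{prop:ladder} only gives one direction: each ladder class is contained in a single $P_{uv}$. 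The converse --- that $P_{uv}$ is a single ladder class --- holds only when $G\setminus\{u,v\}$ is connected (Lemma~\ref{lem:ladder_char}); this is why the paper's Corollary~\ref{cor:line} is restricted to $3$-connected $G$. For a tree, a path, or any graph with a separating edge-pair, $P_{uv}$ splits into several ladder classes, and deciding which ladder classes must be regrouped into a common $P_{uv}$ --- canonically, i.e.\ in a way every automorphism of $F_k(G)$ must respect --- is precisely the hard content of Sections~\ref{sec:alg} and~\ref{sec:F_ureq} (the \textsc{ProductSubgraph} machinery, the classification of the $H_i$, and the long induction proving Theorem~\ref{thm:pi_ext}). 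Your proposal defers this to ``the structural work carried out in the proof of Theorem~\ref{thm:main2},'' but that theorem only outputs \emph{some} reconstruction; it is Theorem~\ref{thm:pi_ext}, proved separately by induction on $n$, that upgrades the output to the canonical statement $\varphi=\iota(\phi(J,\varphi))\circ\psi$ from which the paper deduces Theorem~\ref{thm:main3} in three lines. So the step you treat as already available is the theorem itself.

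Two secondary problems. First, your incidence criterion is stated backwards and too weakly: two distinct classes $P_{uv}$ and $P_{xy}$ with $uv, xy$ disjoint can easily share a common vertex-configuration (take $A$ containing $u$ but not $v$ and $x$ but not $y$); the correct, invariant criterion (used after Lemma~\ref{lem:ladder_char}) is that the classes correspond to \emph{incident} edges of $G$ if and only if no edge of one lies in an induced $4$-cycle together with an edge of the other. Second, Whitney's lifting of line-graph automorphisms has exceptional graphs beyond $K_3$ and $K_{1,3}$; in particular $K_4$ is ($C_4$,diamond)-free and $\aut(L(K_4))$ is strictly larger than $\aut(K_4)$, so that case must be excluded or handled separately. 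Finally, even granting a correct class-preserving $\bar\psi$ lifting to $\sigma\in\aut(G)$, pinning down $\psi(A)=F_k(\sigma)(A)$ at a base configuration requires an argument (the class data only tells you, for each edge $uv$ meeting $\varphi$-one token, that exactly one of $\sigma(u),\sigma(v)$ lies in $\psi(A)$, which leaves a global two-fold ambiguity that must be matched to $\mathfrak{c}$); this is where the $k=n/2$ complement automorphism genuinely enters and needs to be isolated, as in conditions $3)$ and $4)$ of Theorem~\ref{thm:char}.
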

 We prove the following
consequence of being uniquely $k$-token reconstructible.
\begin{prop}\label{prop:aut}
Suppose that $F_k(G)$ is uniquely $k$-token reconstructible as
the $k$-token graph of $G$. Then 
 \begin{equation*}
  \operatorname{Aut}(F_k(G)) \simeq
    \begin{cases}
    \operatorname{Aut}(G) \times \mathbb{Z}_2 & \textrm{ for }   k = n/2 \textrm{ and } n \ge 4, \\
    \operatorname{Aut}(G)  & \textrm{ otherwise.}\\
    \end{cases}
  \end{equation*}
\end{prop}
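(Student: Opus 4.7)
My plan is first to exhibit an explicit subgroup of $\operatorname{Aut}(F_k(G))$ isomorphic to the right-hand side of the claimed formula, and then to use the unique reconstructibility hypothesis to promote that inclusion to an equality.

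I would begin by constructing the canonical map $\Phi \colon \operatorname{Aut}(G) \to \operatorname{Aut}(F_k(G))$ defined by $\Phi(\sigma)(A) = \{\sigma(a) : a \in A\}$. Since $\sigma(A) \triangle \sigma(B) = \sigma(A \triangle B)$ and $\sigma$ preserves edges of $G$, this permutation of $k$-sets is in fact an automorphism of $F_k(G)$, and $\Phi$ is visibly a group homomorphism. The injectivity of $\Phi$, valid throughout $1 \le k \le n-1$, follows from the observation that the intersection of all $k$-subsets of $V(G)$ containing a fixed vertex $v$ equals $\{v\}$, so if $\Phi(\sigma)$ fixes every $k$-set setwise then $\sigma$ fixes every vertex pointwise.

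Next, in the middle case $k = n/2$, I would introduce the complementation map $\tau(A) = V(G) \setminus A$. The identity $\overline{A} \triangle \overline{B} = A \triangle B$ shows that $\tau \in \operatorname{Aut}(F_k(G))$, and plainly $\tau^2 = \operatorname{id}$. The map $\tau$ commutes with every $\Phi(\sigma)$ because both $\tau \circ \Phi(\sigma)$ and $\Phi(\sigma) \circ \tau$ send $A$ to $V(G) \setminus \sigma(A)$. Moreover, for $n \ge 4$ (so $k \ge 2$), one has $\tau \notin \Phi(\operatorname{Aut}(G))$: if $\Phi(\sigma) = \tau$, then $\sigma(A)$ would be disjoint from $A$ for every $k$-set $A$, but choosing a $k$-set $A$ containing both a vertex $v$ and its image $\sigma(v)$ forces $\sigma(v) \in \sigma(A) \cap A$, a contradiction. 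Hence the subgroup $\langle \Phi(\operatorname{Aut}(G)), \tau \rangle$ is an internal direct product isomorphic to $\operatorname{Aut}(G) \times \mathbb{Z}_2$.

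The final step is to show the subgroup produced above exhausts $\operatorname{Aut}(F_k(G))$. For any $\psi \in \operatorname{Aut}(F_k(G))$, the pairs $(G, \operatorname{id})$ and $(G, \psi)$ are both $k$-token reconstructions of $F_k(G)$. Unique reconstructibility then says these reconstructions are equivalent under the natural $\operatorname{Aut}(G)$-action (and, when $k = n/2$, under the complementation symmetry that the formal definition in Section~\ref{sec:unq_rec} must accommodate). Either way, $\psi = \Phi(\sigma)$ or $\psi = \tau \circ \Phi(\sigma)$ for some $\sigma \in \operatorname{Aut}(G)$, which concludes the argument. I expect the main obstacle to be the precise bookkeeping in the $k = n/2$ case: one has to verify that the formal definition of uniquely $k$-token reconstructible accounts for exactly one additional complementation symmetry, so that no extra relations collapse the direct product and no missing ones obstruct surjectivity.
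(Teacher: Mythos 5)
Your proposal is correct and follows essentially the paper's own route: the paper proves this as the implication $1)\Rightarrow 2)$ of Theorem~\ref{thm:char}, combining the injective embedding $\iota(\operatorname{Aut}(G))\le \operatorname{Aut}(F_k(G))$ (together with $\mathfrak{c}$ and Proposition~\ref{prop:c_not_in_A(G)} when $k=n/2$) with the fact that unique reconstructibility forces every automorphism to lie in $I_\varphi\cup C_\varphi$, i.e.\ to have the form $\iota(\sigma)$ or $\mathfrak{c}\circ\iota(\sigma)$. Your direct surjectivity argument replaces the paper's cardinality count but is the same idea in substance.
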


\subsection*{Roadmap}

In Section~\ref{sec:unq_rec}, we introduce the notion of unique $k$-token reconstructibility.
In Theorem~\ref{thm:char}, we present three conditions
equivalent to being uniquely $k$-token  reconstructible.
In Section~\ref{sec:stars}, we consider the token graph of stars. Token graphs
of stars play an instrumental role in our reconstruction algorithm. We show that token
graphs of stars are uniquely reconstructible as the $k$-token graph
of $K_{1,n}$. We also show that if $F$ is isomorphic to $F_k(K_{1,n})$, then
a reconstruction of $F$ as the $k$-token graph of $F_k(K_{1,n})$ can be found 
in polynomial time. In Section~\ref{sec:4-cycles}, we characterize how $4$-cycles
are generated in $F_k(G)$; we derive some consequences of this characterization.
In Section~\ref{sec:alg}, we prove Theorem~\ref{thm:main}. In Section~\ref{sec:F_ureq}, we 
prove Theorems~\ref{thm:main2} and \ref{thm:main3}. Finally, in Section~\ref{sec:disconnected},
we consider the case when $G$ is a  disconnected ($C_4$,diamond)-free graph.

\section{Uniquely $k$-token Reconstructible Graphs} \label{sec:unq_rec}
Let $H$ be a graph isomorphic to $G$. 
We define a function $\iota:\iso(H,G)  \to \iso(F_k(H),F_k(G))$ as follows.
Let $\psi \in \iso(H,G)$. Let $\iota(\psi)$ be the function
that maps  every $A \in V(F_k(G))$ to \[\iota(\psi)(A):=\{\psi(v) : v \in A\}.\]
It is straightforward  to show that $\iota(\psi) \in \iso(F_k(H),F_k(G))$. 
We show that $\iota$ is injective.
 Let $\phi \in \iso(H,G)$ such that $\psi \neq \phi$. 
 Let $v \in V(H)$ such that $\phi(v) \neq \psi(v)$ and let $u \in V(G)$ be such that
 $\phi(u) = \psi(v)$. Thus, $u = \phi^{-1}\psi(v)$ and $u \neq v$.  Let $A \in V(F_k(H))$ such
 that $v \in A$ and $u \notin A$. We have that 
  $\psi(v) \notin \iota(\phi)(A)$ and $\psi(v) \in \iota(\psi)(A)$. 
  Therefore, $\iota(\phi)(A) \neq \iota(\psi)(A)$.
 Let $J$ be a graph isomorphic to $G$, and let $\phi$ now be an isomorphism from $G$ to $J$.
  It is straightforward to show that
 \[\iota(\phi \circ \psi)=\iota(\phi) \circ \iota(\psi).\]
Ibarra and Rivera~\cite{manuel} recently showed that when
$G=H$, $\imath$ is an injective group homomorphism from $\operatorname{Aut}(G)$
to $\operatorname{Aut}(F_k(G))$. Thus, 
\begin{equation}\label{eq:G<=F}
 \operatorname{Aut}(G) \le \operatorname{Aut}(F_k(G)).
 \end{equation}
Let $\mathfrak{c}$ be the map that sends
every set of $k$ vertices of $G$ to its complement in $V(G)$. This
map is an isomorphism from $F_k(G)$ to $F_{n-k}(G)$. If $k=n/2$, then
$\mathfrak{c}$ is an automorphism of $F_k(G)$, which we call the 
\emph{complement automorphism of $F_k(G)$}. 
\begin{prop}\label{prop:c_not_in_A(G)}
 For $n > 2$ even and $k=n/2$, $\mathfrak{c} \notin \imath(\operatorname{Aut}(G))$.
\end{prop}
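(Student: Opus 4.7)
The plan is to argue by contradiction from the definition of $\iota$. Suppose that there exists $\psi \in \operatorname{Aut}(G)$ with $\iota(\psi) = \mathfrak{c}$. Unpacking the definitions, this means that for every $A \in V(F_{n/2}(G))$, we have
\[
\{\psi(v) : v \in A\} \;=\; V(G) \setminus A.
\]
In particular, for every $n/2$-subset $A$, the set $A$ and its image $\psi(A)$ are disjoint, and $\psi(v) \in V(G) \setminus A$ whenever $v \in A$.

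The key ingredient is that, because $n \ge 4$, we have $n/2 \ge 2$ and $n - 2 \ge n/2 - 2$, so for any two (possibly equal) vertices $v, w$ of $G$ we can find an $n/2$-subset $A$ containing both $v$ and $w$. I would then fix an arbitrary vertex $v$ and, for each $w \in V(G)$, choose such an $A \ni v, w$. Applying the displayed identity gives $\psi(v) \in V(G)\setminus A$, and in particular $\psi(v) \neq w$.

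Letting $w$ range over all of $V(G)$ shows that $\psi(v)$ is distinct from every vertex of $G$, which is impossible. This contradiction rules out $\mathfrak{c} \in \iota(\operatorname{Aut}(G))$, establishing the proposition. There is no real obstacle here; the argument is a direct consequence of the requirement that $\psi(A)$ and $A$ be disjoint for every half-size subset $A$, which is far too restrictive to be induced by a permutation of $V(G)$ once $n/2 \ge 2$.
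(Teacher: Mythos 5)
Your proof is correct and follows essentially the same route as the paper: both arguments derive a contradiction by producing a half-size set $A$ that contains some vertex $v$ together with a candidate for $\psi(v)$, so that $\psi(v)$ would have to lie in both $A$ and $V(G)\setminus A$. Your version of quantifying over all $w$ is just a slightly more uniform packaging of the paper's choice of $A \supseteq \{v_1,\phi(v_1)\}$, with the small bonus that you need not first observe that $\psi$ is not the identity.
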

\begin{proof}
Suppose for a contradiction that there exists $\phi \in \aut(G)$
such that $\imath(\phi)=\mathfrak{c}$. Note that $\phi$ is not the identity; thus, there exists
a vertex $v_1$ of $G$ such that $v_1\neq \phi(v_1)$. Let $A:=\{v_1,\phi(v_1),v_2,\dots,v_{k-1}\}$ be a vertex in $F_k(G)$. 
Then $V(G)\setminus{A} = \{\phi(v_1),\phi(\phi(v_1)),\dots,\phi(v_{k-1})\}$. This implies
that $\phi(v_1) \in A$ and $\phi(v_1) \notin A$---a contradiction.
\end{proof}
\begin{obs}\label{obs:swap}
 Suppose that $G$ is the edge $uv$. Let $\operatorname{swap}$ be the automorphism of $G$ that
 interchanges these two vertices, so that $\operatorname{swap}(u)=v$ and $\operatorname{swap}(v)=u$. Then $\mathfrak{c}=\iota(\operatorname{swap})$.
\end{obs}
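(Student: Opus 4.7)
The plan is essentially a direct pointwise verification, because the underlying graph $G=K_2$ has only $n=2$ vertices and, for the hypothesis on $\mathfrak{c}$ to be meaningful as an automorphism of $F_k(G)$, one must take $k=n/2=1$. Thus $F_1(G)$ has exactly two vertices, the singletons $\{u\}$ and $\{v\}$, and it suffices to check that $\mathfrak{c}$ and $\iota(\operatorname{swap})$ agree on each of them.

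First I would unfold the definition of $\mathfrak{c}$ (set-complementation inside $V(G)=\{u,v\}$) to obtain $\mathfrak{c}(\{u\})=\{v\}$ and $\mathfrak{c}(\{v\})=\{u\}$. Next I would unfold the definition of $\iota$ given just before the observation, namely $\iota(\psi)(A)=\{\psi(w):w\in A\}$, and specialize it to $\psi=\operatorname{swap}$. Since $\operatorname{swap}(u)=v$ and $\operatorname{swap}(v)=u$, this yields $\iota(\operatorname{swap})(\{u\})=\{v\}$ and $\iota(\operatorname{swap})(\{v\})=\{u\}$. Comparing the two computations gives the equality of functions $\mathfrak{c}=\iota(\operatorname{swap})$.

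There is no real obstacle here: the observation is a sanity check documenting the boundary case $n=2$ excluded from Proposition~\ref{prop:c_not_in_A(G)}, and it records that the restriction $n>2$ there is genuinely needed, since for $G=K_2$ the complement automorphism does lie in $\iota(\operatorname{Aut}(G))$. The only thing worth flagging in a polished write-up is that the verification only needs to be performed on one of the two vertices of $F_1(G)$, because both $\mathfrak{c}$ and $\iota(\operatorname{swap})$ are involutions of a two-element set and therefore determined by the image of either vertex.
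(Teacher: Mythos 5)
Your verification is correct and is exactly the intended argument: the paper states this observation with no proof at all (it is immediately followed by \qed), treating the pointwise check on the two vertices $\{u\}$ and $\{v\}$ of $F_1(K_2)$ as obvious. Your unfolding of the definitions of $\mathfrak{c}$ and $\iota$ supplies precisely that check, and your remark about the role of the $n>2$ hypothesis in Proposition~\ref{prop:c_not_in_A(G)} matches the observation's purpose in the paper.
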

\qed

Note that for every $\psi \in \aut (G)$ we have that 
$\mathfrak{c} \circ \iota (\psi)= \iota (\psi) \circ \mathfrak{c}$. Since $\mathfrak{c}^2$ is the identity, when $n \ge 3$,
the group generated by $\operatorname{Aut}(G)$ and $\mathfrak{c}$ is isomorphic to $\operatorname{Aut}(G)\times \mathbb{Z}_2$.
Thus, when $k=n/2$ we have that
\begin{equation}\label{eq:G<=FxZ2}
\operatorname{Aut}(G)\times \mathbb{Z}_2 \le \operatorname{Aut}(F_k(G)).
\end{equation}
The inclusions(\ref{eq:G<=F})~and~(\ref{eq:G<=FxZ2}) can be proper. Using the SageMath~\cite{sage} and GAP~\cite{gap}
softwares we determined that 
\[\aut(K_{2,3})=\mathbb{Z}_2 \times S_3 < \mathbb{Z}_2 \times S_4 = \aut(F_2(K_{2,3}))\]
and
\[\aut(C_4) \times \mathbb{Z}_2= D_4\times \mathbb{Z}_2 < S_4 \times \mathbb{Z}_2 = \aut(F_2(C_4)).\]

We define an equivalence relation between $k$-token reconstructions. Let $(G,\psi)$ and $(G,\varphi)$ be two
$k$-token reconstructions  of a graph $F$.
We say that $(G,\varphi)$ and $(G,\psi)$ are \emph{equivalent $k$-reconstructions} of $F$ if there exists
an automorphism $\mathfrak{s}(\varphi,\psi)$ of $G$ such that
\[\psi=\iota(\mathfrak{s}(\varphi,\psi)) \circ \varphi \textrm{ \ or \ } \psi=\mathfrak{c} \circ \iota(\mathfrak{s}(\varphi,\psi)) \circ \varphi .\]
We say that $F$ is \emph{uniquely reconstructible as the $k$-token graph of $G$} if any two 
$k$-reconstructions of $F$ as the $k$-token graph of $G$ are equivalent. For a given $\varphi \in \iso(F,F_k(G))$ let 
\[I_\varphi:=\{\psi \in \operatorname{Iso}(F,F_k(G)):\psi=\iota(\mathfrak{s}(\varphi,\psi)) \circ \varphi\}\]
 and \[C_\varphi:=\{\psi \in \operatorname{Iso}(F,F_k(G)):\psi=\mathfrak{c}\circ \iota(\mathfrak{s}(\varphi,\psi)) \circ \varphi\}.\]
 By Proposition~\ref{prop:c_not_in_A(G)}, if $|G| \ge 3$, then $I_\varphi$ and $C_\varphi$ are disjoint. 
 Since $\iota$ is injective  we have the following.
 \begin{lemma}\label{lem:s_inj}
  \[I_\varphi= \iota(\aut(G))\circ \varphi \textrm{ and } C_\varphi= \mathfrak{c} \circ \iota(\aut(G))\circ \varphi.\]
 \end{lemma}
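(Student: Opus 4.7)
The plan is to prove both set equalities by double inclusion, and the proof is essentially an unwinding of definitions together with the injectivity of $\iota$ that was just established above.

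For the first equality, I would begin with the inclusion $I_\varphi \subseteq \iota(\aut(G)) \circ \varphi$: if $\psi \in I_\varphi$, then by definition there is an automorphism $\alpha := \mathfrak{s}(\varphi,\psi) \in \aut(G)$ with $\psi = \iota(\alpha) \circ \varphi$, which exhibits $\psi$ as an element of $\iota(\aut(G)) \circ \varphi$. For the reverse inclusion, suppose $\psi = \iota(\alpha) \circ \varphi$ for some $\alpha \in \aut(G)$. The map $\iota(\alpha)$ is an automorphism of $F_k(G)$ (since $\iota$ lands in $\aut(F_k(G))$ by the homomorphism property cited from~\cite{manuel}), and $\varphi \in \iso(F, F_k(G))$, so $\psi$ is a composition of isomorphisms, and hence $\psi \in \iso(F, F_k(G))$. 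Taking $\mathfrak{s}(\varphi,\psi) := \alpha$ shows $\psi \in I_\varphi$. Note that this choice is forced: if $\iota(\alpha) \circ \varphi = \iota(\alpha') \circ \varphi$ then, since $\varphi$ is a bijection, $\iota(\alpha) = \iota(\alpha')$, and injectivity of $\iota$ gives $\alpha = \alpha'$, so $\mathfrak{s}(\varphi,\psi)$ is well-defined.

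For the second equality I would argue identically, but with $\mathfrak{c}$ inserted. The inclusion $C_\varphi \subseteq \mathfrak{c} \circ \iota(\aut(G)) \circ \varphi$ is immediate from the definition of $C_\varphi$. Conversely, if $\psi = \mathfrak{c} \circ \iota(\alpha) \circ \varphi$ for some $\alpha \in \aut(G)$, then because $k=n/2$ is implicit (this is the case in which $\mathfrak{c}$ is an automorphism of $F_k(G)$), the map $\mathfrak{c} \circ \iota(\alpha)$ belongs to $\aut(F_k(G))$, so again $\psi \in \iso(F, F_k(G))$; setting $\mathfrak{s}(\varphi,\psi) := \alpha$ places $\psi$ in $C_\varphi$. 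The same injectivity argument (now using that $\mathfrak{c}$ is an involution and in particular invertible) makes $\mathfrak{s}(\varphi,\psi)$ unambiguous.

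There is no real obstacle here; the substantive facts have already been proved earlier in the section, namely that $\iota$ is an injective homomorphism $\aut(G) \hookrightarrow \aut(F_k(G))$ and that $\mathfrak{c}$ is an automorphism of $F_k(G)$ when $k=n/2$. The only thing one must be careful about is to verify that any $\psi$ produced on the right-hand side of either identity is genuinely an element of $\iso(F, F_k(G))$ rather than merely a set-theoretic map; this follows for free because each factor in the composition is an isomorphism between the appropriate graphs.
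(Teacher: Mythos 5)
Your proof is correct and is exactly the definition-unwinding the paper intends: the paper gives no written proof for this lemma (it simply remarks ``Since $\iota$ is injective we have the following'' and marks it \qed), and your double-inclusion argument, together with the observation that injectivity of $\iota$ and bijectivity of $\varphi$ force the choice of $\mathfrak{s}(\varphi,\psi)$, fills in precisely those omitted details. Your side remark that the $C_\varphi$ case implicitly requires $k=n/2$ (so that $\mathfrak{c}\in\aut(F_k(G))$) is a fair reading of the paper's context and does not change the argument.
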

 \qed
 
%
%
%
For a given vertex $u \in G$ let 
\[\kappa_G(u,k):=\{A \in F_k(G): u \in A\}\]
and 
\[\overline{\kappa_G}(u,k):=\{A \in F_k(G): u \notin A\}.\]
In the following theorem we give three equivalent conditions for $F_k(G)$ to be uniquely
reconstructible as the $k$-token graph of $G$.
\begin{theorem}\label{thm:char}
 Let $G$ and $H$ be isomorphic graphs on at least $3$ vertices;
 the following are equivalent:
 \begin{itemize}
  \item[1)] $F_k(G)$ is uniquely reconstructible as the $k$-token graph of $G$. \\
  
  \item[2)] \begin{equation*}
  \operatorname{Aut}(F_k(G)) \simeq
    \begin{cases}
    \operatorname{Aut}(G) \times \mathbb{Z}_2 & \textrm{ for }   k = n/2,\\
    \operatorname{Aut}(G)  & \textrm{ otherwise.}\\
    \end{cases}
  \end{equation*}\\
 
  \item[3)]   For every $\psi \in \iso(F_k(H),F_k(G))$
 there exists a $f(\psi) \in \operatorname{Iso}(H,G)$ such that 
 \[\psi = \iota(f(\psi)) \textrm{ or } \psi = \mathfrak{c} \circ \iota(f(\psi)).\] \\
 
  \item[4)] There exists a function $f$  that assigns to every $\psi \in \iso(F_k(H),F_k(G))$
  a function $f(\psi):V(H) \to V(G)$ such that the following holds.  For every vertex $u \in H$ either
  \[\psi(\kappa_H(u,k))=\kappa_G(f(\psi)(u),k) \textrm{ or } \psi(\kappa_H(u,k))=\overline{\kappa_G}(f(\psi)(u),k).\]
 \end{itemize}
\end{theorem}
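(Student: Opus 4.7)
The plan is to make condition (3) the hub and to establish $(1)\Leftrightarrow(3)$, $(2)\Leftrightarrow(3)$, and $(3)\Leftrightarrow(4)$ in that order.

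For $(1)\Leftrightarrow(3)$, I would fix any $\eta\in\iso(H,G)$ (which exists since $H\simeq G$). Given any $\psi\in\iso(F_k(H),F_k(G))$, the pairs $(G,\psi)$ and $(G,\iota(\eta))$ are both $k$-token reconstructions of $F_k(H)$ as the $k$-token graph of $G$, so (1) supplies $\mathfrak{s}\in\aut(G)$ with $\psi=\iota(\mathfrak{s}\circ\eta)$ or $\psi=\mathfrak{c}\circ\iota(\mathfrak{s}\circ\eta)$, and I would set $f(\psi):=\mathfrak{s}\circ\eta$. Conversely, for any two reconstructions $(G,\varphi),(G,\psi)$ of $F$, applying (3) with $H=G$ to the automorphism $\psi\circ\varphi^{-1}$ of $F_k(G)$ expresses it as $\iota(\sigma)$ or $\mathfrak{c}\circ\iota(\sigma)$, yielding equivalence.

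For $(2)\Leftrightarrow(3)$, inequalities~(\ref{eq:G<=F}) and~(\ref{eq:G<=FxZ2}) embed $\iota(\aut(G))$ (and $\iota(\aut(G))\times\langle\mathfrak{c}\rangle$ when $k=n/2$) as a subgroup of $\aut(F_k(G))$. If (2) holds, an order comparison forces equality, so every automorphism of $F_k(G)$ has the form $\iota(\sigma)$ or $\mathfrak{c}\circ\iota(\sigma)$; composing with $\iota(\eta)$ for a fixed $\eta\in\iso(H,G)$ then yields (3). Conversely, (3) with $H=G$ decomposes $\aut(F_k(G))$ into the cosets $\iota(\aut(G))$ and $\mathfrak{c}\circ\iota(\aut(G))$, which are disjoint by Proposition~\ref{prop:c_not_in_A(G)} when $k=n/2$; combined with the commutativity $\mathfrak{c}\,\iota(\sigma)=\iota(\sigma)\,\mathfrak{c}$ and the injectivity of $\iota$ supplied by Lemma~\ref{lem:s_inj}, this gives the group structure in (2).

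The substantive part is $(3)\Leftrightarrow(4)$. The direction $(3)\Rightarrow(4)$ is immediate: if $\psi=\iota(f(\psi))$, then $\psi$ maps $\kappa_H(u,k)$ bijectively onto the family of $k$-subsets containing $f(\psi)(u)$; applying $\mathfrak{c}$ handles the other case. For $(4)\Rightarrow(3)$, I would proceed in four steps. \emph{(i) Consistency:} for fixed $\psi$, either every $u\in H$ lies in the $\kappa$ case or every $u$ lies in the $\overline{\kappa}$ case. If some $u,v$ lay in opposite cases, $\psi$ would map the intersection $\kappa_H(u,k)\cap\kappa_H(v,k)$ of size $\binom{n-2}{k-2}$ to a set of size $0$ or $\binom{n-2}{k-1}$, and these cardinalities disagree except at the isolated value $k=(n+1)/2$, which is handled by a supplementary triple-intersection count or by reducing to $k\le n/2$ via the complement isomorphism $F_k(G)\simeq F_{n-k}(G)$. \emph{(ii) Bijectivity of $f(\psi)$:} if $f(\psi)(u)=f(\psi)(v)$, then $\kappa_H(u,k)=\kappa_H(v,k)$, forcing $u=v$. \emph{(iii) $f(\psi)$ is a graph isomorphism:} $uv\in E(H)$ is characterized by the existence of an $F_k(H)$-edge between $\kappa_H(u,k)\setminus\kappa_H(v,k)$ and $\kappa_H(v,k)\setminus\kappa_H(u,k)$, and this characterization transfers under $\psi$ (with the two sides swapped in the $\overline{\kappa}$ case). \emph{(iv) Reconstruction of $\psi$:} applying $\psi$ to the identity $\{A\}=\bigcap_{u\in A}\kappa_H(u,k)\cap\bigcap_{u\notin A}\overline{\kappa_H}(u,k)$ recovers $\psi(A)=\iota(f(\psi))(A)$ in the $\kappa$ case and $\psi(A)=\mathfrak{c}(\iota(f(\psi))(A))$ in the $\overline{\kappa}$ case, the latter forcing $k=n/2$ on cardinality grounds.

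The main obstacle is the consistency step of $(4)\Rightarrow(3)$. Without it, $f(\psi)$ could be a patchwork of $\kappa$- and $\overline{\kappa}$-behavior across different vertices, and no uniform description of $\psi$ as $\iota(f(\psi))$ or $\mathfrak{c}\circ\iota(f(\psi))$ would emerge. The binomial-coefficient inequality $\binom{n-2}{k-2}\neq\binom{n-2}{k-1}$ is the one lever that rules out the patchwork, with the borderline case $k=(n+1)/2$ requiring a slightly finer combinatorial count.
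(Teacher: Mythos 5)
Your proof is correct and uses essentially the same ingredients as the paper's — the binomial-coefficient counts for consistency and injectivity, the $E(X\setminus Y,\,Y\setminus X)$ edge criterion, the intersection identity recovering $\psi(A)$, and the order comparison of $\aut(F_k(G))$ against the subgroup generated by $\iota(\aut(G))$ and $\mathfrak{c}$ — merely rearranged into a hub around condition $3)$ instead of the paper's cycle $1)\Rightarrow 2)\Rightarrow 3)\Rightarrow 4)\Rightarrow 1)$. The one place the paper is tidier is your consistency step at $k=(n+1)/2$: rather than a supplementary triple-intersection count, observe first (as the paper does) that $\psi(\kappa_H(u,k))=\overline{\kappa_G}(f(\psi)(u),k)$ already forces $\binom{n-1}{k-1}=\binom{n-1}{k}$, hence $k=n/2$, which rules out $k=(n+1)/2$ before the pairwise intersection argument is even needed.
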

\begin{proof}
 \ \newline
 
 \noindent \emph{$1) \Rightarrow 2)$} Let $\varphi \in \iso(F,F_k(G))$. Since $F_k(G)$ is uniquely reconstructible as the $k$-token graph of $G$,
 we have that $\aut(F_k(G))=I_\varphi \cup C_\varphi$. By Lemma~\ref{lem:s_inj} we have that $|\aut(F_k(G))| \le |\aut(G)|$ for $k \neq n/2$,
 and  $|\aut(F_k(G))| \le 2|\aut(G)|$ for $k=n/2$. By (\ref{eq:G<=F}) and (\ref{eq:G<=FxZ2}) we have $2)$. \\
 
 \noindent \emph{$2) \Rightarrow 3)$} Note that $|\iso(H,G)|=|\aut(G)|$ and $|\iso(F_k(H),F_k(G))|=|\aut(F_k(G))|$.
 Suppose that $k\neq n/2$. We have that $|\iso(H,G)|=|\iso(F_k(H),F_k(G))|$. Since $\iota$ is an 
 injection from $\iso(H,G)$ to $\iso(F_k(H),F_k(G))$ it is also a bijection and we have $3)$. 
 Suppose that $k = n/2$. Let
 \[X:=\{\iota(\phi): \phi \in \iso(H,G)\} \textrm{ and } Y:=\{\mathfrak{c}\circ\iota (\phi): \phi \in \iso(H,G)\}.\]
 Since $\iota$ is injective we have that $|X|=|\iso(H,G)|$ and $|Y|=|\iso(H,G)|$. By Proposition~\ref{prop:c_not_in_A(G)} we have that $X \cap Y =\emptyset$. 
 Since $|\iso(F_k(H),F_k(G))|=2|\iso(H,G)|$, we have that $\iso(F_k(H),F_k(G))=X \cup Y$; thus $3)$ holds. \\
 
 \noindent \emph{$3) \Rightarrow 4)$}
 Let $u$ be a vertex of $H$. If $\psi=\iota(f(\psi))$, then
 \[\psi(\kappa_H(u,k))=\iota(f(\psi))(\kappa_H(u,k))=\kappa_G(f(\psi)(u),k).\]
 If $\psi=\mathfrak{c} \circ \iota(f(\psi))$, then 
 \[\psi(\kappa_H(u,k))=\mathfrak{c} \circ \iota(f(\psi))(\kappa_H(u,k))=\mathfrak{c} \circ \kappa_G(f(\psi)(u),k)=\overline{\kappa_G}(f(\psi)(u),k).\]\\
 
 \noindent \emph{$4) \Rightarrow 1)$}
 Note that \[|\kappa_H(v,k)|=\binom{n-1}{k-1} \textrm{ and } |\overline{\kappa_H}(v,k)|=\binom{n-1}{k}. \]
 Therefore, if for some vertex $v$ of $H$ we have that $\psi(\kappa_H(v,k))=\overline{\kappa_G}(f(\psi)(v),k)$, we would have
 that $\binom{n-1}{k-1}=\binom{n-1}{k}$. This would imply that $n$ is even and $k=n/2.$
 Suppose that for some pair of vertices $u,v \in H$ we have that $\psi(\kappa_H(u,k))=\kappa_G(f(\psi)(u),k)$ and $\psi(\kappa_H(v,k))=\overline{\kappa_G}(f(\psi)(v),k)$.
 Note that \[|\kappa_H(u,k)\cap \kappa_H(v,k)|=\binom{n-2}{k-2};\] we have that
 \[|\varphi(\kappa_H(u,k)\cap \kappa_H(v,k))|=|\varphi(\kappa_H(u,k))\cap \varphi(\kappa_H(v,k))|=|\kappa_G(f(\psi)(u),k)\cap \overline{\kappa_G}(f(\psi)(v),k)|=\binom{n-2}{k-1}.\]
 Thus, $\binom{n-2}{k-2}=\binom{n-2}{k-1}$, and $n$ is odd---a contradiction.
 Therefore, for all vertices $u \in H$ either $\psi(\kappa_H(u,k))=\kappa_G(f(\psi)(u),k)$  or 
 $\psi(\kappa_H(u,k))=\overline{\kappa}_G(f(\psi)(u),k)$.
 
  Suppose that there exist two vertices $u,v \in H$, such that $f(\psi)(u)=f(\psi)(v)$.
  We have that $\binom{n-1}{k-1}=\binom{n-2}{k-2}$. This implies that $n=k$, a contradiction.
  Thus, $f(\psi)$ is injective; thus, it is also bijective.

Note that $u$ is not adjacent to $v$ if and only if
\[E\left (\kappa_H(u,k) \setminus \kappa_H(v,k), \kappa_H(v,k) \setminus \kappa_H(u,k) \right )= \emptyset.\] Similarly, $u$ is not is adjacent to $v$ if and only if
\[E\left (\overline{\kappa_H}(u,k) \setminus \overline{\kappa_H}(v,k), \overline{\kappa_H}(v,k) \setminus \overline{\kappa_H}(u,k) \right)= \emptyset.\] 
Let $X:=\kappa_H(u,k)$ and $Y:=\kappa_H(v,k)$.
Since 
\[\left | E\left (X \setminus Y, Y \setminus X\right ) \right |  =|\psi \left ( E\left (X \setminus Y, Y \setminus X \right ) \right )| 
   =|E\left (\psi(X) \setminus \psi(Y), \psi(Y) \setminus \psi(X) \right )|,  \]
we have that $u$ is adjacent to $v$ if and only if $f(\psi)(u)$ is adjacent to $f(\psi)(v)$. Thus, $f(\psi) \in \iso(H,G)$.

Moreover, if $A \in  F_k(H)$ then 
 \[\psi(A)=\psi \left (\bigcap_{v \in A} \kappa_H(v,k) \right )= \bigcap_{v \in A} \psi( \kappa_H(v,k) )  =\bigcap_{v \in A} \kappa_G(f(\psi)(v),k) = \iota(f(\psi))(A)\] or 
\[ \psi(A)=\psi \left (\bigcap_{v \in A} \kappa_H(v,k) \right )= \bigcap_{v \in A} \psi( \kappa_H(v,k) ) =\bigcap_{v \in A} \overline{\kappa_G}(f(\psi)(v),k)=\mathfrak{c} \circ \iota(f(\psi))(A). \]

 Fix an isomorphism $\psi$ from $F_k(H)$ to $F_k(G)$ and let $(G,\varphi)$ and $(G,\phi)$ be two $k$-token reconstructions of $F_k(G)$. 
 Let $\mathfrak{s}(\varphi,\phi):=f(\phi \psi) \circ f(\varphi \psi)^{-1}$.
 Note that $\phi=\iota(\mathfrak{s}(\varphi,\phi))\circ \varphi$ or $\phi=\mathfrak{c} \circ \iota(\mathfrak{s}(\varphi,\phi))\circ \varphi$
 and we have $1)$
\end{proof}

\begin{obs}\label{obs:char}
\ \newline
\begin{itemize}
 \item  If $3)$ of Theorem~\ref{thm:char} holds, then $f(\psi)$ is unique;
 
 \item  if $4)$ of Theorem~\ref{thm:char} holds, then $f(\psi)$ is unique and an isomorphism from $H$ to $G$, and either
   \[\psi(\kappa_H(u,k))=\kappa_G(f(\psi)(u),k) \textrm{ or } \psi(\kappa_H(u,k))=\overline{\kappa_G}(f(\psi)(u),k),\]
  for all vertices $u \in H$ 
 \end{itemize}
\end{obs}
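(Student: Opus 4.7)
My plan is to observe that both bullets of the observation follow almost entirely from facts already established during the proof of Theorem~\ref{thm:char}, together with the injectivity of $\iota$ recorded earlier in Section~\ref{sec:unq_rec}. Very little new work is required; what there is amounts to ruling out collisions between the two options ``$\psi = \iota(f(\psi))$'' and ``$\psi = \mathfrak{c} \circ \iota(f(\psi))$''.

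For the first bullet I would proceed as follows. Within each of the two forms stipulated by $3)$ the witness $f(\psi)$ is forced to be unique, because $\iota$ is injective: if $\psi = \iota(\phi)$ then $\phi$ is the unique preimage of $\psi$ under $\iota$; if instead $\psi = \mathfrak{c} \circ \iota(\phi)$, then using that $\mathfrak{c}^2$ is the identity we get $\iota(\phi) = \mathfrak{c} \circ \psi$, and again $\phi$ is the unique preimage of $\mathfrak{c} \circ \psi$ under $\iota$. It then remains to rule out that the two forms simultaneously realize the same $\psi$ with distinct witnesses. If $\psi = \iota(\phi_1) = \mathfrak{c} \circ \iota(\phi_2)$, then $\mathfrak{c} = \iota(\phi_1 \circ \phi_2^{-1}) \in \iota(\aut(G))$, which contradicts Proposition~\ref{prop:c_not_in_A(G)} in the case $k = n/2$; in the case $k \ne n/2$, $\mathfrak{c}$ is not even an automorphism of $F_k(G)$ (it maps $F_k(G)$ to $F_{n-k}(G)$), so the second form cannot produce an element of $\iso(F_k(H), F_k(G))$ in the first place.

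For the second bullet I would simply quote the proof of $4) \Rightarrow 1)$ in Theorem~\ref{thm:char}. The two binomial-coefficient computations given there show that if some $u \in H$ has $\psi(\kappa_H(u,k)) = \overline{\kappa_G}(f(\psi)(u),k)$ then $k = n/2$, and that a mixed configuration (some $u$ of the $\kappa_G$ type together with some $v$ of the $\overline{\kappa_G}$ type) forces $n$ to be odd, a contradiction. Hence the two forms hold uniformly in $u$, as claimed. The subsequent paragraphs of that proof show $f(\psi)$ is injective (hence bijective, since $|H| = |G|$) and that $uv \in E(H)$ if and only if $f(\psi)(u) f(\psi)(v) \in E(G)$, so $f(\psi) \in \iso(H,G)$. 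Uniqueness of $f(\psi)(u)$ finally reduces to the elementary observation that $\kappa_G(v,k) = \kappa_G(v',k)$ forces $v = v'$ whenever $1 \le k \le n-1$ (any $k$-set containing exactly one of $v,v'$ witnesses the inequality), and likewise for $\overline{\kappa_G}$.

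I do not anticipate a serious obstacle: the only step beyond reading off the proof of Theorem~\ref{thm:char} is the exclusion of simultaneous representations in $3)$, which is dispatched by Proposition~\ref{prop:c_not_in_A(G)}.
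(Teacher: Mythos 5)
Your proposal is correct and matches the paper's (implicit) argument: the observation carries no proof precisely because it is meant to be read off from the proof of Theorem~\ref{thm:char} together with the injectivity of $\iota$ and Proposition~\ref{prop:c_not_in_A(G)}, which is exactly what you do. The one detail left tacit in your second bullet is that a cross-form collision $\kappa_G(v,k)=\overline{\kappa_G}(v',k)$ must also be excluded for uniqueness; this is immediate, since it would force $\binom{n-1}{k-1}=\binom{n-1}{k}$, hence $k=n/2\ge 2$, and any $k$-set containing both $v$ and $v'$ then lies in one family but not the other.
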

\qed

We now present some consequences of Theorem~\ref{thm:char}.

\subsection{$k$-reconstruction Families}
Suppose that $F$ is a graph on $\binom{n}{k}$ vertices. Inspired by property $4)$ of Theorem~\ref{thm:char}, we define the concept of 
a $k$-reconstruction family of $F$. Let $\mathcal{R}$ be a family of subsets of vertices
 of $F$. For every vertex $A \in F$, let \[S_{\mathcal{R}}(A):=\{X \in \mathcal{R}: A \in X\}.\]
We say that $\mathcal{R}$ is a \emph{$k$-reconstruction family}
of $F$ if it satisfies the following properties.
\begin{itemize}
 \item[$1)$] $|X|=\binom{n-1}{k-1}$ for all $X \in \mathcal{R}$;
 
 \item[$2)$] $|S_{\mathcal{R}}(A)|=k$ for all $A \in V(F)$; and
 
 \item[$3)$] for every edge $AB \in F$ we have that 
 \[|S_\mathcal{R}(A)\cap S_\mathcal{R}(B)|=k-1.\]
 
%
\end{itemize}
Note that $1)$ and $2)$ imply that $|\mathcal{R}|=n$. Let $(G,\varphi)$ be a $k$-reconstruction of $F$. 
Note that \[\mathcal{R}_{\varphi}:=\{ \varphi^{-1}(\kappa_{G}(u,k)): u \in V(G)\}\]
is a $k$-reconstruction family of $F$. Conversely, from a $k$-reconstruction family
we can obtain a $k$-token reconstruction of $F$ as follows.
Let $G_\mathcal{R}$ be the graph whose vertex set is $\mathcal{R}$; and such
that $X$ is adjacent to $Y$ in $G_{\mathcal{R}}$ if and only if  there exists an edge $AB$ of $F$ such that
\[S_\mathcal{R}(A)\triangle S_\mathcal{R}(B)=\{X,Y\}.\]
\begin{prop}
 If $\mathcal{R}$ is a $k$-reconstruction family of $F$, then $(G_\mathcal{R},S_{\mathcal{R}})$ is a $k$-token reconstruction of $F$. 
\end{prop}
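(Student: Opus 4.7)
The plan is to verify that $S_\mathcal{R}\colon V(F)\to V(F_k(G_\mathcal{R}))$ is a graph isomorphism, which by definition makes $(G_\mathcal{R},S_\mathcal{R})$ a $k$-token reconstruction of $F$. I would organize the argument into three chunks: matching the vertex counts, proving $S_\mathcal{R}$ is a bijection, and handling each direction of edge preservation.

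For the vertex count, double-counting the incidence pairs $(A,X)$ with $A\in X\in\mathcal{R}$, together with properties $1)$ and $2)$, gives
\[
|\mathcal{R}|\binom{n-1}{k-1} \;=\; \sum_{X\in\mathcal{R}}|X| \;=\; \sum_{A\in V(F)}|S_\mathcal{R}(A)| \;=\; k\binom{n}{k},
\]
so $|\mathcal{R}|=n$ and hence $|V(F_k(G_\mathcal{R}))|=\binom{n}{k}=|V(F)|$. Property $2)$ already ensures that each $S_\mathcal{R}(A)$ is a $k$-subset of $\mathcal{R}=V(G_\mathcal{R})$, so $S_\mathcal{R}$ is at least a well-defined map into $V(F_k(G_\mathcal{R}))$.

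The next step is to show that $S_\mathcal{R}$ is injective; combined with the cardinality match this yields bijectivity. I expect this to be the main obstacle. The easy local input is that property $3)$ forces $S_\mathcal{R}(A)\ne S_\mathcal{R}(B)$ whenever $AB\in E(F)$, since the intersection then has size $k-1<k$; the real work is to propagate this separation through the adjacency structure of $F$ so that no two distinct vertices of $F$ share an image.

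With bijectivity in hand, the forward direction of edge preservation is immediate: if $AB\in E(F)$, property $3)$ makes $S_\mathcal{R}(A)\triangle S_\mathcal{R}(B)$ a pair $\{X,Y\}$, and by the defining adjacency of $G_\mathcal{R}$ the pair $XY$ lies in $E(G_\mathcal{R})$, so $S_\mathcal{R}(A)$ and $S_\mathcal{R}(B)$ are adjacent in $F_k(G_\mathcal{R})$. For the reverse direction, if $S_\mathcal{R}(A)S_\mathcal{R}(B)$ is an edge of $F_k(G_\mathcal{R})$ with symmetric difference $\{X,Y\}$, then by the defining adjacency of $G_\mathcal{R}$ there exist $A',B'$ with $A'B'\in E(F)$ and $S_\mathcal{R}(A')\triangle S_\mathcal{R}(B')=\{X,Y\}$; a short combinatorial argument matching unordered pairs of $k$-sets with the same symmetric difference (using injectivity and the incidence structure of $\mathcal{R}$) identifies $\{S_\mathcal{R}(A'),S_\mathcal{R}(B')\}$ with $\{S_\mathcal{R}(A),S_\mathcal{R}(B)\}$, so $\{A',B'\}=\{A,B\}$ and $AB\in E(F)$. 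Combining these steps yields that $S_\mathcal{R}$ is an isomorphism and $(G_\mathcal{R},S_\mathcal{R})$ is a $k$-token reconstruction of $F$.
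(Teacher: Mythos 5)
Your outline is the same as the paper's, which disposes of the proposition in three asserted sentences: $2)$ makes $S_{\mathcal{R}}$ well defined into $V(F_k(G_\mathcal{R}))$, $1)$ and $2)$ make it a bijection, and $3)$ together with the definition of $G_\mathcal{R}$ make it an isomorphism. Your double-count giving $|\mathcal{R}|=n$ is exactly the remark the paper makes just before the proposition, and your forward edge direction is fine. The difficulty is that the two steps you defer are the only ones that require an argument, and neither of your sketches closes them.

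On injectivity you admit you have no argument beyond separating adjacent vertices, and ``propagating the separation through the adjacency structure of $F$'' is not a proof: properties $1)$--$3)$ alone do not force injectivity. For $n=4$, $k=2$, take $X_1=\{A_1,A_2,A_3\}$, $X_2=\{A_1,A_2,A_4\}$, $X_3=\{A_3,A_5,A_6\}$, $X_4=\{A_4,A_5,A_6\}$ on an edgeless $F$; then $1)$ and $2)$ hold, $3)$ is vacuous, and $S_{\mathcal{R}}(A_5)=S_{\mathcal{R}}(A_6)$. So any proof of injectivity must use the edges of $F$ in an essential way that you have not supplied. For the converse edge direction, the step you invoke is simply false: two unordered pairs of $k$-sets can have the same symmetric difference $\{X,Y\}$ without coinciding --- take $\{X\}\cup Z,\ \{Y\}\cup Z$ versus $\{X\}\cup W,\ \{Y\}\cup W$ for distinct $(k-1)$-sets $Z,W$ disjoint from $\{X,Y\}$. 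This is the generic situation in a token graph, where a single edge of $G$ generates many edges of $F_k(G)$, so the witnessing edge $A'B'$ from the definition of $E(G_\mathcal{R})$ need not satisfy $\{A',B'\}=\{A,B\}$. Worse, the converse direction cannot be salvaged from $1)$--$3)$ and the existential definition of $E(G_\mathcal{R})$ at all: deleting the single edge $\{1,3\}\{1,4\}$ from $F_2(P_4)$ leaves $\{\kappa_{P_4}(u,2):u\in V(P_4)\}$ a $2$-reconstruction family of the resulting graph $F$, and the edge $\{2,3\}\{2,4\}$ still forces $\kappa(3)\kappa(4)\in E(G_\mathcal{R})$, so $F_2(G_\mathcal{R})$ has six edges while $F$ has five and $S_{\mathcal{R}}$ is not an isomorphism. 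So the gap you flagged is real, and the ``short combinatorial argument'' you gesture at does not exist; closing it requires either strengthening the hypotheses (e.g., demanding that \emph{every} pair $A,B$ with $S_{\mathcal{R}}(A)\triangle S_{\mathcal{R}}(B)=\{X,Y\}$ for $XY\in E(G_\mathcal{R})$ be an edge of $F$) or an argument of a different kind.
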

\begin{proof}
 By $2)$, for every $A \in V(F)$ we have that $S_{\mathcal{R}}(A) \in V(F_k(G_\mathcal{R}))$. $1)$ and $2)$
 imply that $S_{\mathcal{R}}$ is a bijection from $V(F)$ to $V(F_k(G_\mathcal{R}))$. 
 The definition of $G_\mathcal{R}$  and $3)$ implies that $S_{\mathcal{R}}$ is an isomorphism
 from $F$ to $F_k(G_\mathcal{R})$.
\end{proof}

Suppose that $\mathcal{R}$ is a $k$-reconstruction family of $F$; let  \[\overline{\mathcal{R}}:=\{V(F)\setminus X: X \in \mathcal{R}\}.\]
\begin{prop}
 Suppose that $\mathcal{R}$ is $k$-reconstruction family of $F$ and that  $k=n/2$.
 Then $\overline{\mathcal{R}}$ is a $k$-reconstruction family of $F$.
\end{prop}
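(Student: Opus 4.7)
The plan is to verify the three defining properties of a $k$-reconstruction family directly for $\overline{\mathcal{R}}$, using only the hypothesis that $\mathcal{R}$ itself satisfies them together with the identity $n = 2k$. The definitions of $|X|$, $|S_{\mathcal{R}}(A)|$, and $|S_{\mathcal{R}}(A)\cap S_{\mathcal{R}}(B)|$ all involve binomial coefficients, and at $k=n/2$ the relevant binomials satisfy the key symmetries $\binom{n-1}{k-1}=\binom{n-1}{k}$ and $n-k=k$. These are precisely what allow the complementation operation to preserve a $k$-reconstruction family when $k=n/2$; if $k\ne n/2$, at least one of the three properties fails.

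First I would check property $1)$. For any $X \in \mathcal{R}$ we have $|X| = \binom{n-1}{k-1}$ and $|V(F)| = \binom{n}{k}$, so $|V(F)\setminus X| = \binom{n}{k} - \binom{n-1}{k-1} = \binom{n-1}{k}$. Since $k=n/2$, $\binom{n-1}{k} = \binom{n-1}{k-1}$, which gives $1)$ for $\overline{\mathcal{R}}$.

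Next I would verify property $2)$. For $A \in V(F)$, a set $V(F)\setminus X$ (with $X \in \mathcal{R}$) contains $A$ precisely when $A \notin X$, so
\[
|S_{\overline{\mathcal{R}}}(A)| = |\mathcal{R}| - |S_{\mathcal{R}}(A)| = n - k = k,
\]
using $|\mathcal{R}|=n$ (which follows from properties $1)$ and $2)$ of $\mathcal{R}$) and $k=n/2$.

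The step I expect to require the most care is property $3)$, but it reduces cleanly to inclusion–exclusion. For an edge $AB$ of $F$, a set $V(F)\setminus X$ lies in $S_{\overline{\mathcal{R}}}(A)\cap S_{\overline{\mathcal{R}}}(B)$ iff $A\notin X$ and $B\notin X$, so
\[
|S_{\overline{\mathcal{R}}}(A)\cap S_{\overline{\mathcal{R}}}(B)| = |\mathcal{R}| - \bigl(|S_{\mathcal{R}}(A)| + |S_{\mathcal{R}}(B)| - |S_{\mathcal{R}}(A)\cap S_{\mathcal{R}}(B)|\bigr).
\]
Plugging in $|\mathcal{R}|=n=2k$, $|S_{\mathcal{R}}(A)|=|S_{\mathcal{R}}(B)|=k$, and the edge condition $|S_{\mathcal{R}}(A)\cap S_{\mathcal{R}}(B)|=k-1$ from $\mathcal{R}$, this gives $2k - (k + k - (k-1)) = k-1$, as required. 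All three properties hold, so $\overline{\mathcal{R}}$ is a $k$-reconstruction family of $F$.
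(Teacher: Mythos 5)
Your proof is correct and follows essentially the same route as the paper: both verify properties $1)$--$3)$ directly for $\overline{\mathcal{R}}$ via the identities $\binom{n-1}{k}=\binom{n-1}{k-1}$, $n-k=k$, and the same inclusion--exclusion count $n-(k+1)=k-1$ for the edge condition.
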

\begin{proof}
 For every  $X \in \mathcal{R}$ we have that $|V(F)\setminus X|=\binom{n}{k}-\binom{n-1}{k-1}=\binom{n-1}{k-1}$;
 thus, $\overline{\mathcal{R}}$ satisfies $1)$. For every $A \in V(F)$ we have that
 $|S_{\overline{\mathcal{R}}}(A)|=|\mathcal{R}\setminus S_{\mathcal{R}}(A)|=n-k=k$; thus, $\overline{\mathcal{R}}$ satisfies $2)$.
 For every edge $AB \in F$ we have that 
 \begin{align*}
 |S_{\overline{\mathcal{R}}}(A)\cap S_{\overline{\mathcal{R}}}(B)| & 
 = |(\mathcal{R}\setminus S_{\mathcal{R}}(A))\cap (\mathcal{R}\setminus S_{\mathcal{R}}(B))| \\
&=n-(k-1)-2 \\
&=k-1.
 \end{align*}
 thus, $\overline{\mathcal{R}}$ satisfies $3)$.
\end{proof}

\begin{prop}
 Let $(G,\varphi)$ and $(G,\psi)$ be two $k$-token reconstructions of $F$. 
 Then $(G,\varphi)$ and $(G,\psi)$ are equivalent $k$-token reconstructions of $F$ if and
 only if $\mathcal{R}_\varphi=\mathcal{R}_\psi$ or $\mathcal{R}_\varphi=\overline{\mathcal{R}}_\psi$.
\end{prop}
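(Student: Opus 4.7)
The plan is to prove both directions by translating between the set-level statements about $\mathcal{R}_\varphi$ and $\mathcal{R}_\psi$ and the map-level statements about $\tau:=\psi\circ\varphi^{-1}\in\aut(F_k(G))$. The key identities I will use throughout are
$$\iota(\rho)(\kappa_G(u,k))=\kappa_G(\rho(u),k)\qquad\text{and}\qquad \mathfrak{c}(\kappa_G(u,k))=\overline{\kappa_G}(u,k),$$
together with the fact that the sets $\kappa_G(u,k)$ are pairwise distinct whenever $1\le k\le n-1$.

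For the forward direction, assume the two reconstructions are equivalent. If $\psi=\iota(\mathfrak{s})\circ\varphi$ with $\mathfrak{s}\in\aut(G)$, then for every $u\in V(G)$,
$$\psi^{-1}(\kappa_G(u,k))=\varphi^{-1}\bigl(\iota(\mathfrak{s}^{-1})(\kappa_G(u,k))\bigr)=\varphi^{-1}(\kappa_G(\mathfrak{s}^{-1}(u),k)).$$
As $u$ ranges over $V(G)$, these elements are just a reshuffling of $\mathcal{R}_\varphi$, so $\mathcal{R}_\psi=\mathcal{R}_\varphi$. If instead $\psi=\mathfrak{c}\circ\iota(\mathfrak{s})\circ\varphi$, the analogous computation combined with $\varphi^{-1}(\overline{\kappa_G}(v,k))=V(F)\setminus\varphi^{-1}(\kappa_G(v,k))$ yields $\mathcal{R}_\psi=\overline{\mathcal{R}_\varphi}$.

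For the backward direction, assume $\mathcal{R}_\varphi=\mathcal{R}_\psi$. Using distinctness of the $\kappa_G(u,k)$, the matching of elements defines a bijection $\mathfrak{s}:V(G)\to V(G)$ via $\varphi^{-1}(\kappa_G(\mathfrak{s}(u'),k))=\psi^{-1}(\kappa_G(u',k))$, equivalently $\tau(\kappa_G(\mathfrak{s}(u'),k))=\kappa_G(u',k)$. To identify $\tau$ with $\iota(\mathfrak{s}^{-1})$, I will invoke the intersection characterization of singletons in $F_k(G)$:
$$\{A\}=\bigcap_{v\in A}\kappa_G(v,k),$$
which forces $\tau(A)=\{\mathfrak{s}^{-1}(v):v\in A\}$. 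Finally, because $\tau\in\aut(F_k(G))$ preserves adjacency and a symmetric-difference edge $A\triangle B=\{u,v\}$ is carried to $\{\mathfrak{s}^{-1}(u),\mathfrak{s}^{-1}(v)\}$, we conclude $uv\in E(G)$ iff $\mathfrak{s}^{-1}(u)\mathfrak{s}^{-1}(v)\in E(G)$, upgrading $\mathfrak{s}$ from a vertex bijection to an element of $\aut(G)$. The complementary case $\mathcal{R}_\varphi=\overline{\mathcal{R}_\psi}$ is handled by the same argument applied to $\mathfrak{c}\circ\tau$, producing $\psi=\mathfrak{c}\circ\iota(\mathfrak{s}^{-1})\circ\varphi$.

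The main obstacle I expect is the bookkeeping in the backward direction: ensuring $\mathfrak{s}$ is actually well-defined and bijective (which uses distinctness of the $\kappa_G$'s) and then promoting it from a vertex bijection to a graph automorphism via adjacency preservation. A subsidiary but reassuring observation is that $\mathcal{R}_\varphi=\overline{\mathcal{R}_\psi}$ forces the elements on both sides to have equal size $\binom{n-1}{k-1}=\binom{n-1}{k}$, hence $k=n/2$; this is exactly the regime in which $\mathfrak{c}$ is an automorphism of $F_k(G)$ and the second clause of the equivalence definition is available, so no compatibility issue arises.
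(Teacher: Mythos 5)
Your proof is correct and follows essentially the same route as the paper: the forward direction is the same direct computation of $\psi^{-1}(\kappa_G(u,k))$, and the backward direction likewise extracts a vertex bijection from the matching of the two families and upgrades it to an automorphism of $G$. The only (harmless) difference is that where the paper appeals to condition $3)$ of the $k$-reconstruction-family definition to certify that the bijection is an automorphism, you verify it directly via the identity $\{A\}=\bigcap_{v\in A}\kappa_G(v,k)$ and adjacency preservation by $\tau=\psi\circ\varphi^{-1}$, which amounts to the same underlying fact.
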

\begin{proof}
 Suppose that  $(G,\varphi)$ and $(G,\psi)$ are equivalent $k$-token reconstructions of $F$. Then
 there exists an automorphism $\mathfrak{s}(\varphi,\psi)$ of $G$ such that
\[\psi=\iota(\mathfrak{s}(\varphi,\psi)) \circ \varphi \textrm{ \ or \ } \psi=\mathfrak{c} \circ \iota(\mathfrak{s}(\varphi,\psi)) \circ \varphi .\]
In the first case we have that
\begin{align*}
 R_\varphi & =\{\varphi^{-1}(\kappa_G(u,k):u \in V(G)\} \\
           & = \{\psi^{-1}\circ \iota(\mathfrak{s}(\varphi,\psi))^{-1}(\kappa_G(u,k)):u \in V(G)\} \\
           & = \{\psi^{-1}(\kappa_G(\mathfrak{s}(\varphi,\psi)^{-1}(u),k)):u \in V(G)\} \\
           & =\{\psi^{-1}(\kappa_G(u,k):u \in V(G)\} \\
           & \mathcal{R}_\psi.
\end{align*}
In the second case we have that
\begin{align*}
 R_\varphi & =\{\varphi^{-1}(\kappa_G(u,k):u \in V(G)\} \\
           & = \{\psi^{-1}\circ \mathfrak{c} \circ \iota(\mathfrak{s}(\varphi,\psi))^{-1}(\kappa_G(u,k)):u \in V(G)\} \\
           & = \{\mathfrak{c} \circ \psi^{-1}(\kappa_G(\mathfrak{s}(\varphi,\psi)^{-1}(u),k)):u \in V(G)\} \\
           & =\{V(F)\setminus \psi^{-1}(\kappa_G(u,k):u \in V(G)\} \\
           & \overline{\mathcal{R}}_\psi.
\end{align*}

Suppose that $\mathcal{R}_\varphi=\mathcal{R}_\psi$ or $\mathcal{R}_\varphi=\overline{\mathcal{R}}_\psi$.
We define an automorphism $f$ of $G$ as follows. Let $u \in V(G)$ and let $f(u)$ be the vertex of $G$
such that \[\varphi^{-1}(\kappa_G(u,k))=\psi^{-1}(\kappa_G(f(u),k)) \textrm{ or } \varphi^{-1}(\kappa_G(u,k))=V(F) \setminus \psi^{-1}(\kappa_G(f(u),k)). \]
Condition $3)$ in the definition of $k$-reconstruction family implies that $f$ is an automorphism of $G$.
We have that 
\[\psi=\iota(f) \circ \varphi \textrm{ \ or \ } \psi=\mathfrak{c} \circ \iota(f) \circ \varphi .\]
Thus, $(G,\varphi)$ and $(G,\psi)$ are equivalent $k$-reconstructions of $F$.

\end{proof}

\begin{prop}
 Let $\mathcal{R}$ be a $k$-reconstruction family of $F$. Then $F$ is uniquely reconstructible as the 
 $k$-token graph of $G_\mathcal{R}$ if and only if for every automorphism $\varphi$ of $F$ 
 we have that \[\mathcal{R}=\{\varphi(X):X \in \mathcal{R}\} \text{ or } \overline{\mathcal{R}}=\{\varphi(X):X \in \mathcal{R}\}.\]
\end{prop}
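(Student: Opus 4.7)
The plan is to use the preceding proposition, which characterises equivalence of $k$-token reconstructions in terms of the families $\mathcal{R}_\psi$, and translate the condition into an orbit condition on $\mathcal{R}$ under $\aut(F)$. To this end I first compute $\mathcal{R}_\psi$ for every $\psi \in \iso(F,F_k(G_\mathcal{R}))$ in a single uniform way.

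A direct calculation shows $\mathcal{R}_{S_\mathcal{R}}=\mathcal{R}$: for $X\in\mathcal{R}=V(G_\mathcal{R})$ one has $\kappa_{G_\mathcal{R}}(X,k)=\{S_\mathcal{R}(A):A\in X\}$, hence $S_\mathcal{R}^{-1}(\kappa_{G_\mathcal{R}}(X,k))=X$. Now any $\psi\in\iso(F,F_k(G_\mathcal{R}))$ factors as $\psi=S_\mathcal{R}\circ\varphi$ with $\varphi:=S_\mathcal{R}^{-1}\circ\psi\in\aut(F)$, and the calculation above yields
\[
\mathcal{R}_\psi=\{\psi^{-1}(\kappa_{G_\mathcal{R}}(X,k)):X\in\mathcal{R}\}=\{\varphi^{-1}(X):X\in\mathcal{R}\}=\varphi^{-1}(\mathcal{R}).
\]
Since $\varphi$ is a bijection on $V(F)$, one also has $\overline{\varphi^{-1}(\mathcal{R})}=\varphi^{-1}(\overline{\mathcal{R}})$. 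As $\varphi$ ranges over $\aut(F)$, the family $\mathcal{R}_\psi$ ranges precisely over the $\aut(F)$-orbit of $\mathcal{R}$ on the power set of the $\binom{n-1}{k-1}$-subsets of $V(F)$.

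By the previous proposition, $F$ is uniquely reconstructible as the $k$-token graph of $G_\mathcal{R}$ if and only if for every pair $\psi_1,\psi_2\in\iso(F,F_k(G_\mathcal{R}))$ we have $\mathcal{R}_{\psi_1}=\mathcal{R}_{\psi_2}$ or $\mathcal{R}_{\psi_1}=\overline{\mathcal{R}_{\psi_2}}$. Writing $\psi_i=S_\mathcal{R}\circ\varphi_i$, this becomes the statement that for all $\varphi_1,\varphi_2\in\aut(F)$,
\[
\varphi_1^{-1}(\mathcal{R})=\varphi_2^{-1}(\mathcal{R})\quad\text{or}\quad\varphi_1^{-1}(\mathcal{R})=\varphi_2^{-1}(\overline{\mathcal{R}}).
\]
Setting $\varphi_2=\mathrm{id}$ and letting $\varphi:=\varphi_1^{-1}$, the forward direction gives that for every $\varphi\in\aut(F)$, $\varphi(\mathcal{R})=\mathcal{R}$ or $\varphi(\mathcal{R})=\overline{\mathcal{R}}$. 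Conversely, if this orbit condition holds, then for arbitrary $\varphi_1,\varphi_2\in\aut(F)$ we apply it to $\varphi:=\varphi_2\varphi_1^{-1}\in\aut(F)$, obtaining $\varphi_2\varphi_1^{-1}(\mathcal{R})\in\{\mathcal{R},\overline{\mathcal{R}}\}$; applying $\varphi_2^{-1}$ and using $\varphi_2^{-1}(\overline{\mathcal{R}})=\overline{\varphi_2^{-1}(\mathcal{R})}$ recovers the required equivalence of reconstructions.

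There is no real obstacle here; the only thing to watch is the bookkeeping that identifies $\iso(F,F_k(G_\mathcal{R}))$ with $\aut(F)$ via composition with $S_\mathcal{R}$, and the interplay between taking complements in $V(F)$ and applying an automorphism of $F$. Once the formula $\mathcal{R}_\psi=\varphi^{-1}(\mathcal{R})$ is established, the equivalence reduces to a short group-theoretic manipulation combined with the previous proposition.
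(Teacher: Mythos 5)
Your proof is correct, and it takes a genuinely different route from the paper's. The paper conjugates the automorphism $\varphi$ of $F$ by $S_{\mathcal{R}}$ to obtain an automorphism $\varphi'$ of $F_k(G_{\mathcal{R}})$, computes $\kappa_{G_{\mathcal{R}}}(X,k)=S_{\mathcal{R}}(X)$ and $\overline{\kappa_{G_{\mathcal{R}}}}(X,k)=S_{\mathcal{R}}(V(F)\setminus X)$, and then invokes condition $4)$ of Theorem~\ref{thm:char} (with a pointer back to the argument in $4)\Rightarrow 1)$ to show that one alternative holds uniformly over all $X\in\mathcal{R}$) to obtain the orbit condition. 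You instead parametrize $\iso(F,F_k(G_{\mathcal{R}}))$ as $S_{\mathcal{R}}\circ\aut(F)$, establish the identity $\mathcal{R}_{S_{\mathcal{R}}\circ\varphi}=\varphi^{-1}(\mathcal{R})$, and reduce everything to the immediately preceding proposition (two reconstructions $(G_{\mathcal{R}},\psi_1)$ and $(G_{\mathcal{R}},\psi_2)$ are equivalent if and only if $\mathcal{R}_{\psi_1}=\mathcal{R}_{\psi_2}$ or $\mathcal{R}_{\psi_1}=\overline{\mathcal{R}}_{\psi_2}$) followed by a short group-theoretic manipulation. Both arguments rest on the same underlying computation of $\kappa_{G_{\mathcal{R}}}(X,k)$ in terms of $S_{\mathcal{R}}$, but yours stays entirely within the $k$-reconstruction-family machinery, avoids Theorem~\ref{thm:char} altogether, and makes the converse direction fully explicit where the paper's is quite terse. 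The one identity doing quiet work is $\overline{\varphi^{-1}(\mathcal{R})}=\varphi^{-1}(\overline{\mathcal{R}})$, i.e.\ that complementation in $V(F)$ commutes with a bijection of $V(F)$, and you state and use it correctly.
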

\begin{proof}
Let $X \in \mathcal{R}$. Note that
\begin{align*}
 \kappa_{G_{\mathcal{R}}}(X,k) & =\{W \subset \mathcal{R}: |W|=k \textrm{ and } X \in W\} \\
                               & = \{S_{\mathcal{R}}(A): A \in X\} \\
                               & = S_\mathcal{R}(X).
\end{align*}
If $k=n/2$, we also have that 
\begin{align*}
 \overline{\kappa_{G_{\mathcal{R}}}}(X,k) & =\{W \subset \mathcal{R}: |W|=k \textrm{ and } X \notin W\} \\
                               & = \{S_{\mathcal{R}}(A): A \notin X\} \\
                               & = S_\mathcal{R}(V(F)\setminus X).
\end{align*}
Let \[\varphi':=S_{\mathcal{R}} \circ \varphi \circ S_{\mathcal{R}}^{-1}.\] 
 Note that $\varphi'$  is an automorphism of $F_k(G_{\mathcal{R}})$. 
 
Suppose that $F$ is uniquely reconstructible as the $k$-token graph of $G_\mathcal{R}$. Thus, $F_k(G_{\mathcal{R}})$
is uniquely reconstructible as the $k$-token graph of $G_{\mathcal{R}}$. 
Let $X \in \mathcal{R}$.  By $4)$ of Theorem~\ref{thm:char} we have that 
$\varphi'(\kappa_{G_{\mathcal{R}}}(X,k))=\kappa_{G_{\mathcal{R}}}(Y,k)$
or $\varphi'(\kappa_{G_{\mathcal{R}}}(X,k))=\overline{\kappa_{G_{\mathcal{R}}}}(Y,k)$, for some $Y \in \mathcal{R}$. 
In the first case we have that \[\varphi(X)=Y;\]
In the second case case we have that \[\varphi(X)=V(F) \setminus Y.\]
As in the proof of  \noindent \emph{$4) \Rightarrow 1)$}, we have either the first case happens
for all $X \in \mathcal{R}$ or the second case happens for all $X \in \mathcal{R}$. 
Thus, 
\begin{equation}\label{eq:R}
\mathcal{R}=\{\varphi(X):X \in \mathcal{R}\} \text{ or } \overline{\mathcal{R}}=\{\varphi(X):X \in \mathcal{R}\}.
\end{equation}

Suppose that (\ref{eq:R}) holds. Then for all $X \in \mathcal{R}$ we have that 
\[\varphi'(\kappa_{G_{\mathcal{R}}}(X,k))=\kappa_{G_{\mathcal{R}}}(Y,k)
\textrm{ or } \varphi'(\kappa_{G_{\mathcal{R}}}(X,k))=\overline{\kappa_{G_{\mathcal{R}}}}(Y,k), \textrm{ for some } Y \in \mathcal{R}.\]
\end{proof}

\begin{prop}\label{prop:rec_iso}
Let $(G,\varphi)$ and $(H,\phi)$ be two $k$-token reconstructions of $F$. Then $G \simeq H$ if and only if
there exists an automorphism $\psi$ of $F$ such that
\[\mathcal{R}_\phi=\{\psi(X): X \in \mathcal{R}_\varphi\}.\]
\end{prop}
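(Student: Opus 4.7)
The plan is to prove both directions directly using the definition of the reconstruction family $\mathcal{R}_\varphi=\{\varphi^{-1}(\kappa_G(u,k)): u \in V(G)\}$ and the map $\iota$.

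For the forward direction, suppose $\eta \in \iso(G,H)$. The plan is to produce the desired automorphism of $F$ as
\[\psi := \phi^{-1}\circ \iota(\eta)\circ \varphi.\]
Since $\iota(\eta) \in \iso(F_k(G),F_k(H))$ and both $\varphi,\phi$ are isomorphisms with $F$, indeed $\psi \in \aut(F)$. For each $u\in V(G)$, the definition of $\iota$ gives $\iota(\eta)(\kappa_G(u,k)) = \kappa_H(\eta(u),k)$, hence
\[\psi\bigl(\varphi^{-1}(\kappa_G(u,k))\bigr) = \phi^{-1}(\kappa_H(\eta(u),k)).\]
As $u$ ranges over $V(G)$ and $\eta$ is a bijection, $\eta(u)$ ranges over $V(H)$, so $\{\psi(X):X\in\mathcal{R}_\varphi\}=\mathcal{R}_\phi$.

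For the reverse direction, first observe $|V(G)|=|V(H)|=:n$ since $\binom{|V(G)|}{k}=|V(F)|=\binom{|V(H)|}{k}$. For each $u\in V(G)$ set $X_u:=\varphi^{-1}(\kappa_G(u,k))\in\mathcal{R}_\varphi$. The hypothesis puts $\psi(X_u)\in\mathcal{R}_\phi$, so $\psi(X_u)=\phi^{-1}(\kappa_H(v,k))$ for some $v\in V(H)$; this $v$ is unique because $v$ is recovered from $\kappa_H(v,k)$ as the single common element of its members. Define $\eta(u):=v$. Since $\psi$ is a bijection and both families have size $n$, $\eta$ is a bijection from $V(G)$ to $V(H)$.

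The main obstacle is showing $\eta$ preserves adjacency, but this reduces cleanly to a characterization already established inside the proof of Theorem~\ref{thm:char}, namely that $uu'\in E(G)$ if and only if
\[E\bigl(\kappa_G(u,k)\setminus \kappa_G(u',k),\ \kappa_G(u',k)\setminus \kappa_G(u,k)\bigr)\neq \emptyset\]
in $F_k(G)$, and similarly for $H$. Transporting this condition by $\varphi^{-1}$ into $F$ (isomorphism, hence edge-preserving), then by $\psi$ (automorphism of $F$), then by $\phi$ (isomorphism into $F_k(H)$), and using $\psi(X_u)=Y_{\eta(u)}:=\phi^{-1}(\kappa_H(\eta(u),k))$, one obtains $uu'\in E(G)$ iff $\eta(u)\eta(u')\in E(H)$. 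Hence $\eta\in \iso(G,H)$ and $G\simeq H$.
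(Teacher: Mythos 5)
Your proof is correct and takes essentially the same route as the paper: the forward direction uses the same automorphism $\phi^{-1}\circ\iota(\eta)\circ\varphi$, and the reverse direction defines the vertex map the same way, with your edge-set characterization of adjacency just making explicit the step the paper compresses into an appeal to condition $3)$ of the definition of a $k$-reconstruction family.
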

\begin{proof}
 Suppose that $G \simeq H$. Let $f$ be an isomorphism from $G$ to $H$.
 Let \[\psi:=\phi^{-1}\circ \iota(f)\circ \varphi.\]
 Let $X \in \mathcal{R}_{\varphi}$. Let $x \in V(G)$ be such that $\varphi(X)=\kappa_G(x,k)$.
 We have that $\iota(f)\circ \varphi(X)=\kappa_H(f(x),k)$. Let $Y \in \mathcal{R}_\phi$ be such
 that $Y=\phi^{-1}(\kappa_H(f(x),k)$.  Thus, $Y=\psi(X)$, and 
 $\mathcal{R}_\phi=\{\psi(X): X \in \mathcal{R}_\varphi\}.$
 
 Suppose that there exists $\psi \aut(F)$ such that $\mathcal{R}_\phi=\{\psi(X): X \in \mathcal{R}_\varphi\}$.
 We define an isomorphism, $f$, from $G$ to $H$. Let $x \in V(G)$. Let $Y=\psi(\varphi^{-1}(\kappa_G(x,k))$.
 Let $f(x)$ be the vertex of $H$ such that $\phi^{-1}(\kappa_H(f(x),k))=Y$. Condition $3)$ in the definition
 of $k$-reconstruction family implies that $f$ is an isomorphism.
\end{proof}

We can use Proposition~\ref{prop:rec_iso}, to rephrase Conjecture~\ref{con:rec}:
\begin{con}
 Let $G$ be a graph. For every two $k$-token reconstruction families $\mathcal{R}$ and $\mathcal{R}'$
 of $F_k(G)$, there exists an automorphism $\psi$ of $F_k(G)$ such that 
 \[\mathcal{R}'=\{\psi(X): X \in \mathcal{R}_\varphi\}.\]
\end{con}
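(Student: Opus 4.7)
The plan is to reduce the conjecture to Theorem~\ref{thm:main} via Proposition~\ref{prop:rec_iso}. By Proposition~\ref{prop:rec_iso}, an automorphism $\psi$ of $F_k(G)$ sending $\mathcal{R}$ to $\mathcal{R}'$ exists precisely when $G_{\mathcal{R}}\simeq G_{\mathcal{R}'}$. Since the canonical family $\mathcal{R}_{G}:=\{\kappa_G(u,k):u\in V(G)\}$ is a $k$-reconstruction family with $G_{\mathcal{R}_G}\simeq G$, it suffices to show that $G_{\mathcal{R}}\simeq G$ for every $k$-reconstruction family $\mathcal{R}$ of $F_k(G)$. By construction $F_k(G_{\mathcal{R}})\simeq F_k(G)$ via $S_{\mathcal{R}}$, so the task reduces to showing that any graph $H$ with $F_k(H)\simeq F_k(G)$ satisfies $H\simeq G$.

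For this I would invoke Theorem~\ref{thm:main}. Under the paper's standing hypothesis that $G$ is connected and $(C_4,$diamond$)$-free, the theorem supplies a polynomial-time algorithm whose output, on any graph isomorphic to $F_k(G)$, is a graph isomorphic to $G$; in particular, the isomorphism class of the output is determined by the isomorphism class of the input. Feeding the algorithm the graph $F:=F_k(G)\simeq F_k(G_{\mathcal{R}})$, one output description says it is isomorphic to $G$, while if the hypothesis of Theorem~\ref{thm:main} also applies to $G_{\mathcal{R}}$, the other says it is isomorphic to $G_{\mathcal{R}}$. The two outputs must coincide up to isomorphism, yielding $G_{\mathcal{R}}\simeq G$, and therefore the conjecture.

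The main obstacle is justifying that Theorem~\ref{thm:main} applies to $G_{\mathcal{R}}$ as well, i.e., that $G_{\mathcal{R}}$ inherits connectedness and $(C_4,$diamond$)$-freeness from $G$. My proposed route is to show that both properties of $G$ are already witnessed by $F_k(G)$: connectedness of $G$ translates to connectedness of $F_k(G)$, and by the analysis of $4$-cycles in $F_k(G)$ carried out in Section~\ref{sec:4-cycles}, induced diamonds or $C_4$'s in $G$ leave detectable local imprints in $F_k(G)$. Since $F_k(G_{\mathcal{R}})\simeq F_k(G)$, the same local configurations are present in $F_k(G_{\mathcal{R}})$, and tracing them back through $S_{\mathcal{R}}$ forces $G_{\mathcal{R}}$ to be connected and $(C_4,$diamond$)$-free. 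Making this local-to-global transfer rigorous is the key technical step, and it is the kind of argument that Sections~\ref{sec:4-cycles}--\ref{sec:F_ureq} are already geared to produce.

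Beyond the $(C_4,$diamond$)$-free class no analogous rigidity is currently available, which is why the hard part of the conjecture lies outside the paper's setting: without an unconditional reconstruction theorem, the reduction of the first paragraph cannot be closed, and the statement becomes sensitive to exactly the same structural obstructions that make general graph reconstruction from token graphs difficult.
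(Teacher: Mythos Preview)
The statement you are addressing is not a theorem in the paper: it is explicitly presented as a \emph{conjecture}, introduced as a rephrasing of Conjecture~\ref{con:rec} via Proposition~\ref{prop:rec_iso}. The paper offers no proof of it, so there is no argument to compare your proposal against.

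Your proposal does not prove the conjecture either, and you essentially say so yourself in the last paragraph. The hypothesis is ``Let $G$ be a graph'', with no restriction to the connected $(C_4,\text{diamond})$-free class; your reduction via Proposition~\ref{prop:rec_iso} is correct and is exactly how the paper arrives at this reformulation, but after the reduction you are left with Conjecture~\ref{con:rec} in full generality, which is open. Invoking Theorem~\ref{thm:main} only handles the special case where $G$ is connected and $(C_4,\text{diamond})$-free. For that restricted case your outline is fine and the ``obstacle'' you flag is not one: Lemma~\ref{lem:no_4_cycles_G'} already shows that any $G'$ with $F_{k'}(G')\simeq F_k(G)$ is $(C_4,\text{diamond})$-free, and connectedness of $G_{\mathcal{R}}$ follows from the standard fact that $F_k(G)$ is connected if and only if $G$ is. But this settles only a special case the paper already covers (indeed more strongly, via Theorem~\ref{thm:main3}); it does not touch the general conjecture.
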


\begin{example}
Consider $C_4=:(1,2,3,4)$. Let 
\[ X_1:=\{\{2,3\},\{1,3\},\{1,4\}\}, \  X_2:=\{\{2,3\},\{1,2\},\{2,4\}\}, \]
\[ X_3:=\{\{1,2\},\{1,3\},\{2,4\}\},  \ X_4:=\{\{1,4\},\{2,4\},\{3,4\}\}. \]
$\mathcal{R}:=\{X_1,X_2,X_3,X_4\}$ is a $2$-token reconstruction family of $F_2(C_4)$.
\end{example}
We now consider the token graphs of stars; these graphs play a crucial role in our reconstruction algorithm.

\section{Token graphs of stars}\label{sec:stars}

For $n \ge 2$, we call the complete bipartite graph $K_{1,n}$ a \emph{star}.
Throughout this section let $n \ge 2$ and $k \le (n+1)/2$.
Let $\{x_0,x_1,\dots,x_n\}$ be the vertices of $K_{1,n}$ so that $x_0$ is the
 vertex of degree greater than one. 
$F_k(K_{1,n})$
 is a bipartite graph: one set in the partition corresponds to the token configurations without
 a token at $x_0$ and the other set corresponds to the token configurations with
 a token at $x_0$. Let $V_0$ and $V_1$ be these sets, respectively. Every vertex in $V_0$
 has degree equal to $k$ and every vertex in $V_1$ has degree equal to $n-k+1$. 
 Note that if $F$ is not bipartite, then it cannot be isomorphic to the token graph of a star.
Throughout this section assume that $F$ is bipartite with vertex bipartition $(W_0,W_1)$.
 In the following lemmas we show that it is possible to determine in polynomial time whether $F$
 is isomorphic  to the $k$-token graph of a star. If this is the case, then we can also compute
 an isomorphism in polynomial time.
 
\begin{lemma}\label{lem:unique_lm}
 Suppose that  $F$ is isomorphic to the token graph of a star. Then
 there exist unique positive integers  $n$ and
 $k \le (n+1)/2$, such that $F \simeq F_k(K_{1,n})$; these integers can be found in polynomial time.
\end{lemma}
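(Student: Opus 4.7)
The plan is to read $(n,k)$ directly off the degree sequence of $F$. Suppose $F\simeq F_k(K_{1,n})$ and let $x_0$ denote the center of the star. A token configuration $A$ that does not contain $x_0$ has exactly $k$ neighbours in $F_k(K_{1,n})$, one for each of the $k$ leaf-tokens that can be slid to $x_0$; a token configuration $A$ that does contain $x_0$ has exactly $n-(k-1)=n-k+1$ neighbours, one for each empty leaf to which the token at $x_0$ can be moved. Hence every vertex of $F$ has degree in $\{k,\, n-k+1\}$, and these are precisely the degrees attained by the two parts $V_0$ and $V_1$ of the bipartition.

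The hypothesis $k\le (n+1)/2$ is equivalent to $k\le n-k+1$, so $\delta(F)=k$ and $\Delta(F)=n-k+1$. Because the minimum and maximum degrees of $F$ are isomorphism invariants, the pair $(n,k)$ is forced by $F$: if also $F\simeq F_{k'}(K_{1,n'})$ with $k'\le (n'+1)/2$, then the same identities yield $k'=\delta(F)=k$ and $n'-k'+1=\Delta(F)=n-k+1$, hence $n'=n$. This settles the uniqueness claim.

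For the algorithmic part, I would compute $\delta(F)$ and $\Delta(F)$ by scanning $F$ once in time $O(|F|+||F||)$, and then return $k:=\delta(F)$ and $n:=\Delta(F)+k-1$. There is no real obstacle; the only situation worth flagging is the boundary case $k=(n+1)/2$ (possible only when $n$ is odd), in which $\delta(F)=\Delta(F)=k$, but the formula $n=\Delta(F)+k-1=2k-1$ still recovers the correct $n$. The whole lemma is essentially a one-shot degree-counting observation; the substantive work for recognising star token graphs is deferred to the subsequent lemmas in this section.
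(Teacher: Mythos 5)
Your argument is correct and is essentially the paper's own proof: both read $k$ and $n$ off the two degrees occurring in $F$ (the paper phrases this via the bipartition degrees $d_0\le d_1$, you via $\delta(F)$ and $\Delta(F)$, which amounts to the same computation, including the boundary case $d_0=d_1$, i.e.\ $k=(n+1)/2$). No gaps.
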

\begin{proof}
Note that every vertex in $W_0$ has the same degree $d_0$,
 and every vertex in $W_1$ has the same degree $d_1$.  
Without loss of generality assume that $d_0 \le d_1$.
 If $d_0<d_1$, an isomorphism from $F$ to  $F_k(K_{1,n})$ must map $W_0$ to $V_0$ and $W_1$ to $V_1$. 
 If $d_0=d_1$, an isomorphism from $F$ to  $F_k(K_{1,n})$ can map $W_0$ to $V_0$ or to $V_1$. In both cases $d_0=k$ and $d_1=n-k+1$.
 Therefore, $k$ and $n$ are uniquely determined, and computable in polynomial time.
\end{proof}

In view of Lemma~\ref{lem:unique_lm}, in what follows assume that $n$ and $k$ are such that every vertex in $W_0$ has degree $k$,
and every vertex in $W_1$ has degree $n-k+1$.

 \begin{lemma}\label{lem:force}
  Let
  \begin{itemize}
   \item $v^*$ be a vertex in $W_0$; 
   \item $w_1,\dots,w_{k}$ be the neighbors of $v^*$; 
   \item $v_{k+1},v_{k+2},\dots,v_{n}$ be the neighbors of $w_1$ distinct from $v^{*}$; and
   \item $f$ be any injective function that maps $\{v^*\}\cup \{w_1,\dots,w_{k}\} \cup \{v_{k+1},v_{k+2},\dots,v_{n}\}$ to the vertices of $F_k(K_{1,n})$
  such that
  \begin{itemize}
   \item  $f(\{w_1,\dots,w_{k}\}) = N(f(v^*))$ and
   \item $f(\{v_{k+1},v_{k+2},\dots,v_{n}\})=N(f(w_1)) \setminus \{f(v^*)\}.$
  \end{itemize}
  \end{itemize}
  If $F$ and $F_k(K_{1,n})$ are isomorphic, then in polynomial time we can extend $f$ to a unique isomorphism from 
  $F$ to $F_k(K_{1,n})$. Moreover, if $F$ and $F_k(K_{1,n})$ are not isomorphic then we can determine
  in polynomial time that such an extension does not exist.
 \end{lemma}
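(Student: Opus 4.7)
The plan is to extend $f$ via an iterative propagation procedure. By composing with a permutation of the leaves of $K_{1,n}$ (which is an automorphism of $K_{1,n}$ and thus induces an automorphism of $F_k(K_{1,n})$), I may assume $f$ takes the canonical values $f(v^*)=\{x_1,\dots,x_k\}$, $f(w_j)=\{x_0,x_1,\dots,x_k\}\setminus\{x_j\}$, and $f(v_{k+j})=\{x_2,\dots,x_k,x_{k+j}\}$. The crucial observation is that every vertex $x_0,x_1,\dots,x_n$ of $K_{1,n}$ appears in at least one of these $n+1$ initial labels, and each $x_i$ is distinguished from the others by its pattern of incidences among them; so the initial data already ``identifies'' every leaf of $K_{1,n}$.

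For each unlabeled $u\in V(F)$ I maintain a set $\mathcal{L}(u)$ of candidate values for $f(u)$, initially constrained by the bipartition (i.e., whether $x_0\in f(u)$, determined from $u\in W_0$ or $u\in W_1$). Two rules, derived from the adjacency in $F_k(K_{1,n})$, prune $\mathcal{L}(u)$: if $u'$ is a locked neighbor of $u$ then every $L\in\mathcal{L}(u)$ must satisfy $L\,\triangle\,f(u')=\{x_0,x_i\}$ for some leaf $x_i$; and by injectivity of $f$, any label already assigned is removed from $\mathcal{L}(u)$. In particular, two locked neighbors determine $f(u)$ uniquely, via $f(u)=\bigl(f(u'_1)\cap f(u'_2)\bigr)\cup\{x_0\}$ when $u\in W_1$, or $f(u)=\bigl(f(u'_1)\cup f(u'_2)\bigr)\setminus\{x_0\}$ when $u\in W_0$. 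The algorithm locks every $u$ whose $\mathcal{L}(u)$ becomes a singleton, updates the candidate sets of its neighbors, and iterates. If all $\mathcal{L}(u)$ eventually become singletons we do a final adjacency-preservation check and return the resulting $\tilde f$; if some $\mathcal{L}(u)$ becomes empty or the final check fails, we report that $F\not\simeq F_k(K_{1,n})$.

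The main obstacle is proving completeness: that the procedure never stalls before labeling every vertex. I plan to argue this by induction on BFS-distance from the initial labeled set, using that $F_k(K_{1,n})$ is connected and that the $n+1$ canonical initial labels distinguish every leaf; specifically, a vertex at distance $t$ from the initial set acquires by round $t{-}1$ either two locked neighbors (and is then forced by the intersection/union formula) or a unique candidate once enough labels elsewhere have been fixed by injectivity. Each round is implemented in $O(|V(F)|^2)$ time and there are $O(|V(F)|)$ rounds, so the overall running time is polynomial. Uniqueness of the extension is then immediate from the determinism of the forcing: any isomorphism extending $f$ lies in $\mathcal{L}(u)$ at every step, so once $\mathcal{L}(u)$ collapses to a singleton all extensions agree on $u$; equivalently, any automorphism of $F_k(K_{1,n})$ fixing the $n+1$ canonical initial labels permutes the leaves trivially (since each leaf is pinned down by its incidence pattern among these labels) and hence is the identity.
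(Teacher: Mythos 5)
There is a genuine gap in the completeness argument, and it occurs at the very first propagation step beyond the initial data. Consider a neighbor $u$ of $w_j$ with $j\neq 1$, $u\neq v^*$; in the canonical labeling $u$ should receive $\{x_1,\dots,x_k\}\setminus\{x_j\}\cup\{x_t\}$ for some $t\in\{k+1,\dots,n\}$. Among the $n+1$ initially locked vertices, $u$ is adjacent \emph{only} to $w_j$ (its symmetric difference with every other $w_l$ and with every $v_{k+s}$ has size at least $4$), so your first pruning rule leaves $\mathcal{L}(u)$ with all $n-k$ candidates $\{x_1,\dots,x_k\}\setminus\{x_j\}\cup\{x_t\}$, $t=k+1,\dots,n$ (after injectivity removes $f(v^*)$). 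None of these labels is ever assigned elsewhere, and all $n-k$ such neighbors of $w_j$ have \emph{identical} candidate sets and identical adjacency relations to every locked vertex; your two rules (adjacency to locked neighbors, plus injectivity) therefore cannot separate them, and the procedure stalls whenever $n-k\ge 2$. These vertices are in fact distinguished only by a \emph{distance-two} relation to the already-labeled $v_{k+1},\dots,v_n$: $u$ should get label $\ell(v^*)\ominus j\oplus t$ exactly when there is a path of length two from $u$ to $v_t$. The paper's proof introduces precisely this extra rule to label the neighbors of the $w_j$, $j\neq 1$, and only afterwards does the ``two already-labeled neighbors force the label'' mechanism (which you also use, and which is correct) carry the induction over the remaining distance classes. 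Your BFS induction as stated asserts that each new vertex acquires two locked neighbors or a unique candidate by injectivity, but this fails for this first layer, so the argument does not go through without the additional distance-two rule.

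A secondary issue: your closing remark that uniqueness follows because ``any automorphism of $F_k(K_{1,n})$ fixing the $n+1$ canonical initial labels permutes the leaves trivially'' presupposes that such automorphisms are induced by automorphisms of $K_{1,n}$, which is essentially the content of the later unique-reconstructibility result that depends on this lemma; the non-circular route is the one you also mention, namely that any extension of $f$ must survive every (correct) forcing step, so determinism of the forcing gives uniqueness once completeness is repaired.
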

 \begin{proof}
 
We provide an algorithm that attempts to extend $f$ to an isomorphism from $F$ to $F_k(K_{1,n})$.
The algorithm succeeds if and only if $F$ and $F_k(K_{1,n})$ are isomorphic. 
 Our algorithm
 proceeds by labeling the vertices  of $F$. Let $v$ be a vertex of $F$. 
  If $v$ is in $W_0$ then $v$ will be labeled with a string of integers $s_1s_2\cdots s_k$;
 this means that the isomorphism maps $v$ to the token configuration $\{x_{s_1},\dots, x_{s_k}\}$.
 If $v$ is in $W_1$ then $v$ will  be labeled with a string of integers $s_1 \cdot s_2\cdots s_{k-1}$;
 this means that our isomorphism maps $v$ to the token configuration $\{x_0,x_{s_1},\dots, x_{s_{k-1}}\}$.
 We denote with $\ell(v)$ the label assigned to  vertex $v$.
 Let $s$ be one of these labelings. For a given integer $j$, we denote with $s\ominus j$
 the label that results from $s$ by removing the appearance of $j$. Similarly, we denote
 with $s\oplus j$ the label that results from adding $j$ to $s$. 
 
 If necessary we relabel the vertices of $K_{1,n}$ so that $\ell(v^*)=1\cdot 2\cdots k$. 
 Note that the neighbors of $v^*$ receive a label of the form $\ell(v^{*})\ominus j$
 for some $1 \le j \le k$. We relabel the neighbors of $v$ so that 
 $\ell(w_j):=\ell(v^{*})\ominus j$. Note that the neighbors of $w_1$ distinct from
 $v^*$ receive a label of the form $\ell(v^*)\ominus 1 \oplus j$ for some 
 $ k+1 \le j \le n$. We relabel the neighbors of $w_1$ distinct from $v^*$ so that
 $\ell(v_j)=\ell(v^*)\ominus 1 \oplus j$. This first labeling can be made if $F$ and $F_k(K_{1,n})$
 are indeed isomorphic. In what follows, we show that this first labeling determines 
 the labels of the remaining vertices of $F$. 
 
 We now label the neighbors of each $w_j$ with $j \neq 1$.
 Note that the neighborhoods of distinct $w_j$ only intersect at $v^{\ast}$. Let $j \neq 1$.
 Let $u$ be an unlabeled
 neighbor of $w_j$. Note that $u$ should receive a label of the form
 $\ell(v^*)\ominus j \oplus t$ for some $k+1 \le t \le n$. Further note that $u$ should receive the label
 $\ell(v^*)\ominus j \oplus t$ if and only if there is a path of length two from $u$ to $v_t$. 
 This corresponds to the following token moves: starting from
 the token configuration assigned to $v_t$ move the token at $x_j$ to $x_0$; then
 move this token from $x_0$ to $x_1$ to arrive to the token configuration
 assigned to $u$. We label each such $u$ by checking the paths of length $2$ from $u$ to 
 $v_{k+1},v_{k+2},\dots,v_{n}$. In the process we check whether there are
 conflicting labelings for $u$, in which case $F$ and $F_k(K_{1,n})$ are
 not isomorphic.
 
 So far we have labeled  all the vertices in $W_1$ at distance one from $v^{*}$ and  all vertices in $W_0$ 
 at distance two from $v^{*}$. Let $d \ge 3$ be an odd integer. Suppose we have labeled all the vertices in $W_1$ at distance at most $d-2$ from $v^{*}$
 and all the vertices  in $W_0$ at distance at most $d-1$ from $v^{*}$. We now label
 the vertices in $W_1$ at distance $d$ from $v^{*}$ and the vertices in $W_0$ at
 distance $d+1$ from $v^{*}$. 
 
 Let $u$ be a vertex in $W_1$ at distance $d$ from $v^{*}$.
 Let $y_1$ and $y_2$ be two neighbors of $u$  at distance $d-1$
 from $v^{*}$.  Note that there exists two integers $t_1$
and $t_2$ such that $\ell(y_2)=\ell(y_1)\ominus t_1 \oplus t_2$; thus,
$u$ should be labeled with $s:=\ell(y_1)\ominus t_1=\ell(y_2) \ominus t_2$.
We label each such $u$ by checking all its pairs of neighbors at distance $d-1$ from $v^{*}$. 
In the process we check whether there are
conflicting labelings for $u$, in which case $F$ and $F_k(K_{1,n})$ are
not isomorphic.
 
Let now $u$ be a vertex in $W_0$ at distance $d+1$ from $v^{*}$. Let $y_1$
and $y_2$ be two neighbors of $u$  at distance $d$ from $v^{*}$. Note that there exists two integers $t_1$
and $t_2$ such that $\ell(y_2)=\ell(y_1)\ominus t_1 \oplus t_2$; thus,
$u$ should be labeled with $s:=\ell(y_1) \oplus t_2=\ell(y_2)\oplus t_1$.
We label each such $u$ by checking all its pairs of neighbors at distance $d$ from $v^{*}$. 
In the process we check whether there are
conflicting labelings for $u$, in which case $F$ and $F_k(K_{1,n})$ are
not isomorphic.
If the algorithm succeeds in labeling the vertices of $F$, then
$F$ and $F_k(K_{1,n})$ are isomorphic.
 \end{proof}

\begin{lemma}\label{lem:star}
We can determine in polynomial time whether $F$ and $F_k(K_{1,n})$ are isomorphic. Moreover, if
 $F \simeq F_k(K_{1,n})$, then we have the following.
 \begin{enumerate} 
  \item  We can find an isomorphism between  $F$ and $F_k(K_{1,n})$ in polynomial time;
  
  \item $F$ is  uniquely reconstructible as the $k$-token graph of $K_{1,n}$. 
 \end{enumerate}
\end{lemma}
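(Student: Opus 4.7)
The plan is to deduce both parts from Lemma~\ref{lem:force} together with the characterization of unique reconstructibility in Theorem~\ref{thm:char}. First I would apply Lemma~\ref{lem:unique_lm} to compute $n$ and $k$ in polynomial time. For the algorithmic part, I pick any vertex $v^*\in W_0$, list its $k$ neighbors in an arbitrary order as $w_1,\dots,w_k$, and list the $n-k$ remaining neighbors of $w_1$ in an arbitrary order as $v_{k+1},\dots,v_n$. I then commit to the concrete choice $f(v^*):=\{x_1,\dots,x_k\}$, $f(w_j):=\{x_0,x_1,\dots,x_k\}\setminus\{x_j\}$ for $1\le j\le k$, and $f(v_j):=\{x_2,\dots,x_k,x_j\}$ for $k+1\le j\le n$. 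This satisfies the hypotheses of Lemma~\ref{lem:force}, so running its algorithm either extends $f$ to an isomorphism $F\to F_k(K_{1,n})$ (establishing both the decision and the construction) or reports that no extension exists.

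The subtle point is that a single starting choice suffices, i.e.\ that failure really forces $F\not\simeq F_k(K_{1,n})$. This holds because $\iota(\aut(K_{1,n}))=\iota(S_n)$ acts transitively on admissible tuples of $F_k(K_{1,n})$ of the prescribed shape: given any hypothetical isomorphism $\varphi:F\to F_k(K_{1,n})$, we can post-compose with a suitable $\iota(\sigma)$ so that $\iota(\sigma)\circ\varphi$ agrees with our chosen $f$ on the tuple $(v^*,w_1,\dots,w_k,v_{k+1},\dots,v_n)$, and the uniqueness in Lemma~\ref{lem:force} then forces the algorithm to recover $\iota(\sigma)\circ\varphi$.

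For part~2 the plan is to verify condition~2) of Theorem~\ref{thm:char} by bounding $|\aut(F_k(K_{1,n}))|$. The uniqueness clause of Lemma~\ref{lem:force} implies that any element of $\aut(F_k(K_{1,n}))$ is determined by its action on a fixed tuple $(v^*,w_1,\dots,w_k,v_{k+1},\dots,v_n)$, so the order of the group is at most the number of admissible image tuples. When $k<(n+1)/2$, the bipartition classes $V_0,V_1$ have distinct degrees, forcing $f(v^*)\in V_0$; the count is $\binom{n}{k}\cdot k!\cdot(n-k)!=n!$. Since $\iota(\aut(K_{1,n}))$ already injects into $\aut(F_k(K_{1,n}))$ with image of order $n!$, equality holds and $\aut(F_k(K_{1,n}))\simeq\aut(K_{1,n})$. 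When $k=(n+1)/2$ the two classes have equal degrees and $f(v^*)$ may lie in either, doubling the count to $2n!$; combined with Proposition~\ref{prop:c_not_in_A(G)}, the disjoint union $\iota(\aut(K_{1,n}))\cup\mathfrak{c}\,\iota(\aut(K_{1,n}))$ already has size $2n!$, so equality again holds and $\aut(F_k(K_{1,n}))\simeq\aut(K_{1,n})\times\mathbb{Z}_2$. Theorem~\ref{thm:char} then yields unique reconstructibility of $F$ as the $k$-token graph of $K_{1,n}$.

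The main obstacle I anticipate is the boundary case $k=(n+1)/2$: the two bipartition classes become indistinguishable by degree, the complement automorphism $\mathfrak{c}$ contributes genuinely new elements of $\aut(F_k(K_{1,n}))$, and both the counting step and the application of Theorem~\ref{thm:char} must be split along this case. Everything else is routine bookkeeping layered on Lemma~\ref{lem:force} and Lemma~\ref{lem:unique_lm}.
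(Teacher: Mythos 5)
Your proposal is correct and follows essentially the same route as the paper: reduce both parts to Lemma~\ref{lem:force} (via Lemma~\ref{lem:unique_lm}), count the admissible starting tuples to get $|\aut(F_k(K_{1,n}))|=n!$ (or $2n!$ when $k=(n+1)/2$), and invoke condition 2) of Theorem~\ref{thm:char} together with the inclusions $\iota(\aut(K_{1,n}))\le\aut(F_k(K_{1,n}))$. The only detail to make explicit is that in the boundary case $k=(n+1)/2$ your transitivity step must allow post-composition with $\mathfrak{c}\circ\iota(\sigma)$ as well as $\iota(\sigma)$ (since an isomorphism may send $W_0$ to $V_1$), a point you already flag.
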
 
\begin{proof}
It only remains to show that if $F \simeq F_k(K_{1,n})$, then $F$ is  uniquely reconstructible as the $k$-token graph of $K_{1,n}$,. Suppose that $F \simeq F_k(K_{1,n})$.
Pick a vertex $v^{\ast} \in W_0$. Let $\{w_1,\dots,w_k\}$ be the neighbors
of $v^{\ast}$. Let $\{v_{k+1},v_{k+2},\dots,v_n\}$ be the neighbors of $w_1$
distinct from $v^{\ast}$.
Choose any injective function, $f$, that maps $\{v^*\}\cup \{w_1,\dots,w_{k}\} \cup \{v_{k+1},v_{k+2},\dots,v_{n}\}$ to the vertices of $F_k(K_{1,n})$
  such that
  \begin{itemize}
  \item $f(v^*) \in V_0$ if $k < (n+1)/2$;
   \item  $f(\{w_1,\dots,w_{k}\}) = N(f(v^*))$; and
   \item $f(\{v_{k+1},v_{k+2},\dots,v_{n}\})=N(f(w_1)) \setminus \{f(v^*)\}.$
  \end{itemize}
By Lemma~\ref{lem:force}, we can extend $f$ to an isomorphism $\psi$ from $F$ to $F_k(K_{1,k})$.
Iterating over all possible choices for $f$, we generate all isomorphisms, $\psi$,  from $F$ to $F_k(K_{1,n})$.
This allows us to compute the size of $\iso(F,F_k(K_{1,k}))$ by counting the number of possible
choices for $f$.
If  $k=(n+1)/2$  we have that $f(v^{\ast}) \in V_0$ or $f(v^{\ast}) \in V_1$.
Once this choice is made,
there are $\binom{n}{k}$ possible
choices for $f(v^*)$. Once the value of $f(v^{\ast})$ is fixed there are $k!$ possible choices for $\{f(w_1),\dots,f(w_k)\}$.
Once these values are fixed, there are $(n-k)!$  possible choices  for $\{f(v_{k+1}),f(v_{k+2}),\dots, f(v_n)\}$.
 We have that 
 \begin{equation*}
    |\iso(F,F_k(K_{1,n}))| = 
    \begin{cases}
     n! & \textrm{ if } k \neq (n+1)/2,\\
     2n! & \textrm{ if }  k = (n+1)/2.
    \end{cases}
 \end{equation*}
 Since $\operatorname{Aut}(K_{1,n})=S_n$ and $|\aut(F_k(K_{1,n})|=|\iso(F,F_k(K_{1,n}))|$, we have that by Theorem~\ref{thm:char}, $F$ is uniquely reconstructible
 as the $k$-token graph of $K_{1,n}$.
\end{proof}

\section{Induced $4$-cycles of $F_k(G)$ and Ladders}\label{sec:4-cycles}

In this section we study how induced $4$-cycles in $F_k(G)$ can be generated. 
In particular we show that if $G$ is a ($C_4$,diamond)-free graph then all induced
$4$-cycles  of $F_k(G)$ are generated by moving two tokens along two independent
edges of $G$. We use this characterization to define an equivalence relation on the 
edges of $F_k(G)$. This equivalence relationship is computable in polynomial time.

 \begin{figure}
 	\centering
 	\includegraphics[width=0.9\textwidth]{./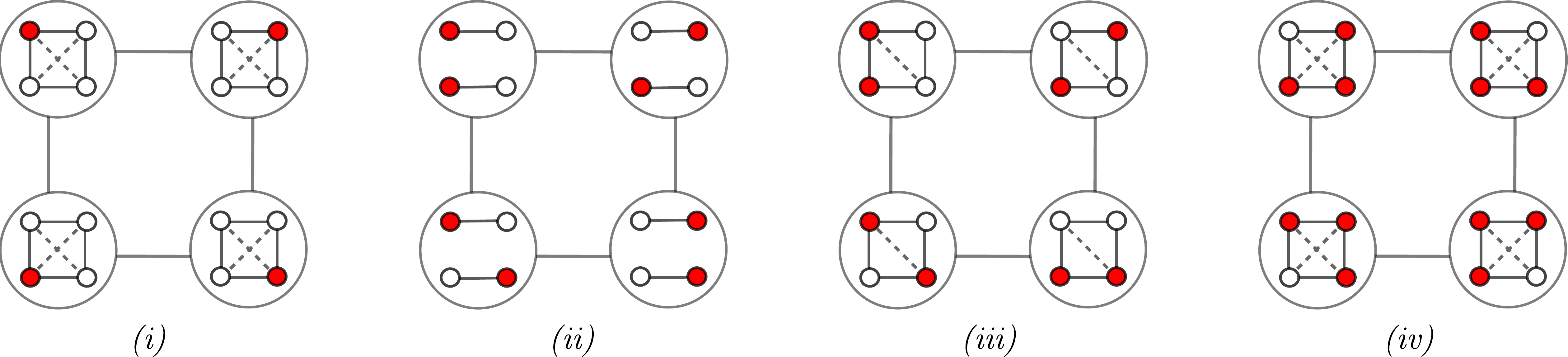}
 	\caption{These are the four ways to generate an induced $4$-cycle in $F_k(G)$; tokens that are not shown are assumed to remain fixed; and dashed lines are forbidden in $G$.}
 	\label{fig:ways}
 \end{figure}

\subsection{Induced $4$-cycles of $F_k(G)$}

\begin{prop}
\label{prop:4-cycles}
Every induced $4$-cycle of a $k$-token graph is generated in one of the four ways depicted in
Figure~\ref{fig:ways}.
\end{prop}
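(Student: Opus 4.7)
The plan is to track an induced $4$-cycle $A_1A_2A_3A_4$ of $F_k(G)$ through its associated edges $e_i := A_i\triangle A_{i+1}$ of $G$ (indices mod $4$). The key identity is the closure condition $A_5=A_1$, which gives $e_1\triangle e_2\triangle e_3\triangle e_4=\emptyset$ as multisets of vertices: in the multiset of $8$ endpoints of $e_1,\ldots,e_4$, every vertex of $G$ appears an even number of times.

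First I would count the distinct values among $e_1,\dots,e_4$. No two consecutive edges can coincide, since $e_i=e_{i+1}$ would force $A_{i-1}=A_{i+1}$; the parity condition then rules out having exactly one or three distinct values, and the non-consecutive coincidence $e_1=e_4$ with $e_2=e_3$ is ruled out for the same consecutive-equality reason after reindexing. So we are left with either $e_1=e_3$ and $e_2=e_4$ (two distinct edges used twice each) or $e_1,\dots,e_4$ all distinct.

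In the two-value case, a direct check of the chain $A_2=A_1\triangle e_1$, $A_3=A_2\triangle e_2$, $A_4=A_3\triangle e_1$ shows that $e_1$ and $e_2$ must be vertex-disjoint in $G$; this yields the ``two independent edges'' configuration in Figure~\ref{fig:ways}. In the four-distinct case, the endpoint-parity condition forces the four edges to sit on at most four vertices, each of degree $2$, so they form a $4$-cycle $u$-$v$-$w$-$x$-$u$ of $G$. I would then enumerate, up to symmetry, the cyclic orderings of these four edges along the $F_k(G)$-$4$-cycle: either a single token traverses the entire cycle, producing configurations $A\cup\{u\},A\cup\{v\},A\cup\{w\},A\cup\{x\}$, or two tokens rotate around the cycle in one of two ``orientations'', yielding configurations of the form $A\cup\{u,w\},A\cup\{v,w\},A\cup\{v,x\},A\cup\{w,x\}$ and its mirror image. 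Computing the diagonals $A_1\triangle A_3$ and $A_2\triangle A_4$ in each of these three subcases then reveals exactly which edges of $G$ must be absent (the dashed lines in Figure~\ref{fig:ways}): both diagonals of the $G$-$4$-cycle in the one-token case (so the $C_4$ is induced), and exactly one diagonal in each of the two-token cases.

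The main obstacle is the enumeration in the four-distinct-edge case: once the edges are shown to form a $4$-cycle of $G$, one must verify that every ordering of them along a $4$-cycle of $F_k(G)$ falls into one of the three depicted token patterns, and that each pattern is induced precisely under the stated forbidden-edge conditions. The two-distinct-edge case, by comparison, is a short direct calculation.
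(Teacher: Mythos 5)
Your reduction to the multiset identity $e_1\triangle e_2\triangle e_3\triangle e_4=\emptyset$ is a genuinely different (and cleaner) route than the paper's direct case analysis on coinciding endpoints, and the first two stages are sound: the multiplicity pattern must be $(2,2)$ alternating or four distinct edges; the alternating case forces two disjoint edges of $G$, which is configuration \textit{(ii)}; and in the four-distinct case the parity condition correctly forces the edges to form a $4$-cycle $u$-$v$-$w$-$x$-$u$ of $G$.

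The gap is in your final enumeration of realizations on that $G$-$4$-cycle. For the ordering in which consecutive moves of the token $4$-cycle use consecutive edges of the $G$-cycle, there are \emph{two} realizations, determined by which endpoint of $e_1$ lies in $A_1$: one token traversing the cycle (your first pattern, the paper's \textit{(i)}), but also three tokens each shifting one step, namely
\[
\bigl(S\cup\{v,w,x\},\ S\cup\{u,w,x\},\ S\cup\{u,v,x\},\ S\cup\{u,v,w\}\bigr),
\]
which is an induced $4$-cycle (e.g.\ all of $F_3(C_4)$ when $S=\emptyset$) and is exactly the paper's configuration \textit{(iv)}. Your dichotomy ``one token traverses the cycle, or two tokens rotate in one of two orientations'' omits this three-token pattern entirely, while your two ``orientations'' of the two-token pattern both fall under the single depicted configuration \textit{(iii)}: they use the same four edges in different cyclic orders and are carried into one another by a symmetry of the $G$-cycle, and both force exactly one diagonal to be absent. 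So as written your enumeration neither covers all induced $4$-cycles of $F_k(G)$ nor matches the four ways of Figure~\ref{fig:ways}. The repair is routine: for each cyclic ordering of the four edges, track the two admissible choices of which endpoint of $e_1$ carries a token; this yields precisely one token (both diagonals forbidden), three tokens (both diagonals forbidden), and two tokens (one diagonal forbidden), i.e.\ \textit{(i)}, \textit{(iv)} and \textit{(iii)}.
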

\begin{proof}

Let $G$ be a graph.  Let $\mathcal{C}:=(A,B,C,D)$ be an induced $4$-cycle of $F_k(G)$. 
Let 
\begin{itemize}
\item $A \triangle B:=\{a_1,b_1\}$ with $a_1 \in A, b_1 \in B$;
\item $B \triangle C:=\{b_2,c_1\}$ with $b_2 \in B, c_1 \in C$; and
\item $C \triangle D:=\{c_2,d_1\}$ with $c_2 \in C, d_1 \in D$.
\end{itemize}

We proceed by case analysis.
\begin{itemize}
\item Suppose that $\{a_1,b_1\} \cap \{b_2,c_1\}=\emptyset$.
    This implies that $A \triangle C=\{a_1,b_2,b_1,c_1\}$.
    There are three possible values for  $C \triangle D$.
    $C \triangle D=\{a_1,b_1\}$; $C \triangle D=\{a_1,c_1\}$; or $C\triangle D=\{b_1,b_2\}$.
    \begin{itemize}
        \item If $C \triangle D=\{a_1,b_1\}$ then $A \triangle D=\{b_2,c_1\}$ and  $\mathcal{C}$ is generated as in  \textit{(ii)}  of Figure~\ref{fig:ways}.
        \item If $C \triangle D=\{a_1,c_1\}$ then $A \triangle D=\{b_1,b_2\}$ and $\mathcal{C}$ is generated as in \textit{(iii)}  of Figure~\ref{fig:ways}.
        \item If $C\triangle D=\{b_1,b_2\}$ then $A\triangle D=\{c_1,a_1\}$ and $\mathcal{C}$ is generated as in \textit{(iii)} of Figure~\ref{fig:ways}.
    \end{itemize}

\item Suppose that $\{a_1,b_1\} \cap \{b_2,c_1\}\neq \emptyset$. Thus, $b_1=b_2$ or $a_1=c_1$.
    \begin{itemize}
        \item Suppose that $b_1=b_2$. 
        \begin{itemize}
        \item Suppose that $c_1= c_2$. Thus, $d_1 \neq a_1$ and $A \triangle D= \{a_1, d_1,\}$.
        This implies that $\mathcal{C}$ is generated as in \textit{(i)}  of Figure~\ref{fig:ways}.
        \item Suppose that $c_1 \neq c_2$. If $d_1 \neq a_1$ then $A$ and $D$ are not adjacent
        since $A \triangle D=\{a_1,c_1,c_2,d_1\}$ in this case. Therefore, $d_1=a_1$. This implies
        that $D \triangle A=\{c_2,d_1\}$ and $\mathcal{C}$ is generated as in \textit{(iii)}  of Figure~\ref{fig:ways}.
       \end{itemize} 
        
        \item Suppose that $a_1=c_1$.
            \begin{itemize}
                \item Suppose $c_2=a_1$. Then $d_1 \neq b_2$ as otherwise $D=B$. But then $A$ is not adjacent to $D$.
                
                \item Suppose that $c_2=b_1$. Then $A \triangle D=\{d_1,b_2\}$ and $\mathcal{C}$ is generated as in \textit{(iii)}  of Figure~\ref{fig:ways}.
                
                \item Suppose that $c_2 \notin\{a_1,b_1\}$. Then $d_1=b_2$ as otherwise $D$ would not be adjacent to $A$. 
                Therefore,  $A \triangle D=\{b_1,c_2\}$ and  $\mathcal{C}$ is generated as in \textit{(iv)}  of Figure~\ref{fig:ways}.
            \end{itemize}
        
    \end{itemize}
\end{itemize}
\end{proof}

It may be the case that $F$ can be reconstructed (even uniquely) as the token graph
of two non-isomorphic graphs. The following lemma shows that if one of them
is a ($C_4$,diamond)-free graph, then the other graph is also a ($C_4$,diamond)-free graph.
\begin{lemma}\label{lem:no_4_cycles_G'}
Let $G$ be a ($C_4$,diamond)-free graph. If $(G', \varphi)$ is any
$k'$-reconstruction of $F_k(G)$ then $G'$ is also a ($C_4$,diamond)-free graph.
\end{lemma}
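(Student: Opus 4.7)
The plan is a contradiction argument using, as invariant, the pair of common-neighbour counts $(|N(A)\cap N(C)|,\,|N(B)\cap N(D)|)$ at the two opposite-vertex pairs of an induced $4$-cycle $(A,B,C,D)$. I will show this pair is \emph{balanced} (equal entries) for every induced $4$-cycle of $F_k(G)$, but that an induced $C_4$ or diamond in $G'$ forces an induced $4$-cycle in $F_{k'}(G')$ with \emph{unbalanced} counts. Since $\varphi$ preserves induced subgraphs and common-neighbour counts, this is the required contradiction.

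For the balanced direction, Proposition~\ref{prop:4-cycles} together with the hypothesis on $G$ (which eliminates cases (i), (iii), (iv), each of which embeds an induced $C_4$ or diamond into $G$) forces every induced $4$-cycle of $F_k(G)$ to be of case (ii): there exist $X=\{x_1,x_2,x_3,x_4\}\subseteq V(G)$, a set $S\subseteq V(G)\setminus X$ of size $k-2$, and a partition $P_3=\{\{x_1,x_4\},\{x_2,x_3\}\}$ of $X$ into two edges of $G$ such that the cycle equals $(A,B,F,C)=(\{x_1,x_2\}\cup S,\,\{x_1,x_3\}\cup S,\,\{x_3,x_4\}\cup S,\,\{x_2,x_4\}\cup S)$. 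Writing $P_1=\{\{x_1,x_2\},\{x_3,x_4\}\}$ and $P_2=\{\{x_1,x_3\},\{x_2,x_4\}\}$ for the other partitions of $X$ and $[P_\ell]\in\{0,1\}$ for the indicator that both pairs of $P_\ell$ are edges of $G$, a routine symmetric-difference computation (each common neighbour of two opposite vertices corresponds to a partition of $X$ into two disjoint edges, each pair crossing the reference vertex) yields $|N(A)\cap N(F)|=2[P_2]+2[P_3]$ and $|N(B)\cap N(C)|=2[P_1]+2[P_3]$. Since $[P_3]=1$, balance reduces to $[P_1]=[P_2]$; if instead $[P_1]=1$ and $[P_2]=0$ (the other case is symmetric), then the four pairs of $P_1\cup P_3$ are all edges while at least one pair of $P_2$ is not, forcing $X$ to induce either a $C_4$ (both $P_2$ pairs non-edges) or a diamond (exactly one $P_2$ pair an edge)---contradicting the hypothesis.

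For the unbalanced direction, suppose $G'$ contains an induced $C_4$ or diamond on $X=\{x_1,x_2,x_3,x_4\}$. After replacing $k'$ by $|G'|-k'$ via $F_{k'}(G')\simeq F_{|G'|-k'}(G')$ if necessary, pick $S\subseteq V(G')\setminus X$ of size $k'-2$ and form the analogous quadruple $(A,B,F,C)$ above. Its four consecutive symmetric differences are the two $P_3$-edges and its two opposite-pair symmetric differences both equal $X$ (of size $4$, hence non-edges), so it is an induced $4$-cycle of $F_{k'}(G')$. In the $C_4$ case (edges $x_1x_2,x_2x_3,x_3x_4,x_4x_1$) one has $([P_1],[P_2],[P_3])=(1,0,1)$, giving counts $(2,4)$; in the diamond case (missing edge $x_1x_2$), $([P_1],[P_2],[P_3])=(0,1,1)$, giving $(4,2)$. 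Both are unbalanced, contradicting the balance property and completing the proof. The main obstacle will be carefully verifying the symmetric-difference counting formula---in particular, confirming that every common neighbour lies in the ``affine hull'' of $X$ above $S$ and that the crossing condition correctly selects the contributing partitions.
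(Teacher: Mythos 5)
Your argument is correct, and it takes a genuinely different route from the paper's. The paper also reduces to the type-(ii) classification of Proposition~\ref{prop:4-cycles}, but its invariant is qualitative: property (P1), stating that any vertex adjacent to two opposite vertices of an induced $4$-cycle of $F_k(G)$ must be adjacent to all four; a $C_4$ or diamond in $G'$ then yields a fifth configuration $X'$ adjacent to exactly two opposite vertices of a suitable $4$-cycle in $F_{k'}(G')$, violating (P1). Your invariant is quantitative: the equality $|N(A)\cap N(C)|=|N(B)\cap N(D)|$ across the two diagonals. The key computation you flag --- that every common neighbour of two configurations $A,F$ with $|A\triangle F|=4$ sits over the same $S$ and corresponds to one of the two crossing partitions of $A\triangle F$, each contributing $0$ or $2$ --- does check out: $A\triangle F\subseteq(A\triangle D)\cup(D\triangle F)$ forces $\{a,d\}$ and $\{f,d'\}$ to partition $X$ with each pair crossing, and the four resulting candidates are pairwise distinct. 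This gives $|N(A)\cap N(F)|=2[P_2]+2[P_3]$ and $|N(B)\cap N(C)|=2[P_1]+2[P_3]$ in \emph{any} token graph, so balance reduces to $[P_1]=[P_2]$, which is exactly the ($C_4$,diamond)-free condition on the quadruple; your $(2,4)$ versus $(4,2)$ computation for the forbidden configurations in $G'$ is also correct. What your approach buys is a cleaner, purely arithmetic violation that avoids the paper's case analysis of which token the extra vertex $X$ must have moved; what it costs is the common-neighbour counting lemma, which, however, is a two-line consequence of the triangle inequality for symmetric differences. One shared caveat: both your construction and the paper's require $2\le k'\le|G'|-2$ in order to choose $S$ of size $k'-2$ inside $V(G')\setminus X$; your complementation remark does not rescue $k'\in\{1,|G'|-1\}$ (indeed $F_1(F_2(P_4))\simeq F_2(P_4)$ shows the statement needs that restriction), but since the paper's own proof silently makes the same assumption, this is not a defect of your argument relative to theirs.
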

\begin{proof}
	We start by showing the following property of $F_k(G)$: 
	\begin{itemize}
		\item[(P1)] for each induced $4$-cycle $ABCD$ of $F_k(G)$ and each vertex $X\in F_k(G)-\{A,B,C,D\}$, 
		if $X$ is adjacent to two non-consecutive vertices of the cycle $ABCD$, then it is adjacent
		to the vertices $A,B,C$ and $D$. 
	\end{itemize}
	
	Let $A,B,C,D$ and $X$ as in (P1). Suppose that $X$ is adjacent to $A$ and $C$. 
	Since $G$ is ($C_4$,diamond)-free, the  $4$-cycle $ABCD$ must be  generated as in (ii) of Figure 2:
	by moving two tokens on two disjoint edges $(a_1,b_1)$ and $(a_2,b_2)$ of $G$,
	while the other $k-2$ tokens remain fixed on a subset $S$ of $G-\{a_1,a_2,b_1,b_2\}$. 
	Without loss of generality assume that 
	\begin{equation*}
		A=S\cup \{a_1,a_2\},\qquad 
		B=S\cup \{b_1,a_2\},\qquad 
		C=S\cup \{b_1,b_2\},\qquad 
		D=S\cup \{a_1,b_2\}.
	\end{equation*}
	Consider now the vertex $X$.
	Let us note that $X$ must be obtained from $C$ by moving a token at one of $\{b_1,b_2\}$ to a vertex in $\{a_1,b_1\}$, 
	as otherwise we would have $|X\triangle A|>2$, and so $X$ and $A$ cannot be adjacent, a contradiction. 
	Clearly, $X$ cannot be obtained from $C$ by moving the token at $b_2$ to $a_2$, as otherwise $X=B$, 
	a contradiction. Similarly,  $X$ cannot be obtained from $C$ by moving the token at $b_1$ to $a_1$.
	Thus, either $X$ is obtained from $C$ by moving the token at $b_1$ to $a_2$, or by moving the token at $b_2$ to $a_1$,
	but these two cases are analogous. Without loss of generality let us assume that $X$ is obtained from $C$ by moving the 
	token at $b_1$ to $a_2$, and so, $b_1$ is adjacent to $a_2$. 
	Since $X$ is adjacent to $A$, it follows that $a_1$ is adjacent to $b_2$, and since $G$ is a ($C_4$,diamond)-free graph, 
	the vertex set $\{a_1,a_2,b_1,b_2\}$ must induce a complete graph in $G$. This fact implies that $X$ is also adjacent to $B$ and $D$, and so (P1) holds.

    Suppose that $G'$ is not a ($C_4$,diamond)-free graph. Let $uvwz$ be a $4$-cycle in $G'$, with at most one chord,
	let us assume that $v$ and $z$ are not adjacent. Let $S'\subseteq G'-\{u,v,w,z\}$ with $|S'|=k-2$, 
	and consider the vertices 
	\[A'=S'\cup \{u,v\},\qquad  B'=S'\cup \{u,w\},\qquad  C'=S'\cup \{u,z\}, \qquad  D'=S'\cup \{v,z\}\qquad \text{and }\qquad X'=S'\cup \{z,w\}.\]
	Note that $A'B'C'D'$ induces a $4$-cycle in $F_{k'}(G')$ and the vertex $X'$ is adjacent to $B'$ and $D'$, 
	however, $X'$ cannot be adjacent to $A'$, and so (P1) does not hold for $F_{k'}(G')$---a contradiction.
	Thus, $G'$ is a ($C_4$,diamond)-free graph. 
\end{proof}

\subsection{Ladders, Cartesian Products and Line Graphs}

A \emph{ladder} is a graph isomorphic to the Cartesian product
of $K_2$ and a path $P_m$ of length $m \ge 1$. Let $x$ and $y$ be the two vertices
of $K_2$; let $v_1,\dots,v_{m+1}$ be the vertices of $P_m$. For $m \ge 2$, we call the edges
 $(x,v_i)(y,v_i)$ the \emph{rungs} of the ladder. 
In the case of $K_2 \square P_1$ the rungs may be either one
of the two pairs of disjoint edges. Two edges $e$ and $f$ in $F$
are said to be \emph{connected by a ladder}  if there exists an induced subgraph of $F$
isomorphic to a ladder, such that $e$ and $f$ are rungs of this ladder. 
Being connected by a ladder is an equivalence relation on 
the edges of $F$. We refer to its equivalence classes as \emph{ladders classes}.
We denote the ladder class of $e$ with $R[e]$.
The ladder classes of $F$ are easily computed in polynomial time as follows.
Construct a graph $F'$ whose vertices are the edges of $F$; two of which
are adjacent if they are disjoint edges of an induced $4$-cycle of $F$.
The ladder classes of $F$ correspond to the components of $F'$.
In the case when $F$ is the $k$-token graph of a ($C_4$,diamond)-free graph, we have the following.

\begin{prop}\label{prop:ladder}
 Let $G$ be a ($C_4$,diamond)-free graph and let $AB, A'B'$ be two
 edges of $F_k(G)$ in the same ladder class. Then \[A \triangle B = A' \triangle B';\]
 that is, $AB$ and $A'B'$ correspond to moving a token along the same edge of $G$.
\end{prop}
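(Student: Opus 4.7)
The plan is to reduce the statement to the one-step generator of the ladder equivalence and then show that this generator preserves the symmetric difference. From the paragraph preceding the proposition, $R[e]$ is the transitive closure of the relation ``$e$ and $f$ are disjoint edges of a common induced $4$-cycle of $F_k(G)$'' (since in a $K_2\square P_1$ the two rungs are exactly the two disjoint edges of an induced $4$-cycle, and the rungs of any longer ladder decompose into consecutive pairs of this form). Equality of symmetric differences is itself an equivalence relation on $E(F_k(G))$, so it suffices to verify the proposition when $AB$ and $A'B'$ are opposite edges of a single induced $4$-cycle $\mathcal{C}$ of $F_k(G)$.

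Now I would invoke Proposition~\ref{prop:4-cycles}: $\mathcal{C}$ is generated in one of the four ways of Figure~\ref{fig:ways}. The first substantive step is to rule out types (i), (iii) and (iv) under the hypothesis that $G$ is ($C_4$,diamond)-free. Inspecting the case analysis in the proof of Proposition~\ref{prop:4-cycles}, each of these three generators forces the (at most) four vertices of $G$ that carry the moving tokens to admit certain edges (from the four adjacencies in $\mathcal{C}$) and to avoid others (from the two non-adjacencies that make $\mathcal{C}$ induced). In each case the resulting pattern contains an induced $C_4$ or an induced diamond of $G$, contradicting the hypothesis. This is exactly the observation used in the proof of Lemma~\ref{lem:no_4_cycles_G'}.

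Thus $\mathcal{C}$ is of type (ii): there exist $S\subseteq V(G)$ with $|S|=k-2$ and two vertex-disjoint edges $a_1b_1$ and $a_2b_2$ of $G$ (both disjoint from $S$) such that, up to relabeling,
\[
\mathcal{C}=\bigl(\,S\cup\{a_1,a_2\},\ S\cup\{b_1,a_2\},\ S\cup\{b_1,b_2\},\ S\cup\{a_1,b_2\}\,\bigr).
\]
A direct computation then gives, for the two pairs of opposite edges,
\[
A\triangle B=C\triangle D=\{a_1,b_1\}, \qquad B\triangle C=D\triangle A=\{a_2,b_2\}.
\]
Hence any two opposite (equivalently, disjoint) edges of $\mathcal{C}$ correspond to moving a token along the same edge of $G$, which is the desired one-step statement; the general claim follows by transitivity along a chain of ladders connecting $AB$ to $A'B'$.

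The main obstacle is the case analysis that eliminates types (i), (iii) and (iv); this is the only place where the ($C_4$,diamond)-freeness of $G$ is used, and it is what distinguishes this proposition from the analogous (false) statement for arbitrary $G$. Once that is in hand, the verification in type (ii) and the propagation along $R[e]$ are purely formal.
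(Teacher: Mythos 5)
Your proposal is correct and follows essentially the same route as the paper: the paper also reduces to consecutive rungs of a ladder (via induction on the ladder's length rather than via the transitive-closure description of $R[e]$), invokes the fact that ($C_4$,diamond)-freeness forces every induced $4$-cycle of $F_k(G)$ to be of type \textit{(ii)} in Figure~\ref{fig:ways}, and concludes from the equality of symmetric differences across opposite edges of such a $4$-cycle. The elimination of types \textit{(i)}, \textit{(iii)} and \textit{(iv)}, which you flag as the main obstacle, is asserted at the same level of detail in the paper (it is encoded in the dashed forbidden edges of Figure~\ref{fig:ways}), so no gap relative to the paper's own argument.
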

\begin{proof}
Let $H \simeq K_2 \square P_m$ be a ladder of $F_k(G)$  such that
 $AB$ is the first rung of $H$ and $A'B'$ is the last rung of $H$. 
 Since $G$ is a ($C_4$,diamond)-free graph, every induced $4$-cycle of $F_k(G)$ is
generated as in \textit{(ii)}  of Figure~\ref{fig:ways}. If $m=1$ then the result follows from this observation.
Suppose that $m >1$ and that the result holds for smaller values of $m$. 
Let $A''B''$ be the rung of $H$ previous to $A'B'$. By induction $A \triangle B = A'' \triangle B''$
 and by the previous argument $A'' \triangle B'' = A'\triangle B'$; the result follows.
\end{proof}

Although Proposition~\ref{prop:ladder} implies that every edge in a given ladder class of $F_k(G)$ corresponds to 
moving a token along the same edge of $G$, two edges in different ladder classes
may correspond to moving a token along the same edge $ab$ of $G$. The next lemma shows this
does not happen when $G\setminus \{a,b\}$ is connected. 
\begin{lemma} \label{lem:ladder_char} 
Let $G$ be a ($C_4$,diamond)-free graph. Let $e:=ab$ be an edge of $G$ such that 
$G\setminus\{a,b\}$ is connected. Then the set of edges of $F_k(G)$ that correspond
to moving a token along $e$ form a ladder class.
\end{lemma}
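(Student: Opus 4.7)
The plan is to establish both inclusions. For the easy direction, Proposition~\ref{prop:ladder} already guarantees that every edge ladder-equivalent to an $e$-move is itself an $e$-move, so it suffices to prove the converse: any two $e$-move edges of $F_k(G)$ lie in a common ladder class. Every such edge has the form $(S\cup\{a\},\,S\cup\{b\})$ for some $(k-1)$-subset $S$ of $V(G)\setminus\{a,b\}$. I would first dispense with the trivial cases $k=1$ and $k=n-1$, in which only one such $S$ is possible and the conclusion is vacuous, and then focus on $2\le k\le n-2$.

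The central step is a local "one-swap" lemma. Suppose $S$ and $S'=(S\setminus\{u\})\cup\{v\}$ are $(k-1)$-subsets of $V(G)\setminus\{a,b\}$ differing along an edge $uv$ of $G\setminus\{a,b\}$. Setting
\[A_1:=S\cup\{a\},\quad B_1:=S\cup\{b\},\quad A_2:=S'\cup\{a\},\quad B_2:=S'\cup\{b\},\]
I would verify by direct computation of symmetric differences that $A_1B_1$, $B_1B_2$, $B_2A_2$, $A_2A_1$ are all edges (with symmetric differences $\{a,b\}$ or $\{u,v\}$), while $A_1\triangle B_2=A_2\triangle B_1=\{a,b,u,v\}$ has size four, so the diagonals are non-edges. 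Hence $A_1B_1B_2A_2$ is an induced $4$-cycle, which is a ladder $K_2\square P_1$ whose two rungs can be taken to be the pair of disjoint edges $A_1B_1$ and $A_2B_2$. Therefore these two $e$-move edges belong to the same ladder class.

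To finish, I would chain single swaps. The adjacency relation between $(k-1)$-subsets of $V(G)\setminus\{a,b\}$ used above is exactly the edge relation of $F_{k-1}(G\setminus\{a,b\})$. Since $G\setminus\{a,b\}$ is connected by hypothesis and $1\le k-1\le (n-2)-1$, the standard fact that token graphs of connected graphs are connected yields a path in $F_{k-1}(G\setminus\{a,b\})$ from any given $S$ to any given $S'$. Applying the one-swap step along each consecutive pair of the path and using transitivity of the ladder-equivalence relation shows that $(S\cup\{a\},S\cup\{b\})$ and $(S'\cup\{a\},S'\cup\{b\})$ lie in a common ladder class, as required.

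I do not expect a serious obstacle. The only subtle point is ensuring that the $4$-cycles used are induced, which is immediate from the symmetric-difference calculation and does not require the $(C_4,\text{diamond})$-free hypothesis; that hypothesis is only invoked through the use of Proposition~\ref{prop:ladder} for the reverse inclusion. The essential role of the assumption that $G\setminus\{a,b\}$ is connected is also clear from the plan: if it had several components, the $(k-1)$-subsets meeting different components could not be linked by one-swap steps and the $e$-moves would split into multiple ladder classes.
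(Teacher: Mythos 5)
Your proposal is correct and follows essentially the same route as the paper: strip $\{a,b\}$ from the two token configurations, use the connectivity of $F_{k-1}(G\setminus\{a,b\})$ to obtain a path between the resulting $(k-1)$-sets, and lift that path to a ladder in $F_k(G)$ joining the two $e$-move edges. Your explicit one-swap induced-$4$-cycle verification and the appeal to Proposition~\ref{prop:ladder} for the reverse inclusion are just slightly more detailed renderings of steps the paper leaves implicit.
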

\begin{proof}
Let $A_1B_1$ and $A_2B_2$ edges of $F_k(G)$ such that
$A_1 \triangle B_1 =A_2 \triangle B_2 =\{a,b\}.$ Without loss
of generality assume that $a \in A_1$, $a \in A_2$, $b \in B_1$
and $b \in B_2$.
Let $A_1':=A_1 \setminus \{a,b\}$,  $B_1':=B_1 \setminus \{a,b\}$,
$A_2':= A_2 \setminus \{a,b\}$ and $B_2':=B_2 \setminus \{a,b\}$.
Note that $A_1',B_1',A_2',B_2'$ are vertices of $F_{k-1}(G\setminus \{a,b\})$.
Since $G'$ is connected then so is $F_{k-1}(G\setminus \{a,b\})$~\cite{token_graph}.
Let $(A_1'=:C_1,C_2,\dots,C_m:=A_2')$ be a path from $A_1'$ to $A_2'$ in $F_{k-1}(G\setminus \{a,b\})$.
The set of vertices 
\[\{C_i\cup\{a\}:1 \le i \le m\} \cup \{C_i\cup\{b\}:1 \le i \le m\}\]
induces a ladder that connects $A_1B_1$ to $A_2B_2$ in $F_k(G)$. 
\end{proof}

Note that if $G$ is  a $3$-connected ($C_4$,diamond)-free graph, then the edges of $G$ and the ladder classes
of $F_k(G)$ are in a one to one correspondence. By Proposition~\ref{prop:4-cycles}, the edges
corresponding to two ladder classes $R_1$ and $R_2$ are incident to a same vertex if and only if no
edge of $R_1$ is contained in an induced $4$-cycle of $F_k(G)$ simultaneously with 
an edge of $R_2$. In particular this implies that if $G$ is $3$ connected, then we can recover the line graph $L(G)$ of $G$ from the ladder classes of $F$ in polynomial time. We have the following corollary.
\begin{cor}\label{cor:line}
 Let $G$ be a $3$-connected ($C_4$,diamond)-free graph; let $F$ be a graph isomorphic to $F_k(G)$.
 Given only $F$ and the information that $G$ is $3$-connected, we can compute in polynomial time a graph $H$ isomorphic  to $G$.
\end{cor}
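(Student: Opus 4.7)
The plan is to carry out precisely the two-stage reduction that the paragraph preceding the statement foreshadows: first extract $L(G)$ from $F$ using only the ladder-class structure, then feed $L(G)$ into the Roussopoulos/Lehot algorithm to recover a copy of $G$.

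First I would compute the ladder classes of $F$. By the discussion in Section~\ref{sec:4-cycles}, this can be done in polynomial time by building the auxiliary graph whose vertices are the edges of $F$ and whose edges are the pairs of disjoint edges that lie together on some induced $4$-cycle of $F$; the components of this auxiliary graph are exactly the ladder classes. Because $G$ is $3$-connected, for every edge $ab\in E(G)$ the subgraph $G\setminus\{a,b\}$ is connected, so Lemma~\ref{lem:ladder_char} applies to every edge of $G$ and gives a bijection
\[\Phi:E(G)\longrightarrow\{\text{ladder classes of }F\},\qquad ab\mapsto R[ab],\]
sending each edge $ab$ to the unique ladder class consisting of those edges of $F_k(G)\cong F$ whose symmetric difference is $\{a,b\}$.

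Next I would use the characterization of induced $4$-cycles of $F_k(G)$ to read off incidences between edges of $G$. Since $G$ is $(C_4,\mathrm{diamond})$-free, Proposition~\ref{prop:4-cycles} forces every induced $4$-cycle of $F_k(G)$ to come from type \emph{(ii)} of Figure~\ref{fig:ways}, i.e.\ from moving two tokens along two \emph{vertex-disjoint} edges of $G$. Consequently, two distinct ladder classes $R_1=\Phi(e_1)$ and $R_2=\Phi(e_2)$ contain edges of a common induced $4$-cycle of $F$ if and only if $e_1$ and $e_2$ are vertex-disjoint edges of $G$. Negating, $e_1$ and $e_2$ share an endpoint in $G$ if and only if no edge of $R_1$ lies together with an edge of $R_2$ on an induced $4$-cycle of $F$. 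This criterion is polynomial-time testable directly on $F$, so I can build the graph $L$ whose vertex set is the set of ladder classes of $F$ and whose adjacency is given by this criterion. By construction $L\simeq L(G)$.

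Finally I would apply the Roussopoulos~\cite{line_rec_1} and Lehot~\cite{line_rec_2} $O(|L|+\|L\|)$-time line-graph recognition algorithm to $L$; since $G$ is $3$-connected we have $|G|\geq 4$, which avoids the Whitney exceptional cases ($K_3$ and $K_{1,3}$), and the algorithm outputs a graph $H$ with $L(H)\simeq L\simeq L(G)$, whence $H\simeq G$ by Whitney's theorem~\cite{whitney}. The only step that required any thought is the equivalence between incidence in $G$ and absence of common induced $4$-cycles; everything else is a direct invocation of earlier results, so no genuine obstacle arises once the $3$-connectivity hypothesis has been used to guarantee both the bijectivity of $\Phi$ (via Lemma~\ref{lem:ladder_char}) and the applicability of the line-graph reconstruction algorithm.
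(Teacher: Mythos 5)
Your proposal is correct and follows exactly the paper's intended argument: the $3$-connectivity hypothesis makes Lemma~\ref{lem:ladder_char} applicable to every edge of $G$, giving the bijection between edges of $G$ and ladder classes of $F$; Proposition~\ref{prop:4-cycles} (all induced $4$-cycles arise from disjoint edges) yields the incidence criterion recovering $L(G)$; and the Roussopoulos--Lehot algorithm together with Whitney's theorem finishes the reconstruction. This is precisely the chain of reasoning in the paragraph preceding the corollary, so there is nothing to add.
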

\qed

In the Section~\ref{sec:alg} we give an algorithm that given $F \simeq F_k(G)$, in polynomial time finds a graph isomorphic to $G$.
A key step in our algorithm is to find a large composite graph in $F$. The following lemma 
characterizes how  certain large composite graphs are generated in $F_k(G)$. 

\begin{theorem} \label{thm:prod}
Let $G$ be a connected ($C_4$,diamond)-free graph. Let $H$ be a subgraph of $F_k(G)$, such that $H$ is
maximal with the property of being isomorphic to a graph $H'=H_1'\square \cdots \square H_r'$,
 where each $H_i'$ is connected and with at least  two vertices. 
 Then there exists a partition $V_1,\dots, V_r$ of $V(G)$, and positive integers $k_1,\dots,k_r$ with $k=k_1+\cdots+k_r$,
 such that the following holds. $H$ is generated by moving $k_i$ tokens on $G_i:=G[V_i]$ and each $H_i'$ is isomorphic to 
 $F_{k_i}(G_i)$
\end{theorem}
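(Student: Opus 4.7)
The plan is to extract the partition $V_1,\ldots,V_r$ from the Cartesian decomposition of $H$ by analyzing its 4-cycles, and then to invoke maximality. Throughout I rely on Proposition~\ref{prop:4-cycles} together with the $(C_4,\mathrm{diamond})$-freeness of $G$, which forces every induced $4$-cycle of $F_k(G)$ to be of type \textit{(ii)}---two tokens on disjoint $G$-edges. By uniqueness of the prime Cartesian decomposition of a connected graph, each edge of $H$ carries a well-defined direction in $\{1,\ldots,r\}$, and every vertex $A\in V(H)$ has at least one direction-$i$ edge for each $i$. For any $i\neq j$ the Cartesian structure at $A$ yields a 4-cycle $ABCD$ in $H$, hence in $F_k(G)$, with $AB$ of direction $i$ and $AD$ of direction $j$. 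A case analysis on potential chords shows: if only the diagonal $AC$ is a chord, the $G$-edges $A\triangle B,A\triangle D,B\triangle C,C\triangle D$ together with $A\triangle C$ span four distinct vertices of $G$ carrying at least five edges among them---inducing a diamond, contradiction---and if both $AC$ and $BD$ are chords, a parity argument on the sizes of $A,B,C,D$ rules out all four being simultaneously $k$-subsets. Hence the 4-cycle is induced, and Proposition~\ref{prop:4-cycles} forces it to be of type \textit{(ii)}: $A\triangle B$ and $A\triangle D$ are vertex-disjoint, and opposite edges share their $G$-edge.

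Because opposite edges of each such Cartesian 4-cycle share their $G$-edge, iterating over the other coordinates of $H$ (varying one non-$i$ factor coordinate at a time) shows that every edge in the fiber of a fixed $e'\in E(H_i')$ corresponds to a single $\phi_i(e')\in E(G)$; equivalently, these fiber edges all lie in a common ladder class of $F_k(G)$, so Proposition~\ref{prop:ladder} applies. Setting $V_i:=\bigcup_{e'\in E(H_i')}\phi_i(e')$, the previous paragraph applied at any $A\in V(H)$ gives $V_i\cap V_j=\emptyset$ for $i\neq j$. Each edge of $H$ toggles exactly two elements of a single $V_\ell$, so $|A\cap V_i|=:k_i$ is constant on $V(H)$, with $k_i\geq 1$ since $H_i'$ has at least two vertices. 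Writing $R:=V(G)\setminus\bigcup V_i$, every $u\in R$ has constant membership across $V(H)$, partitioning $R$ into $R_+$ (always present) and $R_-$ (always absent).

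If some $u\in R_+$ has a neighbor $v\notin R_+$ in $G$, absorbing $u$ into the factor containing $v$---replacing $H_\ell'=F_{k_\ell}(G_\ell)$ by $F_{k_\ell+1}(G[V_\ell\cup\{u\}])$ when $v\in V_\ell$, or adjoining a $K_2$ factor when $v\in R_-$---yields a Cartesian product subgraph of $F_k(G)$ strictly containing $H$; disjointness of the $V_i$ ensures the new direction commutes with all existing ones. Maximality forbids this, so every neighbor of $R_+$ lies in $R_+$; since $G$ is connected and $|V(H)|\geq 2$ we obtain $R_+=\emptyset$, and symmetrically $R_-=\emptyset$. Hence $\bigcup V_i=V(G)$ and $k=\sum_ik_i$. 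The map $\Phi(A):=(A\cap V_1,\ldots,A\cap V_r)$ is an injective graph homomorphism from $V(H)$ into $V(F_{k_1}(G_1)\square\cdots\square F_{k_r}(G_r))$, and maximality once more forces $\Phi$ to be bijective and edge-preserving---any missing vertex or edge would allow a proper Cartesian extension of $H$ inside $F_k(G)$---which identifies $H_i'\cong F_{k_i}(G_i)$.

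The hard part will be the chord analysis: one must carefully enumerate how the four $G$-edges of the Cartesian 4-cycle and the proposed chord can share vertices, and handle the degenerate subcase where both diagonals are chords via parity rather than a forbidden subgraph of $G$. A secondary obstacle is verifying that the absorption constructions really yield Cartesian product subgraphs of $F_k(G)$; this uses the disjointness of the $V_i$ and the structure of the enlarged factor $F_{k_\ell+1}(G[V_\ell\cup\{u\}])$.
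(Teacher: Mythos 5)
Your overall architecture is the paper's: classify the Cartesian $4$-cycles of $H$ via Proposition~\ref{prop:4-cycles}, propagate the underlying $G$-edge along the fibers of each factor (the paper packages this step as a ladder and invokes Proposition~\ref{prop:ladder}), define $V_i$ as the union of the $G$-edges realized by direction-$i$ moves, obtain disjointness of the $V_i$ by bringing a direction-$i$ and a direction-$j$ edge to a common vertex of the product, and finish with maximality; your absorption of $R_+\cup R_-$ is the paper's ``$H$ is a subgraph of $H''$, so maximality forces equality'' in different words. The genuine gap is exactly where you flag it: the chord analysis, and neither of your two sub-arguments works as stated. If $AC$ is a chord, then $ABC$ is a triangle of $F_k(G)$, and triangles of token graphs are generated by moving tokens within a single triangle of $G$ (the fact cited in Section~\ref{sec:alg}); hence $A\triangle B$, $B\triangle C$ and $A\triangle C$ are the three edges of one triangle of $G$ and span only \emph{three} vertices of $G$, so your ``four distinct vertices carrying at least five edges'' premise fails and no diamond is forced. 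And ``a parity argument on the sizes of $A,B,C,D$'' cannot rule anything out, since every vertex of $F_k(G)$ is a $k$-set by definition.

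Worse, the statement you are aiming for is false as a purely local fact about $F_k(G)$: if $G$ contains a $K_4$ on $\{a,b,c,d\}$ --- which ($C_4$,diamond)-freeness permits, since $K_4$ has no \emph{induced} diamond --- then $S\cup\{a\}$, $S\cup\{c\}$, $S\cup\{b\}$, $S\cup\{d\}$ is a $4$-cycle of $F_k(G)$ with both diagonals present, and its opposite edges move tokens along \emph{different} edges of $G$. So the inducedness of your Cartesian squares cannot be established by inspecting one $4$-cycle and the forbidden subgraphs of $G$; it must be extracted from the product structure of $H$ and the maximality hypothesis. (For what it is worth, the paper applies Propositions~\ref{prop:4-cycles} and~\ref{prop:ladder} to squares and ladders that are only known to be induced in $H$, not in $F_k(G)$, so you have put your finger on the one genuinely delicate point of the argument; you just have not closed it, and the two devices you propose for closing it do not work.)
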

\begin{proof}
 Let $f$ be an isomorphism from $H'$ to $H$.
 Fix an index $1 \le i \le r$.
 Let $u_1u_2$ and $v_1v_2$ be two edges of $H'$ such that 
 \[u_1(i)=x=v_1(i)\textrm{ and } u_2(i)=y=v_2(i)\]
 for some pair of adjacent vertices $x,y$ in $H_i'$. 
 We first show that
 
 \MyQuote{$f(u_1)f(u_2)$ and $f(v_1)f(v_2)$ are generated by moving  a token along the same
 edge of $G$.}
 Let $(u_1=w_1,\dots,w_m=v_1)$ be a shortest path from $u_1$ to $v_1$ in $H'$ such that 
 for all  $1 \le j \le m$, we have that
 \[w_j(i)=x.\]
 Let $(u_2=w_1',\dots,w_m'=v_2)$ be the path in $H'$ such that for all $1 \le j \le m$ and all $1 \le l \le r$, we have that
 \begin{equation*}
    w_j'(l) :=
    \begin{cases}
     y & \textrm{ if } l=i,\\
     w_j(l)  & \textrm{ if }  l \neq i.
    \end{cases}
 \end{equation*}
 Note that the set of vertices \[\{f(w_j): 1 \le j \le m\} \cup \{f(w_j'): 1 \le j \le m\}\] induces
 a ladder in $H$. 
 By  Proposition~\ref{prop:ladder}, $f(u_1)f(u_2)$ and $f(v_1)f(v_2)$ are generated
 by moving a token along the same edge of $G$. This proves $(\ast)$.

 We now define the sets $V_i$'s. Fix a vertex $v^\ast \in H'$.  
 Let $H_i$ be subgraph of $H$ induced by the set of vertices
 \[\{f(u): u \in V(H') \textrm{ and } u(j)=v^{*}(j) \textrm{ for all } j \neq i \}.\]
 Clearly, $H_i \simeq H_i'$.
 Let \[V_i:=\{x \in V(G): \textrm{ there exist } A,B \in V(H_i) \textrm{ such that } x \in A \textrm{ and } x \notin B\}.\]
 By $(\ast)$ and the fact that $H_i$ is connected we have that $V_i$ does not depend on the choice of $v^*$.

We show that the $V_i$ are pairwise disjoint. Suppose that for some distinct 
$V_i$ and $V_j$ there exists a vertex $x \in V_i \cap V_j$. 
Since $H_i$ is connected, there exist  adjacent vertices $A_1$ and $B_1$ of $H_i$, such that $x \in A_1$ and $x \notin B_1$; let $y_1$
be the vertex of $V_i$ such that $B_1$ is obtained from $A_1$ by moving the token
from $x$ to $y_1$. 
Since $H_j$ is connected there exists  adjacent vertices $A_2$ and $B_2$ of $H_j$ such that $x \in A_2$ and $x \notin B_2$; let $y_2$
be the vertex of $V_j$ such that $B_2$ is obtained from $A_2$ by moving the token
from $x$ to $y_2$.
Note that $f^{-1}(A_1)f^{-1}(B_1)$ is an edge of $H'$ and $f^{-1}(A_1)(i)f^{-1}(B_1)(i)$ is an edge of $H_i'$.
Similarly, $f^{-1}(A_2)f^{-1}(B_2)$ is an edge of $H'$ and $f^{-1}(A_2)(j)f^{-1}(B_2)(j)$ is an edge of $H_j'$.
Let $w_1,w_2,w_3,w_4$ be vertices of $H'$ defined as follows. For all $1 \le l \le r$ and $l \neq i,j$ we have
\[w_1(l)=w_2(l)=w_3(l)=w_4(l)=v^*(l).\] For $i$, we have 
 \[w_1(i)=f^{-1}(A_1)(i),  w_2(i)=f^{-1}(A_1)(i), w_3(i)=f^{-1}(B_1)(i) \textrm{ and }  w_4(i)=f^{-1}(B_1)(i).\]
 For $j$, we have 
 \[w_1(j)=f^{-1}(A_2)(j),  w_2(j)=f^{-1}(B_2)(j), w_3(j)=f^{-1}(B_2)(j) \textrm{ and }   w_4(j)=f^{-1}(A_2)(j).\]
 Note that $(w_1,w_2,w_3,w_4)$ is an induced
$4$-cycle of $H'$. By Proposition~\ref{prop:4-cycles}, $f(w_1)f(w_2)$ and $f(w_1)f(w_4)$ are generated
each by moving a token along disjoint edges of $G$. However, by $(\ast)$ these edges are $xy_1$ and $xy_2$, respectively---a contradiction.

Let $A$ be a vertex of $H_i$, we define $k_i:=|A \cap V_i|$. Let $B$ a vertex of $H_i$ distinct from $A$. 
Let $(A=:A_1,A_2,\dots,A_m:=B)$ be a path
from $A$ to $B$ in $H_i$. Note that for every $1 \le l < m$, $A_l \triangle A_{l+1} \subset V_i$.
Therefore, $|A \cap V_i|= |B \cap V_i|$. Thus,  $k_i$ does not depend
on our choice of $A$.
For every $1 \le i \le r$, let $G_i$ be the subgraph of $G$ induced by $V_i$. Let $H''$
be the subgraph of $F$ generated by moving $k_i$ tokens on each $G_i$. Note that
$H''\simeq F_{k_1}(G_1) \square \cdots \square F_{k_r}(G_r)$. 
Since $V_i$ does not depend on the choice of $v^*$, we have that $H$ is a subgraph of $H''$.
The maximality of $H$ implies that $H''=H$, $H_i \simeq F_{k_i}(G_i)$, $k=k_1+\cdots+k_r$ and that $V(G)=V_1\cup \cdots \cup V_r$.
\end{proof}
We have the following corollary to Theorem~\ref{thm:prod}.
\begin{cor}\label{cor:prime_connected}
 If $G$ is a connected ($C_4$,diamond)-free graph, then $F_k(G)$ is a prime graph.
\end{cor}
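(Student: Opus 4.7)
The plan is to derive the corollary by contradiction, leveraging Theorem~\ref{thm:prod} applied to $H = F_k(G)$ itself. Suppose $F_k(G)$ is composite, so we can write $F_k(G) \simeq H_1' \square \cdots \square H_r'$ with $r \ge 2$ and each $H_i'$ a nontrivial connected graph. Since $H := F_k(G)$ is the whole graph, it is trivially maximal among its own subgraphs with this Cartesian product structure, so the hypotheses of Theorem~\ref{thm:prod} apply.

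Invoking Theorem~\ref{thm:prod} yields a partition $V_1, \dots, V_r$ of $V(G)$ and positive integers $k_1, \dots, k_r$ with $k_1 + \cdots + k_r = k$, such that every edge of $F_k(G)$ is generated by moving a token within some single $G_i := G[V_i]$, and $H_i' \simeq F_{k_i}(G_i)$. In particular, since each $H_i'$ is nontrivial, we have $1 \le k_i \le |V_i|-1$ for each $i$. The crucial point I will exploit is that, under this structure, \emph{no} edge of $F_k(G)$ can come from moving a token across the partition.

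Because $G$ is connected and $r \ge 2$, there must be an edge $xy$ of $G$ with $x \in V_i$ and $y \in V_j$ for some $i \neq j$. The next step is to construct a $k$-subset $A \subset V(G)$ that contains $x$, avoids $y$, and has exactly $k_\ell$ elements in each $V_\ell$; this is possible because in $V_i$ we need $k_i - 1 \le |V_i| - 1$ further elements besides $x$, in $V_j$ we need $k_j \le |V_j| - 1$ elements from $V_j \setminus \{y\}$, and in every other $V_\ell$ we simply need $k_\ell \le |V_\ell|$ elements. Then $B := (A \setminus \{x\}) \cup \{y\}$ is adjacent to $A$ in $F_k(G)$, but $AB$ is not a token move within any single $G_\ell$, contradicting the conclusion of Theorem~\ref{thm:prod}.

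The bulk of the argument is already packaged inside Theorem~\ref{thm:prod}, so no serious obstacle is expected here; the only care needed is verifying that the required $k$-subset $A$ exists, which follows from the bounds $1 \le k_i \le |V_i| - 1$ guaranteed by the nontriviality of the factors $H_i'$.
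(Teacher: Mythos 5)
Your proof is correct and is essentially the argument the paper intends (the corollary is stated with no written proof precisely because it follows this way from Theorem~\ref{thm:prod}): take the hypothetical factorization, note $F_k(G)$ is itself maximal, obtain the partition $V_1,\dots,V_r$ with token moves confined to each $G[V_i]$, and contradict this using a cross edge guaranteed by connectivity of $G$. The only point worth making explicit is that the factors $H_i'$ are connected because $F_k(G)$ is connected (a Cartesian product is connected iff all factors are), which is needed to invoke Theorem~\ref{thm:prod}; your existence argument for the configuration $A$ via $1\le k_i\le |V_i|-1$ is sound.
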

\qed
\section{Reconstructing $G$}\label{sec:alg}

Throughout this section let:
\begin{itemize}
 \item  $G$ be a connected ($C_4$,diamond)-free graph;
 \item $F$ be  a graph isomorphic to $F_k(G)$, with $1 < k < |G|-1$; and 
 \item $\varphi$ be a fixed isomorphism from $F$ to $F_k(G)$.
\end{itemize}
In this section we present a polynomial time algorithm that given only $F$ 
constructs a graph isomorphic to $G$. Note that we are not given $n$,$k$, $\varphi$, $F_k(G)$ nor $G$.
In particular, we use $\varphi$ only as a tool to help us reason about $F$.

Our general strategy is as follows. We run an algorithm, called \textsc{ProductSubgraph}, on $F$.
The first step of \textsc{ProductSubgraph} is to find a vertex $A$ of $F$ with the following property.
The number of independent edges of $G$ incident to exactly one vertex of $\varphi(A)$ is 
maximum. Let $r$ be this number of independent edges of $G$.
Afterwards, \textsc{ProductSubgraph} finds a subgraph $H$ of $F$, that is maximal with the property 
of being isomorphic to a Cartesian product $H_1 \square \cdots \square H_r$ of connected
graphs $H_i$, each with at least two vertices. \textsc{ProductSubgraph} also finds these $H_i$.  By Theorem~\ref{thm:prod}, we know that there exist
induced disjoint subgraphs $G_1,\dots, G_r$ of $G$, and integers $k_1,\dots,k_r$  that sum up to $k$,
such  that $V(G)=\bigcup_{i=1}^r V(G_i)$ and $H_i \simeq F_{k_i}(G_i)$.
The structure of the $H_i$ is such that we can construct in polynomial time a graph isomorphic to each $G_i$. Finally, we reconstruct 
the adjacencies between the $G_i$'s.

The information stored in the ladder equivalence relations of the edges of $F$ allows us
to locally reconstruct small parts of $G$. Let $A$ be a vertex of $F_k(G)$;
let \[E_A:=\{A\triangle B: B \in N(A)\}.\] Thus, $E_A$ is the set of  edges 
of $G$ with exactly one vertex of $A$ as one of their endpoints.
Let $G_A$ be the subgraph of $G$ whose vertices are the endpoints
of the edges in $E_A$, and its edge set is $E_A$.

Let $AB$ and $AC$ be two edges of $F_k(G)$;
let  $e_1$ and $e_2$ be the edges of $G$ such that $AB$ and $AC$ correspond to moving a token along
$e_1$ and $e_2$, respectively.
Since $G$ is ($C_4$,diamond)-free and by Proposition~\ref{prop:4-cycles}, we have that $AB$ and $AC$ are in a common
induced $4$-cycle of $F_k(G)$ if and only if $e_1$ and $e_2$ are disjoint. 
By checking whether each pair of edges incident to $A$ are contained in a $4$-cycle (in $F_k(G)$) 
we can reconstruct the incidence relations in $E_A$.
Thus, given a vertex $B$ of $F$ we can construct, in polynomial time, a graph isomorphic to the 
line graph of $G_{\varphi(B)}$.
As mentioned above, for graphs with more than three vertices there is a polynomial
time algorithm that can reconstruct a graph from its line graph.~\cite{line_rec_1,line_rec_2}.
Since triangles in $F_k(G)$ are generated by moving one or two
tokens in a triangle of $G$~\cite{token_graph}, we have the following result.

 \begin{lemma}\label{lem:J_A}
   Given only $F$ we can construct in polynomial time a set of graphs
  \[\{J_A : A  \in V(F)\},\] where each $J_A$ is isomorphic to $G_{\varphi(A)}$.
 \end{lemma}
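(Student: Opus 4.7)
The plan is to construct, for each $A \in V(F)$, a graph $J_A \simeq G_{\varphi(A)}$ in two stages: first extract an auxiliary graph $L_A \simeq L(G_{\varphi(A)})$ from the induced $4$-cycle structure of $F$ local to $A$, and then invoke the polynomial-time line-graph reconstruction algorithm of Roussopoulos~\cite{line_rec_1} and Lehot~\cite{line_rec_2} to recover $G_{\varphi(A)}$ from $L_A$.

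For the first stage, I observe that the edges of $F$ incident to $A$ are in bijection with the elements of $E_{\varphi(A)}$ via the map $AB \mapsto \varphi(A)\triangle\varphi(B)$. I define $L_A$ to be the graph on vertex set $\{AB : B \in N_F(A)\}$ in which $AB$ and $AC$ are adjacent iff they lie in \emph{no} common induced $4$-cycle of $F$; this condition is clearly checkable in polynomial time. Because $G$ is ($C_4$,diamond)-free, Proposition~\ref{prop:4-cycles} (together with the discussion immediately preceding the lemma) forces every induced $4$-cycle of $F_k(G)$ to arise by moving two tokens along disjoint edges of $G$. Hence $AB$ and $AC$ share an induced $4$-cycle iff the corresponding edges of $G$ are disjoint, which is iff the corresponding edges of $G_{\varphi(A)}$ do not share an endpoint. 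Thus adjacency in $L_A$ matches incidence of edges in $G_{\varphi(A)}$, giving $L_A \simeq L(G_{\varphi(A)})$.

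For the second stage, I feed $L_A$ to the Roussopoulos--Lehot algorithm to obtain, in linear time, a graph $J_A$ with $L(J_A) \simeq L_A$; when $|G_{\varphi(A)}| > 3$ Whitney's theorem guarantees $J_A \simeq G_{\varphi(A)}$. The sole Whitney exception ($L_A \simeq K_3$, with $J_A$ a priori either $K_3$ or $K_{1,3}$) is resolved using that $G_{\varphi(A)}$ is bipartite: every edge of $E_{\varphi(A)}$ has exactly one endpoint in $\varphi(A)$, so $(\varphi(A),\, V(G)\setminus\varphi(A))$ bipartitions $G_{\varphi(A)}$. This rules out $K_3$ and forces $J_A = K_{1,3}$. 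The remaining tiny cases ($L_A$ empty, $L_A \simeq K_1$, $L_A \simeq K_2$) are handled directly, yielding $G_{\varphi(A)}$ equal to the empty graph, $K_2$, or $P_3$ respectively (bipartiteness again excludes $K_3$ from the last case). The main conceptual obstacle I expected was translating the predicate ``two edges share an endpoint in $G$'' into a predicate computable purely in $F$; the ($C_4$,diamond)-free hypothesis delivers this through the clean characterization of induced $4$-cycles in $F_k(G)$. Since for each of the $|V(F)|$ vertices $A$ we perform $O(\deg_F(A)^2)$ induced $4$-cycle tests to build $L_A$ and one linear-time line-graph reconstruction, the total running time is polynomial in $|V(F)|$.
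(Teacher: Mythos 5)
Your proposal is correct and follows essentially the same route as the paper: both recover $L(G_{\varphi(A)})$ by declaring two edges at $A$ incident exactly when they lie in no common induced $4$-cycle (justified by the characterization of induced $4$-cycles in token graphs of ($C_4$,diamond)-free graphs), and then apply the Roussopoulos--Lehot line-graph reconstruction. The only divergence is in resolving the Whitney $K_3$ versus $K_{1,3}$ ambiguity, where the paper appeals to the characterization of triangles in $F_k(G)$ while you observe that $G_{\varphi(A)}$ is bipartite with parts $\varphi(A)$ and its complement; your argument is a clean, and if anything more direct, way to settle that case.
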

 \qed

\textsc{ProductSubgraph} has two subroutines: \textsc{Initialize} and \textsc{Extend}.
\textsc{Initialize} does the following. In line $1$ it constructs the set of graphs
$J_A$ described in Lemma~\ref{lem:J_A}. In lines $2$-$5$ for every vertex $A$ of $F$
it computes a maximum cardinality matching $M_A$ of $J_A$; this can be done 
in polynomial time~\cite{matchings_gen}. In line $5$, a vertex $A \in F$ is chosen so that 
$|M_A|$ is  maximum. Assuming $k \le n/2$, this matching corresponds to a matching
of $G$ of maximum cardinality with the property of having at most $k$ edges. 
The $1$-token graphs of these edges are the starting $H_i$. Afterwards, \textsc{ProductSubgraph} iteratively calls \textsc{Extend} for each
$i$ in turn. \textsc{Extend} attempts to extend $H_i$ into a larger graph isomorphic to the token graph of some subgraph $G_i$ of $G$.
The initial choice of $A$ is what enable us to reconstruct the $G_i$ from their $H_i$.
At the end of its execution \textsc{ProductSubgraph} outputs a subgraph $H$  of $F$, graphs $H_1,\dots,H_r$ and
an isomorphism $\pi$ from $H$ to $H_1 \square \cdots \square H_r$. 

\newpage

\begin{procedure}
 \caption{Initialize()}
 Construct a set of graphs $\{J_A: A \in V(F)$\} where each $J_A$ is isomorphic to 
 $G_{\varphi(A)}$\;
 \For{ $A \in V(F)$}
 {
    Compute a maximum cardinality matching $M_A$ of $J_A$\;
 }
 Find $A \in V(F)$ maximizing $|M_A|$\;
 Let $e_1,\dots,e_r$ be the edges incident to $A$ in $F$ corresponding to the 
 edges of $M_A$\;
 Find the $r$-cube, $Q_r \subset F$ containing $A$ as a vertex and $e_1,\dots,e_r$
 as edges\;
 $H= Q_r$\;
 \For{$i \gets 1$ \KwTo $r$ }
 {
   
    Initialize two new vertices $x_i$ and $y_i$ and a new graph $H_i$\;
    $V(H_i) \gets \{x_i,y_i\}$\;
    $E(H_i) \gets \{ x_iy_i\}$\;
     
 }
 \For{ $B \in Q_r$}
 {
     Compute a shortest path $P$ in $Q_r$ from $A$ to $B$\;
     \For{$i \gets 1$ \KwTo $r$ }
     {
        \eIf{$P$ contains an edge in $R[e_i]$}
        {
            $\pi(B)(i) \gets y_i$\;
        }
        {
            $\pi(B)(i) \gets x_i$\;
        }
     }
 }
\end{procedure}
\begin{procedure}
 \caption{Extend($i$)}
 Let $A_1$ be any vertex of $H$\;
 Let $A_2$ be the neighbor of $A_1$ in $H$ 
 such that $\pi(A_1)(i) \neq \pi(A_2)(i)$\;
 $Q=\operatorname{Queue}()$\;
 $Q.\operatorname{Insert}(A_1)$\;
 $Q.\operatorname{Insert}(A_2)$\;
 \While{$Q$ not empty}
 {
    $A=Q.\operatorname{Dequeue}()$\;
    \For{ every edge $AB$ of $F$ that is not an edge of $H$}
    {
        \If{every $C \in V(H)$, such that $\pi(C)(i)==\pi(A)(i)$, is incident to an edge in $R[AB]$ }
        {
            \If{$B \notin H$}
            {
                
                Initialize a new vertex $y$\;
                Add the vertex $y$ to $H_i$\;
               
                \For{every $X \in V(H)$, such that  $\pi(X)(i)==\pi(A)(i)$}
                {
                    Let $Y$ be the neighbor of $X$ in $F$ such that $XY$ is in $R[AB]$\;
                    Add the vertex $Y$ to $H$\;
                    $\pi(Y)=\pi(X)$\;
                    $\pi(Y)(i)=y$\;
                }
                $Q.\operatorname{Insert}(B)$\;
            }
            $x=\pi(A)(i)$\;
            $y=\pi(B)(i)$\;
            Add the edge $xy$ to $H_i$\;
            \For{every $X \in V(H)$, such that  $\pi(X)(i)==\pi(A)(i)$}
            {
                Let $Y$ be the neighbor of $X$ in $H$ such that $XY$ is in $R[AB]$\;
                Add the edge $XY$ to $H$\;
            }
        }
    }
}  
\end{procedure}
\begin{algorithm}
 \KwIn{A graph $F\simeq F_k(G)$ where $G$ is a graph without induced $4$-cycles as subgraphs.}
 \KwOut{A subgraph $H$ of $F$, graphs $H_1,\dots,H_r$, and an isomorphism $\pi$ from $H$ to $H_1 \square \dots \square H_r$.}
 \caption{ProductSubgraph}
 \BlankLine
  Compute the set  $R$ of ladder classes of $E(F)$\;
 \Initialize()\tcp*[l]{Initializes $H$ and $H_1, \dots, H_r$}
 
 \For{$i \gets 1$ \KwTo $r$ }
 {
    \Extend($i$)\;
 }
\end{algorithm}

\newpage

 The following lemma provides structural properties of the output of \textsc{ProductSubgraph}; 
 along the way, its proof also  analyses \textsc{ProductSubgraph}, \textsc{Initialize} 
 and \textsc{Extend} in detail.

\begin{lemma}\label{lem:algo}
There exist disjoint induced subgraphs $G_1,\dots,G_r$ of $G$, and positive integers
 $k_1,\dots,k_r$  such that the following holds.
 \begin{itemize}
 \item[(1)] $k=k_1+ \cdots +k_r$ and $V(G)=V(G_1)\cup \cdots \cup V(G_r)$.
 
 \item[(2)] For every pair of vertices $A_1,A_2 \in H$ and index $1 \le i \le r$ we have that  $\pi(A_1)(i)=\pi(A_2)(i)$
   if and only if $\varphi(A_1)\cap V(G_i)=\varphi(A_2) \cap V(G_i)$.
  
  \item[(3)]  For every index $1 \le i \le r$, and vertex $ u \in V(H_i)$, pick any vertex $A \in H$ such that   $u=\pi(A)(i)$; let  $\varphi_i$ be the function 
  that maps $u$ to $\varphi(A)\cap V(G_i)$; then $\varphi_i$ is an isomorphism from $H_i$ to $F_{k_i}(G_i)$.

  \item[(4)]  For every $A \in V(H)$, \[\varphi(A) = \bigcup_{i=1}^{r} \varphi_i(\pi(A)(i)).\]
  That is, the following diagram commutes.
\begin{center}
\begin{tikzcd}
H \arrow[r, "\varphi"] \arrow[d, "\pi"]
& F_k(G) \\
H_1 \square \cdots \square H_r \arrow[ur, "\bigcup_{i=1}^r \varphi_i(\cdot)"']
\end{tikzcd}
 \end{center}
 \end{itemize}
\end{lemma}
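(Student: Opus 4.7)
The plan is to verify that the algorithm produces a subgraph $H$ of $F$ together with a decomposition $\pi\colon H\to H_1\square\cdots\square H_r$ that is maximal in the sense required by Theorem~\ref{thm:prod}, and then to read off properties (1)--(4) directly from that theorem. I would begin by analyzing \textsc{Initialize}. Under $\varphi$, the edges $e_1,\dots,e_r$ of $F$ incident to $A$ correspond to $r$ pairwise vertex-disjoint edges $a_1b_1,\dots,a_rb_r$ of $G$ (since $M_A$ is a matching in $J_A\simeq G_{\varphi(A)}$), with $a_j\in\varphi(A)$ and $b_j\notin\varphi(A)$. For every $S\subseteq\{1,\dots,r\}$, the set $\varphi(A)\triangle\bigcup_{j\in S}\{a_j,b_j\}$ is a valid token configuration, and two such configurations are adjacent in $F_k(G)$ if and only if their index sets differ by exactly one element; hence these $2^r$ configurations induce a subgraph isomorphic to $Q_r$, justifying the existence of the cube on line 6. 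By Proposition~\ref{prop:ladder}, edges of this cube lying in the ladder class $R[e_j]$ are precisely those corresponding to the slide $a_j\leftrightarrow b_j$, so the labels $\pi(B)(i)$ assigned in \textsc{Initialize} encode exactly which of the moves $a_j\to b_j$ have been performed to reach $B$ from $A$, making $\pi$ an isomorphism $Q_r\to K_2\square\cdots\square K_2$.

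Next I would analyze \textsc{Extend}$(i)$ under the invariant that $H$ is an induced subgraph of $F$ isomorphic via $\pi$ to $H_1\square\cdots\square H_r$, with each $H_i$ connected. Any edge $AB$ of $F$ corresponds (via $\varphi$) to a token slide along some $uv\in E(G)$, and by Proposition~\ref{prop:ladder} every edge in $R[AB]$ corresponds to the same $uv$. The test on line 10 requires that $uv$ can be slid uniformly across the entire $i$-th slice of the product (the set $\{C\in V(H):\pi(C)(i)=\pi(A)(i)\}$); this is precisely the condition for extending the $i$-th factor while preserving the product structure. When the test passes, the algorithm appends a new vertex $y$ to $H_i$ (if necessary) and attaches the parallel copy of the current slice along the ladder $R[AB]$, preserving the invariant.

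Upon termination, $H$ remains isomorphic via $\pi$ to a Cartesian product $H_1\square\cdots\square H_r$ of connected graphs on at least two vertices, and (by the stopping criterion of \textsc{Extend} and the choice of $r$ in \textsc{Initialize}) it is maximal with this property in $F$. Applying Theorem~\ref{thm:prod} to $\varphi(H)\subseteq F_k(G)$ yields disjoint induced subgraphs $G_1,\dots,G_r$ of $G$ and integers $k_1,\dots,k_r>0$ with $V(G)=V(G_1)\cup\cdots\cup V(G_r)$, $k=k_1+\cdots+k_r$ and $H_i\simeq F_{k_i}(G_i)$, giving (1). Property (2) follows by matching the $i$-th coordinate of $\pi$ with the coordinates produced in the proof of Theorem~\ref{thm:prod}: both record the state of the $k_i$ tokens lying in $V(G_i)$. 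Defining $\varphi_i(u):=\varphi(A)\cap V(G_i)$ for any $A$ with $\pi(A)(i)=u$, property (2) shows $\varphi_i$ is well-defined, and the product decomposition shows it is an isomorphism $H_i\to F_{k_i}(G_i)$, giving (3). Finally, since the $V(G_i)$ partition $V(G)$, for every $A\in V(H)$ we have $\varphi(A)=\bigcup_{i=1}^r(\varphi(A)\cap V(G_i))=\bigcup_{i=1}^r\varphi_i(\pi(A)(i))$, which is (4).

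The principal obstacle I anticipate is rigorously establishing the maximality of $H$, which is what licenses the application of Theorem~\ref{thm:prod} and in particular gives $V(G)=\bigcup_i V(G_i)$. The key ingredients are the maximum choice of $|M_A|$ (which fixes $r$ as the correct number of factors, preventing a strictly finer product decomposition containing $A$) and the exhaustiveness of the BFS in \textsc{Extend} (which prevents any compatible token slide from being missed); putting these together requires a careful argument that any hypothetical enlargement either forces an enlargement of $M_A$ itself or violates the partition structure guaranteed by Theorem~\ref{thm:prod}.
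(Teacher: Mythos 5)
Your overall strategy---analyze \textsc{Initialize} and \textsc{Extend}, conclude that $H$ is maximal among product subgraphs, and then invoke Theorem~\ref{thm:prod}---is the same one the paper uses, and your treatment of the initial $r$-cube and of property (4) matches the paper's. But the proposal has a genuine gap exactly where you flag "the principal obstacle," and that obstacle is not a routine verification: it is the core of the proof. Two things are missing. First, you never characterize \emph{when} the test in \textsc{Extend} succeeds. The paper proves the claim ($\dagger$): writing $\varphi(B)=\varphi(A)\triangle\{u,v\}$, the condition of line $9$ holds if and only if one of $u,v$ lies in $G_i$ and the other lies in no $G_j$ with $j\neq i$. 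The "only if" direction splits into two cases, both essential: if neither endpoint lies in any $G_j$, the maximality of the matching $M_A$ is violated (this is where the initial choice of $A$ earns its keep); if an endpoint lies in some $G_j$ with $j\neq i$, then some vertex of the $i$-th slice has zero or two tokens on $\{u,v\}$, so the required ladder does not exist. Without ($\dagger$) you cannot control which edges the algorithm adds, so you can establish neither the invariant (2) nor the shape of the sets $G_i$.

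Second, your logical order is inverted relative to what is actually provable. You want to establish maximality of $H$ first and then extract $V(G)=\bigcup_i V(G_i)$ from Theorem~\ref{thm:prod}; the paper does the opposite. It builds the $G_i$ and $k_i$ incrementally alongside the algorithm (maintaining weakened invariants ($3'$), ($4'$) in which $\varphi_i$ is only an isomorphism onto a \emph{subgraph} of $F_{k_i}(G_i)$ and the $G_i$ need not yet cover $G$), then shows directly that at termination no $\bigl(\bigcup_i G_i\bigr)$--$\bigl(G\setminus\bigcup_i G_i\bigr)$ edge can exist: such an edge would produce, via ($\dagger$) and the exhaustiveness of the queue, an edge that \textsc{Extend} would have been forced to add. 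Only then does maximality follow, and Theorem~\ref{thm:prod} is used merely to conclude $k=k_1+\cdots+k_r$ and that the $G_i$ are induced. Attempting to prove maximality in the abstract, before knowing the covering, gives you no handle on a hypothetical larger product subgraph; the covering argument \emph{is} the maximality argument. Similarly, your derivation of (2) "by matching coordinates with the proof of Theorem~\ref{thm:prod}" presumes an alignment between the algorithm's $\pi$ and the partition of that theorem that must itself be proved by tracking the algorithm, which is precisely what the paper's invariant (2) does.
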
 
\begin{proof}

  $H, H_1,\dots,H_r$ and $\pi$ are initialized when \textsc{Initialize}
 is called in line $2$ of \textsc{ProductSubgraph}. Afterwards,
 these graphs and $\pi$ are updated throughout the execution of \textsc{ProductSubgraph}.
  In what follows we show that throughout the execution of \textsc{ProductSubgraph} there exist disjoint subgraphs $G_1,\dots,G_r$ of $G$, and integers
 $k_1,\dots,k_r$ whose sum is at most $k$, such that at key steps of the execution  of \textsc{ProductSubgraph}, $(2)$ and 
 the following properties hold. 
 \begin{itemize}
 \item[($3'$)]  For every index $1 \le i \le r$, and vertex $ u \in V(H_i)$, pick any vertex $A \in H$ such that   $u=\pi(A)(i)$; let  $\varphi_i$ be the function 
  that maps $u$ to $\varphi(A)\cap V(G_i)$; then $\varphi_i$ is an isomorphism from $H_i$ to a \emph{subgraph} of $F_{k_i}(G_i)$.

  \item[($4'$)]  For every $A \in V(H)$, \[\varphi(A) = \left ( \bigcup_{i=1}^{r} \varphi_i(\pi(A)(i)) \right )\cup \left (\varphi(A) \setminus \bigcup_{i=1}^r V(G_i) \right ).\]
 \end{itemize}
  Afterwards, we  show that  $(1)$,$(3)$ and $(4)$ hold at the end of the execution of \textsc{ProductSubgraph}.
  We also show that at the end of the execution of \textsc{ProductSubgraph} the $k_i$ sum up to $k$ and 
  that $G_i$ are induced subgraphs of $G$; this proves the lemma.

 Let $A$  be as in line $5$ of \textsc{Initialize}.
 Since $M_A$ is a matching of $J_A$, its edges are in correspondence with 
 $r:=|M_A|$ independent edges in $G$, such that there is exactly one token of $\varphi(A)$ in each edge.
 Moving these tokens on their respective edges produces an $r$-cube in $F_k(G)$. Therefore, the $r$-cube,
 $Q_r$, of line 7  exists. $Q_r$ can be computed as follows. Let $e_1, \dots, e_r$ be 
 the edges of $F$  incident to $A$ that correspond to the edges of $M_A$ (line $6$ of initialize).
 The vertices of $Q_r$ are all the vertices
 of $F$ that are reachable from $A$ by a path with all its edges contained in $R[e_1]\cup \cdots \cup R[e_r]$.
 Thus, $Q_r$ can be found by computing the subgraph of $F$ with edge set $R[e_1]\cup \cdots \cup R[e_r]$ and 
 then finding the component containing $A$. 
 In line $8$, $H$ is set to be $Q_r$. The $H_i$'s are constructed in lines $9-12$; each $H_i$ consists of two adjacent
 vertices $x_i$ and $y_i$. Let $e_1', \dots, e_r'$ be the edges of $G$ such that $\varphi(e_i)$ corresponds
 to moving the token along $e_i'$; let $G_i$ be the subgraph of $G$ consisting of the edge $e_i'$ and let $k_i=1$.
 In lines $14-22$, $\pi$ is constructed so that $(2)$, $(3')$ and $(4')$ hold.

 We now consider the $i$-th call to \textsc{Extend} in line 4 of \textsc{ProductSubgraph}.
  Assume that $(2),(3')$ and $(4')$ hold before the $i$-th call to \textsc{Extend}. 
 Throughout the execution of \textsc{Extend} we have the following invariant.
 
 \MyQuote{Every vertex $X$ in $Q$ satisfies that $\pi(X)(j)=\pi(A_1)(j)$ for all $j \neq i$.}
 This is certainly the case before the first execution of the \textbf{while} in line $6$, since
 $Q$ contains only the vertices $A_1$ and $A_2$.
 We show that  $(2),(3')$, $(4')$ and $(\ast)$ hold at the end of each execution of the \textbf{for}
 of line $8$.
 
 Let $AB$ be the edge in line $8$ and let $e:=uv$ be the edge of $G$ such that $\varphi(B)$ is obtained from 
 $\varphi(A)$ by moving a token along $e$. We show that
 
 \LabelQuote{the condition of line $9$ is satisfied if and only if one of $u$ and $v$ is in $G_i$ while the other
 is not in any $G_j$ with $j\neq i$.}{$\dagger$}
 Suppose that one of $u$ and $v$ is in $G_i$ while the other
 is not in any $G_j$ with $j\neq i$. Let $C \in V(H)$ with $\pi(C)(i)=\pi(A)(i)$.
 Let $(A=C_1,\dots,C_m=C)$ be a shortest path in $H$ from
 $A$ to $C$. Note that $\pi(C_l)(i)=\pi(A)(i)$ for all $1 \le l \le m$. Since $(2)$ holds we have that 
  for every $1 \le l \le m$ there exists a vertex $D_l$ such that $\varphi(D_l)$
  is obtained from $\varphi(C_l)$ by sliding a token along $e$. Thus, the set of vertices
  \[\{C_l: 1 \le l \le m\} \cup \{D_l: 1 \le l \le m\}\]
  induce a ladder from $AB$ to $CD_m$. Therefore, the condition of line $9$ is satisfied.
 
 Suppose that $u$ and $v$ are not in $\cup_{j=1}^r V(G_i)$.
 Then  $\{e,e_1,\dots,e_r\}$ is a matching of size $r+1$ of $J_A$, where $A$ is as in line $5$
 of \textsc{Initialize}; this is a contradiction to the fact that $M_A$ is maximum.  Therefore, at least one of $u$ and $v$ 
 is in $\cup_{j=1}^r V(G_i)$. Suppose that one of $u$ and $v$ is in $G_j$ for some $j \neq i$. Without loss of generality suppose it
 is $u$.
 Then there exist vertices $C_1$ and $C_2$ of $H$ with $\pi(C_1)(i)=\pi(C_2)(i)=\pi(A)(i)$, 
 such that in $\varphi(C_1)$ there is a token at $u$, and in $\varphi(C_2)$
there is no token at $u$. Depending on whether there is a token at $v$ in $\varphi(A)$, 
for one of $\varphi(C_1)$ and $\varphi(C_2)$  either  $e$ contains 
 two tokens at its endpoints or no endpoint of $e$ contains a token. In either case, there is no
 token move possible along $e$. Therefore, there exists a vertex $C \in H$ with $\pi(C)(i)=\pi(A)(i)$ that is not incident to an
 edge in $R[AB]$. Thus, the condition of line $9$ does not hold. Therefore, $(\dagger)$ holds.

 Suppose that $B$ is not a vertex of $H$. 
 If $v \notin V(G_i)$,  update $V(G_i)$ to $V(G_i) \cup \{v\}$, and $E(G_i)$ to $E(G_i) \cup \{uv\}$.
 If $\varphi(B)$ is obtained from moving a token
 from $v$ to $u$, then this token has not been moved before. In this case
 update $k_i$ to $k_i+1$. Otherwise, if $\varphi(B)$ is obtained from moving a token
 from $u$ to $v$, then $k_i$ remains unchanged. 
 In line $12$ a new vertex $y$ is added to $H_i$. 
 Consider lines $13-15$. For every vertex $X \in H$  with $\pi(X)(i) = \pi(A)(i)$,
 let $Y$ be its neighbor such that $XY \in R[AB]$; we add  $Y$ to $V(H)$.
 In lines $16$ and $17$, $\pi(Y)$ is defined so that
 $\pi(Y)(i):=y$ and $\pi(Y)(j):=\pi(X)(j)$ for all $j \neq i$.
 Thus $(2)$ is satisfied after the execution of line $18$. 
 Since $\varphi(Y)$ is obtained from $\varphi(X)$ by sliding a token along $e$
 we have that $(4')$ holds after the execution of line $18$. 
 In line $19$, $B$ is inserted to $Q$, and $(\ast)$  still holds.
 Suppose that $B$ may or may not be a vertex of $H$. Let $X$ and $Y$ be
 as in lines $24$ and $25$. Since $\varphi(Y)$ is obtained from $\varphi(X)$
 by sliding a token along $uv$, we have that $(3')$ holds after the execution
 of line $27$. 
 
 Suppose that the $i$-th execution of \textsc{Extend} has ended. Let $uv$
 be an edge of $G_i$. Let $X$ be any vertex of $H$ such that $\varphi(X)$ contains
 a token at $u$ and no token at $v$. Let $Y \in F$ be such that $\varphi(Y)$
 is obtained from $\varphi(X)$ by sliding a token along $uv$. Note that $Y$ is also
 in $H$. At some point during the execution of \textsc{Extend}, in line $23$  the edge
 $\pi(A)(i)\pi(B)(i)$ was added to $H_i$. Therefore, we have that 
  \begin{itemize}
 \item[($3''$)]  For every vertex $u \in V(H_i)$, pick any vertex $A \in H$ such that   $u=\pi(A)(i)$; let  $\varphi_i$ be the function 
  that maps $u$ to $\varphi(A)\cap V(G_i)$; then $\varphi_i$ is an isomorphism from $H_i$ to $F_{k_i}(G_i)$.
  \end{itemize}
 
Assume that the 
 execution of \textsc{ProductSubgraph} has ended. Since $(3'')$ holds
 for every $1 \le i \le r$ we have that $(3)$ holds.
 Let $G'=\bigcup_{i=1}^r G_i$.
 Suppose that $G \setminus G' \neq \emptyset$. 
 Let $uv$ be a $G'-G\setminus G'$ edge. Let $G_i$ be such 
 that $u \in G_i$. 
 Let $A_1$ and $A_2$  be vertices
 of $H$ such that in $\varphi(A_1)$ there is a token  at $u$
 and in $\varphi(A_2)$ there is no a token at $u$. 
 Note that
 either there is a token at $v$ in both $\varphi(A_1)$ and
 $\varphi(A_2)$ or there is no token at $v$ in neither of 
 $\varphi(A_1)$ and $\varphi(A_2)$. For exactly one of 
 $\varphi(A_1)$ and $\varphi(A_2)$ we have that there is exactly one
 token at the endpoints of $uv$. Let $A:=A_j$ be such that
 in $\varphi(A_j)$ there is exactly one token at the
 endpoints of $uv$. Let $B \in V(F)$ be such that
 $\varphi(B)$ is obtained from $\varphi(A)$ by sliding the token
 along $uv$. Since $v \notin G_i$ we have that $B \notin H_i$. At some point during the execution of line $7$ of \textsc{Extend}($i$)
 $A$ is removed from $Q$. Afterwards, eventually, in line $8$, $AB$ is considered. 
 $AB$ satisfies the condition of line $9$; thus
 $B$ is added to $H_i$---a contradiction. Thus, $V(G)=V(G_1)\cup \cdots \cup V(G_r)$.
 This implies that $H$ is maximal in $F$ with the property of being the Cartesian product
 of $r$ connected graphs with at least two vertices.  By Theorem~\ref{thm:prod} we have that $k=k_1+ \cdots + k_r$ and
 that the $G_i$'s are induced subgraphs of $G$. In particular $(1)$ holds. Since $(4')$ holds, this implies that $(4)$ holds.
 The result follows. 
 \end{proof}

\subsection{Reconstructing the $G_i$}

 \begin{figure}
 	\centering
 	\includegraphics[width=0.8\textwidth]{./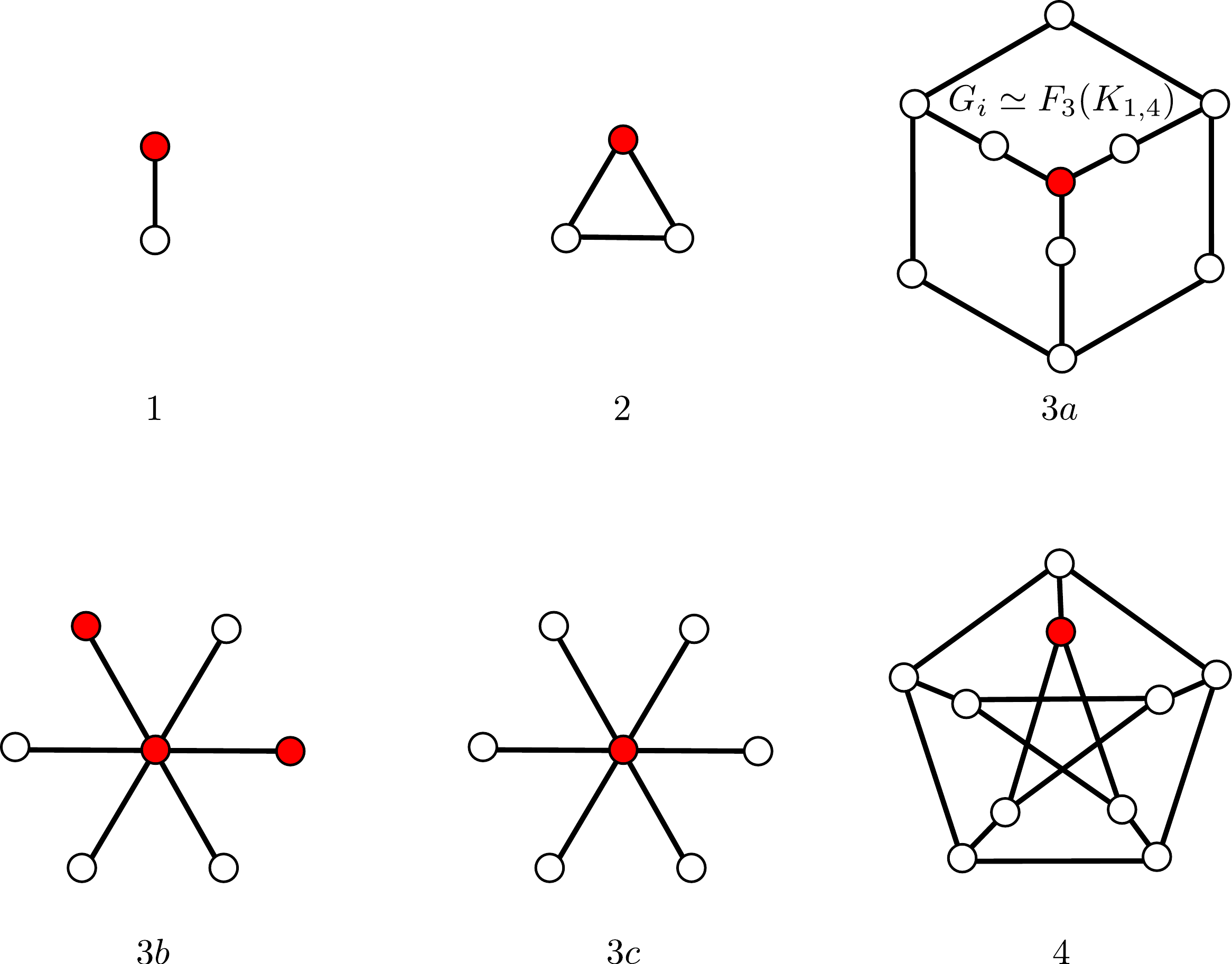}
 	\caption{A sample token configuration of $F_{k_i}(G_i)$ for an $H_i$ of each possible class.}
 	\label{fig:class}
 \end{figure}

 Suppose that \textsc{ProductSubgraph} has been executed; 
let $G_1,\dots,G_r$ and $k_1,\dots,k_r$ be as in Lemma~\ref{lem:algo}.
In this section we show how to construct graphs isomorphic to the $G_i$.
We classify each $H_i$ into the following four classes.
 \begin{enumerate}
  \item $H_i$ is a an edge.
  \item $H_i$ is a triangle.
  \item $H_i$ is isomorphic to the token graph of a star of at least three vertices.
 By Lemma~\ref{lem:star}, there are unique integers $l$ and $m$, with $l \le (m+1)/2$ 
 such that $H_i \simeq F_l(K_{1,m})$. There are three more possibilities in this case:
 \begin{itemize}
  \item[3a.] $G_i \simeq F_l(K_{1,m})$, $k_i=1$ or $k_i=|F_l(K_{1,m})|-1$, and $1 < l < m$; or
  
  \item[3b.] $G_i \simeq K_{1,m}$, $1 < k_i < |G_i|-1$, and $k_i=l$ or $k_i=m+1-l$. 
  
  \item[3c.] $G_i \simeq K_{1,m}$ and $k_i=1$ or $k_i=m$. 
 \end{itemize}
  
  \item $H_i$ is not a triangle, an edge, nor isomorphic to the token graph of a star. 
 \end{enumerate}
See Figure~\ref{fig:class}. 

We now show how to determine the class of each $H_i$ in polynomial time.
The following lemma is useful for restricting the possible values of the $k_i$.
 \begin{lemma}\label{lem:dis_edges}
  If some $G_i$ contains two disjoint edges, then all $k_j$ are equal to $1$ or all $k_j$
  are equal to $|G_j|-1$.
 \end{lemma}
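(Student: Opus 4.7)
The plan is to exploit the maximality of $r = |M_A|$ built into line~5 of \textsc{Initialize}: for every vertex $A' \in V(F)$, the maximum matching of $J_{A'}$ has size at most $r$. I will show that if some $G_i$ contains two disjoint edges and $k$ is not at one of the two extreme values $r$ or $n - r$, then one can construct a vertex $A'$ whose associated $J_{A'}$ admits a matching of size $r + 1$, a contradiction.

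Fix disjoint edges $e, e'$ of $G_i$. For each $j \ne i$, Lemma~\ref{lem:algo}(3) gives $H_j \simeq F_{k_j}(G_j)$, and since $H_j$ is connected with at least two vertices, $G_j$ contains at least one edge $f_j$. Let $\mathcal{M} := \{e, e'\} \cup \{f_j : j \ne i\}$. Each edge of $\mathcal{M}$ lies inside a single $V(G_j)$, and Lemma~\ref{lem:algo}(1) says the sets $V(G_1), \dots, V(G_r)$ partition $V(G)$, so the $r + 1$ edges of $\mathcal{M}$ are pairwise vertex-disjoint. To produce $A'$ realizing $\mathcal{M} \subseteq J_{A'}$, I pick exactly one endpoint of each edge of $\mathcal{M}$ to belong to $A'$, contributing $r+1$ forced tokens, and freely choose which of the $n - 2(r+1)$ remaining vertices to include. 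Hence such an $A'$ with $|A'| = k$ exists precisely when $r + 1 \le k \le n - r - 1$, and in that range $|M_{A'}| \ge r + 1 > r$, contradicting the maximality of $|M_A|$.

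Therefore $k \le r$ or $k \ge n - r$. Combined with the inequalities $r \le \sum_{j=1}^r k_j = k \le \sum_{j=1}^r (|G_j| - 1) = n - r$ coming from $k_j \in \{1, \dots, |G_j| - 1\}$, the only options are $k = r$, which forces every $k_j = 1$, or $k = n - r$, which forces every $k_j = |G_j| - 1$, which is the conclusion. The one step that deserves care is verifying that each $G_j$ with $j \ne i$ actually has an edge (which follows from $H_j$ being connected on at least two vertices) and that the edges of $\mathcal{M}$ are genuinely pairwise disjoint, which is where the partition property from Lemma~\ref{lem:algo}(1) is essential; no further obstacle is anticipated.
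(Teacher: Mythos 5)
Your proof is correct, but it takes a genuinely different route from the paper's. The paper works locally with the extremal vertex $A$ of line~$5$ of \textsc{Initialize}: assuming some $k_j\notin\{1,|G_j|-1\}$, it finds in $G_j$ a spare occupied vertex $v\notin e_j'$ and a spare empty vertex $u\notin e_j'$, and then perturbs $\varphi(A)$ by one token move (filling an empty disjoint edge $e_1^*$ of $G_i$ from $v$, or emptying a doubly-occupied $e_1^*$ into $u$) to manufacture $r+1$ disjoint edges each carrying exactly one token, contradicting $|M_A|$ being maximum. You instead build the $(r+1)$-edge matching $\mathcal{M}$ abstractly, observe that a configuration half-covering all of $\mathcal{M}$ exists exactly when $r+1\le k\le n-r-1$, and then squeeze $k$ against the a priori bounds $r\le\sum k_j=k\le\sum(|G_j|-1)=n-r$ to force $k=r$ or $k=n-r$, whence the extremal equality pins down every $k_j$ at once. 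Your counting argument has the advantage of delivering the full ``all equal $1$ or all equal $|G_j|-1$'' dichotomy in a single step, whereas the paper's perturbation, read literally, only excludes an individual intermediate $k_j$ and leaves the exclusion of a mixed situation ($k_j=1$ alongside $k_l=|G_l|-1$ with $|G_j|,|G_l|\ge 3$) to an implicit repetition of the same move with $u$ and $v$ taken from different parts; the paper's style, on the other hand, stays closer to the token-sliding reasoning used throughout the analysis of \textsc{Initialize}. All the ingredients you rely on ($k_j\ge 1$, $k_j\le|G_j|-1$, the $V(G_j)$ partitioning $V(G)$, each $G_j$ containing an edge) are indeed supplied by Lemma~\ref{lem:algo} and the construction of the $G_j$, so no gap remains.
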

 \begin{proof}
  Consider the vertex $A$  and the edges $e_1,\dots,e_r$ in lines $5$
 and $6$ of \textsc{Initialize}. The edges $\varphi(e_1),\dots,\varphi(e_r)$ 
 of $F_k(G)$ correspond to $e_1',\dots,e_r'$ disjoint edges in $G$, each with exactly one
  token of $\varphi(A)$ at one of their endpoints.  For every $1 \le i \le r$, 
  we have that $e_i$ is in $G_i$.
  Let $e_1^*$ and $e_2^*$ be two disjoint edges of $G_i$. By the maximality of $M_A$, in $\varphi(A)$ at least
  one of  $e_1^*$ and $e_2^*$ contains either: no token, or two tokens at its endpoints.
  Without loss of generality assume it is $e_1^*$. This implies that $e_1^*\neq e_i'$.
  
  For a contradiction suppose that some $k_j$ is different from $1$ and $|G_j|-1$.
  This implies that in $\varphi(A)$, $G_j$ contains both a vertex $u \notin e_j'$ without a token,  and 
  vertex $v \notin e_j'$ with a token.
   If $e_1^*$ contains no token of $\varphi(A)$, then let $\varphi(A')$ be the token configuration
  that is produced from $\varphi(A)$ by removing the token at $v$ and placing it at $e_1^*$.
   If $e_1^*$ contains two tokens of $\varphi(A)$, then let $\varphi(A')$ be  the token configuration
   that is produced from $\varphi(A)$ by removing one token from $e_1^*$ and placing it at $u$.
  We have that $e_1^*,e_1',\dots,e_r'$ is a set of disjoint edges each with exactly one 
  token of $\varphi(A')$. This implies that $|M_A'|=|M_A|+1$, which contradicts our choice of $A$.
 \end{proof}
 
\begin{lemma}
 We can determine in polynomial time the class of every $H_i$.
\end{lemma}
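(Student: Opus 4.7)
The plan is a hierarchical polynomial-time classification of each $H_i$, where the only subtle step is distinguishing class 3a from 3b via a single global check. I first test the easy classes directly on $H_i$: class 1 is detected by $|H_i|=2$ and class 2 by $H_i \simeq K_3$; both take constant time per $H_i$. For every other $H_i$, I invoke Lemma~\ref{lem:star} to decide in polynomial time whether $H_i \simeq F_l(K_{1,m})$ for some unique $l \le (m+1)/2$ and $m \ge 2$, recovering $l,m$ when the answer is yes. If not, $H_i$ is in class 4. If yes, $H_i$ lies in class 3.

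Within class 3, I split on $l$. The case $l = 1$ immediately forces class 3c: then $H_i \simeq K_{1,m}$ is itself a star, whereas cases 3a and 3b both require $l \ge 2$, in which case each side of the bipartition of $F_l(K_{1,m})$ has size $\binom{m}{l} > 1$ and $H_i$ is not a star.

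The remaining step is to distinguish 3a from 3b among the $H_i$'s with $l \ge 2$, where $H_i$ alone cannot decide. The crucial observation is that 3a and 3b are mutually exclusive across the global decomposition: in case 3a the graph $G_i \simeq F_l(K_{1,m})$ (with $l \ge 2$ and $m > l$) contains two disjoint edges, so Lemma~\ref{lem:dis_edges} forces every $k_j \in \{1, |G_j|-1\}$, which contradicts the defining inequality $1 < k_j < |G_j|-1$ of case 3b. Thus all ambiguous $H_i$'s belong uniformly to 3a or uniformly to 3b, reducing the problem to a single binary decision.

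To make this decision, each hypothesis specifies candidate values at every ambiguous index: $|G_i| = \binom{m_i+1}{l_i}$ with $k_i \in \{1, |G_i|-1\}$ under 3a, versus $|G_i| = m_i+1$ with $k_i \in \{l_i, m_i+1-l_i\}$ under 3b. The non-ambiguous classes already determine their $(|G_j|, k_j)$ from $|H_j|$ together with the classification just obtained, and summing yields candidate totals $n = \sum_j |G_j|$ and $k = \sum_j k_j$ for each hypothesis. Only the correct hypothesis is consistent with the observed equalities $\binom{n}{k} = |F|$ and $k \le n/2$. The main obstacle I anticipate is verifying that this check is always decisive: this reduces to showing that $\binom{m+1}{l} > m+1$ whenever $l \ge 2$, so the two hypotheses yield genuinely distinct candidate $n$-values, and, under the bound $k \le n/2$, at most one of the resulting $(n,k)$ pairs can realize $|F|$.
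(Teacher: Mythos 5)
Your classification of classes $1$, $2$, $3c$ and $4$, and your observation (via Lemma~\ref{lem:dis_edges}) that classes $3a$ and $3b$ cannot coexist, match the paper's proof. The divergence --- and the gap --- is in the single binary decision ``all ambiguous $H_i$ are $3a$'' versus ``all are $3b$''. You propose an arithmetic consistency check: compute candidate $(n,k)$ under each hypothesis and test $\binom{n}{k}=|F|$. This is not a proof. First, showing $n_A\neq n_B$ (which does follow from $\binom{m+1}{l}>m+1$ for $2\le l<m$) does not show $\binom{n_A}{k_A}\neq\binom{n_B}{k_B}$: distinct binomial coefficients with different upper indices can coincide even under the normalization $k\le n/2$, e.g.\ $\binom{16}{2}=\binom{10}{3}=120$ and $\binom{15}{5}=\binom{14}{6}=3003$. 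You would have to rule out such coincidences for the specific pairs that arise here, and you explicitly leave this as an ``anticipated obstacle'' rather than resolving it. Second, under each hypothesis $k=\sum_j k_j$ is not a single candidate but a set: for class-$2$ components $k_j\in\{1,2\}$, for class-$3c$ components $k_j\in\{1,m_j\}$, and for class-$3b$ components $k_j\in\{l_j,m_j+1-l_j\}$, and (since under hypothesis $3b$ no $G_j$ has two disjoint edges) Lemma~\ref{lem:dis_edges} does not synchronize these choices. So the false hypothesis must fail for \emph{every} admissible combination of $k_j$'s, which makes the unproved number-theoretic claim even stronger. As it stands the decisive step of your argument is missing and may well be false.

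For comparison, the paper decides this question structurally rather than arithmetically: it shows that the ambiguous $H_i$ are all of class $3a$ if and only if $F$ contains an induced ``co-hyperplane'' $F_1$ of $H$ (the vertices with $\pi(A)(i)\neq u$, $\pi(A)(j)=v$), a disjoint copy $F_2$ outside $H$, and a matching $M$ between them lying in a single ladder class. The point is that a single ladder class corresponds to sliding a token along one fixed edge $xy$ of $G$, and such a slide is available from every configuration of $F_1$ only when the occupancy of $x$ is constant over $F_1$, which forces $k_i\in\{1,|G_i|-1\}$; all of $(1)$--$(6)$ are checkable in polynomial time by enumeration. If you want to salvage your route, you would need either a genuine proof that the relevant binomial-coefficient collisions cannot occur, or to fall back on a structural test of this kind.
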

\begin{proof}
By Lemma~\ref{lem:star}, we can determine in polynomial time whether each $H_i$ is of class $1$, $2$, $3c$ or $4$. 
We show how to distinguish between the classes $3a$ and $3b$. 
By Lemma~\ref{lem:dis_edges} there cannot simultaneously exists
an $H_i$ of class $3a$ and an $H_j$ of class $3b$. 
Assume that at least
 one $H_i$ is of class $3a$ or $3b$ as otherwise we are done. 
 Suppose that $r=1$; since we are assuming that $1 < k < |G|-1$, we have
  that $H_1$ is of class $3b$ and we are done in this case. Assume that $r > 1$.
  
 We claim that 
 
 \MyQuote{
 all the $H_i$ of class $3a$ or $3b$, are of class $3a$ if and only
 if $F$ contains three edge disjoint graphs $F_1, F_2$ and $M$ with the following properties.}
 
 \begin{enumerate}
  \item[$(1)$] $F_1$ is an induced subgraph of $H$; 
  
  \item[$(2)$]  there exists an $H_i$ of class $3a$ or $3b$,  and vertices  $u \in H_i$ and $v \in H_j (j \neq i)$, 
  such that the set of vertices  of $F_1$ is of the form
   \[\{A \in V(H): \pi(A)(i) \neq u \textrm{ and } \pi(A)(j)=v\}.\]

  \item[$(3)$] $F_2$ is disjoint from $H$;
    
  \item[$(4)$] $M$ is a matching from the vertices of $F_1$ to the vertices of $F_2$;
  
  \item[$(5)$]  all the edges in $M$ are in the same ladder class;
  
  \item[$(6)$] the map that sends every vertex in $F_1$ to its matched vertex in $M$
  is an isomorphism from $F_1$ to $F_2$. 
 \end{enumerate}

 Let $F_1$, $F_2$ and $M$ be as above. 
 Since all the edges in $M$ are in the same ladder class, the set of edges of $\varphi(M)$ corresponds
 to moving a token along the same edge $xy$ of $G$. This implies that every token configuration
 in $\varphi(F_1)$ either: contains a token at $x$ and no token at $y$, or  contains a token at
 $y$ and no token at $x$.
 By $(2)$ of Lemma~\ref{lem:algo} there exist token configurations
 $B_1 \in F_{k_i}(G_i)$ and $B_2 \in F_{k_j}(G_j)$ such that
 \[\varphi(V(F_1))=\{C \in \varphi(V(H)): C \cap V(G_i) \neq B_1 \textrm{ and }  C \cap V(G_j) = B_2 \}\]
 Thus, either $x \in G_i$ and $y \in G_j$, or $x \in G_j$ and $y \in G_i$. Without loss of generality 
 assume it is the former. If $H_i$ is of type $3b$ then there exists token configurations $C_1$ and $C_2$
 of $F_{k_i}(G_i)$ distinct from $B_1$ such that $x \in C_1$ and $x \notin C_2$. This is a contradiction
 to the fact that in every token configuration of $\varphi(F_1)$ either there is a token at $x$ 
 or there is no token at $x$. Therefore, if $H$ contains subgraphs $F_1, F_2$ and $M$ as above
 then every $H_i$ of class $3a$ or $3b$, is of class $3a$.
  
 Conversely, suppose  that every $H_i$ of class $3a$ or $3b$, is of class $3a$. 
 Since $r>1$ and $G$ is connected there exists a pair of indices $i$ and $j$, such that  $H_i$ is of class $3a$
 and there exists an edge $xy \in G$ with $x\in G_i$ and $y \in G_j$.
 By Lemma~\ref{lem:dis_edges}  either all $k_i$ are equal to $1$, or all $k_i$ are equal to $|G_i|-1$. 
 If all the $k_i$ are equal to $1$, then let $F_1'$ be the subgraph of $F_k(G)$
 induced by the set of token configurations
 \[ \{B \in \varphi(H): x \notin B \textrm{ and }  y \in B\}.\]
 If all $k_i$ are equal to $|G_i|-1$, then let $F_1' $ be the subgraph of $F_k(G)$
 induced by the set of token configurations
 \[ \{B \in \varphi(H): x \in B \textrm{ and }  y \notin B\}.\]
 Let $F_1:=\varphi^{-1}(F_1')$. By $(2)$  of Lemma~\ref{lem:algo}  and the fact that every $k_i$ is equal to $1$ or to $|G_i|-1$, the vertex set of $F_1$
 is of the form
 \[ \{A \in H: \pi(A)(i) \neq u \textrm{ and } \pi(A)(j)=v\},\]
 for some pair of vertices $u \in H_i$ and $v \in H_j$. 
 Thus $F_1$ satisfies $(1)$ and $(2)$.
 Let $F_2'$ be the subgraph of $F_k(G)$ induced by the set of vertices
 \[\{C \in F_k(G): C \textrm{ is obtained from a vertex } B \in F_1' \textrm{ by sliding the token along } xy\}.\]
 Let $F_2:=\varphi^{-1}(F_2')$.  Since $xy$ is not an edge of $\bigcup_{i=1}^r G_i$, $F_2$ is disjoint from $H$.
 Thus, $F_2$ satisfies $(3)$. 
 Let 
 \[M':=\{C_1C_2 \in E(F_1',F_2'): \varphi(C_1) \triangle \varphi(C_2)=\{x,y\}\}.\] 
 Let $M:=\varphi^{-1}(M')$. 
 $M'$ is a matching from $F_1'$ to $F_2'$; thus, $M$ satisfies $(4)$. By construction of $F_2'$, the map that sends
 every vertex in $F_1'$ to its matched vertex in $M'$ is an isomorphism
 from $F_1'$ to $F_2'$. Therefore, $M$ satisfies $(6)$.
 It is not hard to show that $H_i$ is $2$-connected; this, in turn implies that $F_1'$ is connected. 
 Thus, all the edges in $M'$ are in the same ladder class, and  $M$ satisfies $(5)$. 
  
 The existence of $F_1, F_2$ and $M$ can be determined in polynomial time as follows.
 First we iterate over all possible candidates for $F$ by considering all subgraphs induced
 by a set of vertices satisfying $(2)$; there are a polynomial number of these sets,
 and each can be constructed in polynomial time.  Afterwards, we iterate over each ladder class of $F$
 and compute the subset of edges, $M$,  in this ladder class such that exactly one of its endpoints is a
 vertex of $F_1$. We compute the graph $F_2$ induced by the endpoints of these edges
 that are not in $F_1$. Finally, we check whether $M$ and $F_2$  satisfy $(3)-(6)$.
 If the desired $F_1, F_2$ and $M$ exist they are found by this algorithm.
\end{proof}

For every $1 \le i \le r$ we construct a graph $J_i$ isomorphic to $G_i$ as follows. 
If $H_i$ is not of class $3b$ we set  $J_i$ to be a copy of $H_i$.
If $H_i$ is of class $3b$, we  use Lemma~\ref{lem:unique_lm} to compute $m$ and $l \le (m+1)/2$ such that 
  $H_i \simeq F_l(K_{1,m})$; and set $J_i$ to be a copy of $K_{1,m}$. 
  Let $J:=\bigcup_{i=1}^r J_i$; note that $J$ is isomorphic to $\bigcup _{i=1}^r G_i$.

\subsection{Reconstructing the adjacencies between the $G_i$'s}

To reconstruct $G$ all that remains to be done is to reconstruct 
the adjacencies between the $G_i$'s.  This information is encoded in the adjacencies between $H$ and $F\setminus H$.
We start by labeling each $H_i$ as a token graph of $J_i$.
First note that each $H_i$ is uniquely reconstructible as the $k_i$-token graph
of $J_i$: when $H_i$ is not of class $3b$ this is straightforward; and when $H_i$ is of class $3b$ it follows from Lemmas~\ref{lem:unique_lm} 
and~\ref{lem:star}. There are at most two possible values, $l_i$ and $\bar{l}_i$, for each $k_i$:
\begin{itemize}
\item if $H_i$ is of class $1$, then $k_i=1$; in this case we set $l_i:=1$ and $\bar{l}_i:=1$;

\item If $H_i$ is not of class $3b$ nor $1$, then by Lemma~\ref{lem:dis_edges}, we have that $k_i=1$ or $k_i=|J_i|-1$;
in this case we set $l_i:=1$, and $\overline{l}_i:=|J_i|-1$;

\item If $H_i$ is of class $3b$, then by Lemma~\ref{lem:unique_lm}, there
exist unique integers $m$ and $l \le (m+1)/2$ such that  $H_i \simeq F_l(K_{1,m})$; we set $l_i:=l$, and $\overline{l}_i:=m+1-l$
in this case. 
\end{itemize}

We construct in polynomial time an isomorphism $\psi_i:H_i \to F_{l_i}(J_i)$.
This is  straightforward when $H_i$ is not of class $3b$;  when $H_i$ is of class
$3b$ it can be done in polynomial time by Lemma~\ref{lem:star}. For each $H_i$ we also construct the following isomorphism from $H_i$ to $F_{\overline{l}_i}(J_i)$.
\begin{equation*}
            \overline{\psi_i} := \left\{
            \begin{array}{rl}
                   \mathfrak{c} \circ \psi_i & \text{if } H_i \text{ is not of class 1}; \\
                  \psi_i  & \text{if } H_i \text{ is of class 1}; \\
            \end{array} \right. 
            \end{equation*}

Let $\varphi_i$ be the isomorphism from $H_i$ to $F_{k_i}(G_i)$ given by $(3)$ of Lemma~\ref{lem:algo}.
Using $\varphi_i$ and one of $\psi_i$ and $\overline{\psi_i}$, we define an isomorphism $\phi_i'$ from $F_{k_i}(J_i)$ to $F_{k_i}(G_i)$, and an isomorphism $\phi_i$ from $J_i$ to $G_i$
such that
\[\phi_i'= \iota(\phi_i).\]
\begin{itemize}

%

\item Suppose that $H_i$ is not of class $1$. By $3)$ of Theorem~\ref{thm:char}, there exists a unique
$f(\varphi_i\circ {\psi_i}^{-1}) \in \iso(J_i,G_i)$  such that
\[\varphi_i\circ {\psi_i}^{-1}=\iota (f(\varphi_i\circ {\psi_i}^{-1})) \textrm{ or } 
\varphi_i\circ {\psi_i}^{-1}=\mathfrak{c} \circ \iota (f(\varphi_i\circ {\psi_i}^{-1})).\]
 Let
\begin{equation*}
            \phi_i' := \left\{
            \begin{array}{rl}
                   & \varphi_i\circ {\psi_i}^{-1}  \text{ if } \varphi_i\circ {\psi_i}^{-1}=\iota (f(\varphi_i\circ {\psi_i}^{-1})); \\
                  & \varphi_i\circ \overline{\psi_i}^{-1}  \text{ if } \varphi_i\circ {\psi_i}^{-1}=\mathfrak{c} \circ \iota (f(\varphi_i\circ {\psi_i}^{-1})).
            \end{array} \right. 
            \end{equation*}
Let $\phi_i:=f(\varphi_i\circ {\psi_i}^{-1})$. If $\phi_i'=\varphi_i\circ {\psi_i}^{-1}$, then $\phi_i'=\iota(\phi_i)$. 
If $\phi_i'=\varphi_i\circ \overline{\psi_i}^{-1} $, then
\begin{align}
 \phi_i' & =\varphi_i\circ \overline{\psi_i}^{-1} \nonumber  \\  
         & =\varphi_i\circ  (\mathfrak{c} \circ \psi_i)^{-1} \nonumber  \\ 
         & =  \varphi_i \circ {\psi_i}^{-1} \circ \mathfrak{c} \nonumber  \\ 
         & = \mathfrak{c} \circ \iota (f(\varphi_i\circ {\psi_i}^{-1})) \circ \mathfrak{c}  \nonumber  \\ 
         & = \mathfrak{c}  \circ \mathfrak{c} \circ \iota (f(\varphi_i\circ {\psi_i}^{-1}))  \nonumber  \\
         & = \iota (f(\varphi_i\circ {\psi_i}^{-1})) \nonumber  \\ 
         & = \iota(\phi_i). \nonumber         
\end{align}

\item Suppose that  $H_i$ is of class $1$. Let $\phi_i':=\varphi_i\circ {\psi_i}^{-1}$,
and let $\phi_i \in \iso(J_i,G_i)$ such that 
\[\phi_i'= \iota(\phi_i).\]
\end{itemize}

 We have the following diagram.
\begin{center}
\begin{tikzcd}[every label/.append style={font=\normalsize}]
F_{l_i}(J_i) \arrow[rdd, "{\psi_i}^{-1}"'] \arrow[rrrdd, "\iota(\phi_i)", dashed] & & & & &  &  &     \\
& & & & &  &  &     \\
& H_i \arrow[rr, "\varphi_i"] &  & F_{k_i}(G_i) & & J_i \arrow[rrr, "\phi_i"] &  &  & G_i \\
&  &  &  &  &  &  &     \\
F_{\overline{l}_i}(J_i) \arrow[uuuu, "\mathfrak{c}"] \arrow[ruu, "{\overline{\psi_i}}^{-1}"] \arrow[rrruu, "\iota(\phi_i)"', dashed] & & & & & & &
\end{tikzcd},
\end{center}
where we have exactly one of the dashed lines.

In what follows we always use the same letter to denote a vertex 
of $J_i$ and its image under $\phi_i$ in $G_i$. We use a prime to distinguish the vertex in $J_i$. 
So that if $u' \in J_i$ then $u:=\phi_i(u') \in G_i$.
Let $e \in E(H,F\setminus H)$. Note that there exist indices $1 \le i < j \le r$
such that $\varphi(e)$ corresponds to moving a token along a $G_i-G_j$ edge.
Let $\idx_H(e):=\{i,j\}$; and let $E(H,F\setminus H)_{ij}$
be the set of edges  $e' \in E(H,F\setminus H)$ such that $\idx_H(e')=\{i,j\}.$

\begin{lemma}\label{lem:indices}
 For every $H-F\setminus H$ edge, $e$, we can compute $\idx_H(e)$ in polynomial time.
\end{lemma}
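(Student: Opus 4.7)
The plan is to recover $\idx_H(e)=\{i,j\}$ from the ladder class $R[e]$ together with the coordinate structure encoded by $\pi$. By Proposition~\ref{prop:ladder}, every edge in $R[e]$ corresponds to moving a token along a single fixed edge $xy$ of $G$. Because $e\in E(H,F\setminus H)$ and $\varphi(A)\cap V(G_l)$ has size $k_l$ for every $l$ (Lemma~\ref{lem:algo}), the configuration $\varphi(B)$ must fail this at exactly the two $G_l$ containing the endpoints of $xy$; hence $xy$ is a $G_i$--$G_j$ edge. Without loss of generality assume $x\in\varphi(A)\cap V(G_i)$ and $y\in V(G_j)\setminus\varphi(A)$.

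The algorithm I would run is the following. First, compute the ladder class $R[e]$ in polynomial time (using the description from Section~\ref{sec:4-cycles}). Next, collect the set
\[
P:=\{A'\in V(H):A'\text{ is an endpoint of some edge in }R[e]\},
\]
and for each coordinate $1\le l\le r$ compute the projection $P_l:=\{\pi(A')(l):A'\in P\}\subseteq V(H_l)$. The central claim is that
\[
P_l=V(H_l)\ \Longleftrightarrow\ l\notin\{i,j\},
\]
so $\idx_H(e)$ is exactly the unique pair of indices $l$ for which $P_l\subsetneq V(H_l)$. Clearly all these computations run in polynomial time.

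To justify the claim, the key observation is that $A'\in P$ precisely when $x\in\varphi(A')$ and $y\notin\varphi(A')$; indeed, for such $A'$ the move along $xy$ produces a vertex $A''$ with $\varphi(A'')=\varphi(A')\triangle\{x,y\}$ satisfying $|\varphi(A'')\cap V(G_i)|=k_i-1\ne k_i$, so $A''\notin V(H)$ by Lemma~\ref{lem:algo}, and hence $A'A''\in R[e]$ is a genuine edge of $E(H,F\setminus H)$; conversely any edge of $R[e]$ incident to $A'\in V(H)$ forces $x\in\varphi(A')$ and $y\notin\varphi(A')$. Using the bijection $V(H)\leftrightarrow V(H_1)\times\cdots\times V(H_r)$ from Lemma~\ref{lem:algo}, the condition $x\in\varphi(A')$ constrains only the $i$-th coordinate (via $\varphi_i$) and $y\notin\varphi(A')$ only the $j$-th; thus for $l\notin\{i,j\}$ the coordinate $\pi(A')(l)$ can be chosen freely, so $P_l=V(H_l)$. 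For $l=i$, not every $k_i$-subset of $V(G_i)$ contains $x$ (since $k_i<|V(G_i)|$, because $H_i$ has at least two vertices), so $P_i\subsetneq V(H_i)$; the argument for $l=j$ is analogous.

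I expect the only subtle point to be verifying that an edge $A'A''$ of $R[e]$ with $A'\in V(H)$ genuinely exists whenever $x\in\varphi(A')$ and $y\notin\varphi(A')$, but this is exactly where the Cartesian product structure of $H$ from Lemma~\ref{lem:algo} together with $|\varphi(A')\cap V(G_i)|=k_i$ immediately forces $A''\notin V(H)$. Everything else is routine polynomial-time bookkeeping.
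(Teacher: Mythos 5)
There is a genuine gap in your characterization of the set $P$, and it breaks the projection criterion. You claim that $A'\in P$ precisely when $x\in\varphi(A')$ and $y\notin\varphi(A')$. The forward inclusion is the problem: an edge $A'A''\in R[e]$ with $A'\in V(H)$ only tells you, via Proposition~\ref{prop:ladder}, that $\varphi(A')\triangle\varphi(A'')=\{x,y\}$, so $\varphi(A')$ contains exactly one of $x,y$ --- and it can be $y$. The ladder class can ``wrap around'' through cross-edges between different $G_l$'s and re-enter $H$ on the other side of $xy$. Concretely, take $G=C_6$ with vertices $x,y,u,v,p,q$ in cyclic order and $k=3$, so that a legitimate run of \textsc{ProductSubgraph} gives $r=3$, $G_1=\{q,x\}$, $G_2=\{y,u\}$, $G_3=\{v,p\}$, $k_1=k_2=k_3=1$. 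For $e=AB$ with $\varphi(A)=\{x,u,p\}$, $\varphi(B)=\{y,u,p\}$ (so $xy$ is a $G_1-G_2$ edge and $\idx_H(e)=\{1,2\}$), the induced $4$-cycles on $\{x,u,p\},\{y,u,p\},\{y,v,p\},\{x,v,p\}$ and then on $\{x,v,p\},\{y,v,p\},\{y,v,q\},\{x,v,q\}$ put the edge $\{x,v,q\}\{y,v,q\}$ into $R[e]$, and its endpoint $\{y,v,q\}$ lies in $H$ while containing $y$ and not $x$. Working out all of $P$ in this example gives $P=\{\{x,u,p\},\{x,u,v\},\{y,v,q\},\{y,p,q\}\}$, whose projection onto each of the three coordinates is all of $V(H_l)$; your test then finds no pair with a proper projection and cannot return $\{1,2\}$. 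A secondary, smaller issue: even for the reverse inclusion you assert that $A''\notin V(H)$ ``forces'' $A'A''\in R[e]$, but an edge corresponding to the move along $xy$ need not lie in the same ladder class as $e$ (Lemma~\ref{lem:ladder_char} requires $G\setminus\{x,y\}$ connected); that inclusion is in fact true, but it needs the ladder-lifting argument through the Cartesian structure of $H$ as in the proof of $(\dagger)$ inside Lemma~\ref{lem:algo}.

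The paper avoids all of this by never projecting: for each candidate pair $(i,j)$ it tests whether \emph{every} $C\in V(H)$ with $\pi(C)(i)=\pi(A)(i)$ and $\pi(C)(j)=\pi(A)(j)$ is incident to an edge of $R[e]$. Fixing both coordinates simultaneously pins the tokens at $x$ and $y$ to agree with $A$'s, so the slide along $xy$ is available from every such $C$ and the resulting edge is reached by a ladder lying inside $H$; for a wrong pair, some $C$ in the corresponding set carries tokens at both of $x,y$ or at neither, hence is incident to no edge of $R[e]$ whatsoever. You would need to replace your projection test by a criterion of this form.
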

\begin{proof}
Let $AB$ be an $H-F\setminus H$ edge.
For every pair $1 \le i < j \le r$ we check whether every vertex 
in the set \[\{C \in H:\pi(C)(i)=\pi(A)(i) \textrm{ and } \pi(C)(j)=\pi(A)(j)\}\]
is incident to an edge in the same ladder class as $AB$. The pair 
where this is the case is the pair of indices we are looking for. 
\end{proof}

\subsubsection{Labeling the $H-F\setminus H$ edges with vertices in $J$} 
Consider an edge  $e \in E(H,F\setminus H)_{ij}$. Note that there exist vertices 
$x'=:\operatorname{endpoint}_J(e)(i) \in J_i$  and $y'=:\operatorname{endpoint}_J(e)(j) \in J_j$ such that
$\varphi(e)$ corresponds to moving a token along the edge $xy$. 
In this section we show how to compute $\operatorname{endpoint}_J(e)(i)$ in polynomial time when
$H_i$ is not of class $1$.  We define some subgraphs of $F$, that are useful for this and other purposes. 
\\[\baselineskip]
Let $A$ be a vertex of $F$.
\begin{itemize}
\item Let $\mathbf{Move}(A,i)$ be the subgraph of $F$ induced
by all the vertices $B \in F$ such that 
\[\varphi(B) \cap G_j= \varphi(A) \cap G_j \textrm{ for all }  j \neq i. \]
Thus, $\varphi(\mathbf{Move}(A,i))$ is the subgraph of $F_k(G)$ induced by all the token
configurations that can be reached from $\varphi(A)$ by moving the tokens within $G_i$
while leaving the tokens at the other $G_j$ fixed. 

\item Let $\mathbf{Move}(A)$ be the subgraph of $F$ induced
by all the vertices $B \in F$ such that 
\[|\varphi(B) \cap G_i|= |\varphi(A) \cap G_i| \textrm{ for all }  1 \le i \le r. \]
Thus, $\varphi(\mathbf{Move}(A))$ is the subgraph of $F_k(G)$ induced by all the token
configurations that can be reached from $\varphi(A)$ by token moves that do not involve moving
tokens between different $G_i$. 
\end{itemize}
Note that if $A \in V(H)$, then \[\mathbf{Move}(A,i) \simeq H_i  \textrm{ and }  \mathbf{Move}(A) \simeq H.\]
In particular, in this case  $\mathbf{Move}(A,i)$ is the subgraph of $H$ induced by the set 
of vertices 
\[\{B \in H:\pi(B)(j)=\pi(A)(j) \textrm{ for all } j \neq i\}.\]
Thus, when $A$ is a vertex of $H$ we can compute $\mathbf{Move}(A,i)$ in polynomial time.
\\[\baselineskip]
Let $e=AB\in E(H,F\setminus H)_{ij}$. 
\begin{itemize}
 \item Let $\mathbf{FixEdge}(e,i)$ be the component, that contains $A$, of the subgraph of $F$ induced
 by the set of vertices
 \[\{C \in \mathbf{Move}(A,i): C \textrm{ is incident to an edge in the ladder class of } e\}.\]
 Thus, $\varphi(\mathbf{FixEdge}(e,i))$ is the subgraph of $F_k(G)$ induced by token configurations in $\varphi(\mathbf{Move}(A,i))$ 
 that are reachable from $\varphi(A)$ by a path in $\varphi(\mathbf{Move}(A,i))$, such that at every token move of the path no token has been moved from
or placed at the endpoints of $\varphi(e)$.

\item Let $\mathbf{NFixEdge}(e,i)$ be the subgraph of $\mathbf{Move}(A,i) \setminus \mathbf{FixEdge}(e,i)$ 
induced by  neighbors of $\mathbf{FixEdge}(e,i)$ in $\mathbf{Move}(A,i) \setminus \mathbf{FixEdge}(e,i)$.
\end{itemize}


We now prove some lemmas that use the structure of the previously defined subgraphs of $F$, to compute
 $\operatorname{endpoint}_J(e)(i)$ in polynomial time.
\begin{lemma}\label{lem:CD}
 Let $e \in E(H,F\setminus H)_{ij}$.
 Suppose that $|\mathbf{FixEdge}(e,i)| > 1$ or $|\mathbf{NFixEdge}(e,i)| > 1$.  Then we can compute $\operatorname{endpoint}_J(e)(i)$ in polynomial time. 
\end{lemma}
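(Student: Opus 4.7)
The strategy is to identify $x' := \operatorname{endpoint}_J(e)(i)$ by pulling the subgraphs $\mathbf{FixEdge}(e,i)$ and $\mathbf{NFixEdge}(e,i)$ back into $F_{k_i}(J_i)$ via the map $T(C) := \sigma_i(\pi(C)(i))$, where $\sigma_i \in \{\psi_i,\overline{\psi_i}\}$ is the isomorphism fixed in the previous subsection so that $\varphi_i\circ\sigma_i^{-1} = \iota(\phi_i)$. Both subgraphs, and every value $T(C)$, are computable in polynomial time directly from their definitions, from the ladder classes of $F$, and from the output of \textsc{ProductSubgraph}.

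By Proposition~\ref{prop:ladder}, every edge of the ladder class $R[e]$ corresponds to a move along the same $G$-edge $xy$, with $x\in V(G_i)$ and $y\in V(G_j)$. Consequently, throughout the connected set $\mathbf{FixEdge}(e,i)$ the vertex $x$ is either present in every $\varphi(C)\cap V(G_i)$ or absent from all of them, depending only on whether $x\in\varphi(A)$; and the opposite dichotomy holds on $\mathbf{NFixEdge}(e,i)$, because each $C'\in\mathbf{NFixEdge}$ is obtained from some $\mathbf{FixEdge}$-configuration by sliding the $x$-token along a $G_i$-edge. Pushing this through $\phi_i^{-1}$, the vertex $x'$ lies in every $T(C)$ with $C\in\mathbf{FixEdge}$ and in no $T(C')$ with $C'\in\mathbf{NFixEdge}$, or with the roles of the two sets swapped.

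For uniqueness, suppose $v'\neq x'$ also lies in every $T(C)$ with $C\in\mathbf{FixEdge}$. Any edge of $F$ between a $\mathbf{FixEdge}$- and a $\mathbf{NFixEdge}$-configuration is an $x$-token move and hence leaves the occupancy of every position other than $x$ unchanged, so the vertex $v$ appears in each adjacent $\varphi(C')$, forcing $v'\in \bigcup_{C'\in\mathbf{NFixEdge}} T(C')$. Therefore
\[
\Bigl(\bigcap_{C\in\mathbf{FixEdge}} T(C)\Bigr)\setminus \Bigl(\bigcup_{C'\in\mathbf{NFixEdge}} T(C')\Bigr)=\{x'\},
\]
and analogously for the reversed set difference. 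So $x'$ can be read off in polynomial time provided $\mathbf{NFixEdge}(e,i)\neq\emptyset$, which is automatic under the hypothesis $|\mathbf{NFixEdge}(e,i)|>1$.

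The main obstacle is to rule out $\mathbf{NFixEdge}(e,i)=\emptyset$ under the alternative hypothesis $|\mathbf{FixEdge}(e,i)|>1$. In Case A, $\mathbf{NFixEdge}=\emptyset$ means $N_{G_i}(x)\subseteq \varphi(C)\cap V(G_i)$ for every $C\in\mathbf{FixEdge}$. A short induction on the $G_i$-distance from $x$ extends this: if every vertex at distance at most $d$ from $x$ is always occupied in $\mathbf{FixEdge}$, a vertex $v$ at distance $d{+}1$ with a $G_i$-neighbour $u$ at distance $d$ cannot be free in any $C^{\ast}\in\mathbf{FixEdge}$, for otherwise the move $u\to v$ would yield a new $\mathbf{FixEdge}$-configuration in which $u$ is unoccupied, contradicting the inductive hypothesis. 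Since $G_i$ is connected (by Theorem~\ref{thm:prod}, since $F_{k_i}(G_i)$ is connected), we conclude $\varphi(C)\cap V(G_i)=V(G_i)$ throughout $\mathbf{FixEdge}$, whence $|\mathbf{FixEdge}|=1$---a contradiction. Case B is symmetric, with $N_{G_i}(x)$ always empty propagating to $V(G_i)$. This structural induction is the main technical step, and with it the algorithm outputs $x'$ in polynomial time.
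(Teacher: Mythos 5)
Your structural observations are sound: edges inside $\mathbf{FixEdge}(e,i)$ never move a token onto or off $x$, every $\mathbf{FixEdge}$--$\mathbf{NFixEdge}$ edge does, so the occupancy of $x$ is constant on each of the two sets and opposite between them; and your induction ruling out $\mathbf{NFixEdge}(e,i)=\emptyset$ when $|\mathbf{FixEdge}(e,i)|>1$ is correct (and makes explicit a point the paper leaves implicit). The genuine gap is at the very first step: you evaluate $T(C):=\sigma_i(\pi(C)(i))$ for ``the'' $\sigma_i\in\{\psi_i,\overline{\psi_i}\}$ satisfying $\varphi_i\circ\sigma_i^{-1}=\iota(\phi_i)$ and assert that $T$ is computable. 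It is not: which of $\psi_i,\overline{\psi_i}$ has that property is decided by a condition on $\varphi_i$, and $\varphi$ is precisely the data the algorithm does not possess (the paper only synchronizes these choices across indices much later, and even then only up to a global swap). Your formula $\bigl(\bigcap_{C\in\mathbf{FixEdge}}T(C)\bigr)\setminus\bigl(\bigcup_{C'\in\mathbf{NFixEdge}}T(C')\bigr)$ is not invariant under replacing $\psi_i$ by $\overline{\psi_i}=\mathfrak{c}\circ\psi_i$: the swap turns it into the reversed difference $\bigl(\bigcap_{\mathbf{NFixEdge}}\psi_i\bigr)\setminus\bigl(\bigcup_{\mathbf{FixEdge}}\psi_i\bigr)$. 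So what the algorithm can actually compute is an unordered pair of candidate sets, and your proof gives no way to decide which member is $\{x'\}$. When $|\mathbf{FixEdge}(e,i)|=|\mathbf{NFixEdge}(e,i)|=1$ both members are singletons ($\{x'\}$ and $\{u'\}$, where $u$ is the other endpoint of the boundary move), which is exactly why the lemma carries the hypothesis $|\mathbf{FixEdge}(e,i)|>1$ or $|\mathbf{NFixEdge}(e,i)|>1$ --- a hypothesis your argument never uses for disambiguation, only for nonemptiness.

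The gap is repairable, but the repair is essentially the paper's proof: one must show that under the stated hypothesis the ``wrong'' candidate is always empty, and this requires the case analysis the paper performs --- if $H_i$ is not of class $3b$ then $k_i\in\{1,|G_i|-1\}$ and the configurations are determined by a single distinguished vertex, while if $H_i$ is of class $3b$ then $J_i$ is a star with a fixed center --- checking in each case which side of the dichotomy can contain more than one configuration. The paper sidesteps the alignment issue entirely by working only with quantities invariant under the swap, namely the symmetric differences $\psi_i(A_1)\triangle\psi_i(A_2)=\overline{\psi_i}(A_1)\triangle\overline{\psi_i}(A_2)$ and the distinguished vertex of $\psi_i(A)=V(J_i)\setminus\overline{\psi_i}(A)$; your Boolean-combination argument should be recast in those terms, or supplemented with a proof that exactly one of the two computable candidates is nonempty.
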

\begin{proof}
Let $AB:=e$.
If $H_i$ is of class $1$, then $|\mathbf{FixEdge}(e,i)| = 1$ and $|\mathbf{NFixEdge}(e,i)| = 1$.  Thus, $H_i$ is not of class $1$.
Note that

\MyQuote{\begin{itemize}
\item[a)] if $A_1A_2$ is an edge of $\mathbf{FixEdge}(e,i)$, then 
\[x'  \notin  \psi_i (A_1) \triangle \psi_i (A_2) = {\overline{\psi_i}} (A_1) \triangle {\overline{\psi_i}} (A_2); \textrm{ and} \]
\item[b)] if $A_1A_2$ is a $\mathbf{FixEdge}(e,i)-\mathbf{NFixEdge}(e,i)$ edge then
\[x' \in  \psi_i (A_1) \triangle \psi_i (A_2) = {\overline{\psi_i}} (A_1) \triangle {\overline{\psi_i}} (A_2).\] 
\end{itemize}}
Suppose that $H_i$ is not of class $3b$. We have that $k_i=1$ or $k_i=|G_i|-1$.
Let $v'$ the only vertex in 
\[\psi_i (A)=V(J_i)\setminus {\overline{\psi_i}} (A).\]
Let $C$ be a vertex in $\mathbf{NFixEdge}(e,i)$. Let $w'$
be the only vertex in 
\[\psi_i (C)=V(J_i)\setminus {\overline{\psi_i}} (C).\]
Suppose that  $|\mathbf{FixEdge}(e,i)| > 1$; by $(\ast)$ we have that $\operatorname{endpoint}_J(e)(i)=w'$.
Suppose that $|\mathbf{FixEdge}(e,i)| = 1$; thus, $|\mathbf{NFixEdge}(e,i)|> 1$;
by $(\ast)$ we have that $\operatorname{endpoint}_J(e)(i)=v'$.

Suppose that $H_i$ is of class $3b$. Thus, $J_i$ is a star. Let $v'$ be the
center of $J_i$. If $|\mathbf{FixEdge}(e,i)|=1$, then $|\mathbf{NFixEdge}(e,i)|>1$ and $\operatorname{endpoint}_J(e)(i)=v'$. 
Suppose that $|\mathbf{FixEdge}(e,i)|>1$, then   $\operatorname{endpoint}_J(e)(i) \neq v'$.
Let $CD$ be a $\mathbf{FixEdge}(e,i)-\mathbf{NFixEdge}(e,i)$ edge. We have that $\operatorname{endpoint}_J(e)(i)$
is the vertex in \[\psi_i (C) \triangle \psi_i (D) = {\overline{\psi_i}} (C) \triangle {\overline{\psi_i}} (D)\]
distinct from $v'$.
\end{proof}
%

\begin{lemma}\label{lem:one_side}
Suppose that $H_i$ is not of class $1$. For every vertex $u \in G_i$ such that $u$ is adjacent to a vertex $v \in G_j$, there
exists $e \in E(H,F\setminus H)_{ij}$ 
such that $\operatorname{endpoint}_J(e)(i)=u'$ and for 
which we can compute $\operatorname{endpoint}_J(e)(i)$ in polynomial time.
\end{lemma}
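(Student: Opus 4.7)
Proof Plan. Given $u \in V(G_i)$ with some neighbor $v \in V(G_j)$, I plan to build the desired pair $(A, e)$ explicitly: choose $A \in V(H)$ with $u \in \varphi(A) \cap V(G_i)$ and $v \notin \varphi(A) \cap V(G_j)$, then set $B := \varphi^{-1}((\varphi(A) \setminus \{u\}) \cup \{v\})$ and $e := AB$. By construction $e \in E(H, F \setminus H)_{ij}$ and $\operatorname{endpoint}_J(e)(i) = u'$; the only real task is to pick $A$ so that $\operatorname{endpoint}_J(e)(i)$ can be identified in polynomial time from the data in $F$.

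The workhorse will be Lemma~\ref{lem:CD}, which delivers $\operatorname{endpoint}_J(e)(i)$ whenever $|\mathbf{FixEdge}(e,i)| > 1$ or $|\mathbf{NFixEdge}(e,i)| > 1$. I plan to verify that a suitable $A$ achieving one of these inequalities exists in the following three generic situations: (G1) $k_i = 1$ and $\deg_{G_i}(u) \geq 2$, where the $\deg_{G_i}(u)$ possible moves of the unique token of $G_i$ contribute that many vertices to $\mathbf{NFixEdge}(e,i)$; (G2) $k_i \geq 2$ and $G_i \setminus \{u\}$ contains some edge $ab$, where picking $A$ with $u$ and $a$ among its $G_i$-tokens while excluding $b$ makes the non-$u$ move $a \to b$ available and enlarges $\mathbf{FixEdge}(e,i)$; and (G3) $G_i$ is a star centered at $u$ with $k_i \leq |G_i| - 2$, where two leaves of $G_i$ remain empty in $A$ and both moves of the center's token to them land in $\mathbf{NFixEdge}(e,i)$. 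Because $G_i$ is connected and $H_i$ is not of class~$1$, a short case analysis shows (G1)--(G3) cover every pair $(u, k_i)$ except the two degenerate configurations (D1) $k_i = 1$ with $u$ a leaf of $G_i$ and (D2) $G_i \cong K_{1,m}$ with $u$ its center and $k_i = m$.

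The hard part will be the degenerate cases (D1)--(D2), where $\mathbf{FixEdge}(e,i)$ and $\mathbf{NFixEdge}(e,i)$ both collapse to a single vertex and Lemma~\ref{lem:CD} is silent. In case (D1), the class of $H_i$ is necessarily $3c$ or $4$ (class~$3a$ and class~$3b$ produce no leaves when $k_i=1$, while class~$2$ gives $G_i=K_3$), so the conventions of Section~\ref{sec:alg} force $l_i = 1 = k_i$ and therefore $\phi_i' = \varphi_i \circ \psi_i^{-1}$; chasing the identity $\iota(\phi_i) \circ \psi_i = \varphi_i$ at $\pi(A)(i)$ then gives $\psi_i(\pi(A)(i)) = \{u'\}$, and $\operatorname{endpoint}_J(e)(i)$ is read off as the unique element of this singleton. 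For (D2) the graph $J_i$ is the star $K_{1,m}$ and any isomorphism $\phi_i : J_i \to G_i$ must map the center of $J_i$ to the center of $G_i$, so $\operatorname{endpoint}_J(e)(i) = u'$ is identified as the unique maximum-degree vertex of $J_i$. Both identifications rely on the commuting diagram between $\psi_i$, $\varphi_i$, and $\iota(\phi_i)$ set up in Section~\ref{sec:alg} together with the rigidity of token graphs of stars supplied by Lemma~\ref{lem:star}; these together compel the singleton $\psi_i(\pi(A)(i))$ in (D1), respectively the unique high-degree vertex of $J_i$ in (D2), to coincide with $u'$.
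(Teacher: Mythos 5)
Your generic cases (G1)--(G3) are sound and follow the same strategy as the paper: exhibit a configuration $A$ for which the witness edge satisfies the hypothesis of Lemma~\ref{lem:CD}. The gap is in your degenerate cases (D1)--(D2), and it is a real one. By insisting that the token always slides \emph{out of} $u$ (i.e.\ $u\in\varphi(A)$, $B=\varphi^{-1}((\varphi(A)\setminus\{u\})\cup\{v\})$), you manufacture edges with $|\mathbf{FixEdge}(e,i)|=|\mathbf{NFixEdge}(e,i)|=1$ that the paper simply never uses: in the paper's proof the slide direction is chosen according to $\deg_{G_i}(u)$ and $k_i$, and in what you call (D1) the paper instead takes $A$ with the single $G_i$-token at the neighbour $w$ of $u$ and a token at $v$, slides $v\to u$, and gets $|\mathbf{FixEdge}(e,i)|>1$ because $\deg_{G_i}(w)\ge 2$; in (D2) it takes $A$ with $u$ empty, all $m\ge 2$ leaves occupied and a token at $v$, slides $v\to u$, and gets $|\mathbf{NFixEdge}(e,i)|>1$. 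So every case reduces to Lemma~\ref{lem:CD}.

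Your replacement rules for (D1) and (D2) are each correct \emph{under the hypothesis of their case}, but those hypotheses involve the hidden value of $k_i$, which the algorithm cannot observe at this stage: whether $k_i=l_i$ or $k_i=\bar l_i$, equivalently whether $\psi_i$ or $\overline{\psi_i}$ reads off absolute token positions, is exactly the ambiguity that is only resolved later (Lemma~\ref{lem:nu_computed} and the renaming step). Lemma~\ref{lem:CD} is immune to this because it only uses symmetric differences $\psi_i(A_1)\triangle\psi_i(A_2)=\overline{\psi_i}(A_1)\triangle\overline{\psi_i}(A_2)$; your rules use the absolute label $\psi_i(\pi(A)(i))$, which is not invariant. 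Concretely, take $J_i=K_{1,m}$: scenario (D1) ($k_i=1$, token at a leaf $u$) and scenario (D2) ($k_i=m$, $u$ the centre, one leaf empty) present identical observable data --- a degenerate edge whose label $\psi_i(\pi(A)(i))$ is a singleton consisting of a leaf of $J_i$, with both $\mathbf{FixEdge}$ and $\mathbf{NFixEdge}$ trivial --- yet your two rules demand different outputs (that leaf itself versus the centre of $J_i$). Hence no function of the data available to the algorithm implements both, and the "polynomial-time computation" claimed for these cases is not established. The fix is the paper's: reverse the slide direction in these configurations so that Lemma~\ref{lem:CD} applies.
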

\begin{proof}
 Suppose that $u$ is of degree greater than one in $G_i$. Let $A \in V(H)$ be such 
 that: if $k_i=1$, then in $\varphi(A)$ there is a token
 at $u$ and no token at $v$; and if $k_i>1$, then in $\varphi(A)$ there is no token
 at $u$, a token in at least two neighbors of $u$ and  a token at $v$.
 Let $e:=AB$, such that $\varphi(B)$ is obtained from
 $A$ by sliding the token along $uv$. We have that $|\mathbf{NFixEdge}(e,i)|>1$ and by
 Lemma~\ref{lem:CD} we can compute $\operatorname{endpoint}_J(e)(i)$.
 
 Suppose that $u$ is of degree equal to one in $G_i$, and let
 $w$ be its neighbor in $G_i$. Note that since $H_i$ is not of class
 $1$, $w$ is of degree greater than one in $G_i$. Let $A \in H$ be
 such that: if $k_i=1$, then in $\varphi(A)$ there is a token at $w$, no token at $u$, and
 a token at $v$; if $k_i > 1$, then in $\varphi(A)$ there is no token at $w$, a token at $u$, a token
 at a neighbor of $w$ in $G_i$ distinct from $u$, and no token at $v$. 
 Let $e:=AB$, such that $\varphi(B)$ is obtained from
 $A$ by sliding the token along $uv$. We have that $|\mathbf{FixEdge}(e,i)|>1$ and by
 Lemma~\ref{lem:CD} we can compute $\operatorname{endpoint}_J(e)(i)$.
\end{proof}

\begin{lemma}\label{lem:knowledge}
 Let $e:=AB \in E(H,F\setminus H)_{ij}$ such that $H_i$ is not of class $1$.
 If we know that it must be the case that either $k_i=k_j=1$, or $k_i=|G_i|-1$ and $k_j=|G_j|-1$,
 then we can compute $\operatorname{endpoint}_J(e)(i)$ in polynomial time.
\end{lemma}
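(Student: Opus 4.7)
The plan is to piggy-back on Lemma~\ref{lem:CD} when possible, and to use the extra information in the hypothesis to close the remaining gap. First I would compute $|\mathbf{FixEdge}(e,i)|$ and $|\mathbf{NFixEdge}(e,i)|$; if either exceeds $1$ then Lemma~\ref{lem:CD} applies directly to $e$ and produces $\operatorname{endpoint}_J(e)(i)$ in polynomial time. The nontrivial case is when both equal $1$, and my plan there is to output the unique element $v'$ of $\psi_i(A)$ and argue that $v'=\operatorname{endpoint}_J(e)(i)$.

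To justify this, let $xy$ be the edge of $G$ along which $\varphi(e)$ slides a token, with $x\in G_i$ and $y\in G_j$. The hypothesis $k_i\in\{1,|G_i|-1\}$ rules out $H_i$ being of class~$3b$, so $l_i=1$ and $\psi_i(A)=\{v'\}$ is a singleton. Specializing the binomial counts in the proof of Lemma~\ref{lem:CD} to $k_i\in\{1,|G_i|-1\}$ and using $|G_i|\ge 3$ (which follows from $H_i$ not being of class~$1$), the equality $|\mathbf{FixEdge}(e,i)|=1$ forces one of two subcases matching the two alternatives of the hypothesis: either $k_i=1$ with $\varphi_i(A)=\{x\}$, or $k_i=|G_i|-1$ with $\varphi_i(A)=V(G_i)\setminus\{x\}$.

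Finally I would invoke the identity $\phi_i'=\iota(\phi_i)$, which equals $\varphi_i\circ\psi_i^{-1}$ in the first subcase (where $k_i=l_i$) and $\varphi_i\circ\overline{\psi_i}^{-1}$ in the second (where $k_i=\overline{l}_i$). In the first subcase this yields $\{x\}=\varphi_i(A)=\iota(\phi_i)(\psi_i(A))=\{\phi_i(v')\}$, hence $v'=\phi_i^{-1}(x)=\operatorname{endpoint}_J(e)(i)$. In the second subcase, $V(G_i)\setminus\{x\}=\iota(\phi_i)(\overline{\psi_i}(A))$ gives $\overline{\psi_i}(A)=V(J_i)\setminus\{\phi_i^{-1}(x)\}$, and since $\overline{\psi_i}=\mathfrak{c}\circ\psi_i$ we conclude $\psi_i(A)=\{\phi_i^{-1}(x)\}$ again, so $v'=\operatorname{endpoint}_J(e)(i)$ once more. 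The main obstacle is establishing this uniform identification $\psi_i(A)=\{\phi_i^{-1}(x)\}$ across both alternatives of the hypothesis, so that the algorithm needs no case distinction and can simply output the unique element of the already-computed set $\psi_i(A)$; all other steps (computing $|\mathbf{FixEdge}|$, $|\mathbf{NFixEdge}|$, and reading off $\psi_i(A)$) are polynomial in $|F|$.
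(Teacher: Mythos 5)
Your reduction to Lemma~\ref{lem:CD} in the non-degenerate case is fine, but the claim you make in the degenerate case is false, and it is precisely where all the work in the paper's proof happens. You assert that $|\mathbf{FixEdge}(e,i)|=|\mathbf{NFixEdge}(e,i)|=1$ forces $\varphi_i(\pi(A)(i))=\{x\}$ when $k_i=1$ (resp.\ $V(G_i)\setminus\{x\}$ when $k_i=|G_i|-1$), so that $\operatorname{endpoint}_J(e)(i)$ is simply the unique element of $\psi_i(\pi(A)(i))$. This fails when the token slides \emph{into} $G_i$: take $G_i$ to be a path $a\,b\,c$ with $k_i=1$ and the single $G_i$-token of $\varphi(A)$ at the leaf $a$, and let $\varphi(e)$ slide a token from $y\in G_j$ onto the middle vertex $b$. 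The configurations of $\mathbf{Move}(A,i)$ incident to an edge of $R[e]$ are exactly those with no token at $b$, and the component of $A$ among these is the single configuration with the token at $a$; hence $|\mathbf{FixEdge}(e,i)|=1$ and $|\mathbf{NFixEdge}(e,i)|=1$, yet $\operatorname{endpoint}_J(e)(i)=b'$, not $a'$. In general the degenerate case only tells you that the unique element $x'$ of $\psi_i(\pi(A)(i))$ has degree one in $J_i$ and that $\operatorname{endpoint}_J(e)(i)\in\{x',v'\}$, where $v'$ is the neighbour of $x'$; resolving this two-way ambiguity is exactly what the remainder of the paper's proof is devoted to.

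A symptom of the gap is that your argument never uses the hypothesis that $k_i$ and $k_j$ are \emph{simultaneously} $1$, or simultaneously $|G_i|-1$ and $|G_j|-1$; you only invoke it to rule out class $3b$. The paper needs the joint hypothesis essentially: it repeats the leaf analysis on the $J_j$ side, and when that side is also degenerate it passes to the vertex $A'$ whose $i$th and $j$th coordinates are both complemented, computes $\operatorname{endpoint}_J(A'B')(i)$ for the edges leaving $A'$ (possible there because $v'$ has degree greater than one in $J_i$), and uses the resulting adjacency information between $G_i$ and $G_j$ to decide between $x'$ and $v'$. The algebra in your third paragraph (transporting between $\psi_i$ and $\overline{\psi_i}$ via $\mathfrak{c}$) is correct, but it only applies once the false premise $\varphi_i(\pi(A)(i))=\{x\}$ is granted.
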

\begin{proof}
Let $x'$ be the vertex of $J_i$ such that
\[\{x'\}=\psi_i (\pi(A)(i))=V(J_i)\setminus {\overline{\psi_i}} (\pi(A)(i)).\] 
Suppose that $x'$ is of degree greater than one in $J_i$.
If  $x'=\operatorname{endpoint}_J(e)(i)$
then $|\mathbf{NFixEdge}(e,i)|>1$, and we are done by Lemma~\ref{lem:CD}.
If  $x'\neq \operatorname{endpoint}_J(e)(i)$
then $|\mathbf{FixEdge}(e,i)|>1$, and we are done by Lemma~\ref{lem:CD}.
Assume that $x'$ is of degree equal to one in $J_i$. Let $v' $ be the neighbor of 
$x'$ in $J_i$. Assume  that $\operatorname{endpoint}_J(e)(i)=x'$ or
$\operatorname{endpoint}_J(e)(i)=v'$; otherwise, $|\mathbf{FixEdge}(e,i)|>1$ and we are done by Lemma~\ref{lem:CD}.

Let $y'$ be the vertex of $J_j$ such that
\[\{y'\}=\psi_j (\pi(A)(j))=V(J_j)\setminus {\overline{\psi_j}} (\pi(A)(j)).\] 
Suppose that $|\mathbf{FixEdge}(e,j)|>1$ or $|\mathbf{NFixEdge}(e,j)|>1$. 
Note that $H_j$ is not of class $1$. By Lemma~\ref{lem:CD} we can compute 
$\operatorname{endpoint}_J(e)(j)$. If $\operatorname{endpoint}_J(e)(j)=y'$,
then $\operatorname{endpoint}_J(e)(i)=v'$; if $\operatorname{endpoint}_J(e)(j)\neq y'$,
then $\operatorname{endpoint}_J(e)(i)=x'$.  Thus, we may assume that 
 $|\mathbf{FixEdge}(e,j)|=1$ and $|\mathbf{NFixEdge}(e,j)|=1$. 
This implies that $y'$ is of degree equal to one in $J_j$.
Let $w' $ be the neighbor of  $y'$ in $J_j$. We have that $\operatorname{endpoint}_J(e)(j)=y'$ or  $\operatorname{endpoint}_J(e)(j)=w'$;
otherwise, $|\mathbf{FixEdge}(e,j)|>1$.

 Let $A'$ be the vertex 
of $H$ such that $\psi_i(\pi(A'))(i)=V(J_i)\setminus \overline{\psi}_i(\pi(A)(i))$ and 
$\psi_j(\pi(A'))(j)=V(J_j)\setminus \overline{\psi}_j(\pi(A)(j))$. Let
\[S:=\{B' \in V(F \setminus H): \idx_H(A'B')=\{i,j\} \}.\]
Let $B' \in S$. Since $v'$ is of degree greater than one in $J_i$ we have that
$|\mathbf{FixEdge}(A'B',i)|>1$ or $|\mathbf{FixEdge}(A'B',i)|>1$. By Lemma~\ref{lem:CD}
we can determine $\operatorname{endpoint}_J(A'B')(i)$. 
By a similar argument, if $H_j$ is not of class $1$ 
we can determine $\operatorname{endpoint}_J(A'B')(j)$.

Suppose that $H_j$ is not of class $1$. 
We determine whether $x$ is adjacent to $w$,  and whether $y$ is adjacent to $v$ as follows.
If $x$ is adjacent to $w$ but $v$ is not adjacent 
to $y$, then $\operatorname{endpoint}_J(e)(i)=x'$. If $y$ is adjacent to $v$ but $x$ is not adjacent 
to $w$, then $\operatorname{endpoint}_J(e)(i)=v'$. Assume that $x$ is adjacent to $w$
and that $v$ is adjacent to $y$. We determine the vertex $B'\in S$ 
such that $\varphi(B')$ is obtained from $\varphi(A')$ by sliding the token
along the edge $xw$, and  the vertex $B''\in S$ 
such that $\varphi(B'')$ is obtained from $\varphi(A')$ by sliding the token
along the edge $yv$.
Note that $B=B'$ or $B=B''$.
If $B=B'$ then $\operatorname{endpoint}_J(e)(i)=v'$; and  if $B=B''$ then $\operatorname{endpoint}_J(e)(i)=x'$.

Suppose that $H_j$ is of class $1$. Suppose that there exists a vertex $B' \in S$ such that 
$\operatorname{endpoint}_J(A'B')(i)=v'$. If $B=B'$, then $\operatorname{endpoint}_J(e)(i)=x'$;
otherwise, $\operatorname{endpoint}_J(e)(i)=v'$. Suppose that no such vertex $B'$ exists. 
If $k_i=1$, then $v$ is not adjacent to $y$, and $\operatorname{endpoint}_J(e)(i)=x'$.
If $k_i=|G_i|-1$, then $v$ is not adjacent to $y$, and $\operatorname{endpoint}_J(e)(i)=x'$.
In either case we have that $\operatorname{endpoint}_J(e)(i)=x'$.
\end{proof}

Suppose that we have computed $\operatorname{endpoint}_J(e)(i)$ and 
$\operatorname{endpoint}_J(e)(j)$ for some $e:=AB \in E(H,F\setminus H)_{ij}$. Note that $\psi_i$ and $\overline{\psi_i}$ both
interpret $H_i$ as a token graph of $J_i$. With the difference being  that there is a token at  $\operatorname{endpoint}_J(e)(i)$
in $\psi_i(\pi(A))$ if and only if there is no token at $\operatorname{endpoint}_J(e)(i)$
in $\overline{\psi_i}(\pi(A))$. The same relationship holds for $\psi_j$, $\overline{\psi_j}$ and $H_j$.
So $\psi_i$ is compatible with exactly one of $\psi_j$ and $\overline{\psi_j}$. We formalize this idea in Lemma~\ref{lem:nu_computed}.
For every $1 \le i \le r$, and every $\psi_i' \in \{\psi_i,\overline{\psi_i}\}$, let 
\begin{equation*}
 \overline{\psi_i'} := \left\{
\begin{array}{rl}
\psi_i & \text{if } \psi_i'=\overline{\psi_i},\\
\overline{\psi_i} & \text{if } \psi_i'=\psi_i.
\end{array} \right.
\end{equation*}

\begin{lemma} \label{lem:nu_computed}
Suppose that $H_j$ is not of class $1$ and that we have computed both $\operatorname{endpoint}_J(e)(i)$ and 
$\operatorname{endpoint}_J(e)(j)$ for some $e \in E(H,F\setminus H)_{ij}$. Then we can determine in polynomial time 
$\psi_j' \in  \{\psi_j,\overline{\psi_j}\}$ with the
following property. For every edge $AB \in E(H,F\setminus H)_{ij}$, 
there is exactly one token at $\{\operatorname{endpoint}_J(AB)(i),$ $\operatorname{endpoint}_J(AB)(j)\}$ in each of
\[{\psi_i} (\pi(A)(i)) \cup {\psi_j'} (\pi(A)(j)) \textrm{ and } \overline{{\psi_i}} (\pi(A)(i)) \cup \overline{{\psi_j'}} (\pi(A)(j)).\]
\end{lemma}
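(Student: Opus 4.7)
The plan is to fix the ``parity'' of $\psi_j$ relative to $\psi_i$ by testing the single known edge $e$, and then to verify that this forced choice satisfies the stated condition at every edge of $E(H,F\setminus H)_{ij}$. To formalize the setup, recall that $\phi_i'=\iota(\phi_i)$ holds with $\phi_i'\in\{\varphi_i\circ{\psi_i}^{-1},\varphi_i\circ{\overline{\psi_i}}^{-1}\}$; so exactly one of $\psi_i$ and $\overline{\psi_i}$, which I will call $\psi_i^*$, satisfies $\phi_i(\psi_i^*(\pi(A)(i)))=\varphi(A)\cap V(G_i)$ for every $A\in V(H)$. Analogously, define $\psi_j^*\in\{\psi_j,\overline{\psi_j}\}$. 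Neither $\psi_i^*$ nor $\psi_j^*$ can be identified algorithmically, but they serve as benchmarks for the argument.

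For any edge $AB\in E(H,F\setminus H)_{ij}$, the edge $\varphi(AB)$ of $F_k(G)$ corresponds to sliding a single token along the $G$-edge joining $\phi_i(\operatorname{endpoint}_J(AB)(i))\in V(G_i)$ and $\phi_j(\operatorname{endpoint}_J(AB)(j))\in V(G_j)$; hence $\varphi(A)$ contains exactly one of these two vertices. Applying $\phi_i^{-1}$ and $\phi_j^{-1}$, the set $\psi_i^*(\pi(A)(i))\cup\psi_j^*(\pi(A)(j))$ meets $\{\operatorname{endpoint}_J(AB)(i),\operatorname{endpoint}_J(AB)(j)\}$ in exactly one element. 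This establishes the ``exactly one'' count for the unknown truth pair, uniformly over all $AB$.

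Next I will transfer the property to the accessible pair $(\psi_i,\psi_j')$. Let $\epsilon_i\in\{0,1\}$ indicate whether $\psi_i=\psi_i^*$ or $\psi_i=\overline{\psi_i^*}$, and define $\epsilon_j$ analogously from $\psi_j$. A direct case check shows: if $\epsilon_i=\epsilon_j$, then $\psi_i(\pi(A)(i))\cup\psi_j(\pi(A)(j))$ equals either $\psi_i^*(\pi(A)(i))\cup\psi_j^*(\pi(A)(j))$ or its complement in $V(J_i)\cup V(J_j)$, and both meet the relevant two-element set in exactly one element; whereas if $\epsilon_i\neq\epsilon_j$ the intersection has size $0$ or $2$. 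Hence the correct choice is $\psi_j'=\psi_j$ exactly when $\epsilon_i=\epsilon_j$, and this is an edge-independent condition. Algorithmically, I will decide it at the known edge $e$: letting $A^*\in V(H)$ be its $H$-endpoint, set $\psi_j':=\psi_j$ if $|\{\operatorname{endpoint}_J(e)(i),\operatorname{endpoint}_J(e)(j)\}\cap(\psi_i(\pi(A^*)(i))\cup\psi_j(\pi(A^*)(j)))|=1$, and $\psi_j':=\overline{\psi_j}$ otherwise. The second half of the conclusion follows because $\overline{\psi_i}(\pi(A)(i))\cup\overline{\psi_j'}(\pi(A)(j))$ is the complement in $V(J_i)\cup V(J_j)$ of $\psi_i(\pi(A)(i))\cup\psi_j'(\pi(A)(j))$, and complementation preserves the intersection count with a two-element subset. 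The main obstacle will be the bookkeeping: keeping the unknown truth pair $(\psi_i^*,\psi_j^*)$, the accessible pair $(\psi_i,\psi_j)$, and the paired choice $(\psi_i,\psi_j')$ distinct, and arguing cleanly that the flip-discrepancy between $\epsilon_i$ and $\epsilon_j$ is a global invariant of the setup rather than something that could vary from one $AB$ to another.
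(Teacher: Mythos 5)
Your proposal is correct and follows essentially the same route as the paper: the paper likewise fixes $\psi_j'$ by forcing the "exactly one token" count at the single known edge $e$, and then case-splits on whether $\phi_i'=\varphi_i\circ\psi_i^{-1}$ or $\phi_i'=\varphi_i\circ\overline{\psi_i}^{-1}$ (your $\epsilon_i$) to show the alignment is global and transfers to every other edge $CD$ of $E(H,F\setminus H)_{ij}$. Your $\epsilon_i,\epsilon_j$ bookkeeping and the complementation remark for the barred pair are just a more explicit packaging of the same argument.
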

\begin{proof}
Let $AB:=e$, $x':=\operatorname{endpoint}_J(e)(i)$ and $y':=\operatorname{endpoint}_J(e)(j)$.
Since $\varphi(B)$ is obtained from $\varphi(A)$ by sliding a token
along the edge $xy$, we have that in $\varphi(A)$ there is exactly one token
at each of $x$ and $y$. By definition of $\overline{\psi_j}$ there is a token
at $y'$ in $\psi_j(\pi(A)(j))$ if and only if there is no token at $y'$ in  $\overline{\psi_j}(\pi(A)(j))$.
Choose $\psi_j' \in  \{\psi_j,\overline{\psi_j}\}$ so that 

\MyQuote{ there is a token at 
$y'$ in $\psi_j'(\pi(A)(j)$ if and only there is no token at $x'$ in
$\psi_i(\pi(A)(i))$. }
Let $CD \in E(H,F\setminus H)_{ij}$, $v':=\operatorname{endpoint}_J(CD)(i)$ and $w':=\operatorname{endpoint}_J(CD)(j)$.
Recall that \[\phi_i'=\varphi_i\circ {\psi_i}^{-1} \textrm{ or }  \phi_i'=\varphi_i\circ \overline{\psi_i}^{-1}.\]

Suppose $\phi_i'=\varphi_i \circ \psi_i^{-1}$. Thus, the isomorphism $\phi_i$ from $J_i$ to $G_i$
is given by $\phi_i=\iota^{-1}(\varphi_i \circ \psi_i^{-1})$.
By $(\ast)$ we have that $\phi_j=\iota^{-1}(\varphi_j \circ {\psi_j'}^{-1})$.
This implies that: 
\begin{itemize}
 \item  there is a token at $v$ in $\varphi(C)$ if and only if
there is a token at $v'$ in $\psi_i(\pi(C)(i))$; and

\item there is a token at $w$ in $\varphi(C)$ if and only if
there is a token at $w'$ in $\psi_j'(\pi(C)(j))$.
\end{itemize}
Therefore, there is exactly one token at $\{v',w'\}$ in each of
\[{\psi_i} (\pi(C)(i)) \cup {\psi_j'} (\pi(C)(j)) \textrm{ and } \overline{{\psi_i}} (\pi(C)(i)) \cup \overline{{\psi_j'}} (\pi(C)(j)).\]

Suppose $\phi_i'=\varphi_i \circ \overline{\psi_i}^{-1}$.
Thus, the isomorphism $\phi_i$ from $J_i$ to $G_i$
is given by $\phi_i=\iota^{-1}(\varphi_i \circ \overline{\psi_i}^{-1})$.
By $(\ast)$ we have that $\phi_j=\iota^{-1}(\varphi_j \circ \overline{\psi_j'}^{-1})$
This implies that: 
\begin{itemize}
 \item  there is a token at $v$ in $\varphi(C)$ if and only if
there is no token at $v'$ in $\psi_i(\pi(C)(i))$; and

\item there is a token at $w$ in $\varphi(C)$ if and only if
there is no token at $w'$ in $\psi_j'(\pi(C)(j))$.
\end{itemize}
Therefore, there is exactly one token at $\{v',w'\}$ in each of
\[{\psi_i} (\pi(C)(i)) \cup {\psi_j'} (\pi(C)(j)) \textrm{ and } \overline{{\psi_i}} (\pi(C)(i)) \cup \overline{{\psi_j'}} (\pi(C)(j)).\]
\end{proof}
When $\psi_i$ and $\psi_j'$ are as in Lemma~\ref{lem:nu_computed}, we say that 
$\psi_i$ is \emph{compatible} with $\psi_j'$, and that $\overline{\psi_i}$ is \emph{compatible} with $\overline{\psi_j'}$.
For convenience if  $H_i$ is of class $1$, then for every $ 1 \le j \le r$, we say that $\psi_j$ and $\overline{\psi_j}$ are both \emph{compatible} with
$\psi_i=\overline{\psi_i}$.
We are now ready to prove the main result of this section.
\begin{theorem}\label{thm:label}
 Let $e \in E(H,F \setminus H)_{ij}$  such that $H_i$ is not of class $1$.
 Then we can compute $\operatorname{endpoint}_J(e)(i)$ in polynomial time.
\end{theorem}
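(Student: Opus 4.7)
The plan is to reduce to one of Lemmas~\ref{lem:CD}, \ref{lem:knowledge}, \ref{lem:one_side} and \ref{lem:nu_computed}, organized around how much we can pin down the pair $(k_i,k_j)$. The very first step is simply to compute $\mathbf{FixEdge}(e,i)$ and $\mathbf{NFixEdge}(e,i)$ from $F$, $H$, the ladder classes and $\pi$; if either has more than one vertex, Lemma~\ref{lem:CD} finishes immediately. Hence from here on I assume that $|\mathbf{FixEdge}(e,i)|=|\mathbf{NFixEdge}(e,i)|=1$, which is the genuinely hard regime.

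Next I would branch on whether any of the already-constructed graphs $J_l$ contains two disjoint edges (a polynomial-time test on the $J_l$'s). If some $J_l$ does, then Lemma~\ref{lem:dis_edges} forces either $k_l=1$ for every $l$ or $k_l=|G_l|-1$ for every $l$. Either way the hypothesis of Lemma~\ref{lem:knowledge} is satisfied for the pair $(i,j)$, and that lemma returns $\operatorname{endpoint}_J(e)(i)$ in polynomial time. Otherwise no $J_l$ has two disjoint edges, so each $G_l$ is a connected $(C_4,\text{diamond})$-free graph without two disjoint edges, hence a star, a triangle, or a single edge; in particular the class of each $H_l$ restricts the possible values of $(k_l,|G_l|)$ quite severely. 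In this regime I would iterate $u'$ over $V(J_i)$ and, for each $u'$, build the witness token configuration described in the proof of Lemma~\ref{lem:one_side} (using $\psi_i$ and, on the $j$-side, Lemma~\ref{lem:indices} to identify candidate cross edges). The corresponding witness edge $e_{u'}\in E(H,F\setminus H)_{ij}$ satisfies the hypothesis of Lemma~\ref{lem:CD}, so $\operatorname{endpoint}_J(e_{u'})(i)$ and, symmetrically, $\operatorname{endpoint}_J(e_{u'})(j)$ become computable. Feeding any such doubly-known edge into Lemma~\ref{lem:nu_computed} fixes a compatible pair $(\psi_i,\psi_j')$, and once compatibility is fixed the value $\operatorname{endpoint}_J(e)(i)$ for $e=AB$ is read off directly as the unique element of $V(J_i)$ appearing in $\psi_i(\pi(A)(i))\triangle \psi_i(\pi(B)(i))$.

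The main obstacle is the last subcase, where both $G_i$ and $G_j$ can be non-trivial stars with $1<k_l<|G_l|-1$ (class $3b$). Here Lemma~\ref{lem:CD} genuinely fails at $e$, the hypothesis of Lemma~\ref{lem:knowledge} is not satisfied, and the labelings $\psi_i$ and $\overline{\psi_i}$ are outwardly symmetric. The delicate step is therefore to show that Lemma~\ref{lem:one_side} can still be invoked to produce at least one witness edge on which Lemma~\ref{lem:CD} does apply: this requires picking a vertex $u'\in V(J_i)$ of degree bigger than one in $J_i$ (or resorting to the degree-one subcase handled inside the proof of Lemma~\ref{lem:one_side}) so that the required $\mathbf{FixEdge}/\mathbf{NFixEdge}$ inequality is forced. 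Once such a seed edge is obtained, Lemma~\ref{lem:nu_computed} propagates the correct compatibility uniformly across $E(H,F\setminus H)_{ij}$ and in particular resolves $\operatorname{endpoint}_J(e)(i)$ for the originally given $e$.
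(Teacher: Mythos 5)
Your opening reductions are sound, and the use of Lemma~\ref{lem:dis_edges} to reach the hypothesis of Lemma~\ref{lem:knowledge} whenever some $G_l$ contains two disjoint edges is a slightly cleaner route than the paper's ad hoc ``three disjoint edges in $G_i\cup G_j$'' claim; it correctly disposes of classes $3a$ and $4$ and leaves every $G_l$ a star, triangle or edge. From there, however, the argument has two genuine gaps. The concluding read-off is not well defined: for $e=AB\in E(H,F\setminus H)_{ij}$ the endpoint $B$ lies outside $H$, so $\pi(B)$ does not exist, and ``the unique element of $\psi_i(\pi(A)(i))\triangle\psi_i(\pi(B)(i))$'' is precisely the quantity the theorem asks you to compute, not something you can evaluate. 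Relatedly, Lemma~\ref{lem:nu_computed} does not propagate endpoints: its conclusion presupposes that $\operatorname{endpoint}_J(AB)(i)$ and $\operatorname{endpoint}_J(AB)(j)$ are already known and only synchronizes the token-presence interpretation between $\psi_i$ and $\psi_j'$. In the paper, even after compatibility is fixed, one still needs $\operatorname{endpoint}_J(e)(j)$ for the \emph{given} edge $e$ (available there because $H_j$ is of class $2$, $3a$ or $3b$) to decide between the two surviving candidates $x'$ and $v'$ in $J_i$.

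Second, the hard case is misidentified. Class $3b$ (a star with $1<k_i<|G_i|-1$) is easy: any such configuration has two incident edges of $G_i$ each carrying exactly one token, so $|\mathbf{FixEdge}(e,i)|>1$ or $|\mathbf{NFixEdge}(e,i)|>1$ and Lemma~\ref{lem:CD} applies to $e$ directly---your first step already catches it. The genuine obstruction is class $3c$ on both coordinates ($G_i$ and $G_j$ stars with $k_l\in\{1,|G_l|-1\}$): there $\mathbf{FixEdge}$ and $\mathbf{NFixEdge}$ are singletons on \emph{both} sides of $e$, so no witness edge hands you $\operatorname{endpoint}_J(e)(j)$, and the compatibility argument stalls. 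The paper resolves this with a dedicated structural analysis: Lemma~\ref{lem:one_side} recovers the bipartite adjacency $E(G_i,G_j)$, leaf-to-leaf adjacencies are ruled out (else three disjoint edges appear and Lemma~\ref{lem:knowledge} applies), one concludes $E(G_i,G_j)=\{xw,vw\}$ with $w$ the centre of $G_j$, and a final token-sliding test distinguishes $x'$ from $v'$. Your proposal has no counterpart for this case, and the subcase where $H_j$ is of class $1$ (for which Lemma~\ref{lem:nu_computed} is inapplicable by hypothesis, but Lemma~\ref{lem:knowledge} suffices) is also left unaddressed.
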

\begin{proof}

We first show that

\MyQuote{If there exist three disjoint edges $e_1',e_2'$ and $e_3'$
in $G_i \cup G_j$, then $k_i=k_j=1$, or $k_i=|G_i|-1$ and $k_j=|G_j|-1$. }
Suppose for a contradiction that $k_i=1$ and $k_j>1$, or $k_i=|G_i|-1$ and $k_j<|G_j|-1$.
    Let $e_1, \dots e_r$ be as in line $6$ of \textsc{Initialize}. We can rearrange the tokens
    at $\varphi(A)$ and place exactly one token at the endpoints of each of
    $e_1,\dots,e_{i-1}, e_{i+1},\dots, e_{j-1}, e_{j+1},\dots e_r, e_1', e_2'$ and
    $e_3'$; which contradicts our choice of $A$ in line $5$ of \textsc{Initialize}.

Let $AB:=e \in E(H,F\setminus H)$.
Suppose that $H_i$ is of class $2$, $3a$ or $3b$.
 Note that $G_i$ contains two  incident edges, such that in $\varphi(A)$ each  edge contains exactly
 one token at its endpoints. This implies that $|\mathbf{FixEdge}(e,i)|>1$ or $|\mathbf{NFixEdge}(e,i)|>1$;
 thus, by Lemma~\ref{lem:CD}, we can determine $\operatorname{endpoint}_J(e)(i)$. (Note that using
 the same arguments, if $H_j$  is of class $2$, $3a$ or $3b$, we can determine $\operatorname{endpoint}_J(e)(j)$.)
 Suppose that $H_i$ is of class $4$. We have
 that $G_i$ contains two disjoint edges. By $(\ast)$ we have that 
 $k_i=k_j=1$, or $k_i=|G_i|-1$ and $k_j=|G_j|-1$. Thus, by Lemma~\ref{lem:knowledge}
 we can determine $\operatorname{endpoint}_J(e)(i)$. Assume that $H_i$ is of class $3c$.
 
 Let $x'$ be the vertex of $J_i$ such that
\[\{x'\}=\psi_i (\pi(A)(i))=V(J_i)\setminus {\overline{\psi_i}} (\pi(A)(i)).\] 
We  assume that $|\mathbf{FixEdge}(e,i)|=1$ and $|\mathbf{NFixEdge}(e,i)|=1$; otherwise we are done by Lemma~\ref{lem:CD}.
Thus, $x'$ is of degree one in $J_i$. Let $v' $ be the neighbor of 
$x'$ in $J_i$. We have that 
\[\operatorname{endpoint}_J(e)(i)=x' \textrm{ or } \operatorname{endpoint}_J(e)(i)=v';\] otherwise, $|\mathbf{FixEdge}(e,i)|>1$ or $|\mathbf{NFixEdge}(e,i)|>1$. 

 Suppose that $H_j$ is of class $4$; thus, $G_j$ contains two disjoint edges. By $(\ast)$, we have that 
 $k_i=k_j=1$, or $k_i=|G_i|-1$ and $k_j=|G_j|-1$; and by Lemma~\ref{lem:knowledge},
 we can determine $\operatorname{endpoint}_J(e)(i)$. Suppose that $H_j$ is of class $2$, $3a$ or $3b$. By Lemma~\ref{lem:one_side},
there exists $e'\in E(H,F\setminus H)_{ij}$ for which we can compute $\operatorname{endpoint}_J(e')(i)$. By 
the previous observation we can  compute $\operatorname{endpoint}_J(e')(j)$.
Therefore, we can compute $\psi_j'$ as in Lemma~\ref{lem:nu_computed}. If there is a token at
$\operatorname{endpoint}_J(e)(j)$ in ${\psi_j'} (\pi(A)(j))$ then $\operatorname{endpoint}_J(e)(i)=v'$; and  
if there is no token at $\operatorname{endpoint}_J(e)(j)$ in ${\psi_j'} (\pi(A)(j))$ then $\operatorname{endpoint}_J(e)(i)=x'$.
Suppose that $H_j$ is of class $1$. We have 
that $k_i=k_j=1$, or $k_i=|G_i|-1$ and $k_j=|G_j|-1$. By Lemma~\ref{lem:knowledge},
 we can determine $\operatorname{endpoint}_J(e)(i)$. Assume that $H_j$ is of class $3c$.
 
 Since both $H_i$ and $H_j$ are of class $3c$, we have that $J_i$ and $J_j$ are stars.
 Thus, $x'$ is a leaf of $J_i$ and $v'$ is the center of $J_i$. Let $w'$ 
 be the center of $J_j$. By Lemma~\ref{lem:one_side}, we can determine
 for every pair of vertices $v_1' \in G_i$ and $v_2' \in G_j$, whether
 $v_1$ is adjacent to $v_2$ in $G$. 
 We assume that $x$ is adjacent to a vertex of $G_j$ as otherwise
$\operatorname{endpoint}_J(e)(i)=v'$.  We also assume that $v$ is adjacent to a vertex of $G_j$
as otherwise $\operatorname{endpoint}_J(e)(i)=x'$.
Suppose that a leaf of $G_i$ is adjacent to a leaf of $G_j$;
note that there exists
three disjoint edges in $G_i \cup G_j$. We have that $(\ast)$ implies
 that $k_i=k_j=1$, or $k_i=|G_i|-1$ and  $k_j=|G_j|-1$; thus,
by Lemma~\ref{lem:knowledge}, we can compute
$\operatorname{endpoint}_J(e)(i)$.
Assume that no leaf of $G_i$ is adjacent to a leaf of $G_j$; this implies that $x$ is adjacent to $w$. 

Suppose that $y$ is a neighbor of $v$ in $G_j$ distinct from $w$. Then, $(v,y,w,x)$  is a $4$-cycle in 
$G$ and $x$ and $y$ are adjacent---contradicting the assumption that no leaf of $G_i$ is adjacent to a leaf of $G_j$. 
Thus, $w$ is the only neighbor of $v$ in $G_j$. Suppose that a vertex $y$ of $G_i$
distinct from $x$ and $v$, is adjacent to $w$. Then,  $(y,w,x,v)$  is a $4$-cycle in 
$G$ and $x$ and $y$ are adjacent---contradicting the assumption that $G_i$ is a star. 
Summarizing, we have that
\[E(G_i,G_j)=\{xw,vw\}.\]

Let $z$ be a leaf of $G_i$ distinct from $x$.
    Let $A_1$ such that $\varphi(A_1)$ is obtained from $\varphi(A)$ by sliding
    a token along $xv$.  Let $A_2$ such that $\varphi(A_2)$ is obtained from $\varphi(A_1)$ by sliding
    a token along $vz$. If there exists a vertex $B'\in F \setminus H$ such that $A_2$ is adjacent to $B'$ and
    $\idx_H(A_2B')=\{i,j\}$, then $\operatorname{endpoint}_J(e)(i)=v'$; if no such vertex exists,
    then $\operatorname{endpoint}_J(e)(i)=x'$. 
\end{proof}

\subsubsection{Labeling the  $H-F\setminus H$ edges with respect to token movement direction}

Consider
an edge $e \in E(H,F\setminus H)_{ij}$. 
The edge $\varphi(e)$ corresponds to moving a token either from $G_i$ to $G_j$
or from $G_j$ to $G_i$. We denote these two possibilities
with the tuples $(e,i \to j)$ and $(e,j \to i)$, respectively. If  $\varphi(e)$ corresponds to moving a token from $G_i$ to $G_j$, then
we say that $(e,i \to j)$ \emph{agrees with} $\varphi$.
For every $H_i$ of class $1$, let $V(G_i)=\{x_i,\bar{x}_i\}$.
For every vertex $u'\in J_i$, let
\begin{equation*}
 \overline{u}' := \left\{
\begin{array}{rl}
x_i' & \text{if } u'= \overline{x}_i', \textrm{ and }\\
\overline{x}_i' & \text{if } u'=x_i;
\end{array} \right.
\end{equation*}
and for every vertex $u \in G_i$, let
\begin{equation*}
 \overline{u} := \left\{
\begin{array}{rl}
x_i & \text{if } u= \overline{x}_i, \textrm{ and }\\
\overline{x}_i & \text{if } u=x_i.
\end{array} \right.
\end{equation*}
For convenience, for every vertex $v'$ in some $J_j$, such that $H_j$ is not of class $1$, we define
\[\bar{v' }:=v' \textrm{ and } \bar{v}:=v.\]

\begin{lemma}\label{lem:D}
For all $1 \le i < j \le r$, let \[D_{ij}':=\left \{(e,i \to j): e \in E(H,F\setminus H)_{ij}\} \right \} \cup
\left \{(e,j \to i): e \in E(H,F\setminus H)_{ij}\} \right \}. \]
In polynomial time we can find a partition
of the set

\[\mathcal{D}:=\bigcup_{1 \le i < j \le r }
D_{ij}'
\]
into two sets $\overrightarrow{\mathcal{D}}$ and $\overleftarrow{\mathcal{D}}$, such that the following holds.
Either for all edges  $e \in E(H,F\setminus H)$ there is a tuple containing $e$ in $\overrightarrow{\mathcal{D}}$
that agrees with the direction of $\varphi$, or for all edges  $e \in E(H,F\setminus H)$ there is a tuple containing $e$ in $\overleftarrow{\mathcal{D}}$
that agrees with the direction of $\varphi$.
\end{lemma}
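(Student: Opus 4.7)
The plan is to assign to each edge $e \in E(H, F\setminus H)$ a single direction tuple, in a way that is determined globally up to one simultaneous flip; the two resulting global assignments then form $\overrightarrow{\mathcal{D}}$ and $\overleftarrow{\mathcal{D}}$. The underlying observation is that for every edge $e = AB \in E(H, F\setminus H)_{ij}$, exactly one of $\phi_i(\operatorname{endpoint}_J(e)(i))$ and $\phi_j(\operatorname{endpoint}_J(e)(j))$ carries a token in $\varphi(A)$: if the former, $e$ agrees with direction $i \to j$; if the latter, with $j \to i$. So it suffices to decide, edge by edge and consistently across edges, whether $\operatorname{endpoint}_J(e)(i)$ has a token in our interpretation of $A$.

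I would proceed as follows. For every edge $e = AB \in E(H, F\setminus H)_{ij}$ and every index $l \in \{i,j\}$ with $H_l$ not of class $1$, compute $\operatorname{endpoint}_J(e)(l)$ using Theorem~\ref{thm:label}; for class-$1$ components the endpoint lies in $\{x_l', \overline{x}_l'\}$ and is pinned down by $\pi(A)(l)$ together with the direction of the move. Next, for each ordered pair $(i,j)$ with $E(H, F\setminus H)_{ij} \neq \emptyset$, invoke Lemma~\ref{lem:nu_computed} to determine, when $H_j$ is not of class $1$, whether $\psi_i$ is compatible with $\psi_j$ or with $\overline{\psi_j}$; when a component is of class~$1$, no compatibility choice is needed because $\psi_l = \overline{\psi_l}$. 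Form the auxiliary graph $\Gamma$ on $\{1, \ldots, r\}$ with an edge between $i$ and $j$ whenever $E(H, F\setminus H)_{ij} \neq \emptyset$; since $G$ is connected and the $G_l$ partition $V(G)$, $\Gamma$ is connected. Fix a root and propagate the compatibility relation along a spanning tree of $\Gamma$ to obtain a globally consistent family $\{\widetilde{\psi}_l\}_{l=1}^r$ with $\widetilde{\psi}_l \in \{\psi_l, \overline{\psi_l}\}$ for each $l$.

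Then define $\overrightarrow{\mathcal{D}}$ edge by edge: for $e = AB \in E(H, F\setminus H)_{ij}$, place $(e, i \to j)$ in $\overrightarrow{\mathcal{D}}$ exactly when $\operatorname{endpoint}_J(e)(i) \in \widetilde{\psi}_i(\pi(A)(i))$, and otherwise place $(e, j \to i)$; set $\overleftarrow{\mathcal{D}} := \mathcal{D} \setminus \overrightarrow{\mathcal{D}}$. Correctness follows because the true isomorphism $\varphi$ picks out a specific consistent family $\{\widetilde{\psi}_l^\varphi\}$ via its induced token labelings, and our propagation produces either this family or its simultaneous complement $\{\overline{\widetilde{\psi}_l^\varphi}\}$ (depending on the choice of root-interpretation). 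Flipping every $\widetilde{\psi}_l$ to $\overline{\widetilde{\psi}_l}$ reverses the assigned direction for every edge, so exactly one of $\overrightarrow{\mathcal{D}}$ and $\overleftarrow{\mathcal{D}}$ contains the $\varphi$-agreeing tuple for each edge.

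The principal obstacle is the edges whose two indices $i$ and $j$ both correspond to class-$1$ components, since Lemma~\ref{lem:nu_computed} does not directly apply to such a pair and there is no interpretation flexibility on either side to propagate through. I expect to resolve this by routing through a path in $\Gamma$ that passes through a non-class-$1$ component (such a path exists whenever $G$ has a non-class-$1$ block, which must be the case unless every $G_l$ is a single edge, a boundary case we handle separately by a local structural analysis around $A$ and $B$ showing that the two candidate $ij$-tuples leaving $A$ necessarily have opposite directions, making their assignment to $\overrightarrow{\mathcal{D}}$ and $\overleftarrow{\mathcal{D}}$ forced). Since endpoint computation (Theorem~\ref{thm:label}), pairwise compatibility testing (Lemma~\ref{lem:nu_computed}), and spanning-tree propagation are all polynomial, the whole procedure runs in polynomial time.
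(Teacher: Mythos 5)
Your overall architecture --- determine each crossing edge's direction locally via endpoint computations, then coordinate the per-pair choices globally up to one simultaneous flip --- is the same as the paper's, and it works for every edge $e=AB\in E(H,F\setminus H)_{ij}$ for which at least one of $H_i$, $H_j$ is not of class $1$. The genuine gaps are concentrated in the class-$1$ (single-edge component) cases, which are exactly where the paper spends most of its proof. For an edge with \emph{both} $H_i$ and $H_j$ of class $1$, your recipe needs $\operatorname{endpoint}_J(e)(i)$ or $\operatorname{endpoint}_J(e)(j)$, neither of which Theorem~\ref{thm:label} provides; your remark that the endpoint is ``pinned down by $\pi(A)(l)$ together with the direction of the move'' is circular, since the direction is what you are trying to compute. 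Your fallbacks do not close this: for a class-$1$ component the pair $\{\psi_l,\overline{\psi_l}\}$ collapses, so the globally consistent family $\{\widetilde{\psi}_l\}$ carries no information about which vertex of the edge $G_i$ the crossing $G$-edge is attached to; and knowing that the two candidate tuples leaving a fixed $A$ have opposite directions does not tell you which one agrees with $\varphi$ relative to the directions already assigned elsewhere. The paper resolves this with a dedicated construction: the induced $4$-cycle $H'$ obtained by moving one token in each of $G_i$ and $G_j$, whose neighbours in $F\setminus H$ along $E(H,F\setminus H)_{ij}$ are exactly two vertices $B_1^{\ast},B_2^{\ast}$ (``both tokens in $G_i$'' versus ``both tokens in $G_j$''), together with ladder-class paths carrying an arbitrary $e$ to one of them.

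The second gap is in the global coordination. You propagate compatibility of the $\psi_l$ along a spanning tree of $\Gamma$, but Lemma~\ref{lem:nu_computed} only applies across a $\Gamma$-edge when both $\operatorname{endpoint}_J$ values are available, i.e.\ when neither component is of class $1$. If the non-class-$1$ components are separated in $\Gamma$ by class-$1$ components (for instance two stars joined through a chain of single-edge components), your propagation leaves their interpretations independently flippable, and the resulting partition of $\mathcal{D}$ need not be one of the two valid global assignments. The paper bridges a class-$1$ middle component $j$ by a different mechanism: $(e_1,i\to j)$ and $(e_2,j\to l)$ are declared compatible iff $e_1$ and $e_2$ lie in a common induced $4$-cycle of $F$, which by Proposition~\ref{prop:4-cycles} detects whether the two underlying $G$-edges meet at the occupied vertex of $G_j$ or are disjoint. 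Without these two ingredients the argument does not go through.
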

\begin{proof}
Let $1 \le i < j \le r$ be such that $E(G_i,G_j) \neq \emptyset$.
We first show that in polynomial time we can find a partition of $D_{ij}'$
into two sets $D_{ij}$ and $\overline{D_{ij}}$ such that the following holds.
Either for all edges  $e \in E(H,F\setminus H)_{ij}$ there is a tuple in $D_{ij}$
that agrees with $\varphi$, 
or for all edges  $e \in E(H,F\setminus H)_{ij}$ there is a tuple in $\overline{D_{ij}}$
that agrees with $\varphi$. For convenience we define $D_{ji}:=D_{ij}$ 
and $\overline{D_{ji}}:=\overline{D_{ij}}$. 

Let $e:=AB \in E(H,F \setminus H)_{ij}$.
  We decide which of  $D_{ij}$
and $\overline{D_{ij}}$ contains  $(e,i \to j)$ as follows.

  \begin{itemize}
 \item  Suppose that at least one of $H_i$ and $H_j$ is not of class $1$. 
 
 Without loss of generality assume that $H_i$ is not of class $1$. 
  We use  Theorem~\ref{thm:label} to compute $\operatorname{endpoint}_J(e)(i)$.
 If $\operatorname{endpoint}_J(e)(i) \in {\psi_i} (\pi(A)(i))$, then $(e,i \to j) \in D_{ij}$ and  $(e,j \to i) \in \overline{D_{ij}}$;  if 
 $\operatorname{endpoint}_J(e)(i) \notin {\psi_i} (\pi(A)(i))$, then $(e,i \to j) \in \overline{D_{ij}}$ and $(e,j \to i) \in D_{ij}$.
 Note that, either for all edges  $e \in E(H,F\setminus H)_{ij}$ there is a tuple in $D_{ij}$
that agrees with $\varphi$, or for all edges  $e \in E(H,F\setminus H)_{ij}$ there is a tuple in $\overline{D_{ij}}$
that agrees with $\varphi$. 

\item Suppose that both $H_i$ and $H_j$ are of class $1$.

Fix a vertex $A^\ast \in H$ and let $H'$ be the subgraph of $H$ induced by the vertices $C \in  H$ such that 
$\pi(C)(l)=\pi(A^\ast)(l)$ for all $l \neq i,j$. 
Note that $\varphi(H')$ is the graph generated by moving a token at each of $G_i$ and $G_j$,
 while fixing the tokens at the other $G_l$. 
Since $G_i$ and $G_j$ are edges, $H'$ is an induced $4$-cycle of $F$.
The endpoint in $F \setminus H$ of every edge in $E(H',F \setminus H)_{ij}$ must be
one of two vertices ${B_1}^\ast$ and ${B_2}^\ast$, where $\varphi({B_1}^\ast)$ and $\varphi({B_2}^\ast)$ correspond to having
two tokens in either $G_i$ or in $G_j$.
Let $P:=(A=A_1,\dots,A_m=A')$ be a path from $A$ to a vertex $A'\in H'$ such that for all $1 \le s \le m$,
we have that $\pi(A_s)(i)=\pi(A)(i)$ and $\pi(A_s)(j)=\pi(A)(j)$.
Thus, $\varphi(P)$ corresponds to a sequence of tokens moves that leaves the tokens
at $G_i$ and $G_j$ fixed and arrives at a vertex of $\varphi(H')$. Note that there exists
exactly one edge $A'B' \in E(H,F\setminus H)_{ij}$, such that 
$A'B'$  and $e$ are in the same ladder class. 
If $B'={B_1}^\ast$ then $(e,i \to j) \in D_{ij}$ and  $(e,j \to i) \in \overline{D_{ij}}$; 
if $B'={B_2}^\ast$ then $(e,i \to j) \in \overline{D_{ij}}$ and  $(e,j \to i) \in D_{ij}$.
Note that, either for all edges  $e \in E(H,F\setminus H)_{ij}$ there is a tuple in $D_{ij}$
that agrees with $\varphi$, or for all edges  $e \in E(H,F\setminus H)_{ij}$ there is a tuple in $\overline{D_{ij}}$
that agrees with $\varphi$.
\end{itemize}

Suppose that we have defined all such $D_{ij}$ and $\overline{D_{ij}}$. For $D \in \{D_{ij},\overline{D_{ij}}\}$, we define
 \begin{equation*}
 \overline{D} := \left\{
\begin{array}{rl}
\overline{D_{ij}} & \text{if } D=D_{ij}, \textrm{ and }\\
D_{ij} & \text{if } D=\overline{D_{ij}}.
\end{array} \right.
\end{equation*}

Let $1 \le i , j , l \le r$ be indices such that $E(G_i,G_j) \neq \emptyset$ and  $E(G_j,G_l) \neq \emptyset$.
Let $D_1 \in \{D_{ij},\overline{D_{ij}}\}$ and $D_2 \in \{D_{jl},\overline{D_{jl}}\}$. We say that $D_1$ and $D_2$
 are an \emph{adjacent pair}; we say that $D_1$ and $D_2$
 are a \emph{compatible pair}, if  in addition the following holds. 
 Either for all edges  $e \in E(H,F\setminus H)_{ij} \cup E(H,F\setminus H)_{jl} $ there is a tuple in $D_1 \cup D_2$
that agrees with $\varphi$, or for all edges  
$e \in E(H,F\setminus H)_{ij} \cup  E(H,F\setminus H)_{jl} $ there is a tuple in $\overline{D_1} \cup \overline{D_2}$
that agrees with $\varphi$. 
Note that if $D_1$ and $D_2$ are compatible, then 
$\overline{D_1}$ and $\overline{D_2}$ are compatible. Moreover, there is exactly one of $D_{jl}$ and $\overline{D_{jl}}$
that is compatible to $D_{ij}$.  

We  determine in polynomial time whether $D_1$ and $D_2$ are compatible as follows.
\begin{itemize}
 \item Suppose that $H_j$ is not of class $1$.
 
 Let $e_1:=A_1B_1 \in E(H,F \setminus H)_{ij}$ and $e_2:=A_2B_2 \in E(H,F \setminus H)_{jl}$  
such that $(e_1, i\to j) \in D_1$  and $(e_2, j\to l ) \in D_2$. 
We use Theorem~\ref{thm:label} to compute $\operatorname{endpoint}_J(e_1)(j)$ and $\operatorname{endpoint}_J(e_2)(j)$  in polynomial time.
Let $\psi_j' \in \{\psi_j,\overline{\psi_j}\}$ be such that there is no token at $\operatorname{endpoint}_J(e_1)(j)$ in
${\psi_j'} (\pi(A_1)(j))$. $D_1$ and $D_2$ are compatible if and only if there is a token 
at $\operatorname{endpoint}_J(e_2)(j)$ in ${\psi_j'} (\pi(A_2)(j)).$

\item 
Suppose that $H_j$ is of class $1$.

Let $e_1:=AB_1 \in E(H,F \setminus H)_{ij}$ and $e_2:=AB_2 \in E(H,F \setminus H)_{jl}$  such that $(e_1, i\to j) \in D_1$ 
and $(e_2, j\to l ) \in D_2$. Let $u'$ be the only vertex in ${\psi_j} (\pi(A)(j))$. 
If  $(e_1, i\to j)$ and  $(e_2, j\to l )$  agree with $\varphi$, then $\varphi(e_1)$ corresponds to moving a token
from a vertex in $G_i$ to $\bar{u}$; and  $\varphi(e_2)$ corresponds to moving a token from $u$ to
a vertex in $G_l$. $D_1$ and $D_2$
are compatible if and only if $e_1$ and $e_2$ are contained in an induced $4$-cycle of $F$.
\end{itemize}

We now extend the definition of compatible pairs to not necessarily adjacent pairs.
Let  $1 \le i, j, l, s \le r$  be indices such that  $E(G_{i},G_{j}) \neq \emptyset$ and  $E(G_{l},G_{s}) \neq \emptyset$.
Let $D_1 \in \{D_{ij},\overline{D_{ij}}\}$ and $D_2 \in \{D_{ls},\overline{D_{ls}}\}$. We say that $D_1$ and $D_2$
 are \emph{compatible} if there exists a sequence $D_1=C_1,C_2,\dots,C_m=D_2$, such that 
 for all $1 < t \le m$, $C_t$ and $C_{t+1}$ is a compatible adjacent pair. Having computed
 all compatible adjacent pairs we compute all compatible pairs.
 
We finish the proof by computing $\mathcal{D}$ and $\overline{\mathcal{D}}$.
Without loss of generality assume that $E(G_1,G_2)\neq \emptyset$.
Let \[\overrightarrow{\mathcal{D}}:=\bigcup_{\substack{1 \le i  < j \le r \\ E(G_i,G_j) \neq \emptyset}} 
\{D: D \in \{ D_{ij},\overline{D_{ij}}\} \textrm{ and } D \textrm{ is compatible with } D_{1 2}\}.\]
Similarly, 
let \[\overleftarrow{\mathcal{D}}:=\bigcup_{\substack{1 \le i  < j \le r \\ E(G_i,G_j) \neq \emptyset}} 
\{D: D \in \{ D_{ij},\overline{D_{ij}}\} \textrm{ and } D \textrm{ is compatible with } 
\overline{D_{1 2}}\}.\]
 \end{proof}
 
 \subsection*{Renaming $\psi$ and $\overline{\psi}$}
 Let $\overrightarrow{\mathcal{D}}$ and $\overleftarrow{\mathcal{D}}$ be as in Lemma~\ref{lem:D}.
If for every edge  $e \in E(H,F\setminus H)$ there is a tuple $(e, i\to j) \in \overrightarrow{\mathcal{D}}$
that agrees with $\varphi$, then we say that $\overrightarrow{\mathcal{D}}$ 
\emph{agrees} with  $\varphi$; if for every edge  $e \in E(H,F\setminus H)$ there is a tuple
$(e, i\to j) \in \overleftarrow{\mathcal{D}}$ that agrees with $\varphi$, then we say that $\overleftarrow{\mathcal{D}}$ 
\emph{agrees} with  $\varphi$. Note that $\varphi$ agrees with exactly one of  $\overrightarrow{\mathcal{D}}$ and $\overleftarrow{\mathcal{D}}$. 

Suppose that $H_i$ is not of class $1$ and let $\psi_i' \in \{\psi_i, \overline{\psi_i}\}$. If for every tuple  $(AB, i\to j) \in \overrightarrow{\mathcal{D}}$, 
we have that there is a token at $\operatorname{endpoint}_J(AB)(i)$
in $\psi_i'(A)$, then we say that $\psi_i'$ \emph{agrees} with $\overrightarrow{\mathcal{D}}$; if for every tuple  $(AB, i\to j) \in \overrightarrow{\mathcal{D}}$, 
we have that there is no token at $\operatorname{endpoint}_J(AB)(i)$
in $\psi_i'(A)$, then we say that $\psi_i'$ \emph{agrees} with $\overleftarrow{\mathcal{D}}$. Note that $\psi_i'$ agrees
with  exactly one of  $\overrightarrow{\mathcal{D}}$ and $\overleftarrow{\mathcal{D}}$. Moreover, $\psi_i$ agrees with $\overrightarrow{\mathcal{D}}$
if and only if $\overline{\psi_i}$ agrees with $\overleftarrow{\mathcal{D}}$. For all $1 \le i \le r$, such that $H_i$
is not of class $1$, we rename $\psi_i$ and $\overline{\psi_i}$ so that $\psi_i$ agrees with $\overrightarrow{\mathcal{D}}$ and
 $\overline{\psi_i}$ agrees with $\overleftarrow{\mathcal{D}}$. This implies also renaming $l_i$  and $\overline{l_i}$. 
 Note that if $H_i$ and $H_j$ are not of class $1$ and $E(H,F\setminus H)_{ij} \neq \emptyset$, then
 $\psi_i$ is compatible with $\psi_j$, and $\overline{\psi_i}$ is compatible with $\overline{\psi_j}$. 
 By the definition of $\phi_i'$, we now have that
 \begin{equation*}
            \phi_i' := \left\{
            \begin{array}{rl}
                   & \varphi_i\circ {\psi_i}^{-1}  \text{ if }   \varphi \text{ agrees with } \overrightarrow{\mathcal{D}}; \\
                  & \varphi_i\circ \overline{\psi_i}^{-1}  \text{ if }   \varphi \text{ agrees with } \overleftarrow{\mathcal{D}}.
            \end{array} \right. 
            \end{equation*}
 
 \subsection{Constructing a graph isomorphic to $G$}
 
 We construct, in polynomial time two isomorphic graphs  $\overrightarrow{J}$ and $\overleftarrow{J}$.
 Let  
 \[V(\overrightarrow{J}):=V(\overleftarrow{J}):=\bigcup_{1 \le i \le r} V(J_i);\]

  let  $E(\overrightarrow{J})$ and $E(\overleftarrow{J})$ both contain
 \[\bigcup_{1 \le i \le r} E(J_i).\]
For every $(e,i \to j) \in \overrightarrow{D}$ we add an additional edge to  $\overrightarrow{J}$ and $\overleftarrow{J}$ as follows. 
Let $AB \in E(H,F\setminus H)_{ij}$ such that $AB=e$. Let 
\begin{equation*}
 x':= \left\{
\begin{array}{ll}
\operatorname{endpoint}_J(e)(i) & \text{if } H_i \text{ is not of class } 1;\\
 \text{the only vertex  in } \psi_i(\pi(A)(i)) & \text{if } H_i \text{ is of class } 1;
\end{array} \right.
\end{equation*}
Let 
\begin{equation*}
 y':= \left\{
\begin{array}{ll}
\operatorname{endpoint}_J(e)(j) & \text{if } H_j \text{ is not of class } 1;\\
 \text{the only vertex  in } \psi_j(\pi(A)(j)) & \text{if } H_j \text{ is of class } 1.
\end{array} \right.
\end{equation*}
We add the edge $x'\overline{y'}$ to $\overrightarrow{J}$  and the edge  $\overline{x'}y'$ to $\overleftarrow{J}$.
Note that, if $H_i$ and $H_j$ are not of class $1$, then $x'\overline{y'}=\overline{x'}{y'}=x'y'$.

Let $\phi:V(J) \to V(G)$ be the map defined by
\[\phi(u')=\phi_i(u')=u,\]
where $u \in J_i$.
By Lemma~\ref{lem:D} and the constructions of $\overrightarrow{J}$ and $\overleftarrow{J}$ we have that:
if $\varphi$ agrees with $\overrightarrow{\mathcal{D}}$, then $\phi$ is an isomorphism from $\overrightarrow{J}$
to $G$; and if $\varphi$ agrees with $\overleftarrow{\mathcal{D}}$, then $\phi$ is an isomorphism from $\overleftarrow{J}$
to $G$. 
Let $\operatorname{swap}:V(J) \to V(J)$ be the map  defined by
\[ \operatorname{swap}(x') := \overline{x'}, \]
for all $x' \in V(J).$
Note that $\operatorname{swap}$ is an isomorphism from $\overrightarrow{J}$ to $\overleftarrow{J}$;
we have proved Theorem~\ref{thm:main}:
\thmrec*
Add all the edges of $\overrightarrow{J}$ to $J$, so that throughout the remainder of the paper we assume that
$J=\overrightarrow{J}$.

\section{$F$ is Uniquely Reconstructible} \label{sec:F_ureq}
In this section we show that $F$ is uniquely reconstructible as the $k$-token graph of $G$. 
In the process we often consider unions, intersections, differences and complements of token configurations in $G$ and $J$.

\subsection*{Boolean Formulas and Boolean Combinations on $V(F_k(G))$}

Let $\mathcal{F}$ be a family of subsets of a set $S$. A \emph{Boolean formula} on $\mathcal{F}$
is recursively defined as follows.
\begin{enumerate}
 \item For every $X \in \mathcal{F}$, $X$ is a Boolean formula on $\mathcal{F}$, which we call a \emph{term};
 \item if $\Gamma_1$ and $\Gamma_2$ are Boolean formulas on  $\mathcal{F}$, then so
 are $(\lnot\Gamma_1)$, $(\Gamma_1 \lor \Gamma_2)$ and $(\Gamma_1 \land \Gamma_2)$.
\end{enumerate}
For every Boolean formula $\Gamma$ on $\mathcal{F}$ there is a corresponding Boolean combination of elements in $\mathcal{F}$;
 let  $\operatorname{eval}(\Gamma)$ be the subset of $S$ that is obtained from $\Gamma$ by interpreting: 
every appearance of $\lnot$ as complementation with respect to $S$; every appearance of $\lor$ as set union; and
every appearance of $\land$ as set intersection.
Let $G$ and $H$ be isomorphic graphs and let $\psi \in \iso(G,H)$.
Let $\Gamma$ be a Boolean formula on $V(F_k(G))$. Let $\Gamma(\psi)$ be the Boolean formula on $V(F_k(H))$ that is obtained by replacing every term $A$ of $\Gamma$ with $\psi(A)$. 
We use the following result extensively throughout the proofs of Theorems~\ref{thm:main2} and~\ref{thm:main3}.
\begin{prop}\label{prop:boolean}
 Let $G$ and $H$ be isomorphic graphs on at least three vertices. Suppose that $F_k(G)$ 
 is uniquely reconstructible as the the $k$-token graph of $G$. Let $\Gamma$ be a Boolean formula on $V(F_k(G))$; let
 $\psi \in \iso(F_k(G),F_k(H))$; and let $f(\psi) \in \iso (G,H)$  as in $3)$ of Theorem~\ref{thm:char}. Then
 \begin{equation*}\label{eq:phi}
  f(\psi)(\operatorname{eval}(\Gamma)) =
    \begin{cases}
   \operatorname{eval}(\Gamma(\psi)) & \textrm{ if } \psi=\iota(f(\psi)), \\
   \operatorname{eval}(\Gamma(\mathfrak{c} \circ \psi)) & \textrm{ if }  \psi=\mathfrak{c} \circ \iota(f(\psi)).\\
    \end{cases}
  \end{equation*}
\end{prop}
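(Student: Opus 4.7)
The plan is to proceed by structural induction on the Boolean formula $\Gamma$, using that $f(\psi)$ is a bijection from $V(G)$ to $V(H)$ and therefore commutes with union, intersection, and complementation relative to the ground set. Specifically, extend $f(\psi)$ pointwise to a Boolean-algebra isomorphism $\widetilde{f(\psi)}\colon 2^{V(G)} \to 2^{V(H)}$ via $\widetilde{f(\psi)}(X):=\{f(\psi)(v):v\in X\}$. Since $\widetilde{f(\psi)}$ preserves $\cup$, $\cap$, and $\lnot$, the only work is at the base case, where one must check that $\widetilde{f(\psi)}$ sends each term $A$ of $\Gamma$ to the corresponding term of $\Gamma(\psi)$ (in the first case) or of $\Gamma(\mathfrak{c}\circ \psi)$ (in the second case).

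For the base case, first suppose $\psi = \iota(f(\psi))$. For any term $A \in V(F_k(G))$, by the definition of $\iota$ in Section~\ref{sec:unq_rec} we have
\[\widetilde{f(\psi)}(A) = \{f(\psi)(v) : v \in A\} = \iota(f(\psi))(A) = \psi(A) = \operatorname{eval}(A(\psi)),\]
which matches the replacement that turns $\Gamma$ into $\Gamma(\psi)$ at the term level. Now suppose $\psi = \mathfrak{c}\circ \iota(f(\psi))$. Then $\psi(A) = \mathfrak{c}(\iota(f(\psi))(A)) = V(H)\setminus f(\psi)(A)$, so $(\mathfrak{c}\circ \psi)(A) = \mathfrak{c}(\psi(A)) = f(\psi)(A) = \widetilde{f(\psi)}(A)$. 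Hence in both cases each term is mapped correctly by $\widetilde{f(\psi)}$ under the intended substitution.

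For the inductive step, if the claim holds for $\Gamma_1$ and $\Gamma_2$, then using that $\widetilde{f(\psi)}$ is a Boolean-algebra isomorphism we get, in the case $\psi = \iota(f(\psi))$,
\begin{align*}
\widetilde{f(\psi)}(\operatorname{eval}(\lnot \Gamma_1)) &= V(H)\setminus \widetilde{f(\psi)}(\operatorname{eval}(\Gamma_1)) = V(H)\setminus \operatorname{eval}(\Gamma_1(\psi)) = \operatorname{eval}((\lnot \Gamma_1)(\psi)),\\
\widetilde{f(\psi)}(\operatorname{eval}(\Gamma_1 \lor \Gamma_2)) &= \widetilde{f(\psi)}(\operatorname{eval}(\Gamma_1))\cup \widetilde{f(\psi)}(\operatorname{eval}(\Gamma_2)) = \operatorname{eval}((\Gamma_1\lor \Gamma_2)(\psi)),
\end{align*}
and analogously for $\land$; the case $\psi = \mathfrak{c}\circ \iota(f(\psi))$ is identical with $\psi$ replaced by $\mathfrak{c}\circ \psi$ throughout. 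This completes the induction. The statement $F_k(G)$ being uniquely reconstructible is used only to guarantee the existence of $f(\psi)$ via Theorem~\ref{thm:char}(3); there is no real obstacle in the argument itself, since once the base case is verified the inductive step is a formal consequence of $\widetilde{f(\psi)}$ being a Boolean isomorphism.
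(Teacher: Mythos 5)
Your proof is correct and follows essentially the same route as the paper: the paper's one-line proof "since $f(\psi)$ is a bijection, $f(\psi)(\operatorname{eval}(\Gamma))=\operatorname{eval}(\Gamma(\iota(f(\psi))))$" is exactly the structural induction you spell out, after which identifying $\iota(f(\psi))$ with $\psi$ or with $\mathfrak{c}\circ\psi$ via $3)$ of Theorem~\ref{thm:char} gives the two cases. Your version merely makes explicit the base-case bookkeeping and the fact that the pointwise extension of a bijection is a Boolean-algebra isomorphism.
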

\begin{proof}
 Since $f(\psi)$ is a bijection, we have that \[f(\psi)(\operatorname{eval}(\Gamma))=\operatorname{eval}(\Gamma(\iota(f(\psi))).\]
 By $3)$ of Theorem~\ref{thm:char}, we have that 
 \[\iota(f(\psi))=\psi \textrm{ or } \iota(f(\psi))=\mathfrak{c} \circ \psi. \]
\end{proof}

\subsection{Extending our Framework}
In what follows we  define isomorphisms
from subgraphs of $J$ to subgraphs of $G$.  To avoid confusion,
let 
\begin{equation}\label{eq:phi}
    \phi(J,\varphi):=\phi,
\end{equation} where $\phi$ is defined as above.
Note that $\varphi$  agrees with  $\overrightarrow{\mathcal{D}}$ if and only
if $\mathfrak{c}\circ \varphi$  agrees with  $\overleftarrow{\mathcal{D}}$. In what follows, we assume without loss of generality that $\varphi$  agrees with  
$\overrightarrow{\mathcal{D}}$.
Thus, \[k=\sum_{i=1}^r l_i.\]
Since, if necessary, we can replace $\varphi$ with $\mathfrak{c} \circ \varphi$, we also assume that $k \le n/2$.

In the remainder of this section we compute, in polynomial time,  an isomorphism $\psi$ from $F$ to  $F_{k}(J)$
so that  $(J,\psi)$ is a $k$-token reconstruction of $F$.
For this purpose we extend the theoretical framework developed in the previous sections.
For every $1 \le i \le r$ let \[\widehat{H_i}:= \bigcup_{s=0}^{\min \{k,|J_i|\}} F_s(J_i).\]
For convenience we set $F_0(J_i)$ to be the empty graph on one vertex; 
it represents that no tokens are placed at the vertices of $J_i$.
Thus, $\widehat{H_i}$ is the disjoint union of all possible token graphs of $J_i$ with at most $k$ tokens.
Let $\widehat{F}$ be the subset of vertices  $\widehat{A} \in V(\widehat{H_1} \square \cdots \square \widehat{H_r})$ that satisfy
\[ \sum_{i=1}^r \left | \widehat{A}(i) \right | =k; \]
if $\widehat{A}(i)$ is the only vertex in $F_0(J_i)$, then we set $\widehat{A}(i) := \emptyset.$
Let $\widehat{u}:\widehat{F} \to V(F_{k}(J))$ be the map defined
by \[\widehat{u}(\widehat{A}):=\bigcup_{i=1}^r \widehat{A}(i),\] 
 for all $\widehat{A} \in V(\widehat{F})$. 
In the remainder of this section we prove the following theorem.

\begin{theorem}\label{thm:pi_ext}
Suppose that $\varphi$ agrees  with  
$\overrightarrow{\mathcal{D}}$ and that  $k \le n/2$. Then we can compute in polynomial time a map $\widehat{\pi}:V(F) \to \widehat{F} $ such that
\begin{itemize}
 \item[a)]  $\psi := \widehat{u} \circ \widehat{\pi}$
    is an isomorphism from $F$ to $F_{k}(J)$; and 

\item[b)] $\varphi=\iota(\phi(J,\varphi)) \circ \psi.$
\end{itemize}
That is the following diagram commutes.
\begin{center}
    \begin{tikzcd}[every label/.append style={font=\normalsize}]
    F \arrow[dd, "\widehat{\pi}"'] \arrow[rr, "\varphi"] \arrow[rrdd, "\psi"]   &  & F_k(G)                                    \\
                                                                                &  &                                           \\
   \widehat{F} \arrow[rr, "\widehat{u}"] &  & F_{k}(J) \arrow[uu, "{\iota(\phi(J,\varphi))}"']
    \end{tikzcd}
\end{center}
\end{theorem}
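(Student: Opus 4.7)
The plan is to build $\widehat{\pi}$ so that the implicit identity $\widehat{u}(\widehat{\pi}(A))=\phi^{-1}(\varphi(A))$ holds for all $A\in V(F)$, where $\phi:=\phi(J,\varphi)$. Once this identity holds, (b) is immediate and (a) follows because $\widehat{u}\circ\widehat{\pi}$ coincides with $\iota(\phi)^{-1}\circ\varphi$, a composition of bijections that preserves edges. The whole task is therefore to define $\widehat{\pi}$ from the structural data extracted in Section~\ref{sec:alg} without recourse to $\varphi$ or $\phi$.

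For $A\in V(H)$ we set $\widehat{\pi}(A)(i):=\psi_i(\pi(A)(i))$. Since $\varphi$ agrees with $\overrightarrow{\mathcal{D}}$, the renaming performed at the end of Section~\ref{sec:alg} gives $\phi_i'=\iota(\phi_i)$, hence $l_i=k_i$ for every $i$ and $\sum_i l_i=k$, so $\widehat{\pi}(A)\in\widehat{F}$. Combining this with Lemma~\ref{lem:algo}$(4)$ and the identity $\varphi_i=\iota(\phi_i)\circ\psi_i$ yields $\iota(\phi)(\widehat{u}(\widehat{\pi}(A)))=\bigcup_i\iota(\phi_i)(\psi_i(\pi(A)(i)))=\bigcup_i\varphi_i(\pi(A)(i))=\varphi(A)$, so (b) already holds on $V(H)$.

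We then extend $\widehat{\pi}$ by a breadth-first search from $V(H)$. The crucial intermediate step is to compute, for each edge $e\in E(F)$, an edge $\eta(e)\in E(J)$ encoding the underlying token move; by Proposition~\ref{prop:ladder}, $\eta$ must be constant on every ladder class of $F$. Edges inside $H$ receive $\eta$ directly from $\pi$ and the $\psi_i$ via the symmetric difference $\psi_i(\pi(A)(i))\triangle\psi_i(\pi(B)(i))$. Edges in $E(H,F\setminus H)_{ij}$ receive $\eta$ from the label $\operatorname{endpoint}_J(e)(i)\operatorname{endpoint}_J(e)(j)$ produced by Theorem~\ref{thm:label} together with the direction class $\overrightarrow{\mathcal{D}}$ of Lemma~\ref{lem:D}. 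Every remaining edge inherits $\eta$ through its ladder class. During the BFS, when $\widehat{\pi}(A)$ is already defined and an edge $AB$ carries $\eta(AB)=x'y'$, exactly one of $x',y'$ lies in $\widehat{u}(\widehat{\pi}(A))$, so we obtain $\widehat{\pi}(B)$ by toggling the membership of the appropriate vertex in its component $J_i$ or $J_j$. Each update preserves the identity $\widehat{u}(\widehat{\pi}(\cdot))=\phi^{-1}(\varphi(\cdot))$, which in turn forces path-independence of the BFS and verifies (a) and (b).

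The hard part will be showing that every ladder class of $F$ inherits a label $\eta$ from $E(H)\cup E(H,F\setminus H)$, so that no ladder class is stranded deep inside $F\setminus H$. For a class corresponding to an edge $uv$ of $G$, parameterized by a component of $F_{k-1}(G\setminus\{u,v\})$, one has to exhibit some configuration $S$ in that component with $S\cup\{u\}\in V(H)$ or $S\cup\{v\}\in V(H)$. This reduction leans on the hypothesis $k\le n/2$ together with the maximality built into \textsc{ProductSubgraph}, which forces $(k_1,\dots,k_r)$ to be as balanced as the matching $M_A$ chosen in \textsc{Initialize} permits, so that every ladder class is rooted at the scaffold $H\cup E(H,F\setminus H)$. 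All of the ingredients—ladder classes, $\operatorname{endpoint}_J$, the partition $\overrightarrow{\mathcal{D}}$, and the BFS itself—are polynomial-time, so the resulting $\widehat{\pi}$ is computable in polynomial time.
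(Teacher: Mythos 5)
Your definition of $\widehat{\pi}$ on $V(H)$ and the verification of (b) there are fine and agree with the paper. The gap is exactly at the step you flag as ``the hard part,'' and that step is false: it is \emph{not} true that every ladder class of $F$ contains an edge of $E(H)\cup E(H,F\setminus H)$. Concretely, take $G=P_6$ with vertices $v_1,\dots,v_6$ and $k=3=n/2$. Here \textsc{Initialize} may pick $A$ with $\varphi(A)=\{v_1,v_3,v_5\}$ and $M_A=\{v_1v_2,v_3v_4,v_5v_6\}$, and \textsc{Extend} cannot grow any $G_i$, so $G_1=v_1v_2$, $G_2=v_3v_4$, $G_3=v_5v_6$, $k_i=1$, and $H\simeq Q_3$ consists of the configurations with one token in each $G_i$. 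Now consider the edge of $F_3(G)$ between $\{v_1,v_2,v_3\}$ and $\{v_1,v_2,v_4\}$, i.e.\ sliding a token along $v_3v_4$ while the other two tokens sit on the component $\{v_1,v_2\}$ of $G\setminus\{v_3,v_4\}$. Its ladder class is parameterized by the component of $F_2(G\setminus\{v_3,v_4\})$ in which both remaining tokens lie in $\{v_1,v_2\}$; every configuration $S$ in that component has two tokens in $G_1$, so neither $S\cup\{v_3\}$ nor $S\cup\{v_4\}$ lies in $V(H)$. This ladder class (here a single edge) therefore receives no label $\eta$ from your seeding, and your BFS can never define $\widehat{\pi}$ on $\{v_1,v_2,v_3\}$, whose unique incident edge lies in that stranded class. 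The hypotheses $k\le n/2$ and the maximality of $M_A$ do not rescue the claim --- the example satisfies both.

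The failure is structural, not cosmetic: once the BFS reaches a vertex of $F\setminus H$ through an unlabeled edge, there is in general no local way to decide which token move that edge encodes (a vertex may have several unlabeled incident edges and several candidate moves), so the propagation scheme cannot be repaired by ladder classes alone. This is precisely why the paper's proof is so much heavier: it proceeds by induction on $n$, covers $F$ by the subgraphs $\mathbf{Split}(s,i)$, $\mathbf{Move}(A,i)$ and $\mathbf{Fix}(A,i)$, recursively reconstructs each such ``definable'' piece as a token graph of a smaller graph via Theorems~\ref{thm:main} and~\ref{thm:main3}, and then uses the Boolean-formula machinery of Proposition~\ref{prop:boolean} together with conditions (1)--(3) of Lemma~\ref{lem:extend_pi} to resolve the global $\psi'$ versus $\mathfrak{c}\circ\psi'$ ambiguity on each piece --- an ambiguity that your edge-by-edge propagation never confronts because it assumes every edge label is already pinned down from the scaffold. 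To make your approach work you would at minimum need a substitute mechanism for labeling the stranded classes (e.g.\ the paper's recursive reconstruction of $\mathbf{Fix}(B,i)$), at which point you are essentially reconstructing the paper's argument.
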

\qed

Theorem~\ref{thm:pi_ext} readily implies Theorem~\ref{thm:main3}.
\thmuni*
\begin{proof}
 Let $g \in \iso(F_{k}(J),F_k(G))$. Then $g \circ \psi \in \iso(F,F_k(G))$. By $b)$ of Theorem~\ref{thm:pi_ext}, with either $\varphi=g \circ \psi$ or
  $\varphi=\mathfrak{c} \circ g \circ \psi$ we have that
  \[g \circ \psi= \iota(\phi(J,\varphi))  \circ \psi \textrm{ or } \mathfrak{c} \circ g \circ \psi= \iota(\phi(J,\varphi)) \circ \psi  .\]
 Therefore,
 \[g  = \iota(\phi(J,\varphi)) \textrm{ or } g= \mathfrak{c} \circ \iota(\phi(J,\varphi)).\]
 By $3)$ of Theorem~\ref{thm:char} we have that  $F_k(G)$ is uniquely reconstructible as the $k$-token graph of $G$.
\end{proof}

Let $A$ be a vertex of $F$. We say that we can \emph{define} $\psi$ on $A$, if we can find in polynomial time
a vertex $\widehat{\pi}(A)$ of $\widehat{F}$ such that the following holds. If we set $\psi(A):=\widehat{u} \circ \widehat{\pi}(A)$, then
 \[\varphi(A)= \iota(\phi(J,\varphi)) \circ \psi (A) \]
We begin by defining $\widehat{\pi}$ on the vertices of $H$. For every vertex
$A \in H$, let \[\widehat{\pi}(A):=(\psi_1(\pi(A)(1),\dots,\psi_r(\pi(A)(r))).\]
We have that
\begin{align}
                  \iota(\phi (J,\varphi )) \circ \psi(A) & = \left ( \iota(\phi (J,\varphi )) \circ \psi \right )(A)  \nonumber \\ 
                                   & = \left ( \iota(\phi (J,\varphi )) \circ \widehat{u} \circ \widehat{\pi} \right )(A) \nonumber \\
                                   & = \left ( \iota(\phi (J,\varphi )) \circ \widehat{u} \right )(\psi_1(\pi(A)(1),\dots,\psi_r(\pi(A)(r))) \nonumber \\
                                   & = \iota(\phi (J,\varphi )) \left ( \bigcup_{i=1}^{r} \psi_i(\pi(A)(i)) \right ) \nonumber \\
                                   & = \bigcup_{i=1}^r \left \{ \bigcup_{x' \in \psi_i(\pi(A)(i))} \left \{ \phi(x') \right \}\right \}\nonumber \\
                                   & = \bigcup_{i=1}^r \left \{ \bigcup_{x' \in \psi_i(\pi(A)(i))} \left \{ \phi_i(x') \right \}\right \}\nonumber \\
                                   & = \bigcup_{i=1}^r  \iota(\phi_i) (\psi_i(\pi(A)(i))) \nonumber \\
                                   & = \bigcup_{i=1}^{r} (\phi_i'\circ \psi_i)(\pi(A)(i)) \nonumber \\
                                   & = \bigcup_{i=1}^{r} (\varphi_i \circ \psi_i^{-1} \circ \psi_i)(\pi(A)(i)) \nonumber \\
                                   & = \bigcup_{i=1}^{r} \varphi_i(\pi(A)(i)) \nonumber \\
                                   & = \varphi(A). \nonumber
\end{align}

Before proceeding we define two subgraphs of $F$ that are useful for defining  $\widehat{\pi}$. 
Let $A$ be a vertex of $F$.
\begin{itemize}
\item Let $s:=|\varphi(A) \cap G_i|$. Let $\mathbf{Split}(s,i)$ be the subgraph of $F$ induced
by all the vertices $B \in F$ such that 
\[ |\varphi(B) \cap G_i| = |\varphi(A) \cap G_i|.\]
Thus, $\varphi(\mathbf{Split}(s,i))$ is the subgraph of $F_k(G)$ induced by all the token
configurations in which there are $s$  tokens at $G_i$
and $k-s$ tokens at $G \setminus G_i$.

\item Let $\mathbf{Fix}(A,i)$ be the subgraph of $F$  induced
by all the vertices $B \in \mathbf{Split}(s,i)$ such that 
\[\varphi(B) \cap G_i= \varphi(A) \cap G_i. \]
Thus, $\varphi(\mathbf{Fix}(A,i))$ is the subgraph of $\varphi(\mathbf{Split}(s,i))$ 
in which the tokens at $\varphi(A) \cap G_i$ remain fixed. 
\end{itemize}

We proved the following proposition with the aid of the SAGE software~\cite{sage}. For the proof, we iterated over all 
($C_4$,diamond)-free connected graphs and computed the automorphisms groups of them and their respective token graphs.
We then used $2)$ of Theorem~\ref{thm:char}.
\begin{prop}\label{prop:n<=6}
 If $n \le 6$, then $F$ is uniquely $k$-reconstructible as the $k$-token graph of $G$. 
\end{prop}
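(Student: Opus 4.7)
The plan is to reduce the claim to a finite case check via condition $2)$ of Theorem~\ref{thm:char}, and then carry out that check by computer. Specifically, Theorem~\ref{thm:char} tells us that it suffices to verify, for every connected ($C_4$,diamond)-free graph $G$ on $n\le 6$ vertices and every $1\le k\le n-1$, the isomorphism
\begin{equation*}
\operatorname{Aut}(F_k(G)) \simeq
\begin{cases}
\operatorname{Aut}(G)\times \mathbb{Z}_2 & \textrm{if } k=n/2,\\
\operatorname{Aut}(G) & \textrm{otherwise.}
\end{cases}
\end{equation*}
So the first step is to reformulate the statement into this group-theoretic identity, at which point no further theory is required.

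Next, I would generate the list of all connected ($C_4$,diamond)-free graphs on at most $6$ vertices. Since the number of graphs on at most $6$ vertices is small (and one can filter by the forbidden induced subgraphs $C_4$ and the diamond in constant time per graph), this list is short enough to handle explicitly. For each $G$ in the list, and each $k$ with $1\le k\le n-1$, I would build the token graph $F_k(G)$ directly from the definition (the vertex set is $\binom{V(G)}{k}$ and adjacency is given by symmetric-difference being an edge of $G$), and then call a computer algebra system such as SageMath or GAP to compute $|\operatorname{Aut}(G)|$, $|\operatorname{Aut}(F_k(G))|$, and the structure of these groups.

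The verification itself then reduces to: when $k\neq n/2$, check that $\iota$ from \eqref{eq:G<=F} is a group isomorphism (equivalently, compare orders); and when $k=n/2$, check that the inclusion in \eqref{eq:G<=FxZ2} is an equality (again via orders, since both sides have the same order iff the inclusion is equality). Because both $|\operatorname{Aut}(G)|$ and $|\operatorname{Aut}(F_k(G))|$ are finite and both inclusions \eqref{eq:G<=F} and \eqref{eq:G<=FxZ2} are already established in the paper, the only thing the computer must check is equality of orders on each instance.

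The only real obstacle here is computational cost, but for $n\le 6$ there are at most $\binom{6}{3}=20$ vertices in the largest $F_k(G)$ to be constructed, and only a few dozen graphs $G$ to consider after the ($C_4$,diamond)-free filter. Thus the search is trivial. Once the orders match on every pair $(G,k)$ in the finite list, applying $2)\Rightarrow 1)$ of Theorem~\ref{thm:char} yields the proposition. If one wishes to avoid explicit appeal to software, one can alternatively argue case-by-case on the small number of such graphs (paths, stars, trees of small diameter, and a few small triangulated graphs), but this is tedious and adds nothing of substance beyond what the computer check provides.
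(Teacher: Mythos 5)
Your proposal is correct and is essentially identical to the paper's own proof: the authors likewise enumerate all connected ($C_4$,diamond)-free graphs on at most $6$ vertices, compute $\operatorname{Aut}(G)$ and $\operatorname{Aut}(F_k(G))$ with SageMath, and invoke condition $2)$ of Theorem~\ref{thm:char}. The only cosmetic difference is that you spell out that comparing group orders suffices given the inclusions (\ref{eq:G<=F}) and (\ref{eq:G<=FxZ2}), which the paper leaves implicit.
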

\qed
%

For the proof of Theorem~\ref{thm:pi_ext}, we extend the definition of $\psi$ on certain subgraphs of $F$. 
Let $F^{\ast}$ be an induced subgraph of $F$ such that the following conditions hold. 
\begin{itemize}
 \item[$(a)$] We can determine in polynomial time which vertices of $F$ belong to $F^{\ast}$.
 
 \item[$(b)$]   There exists a connected induced
    subgraph $G^{\ast}$ of $G$, on at least three and less than $|G|$ vertices, such that $\varphi(F^{\ast})$ is generated by moving
     $k^{\ast}\le k$ tokens on the vertices of $G^{\ast}$ while leaving $k-k^{\ast}$ tokens
     fixed at the vertices of a subset $T$ of $V(G\setminus G^{\ast})$.
 
 \item[$(c)$]  We can determine in polynomial time the subgraph $J^{\ast}$ of $J$ such that $\phi(J,\varphi)(J^{\ast})=G^{\ast}$; and the set $T^{\ast} \subset V(J)$ such that 
 \[T^{\ast}=\phi(J,\varphi)^{-1}(T).\]

 \item[$(d)$]
 Let $W$ be the set of vertices of $F^{\ast}$ for which we have defined $\psi$. 
 We can compute in polynomial time a family $\{\Gamma_{u'}\}_{u' \in V(J^{\ast})}$ 
 of Boolean formulas on the set
 \[\{B \in V(F_{k^\ast }(J^\ast)) : B=\psi(A) \cap V(J^\ast) \textrm{ for some } A \in W\},\]  such that
 \[ \{ u' \}= \operatorname{eval}(\Gamma_{u'}),\] 
 for all $u' \in V(J^{\ast})$.
 \end{itemize}
 We call $F^\ast$ a \emph{definable} subgraph of $F$, and $(F^\ast,\{\Gamma_{u'}\}_{u' \in V(J^{\ast})})$  a \emph{definable} pair.
  We now show that when certain conditions are met, we can extend the definition of $\psi$
  to every vertex of $F^\ast$.
  
\begin{lemma}\label{lem:extend_pi}
Let $(F^\ast,\{\Gamma_{u'}\}_{u' \in V(J^{\ast})})$ be a definable pair. 
Suppose that one of the following conditions holds.
\begin{itemize}

\item[(1)] $k^\ast \neq |J^\ast|/2$.

\item[(2)] There exists a vertex $u'$  such that for every $g \in \aut(J^\ast)$, we have that
$g(u')=u'$.

\item[(3)]  There exist an induced
 subgraph $F_1^\ast$ of $F^\ast$, and an induced connected subgraph $G_1^\ast$ of $G^\ast$, with the following properties. 
 \begin{itemize}    
    \item $V(F_1^\ast) \subset W$; 
    \item $|G_1^\ast| \ge 3$; and
    \item $\varphi(F_1^{\ast})$ is generated by moving
     $k_1^{\ast}\le k^\ast$ tokens on the vertices of $G_1^{\ast}$, while leaving the remaining $k-k_1^{\ast}$ tokens fixed. 
 \end{itemize}

\end{itemize}
Then we can define $\psi$ on every vertex of $F^{\ast}$. 
\end{lemma}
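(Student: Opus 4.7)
The plan is to establish this lemma as the inductive engine driving Theorem~\ref{thm:pi_ext}, proceeding by induction on $|G|$ with Proposition~\ref{prop:n<=6} as the base case. In the inductive step, property (b) gives a connected ($C_4$,diamond)-free induced subgraph $G^\ast$ of $G$ on at least three and strictly fewer than $|G|$ vertices, so the inductive hypothesis supplies two key ingredients for $G^\ast$: Theorem~\ref{thm:main3} guarantees that $F_{k^\ast}(G^\ast)$ is uniquely reconstructible as the $k^\ast$-token graph of $G^\ast$, and Theorem~\ref{thm:main2} delivers a polynomial-time algorithm that produces some $k^\ast$-token reconstruction.

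I would then identify $F^\ast$ with $F_{k^\ast}(G^\ast)$ via the bijection $A\mapsto\varphi(A)\setminus T$ granted by (b), and invoke the inductive Theorem~\ref{thm:main2} to obtain in polynomial time some isomorphism $\sigma:F^\ast\to F_{k^\ast}(J^\ast)$, using the graph $J^\ast$ from (c). By $3)$ of Theorem~\ref{thm:char}, every isomorphism $F^\ast\to F_{k^\ast}(J^\ast)$ has the form $\iota(\theta)\circ\sigma$, together with $\mathfrak{c}\circ\iota(\theta)\circ\sigma$ when $k^\ast=|J^\ast|/2$, for some $\theta\in\aut(J^\ast)$. The task therefore reduces to selecting the unique $\psi^\ast$ among these candidates that is compatible with the partial $\psi$ on $W$ and with $\phi(J,\varphi)|_{J^\ast}$; once $\psi^\ast$ is in hand, I set $\psi(A):=\psi^\ast(A)\cup T^\ast$ for each $A\in F^\ast$ (using $T^\ast$ from (c)) and partition across the $J_i$'s to recover $\widehat{\pi}(A)$.

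The main obstacle is precisely this disambiguation, and here the Boolean formulas $\{\Gamma_{u'}\}_{u'\in V(J^\ast)}$ of (d) are the essential tool. For any candidate $\psi'$, Proposition~\ref{prop:boolean} computes the induced vertex map on $J^\ast$ as $\operatorname{eval}(\Gamma_{u'}(\psi'))$ (or as $\operatorname{eval}(\Gamma_{u'}(\mathfrak{c}\circ\psi'))$ in the complement case), so the correct $\psi^\ast$ is characterized by $\operatorname{eval}(\Gamma_{u'}(\psi^\ast))=\{u'\}$ for every $u'$; this determines $\theta$ in polynomial time either by direct read-off from the formulas or by iterating through a polynomial list of candidates. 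It remains to rule out the complement branch, and each of conditions (1)--(3) accomplishes this. Condition (1) removes $\mathfrak{c}$ trivially, since $\mathfrak{c}\notin\aut(F_{k^\ast}(J^\ast))$ unless $k^\ast=|J^\ast|/2$. Condition (2) provides a universally fixed $u'_0$, so the cardinality mismatch between $\kappa_{J^\ast}(u'_0,k^\ast)$ and $\overline{\kappa_{J^\ast}}(u'_0,k^\ast)$---precisely the argument of $4)\Rightarrow1)$ in the proof of Theorem~\ref{thm:char}---lets us test $\Gamma_{u'_0}$ against both branches and detect the correct one. Condition (3) supplies a known subregion $F_1^\ast\subseteq W$ supported on an induced subgraph $G_1^\ast$ of size at least three, and the same cardinality argument applied inside this subregion contradicts an identity of the form $\binom{|G_1^\ast|-1}{k_1^\ast-1}=\binom{|G_1^\ast|-1}{k_1^\ast}$ in the complement case, forcing the $\iota$-branch. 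All constructions run in polynomial time by (a)--(d) and the inductive polynomial bounds of Theorem~\ref{thm:main2}.
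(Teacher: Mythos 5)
Your overall architecture matches the paper's: induct on $|G|$, restrict $\varphi$ to $F^{\ast}$ to get an isomorphism onto $F_{k^{\ast}}(G^{\ast})$, run the reconstruction of Theorem~\ref{thm:main2} inductively, use unique reconstructibility (Theorem~\ref{thm:char}, part 3) together with Proposition~\ref{prop:boolean} and the formulas $\Gamma_{u'}$ to read off the induced vertex map, and then use conditions (1)--(3) to kill the $\mathfrak{c}$-ambiguity before setting $\psi(A):=\psi^{\ast}(A)\cup T^{\ast}$. Condition (1) is handled correctly. However, your justifications for conditions (2) and (3) contain genuine gaps.

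For condition (2): you claim the ``cardinality mismatch between $\kappa_{J^{\ast}}(u_0',k^{\ast})$ and $\overline{\kappa_{J^{\ast}}}(u_0',k^{\ast})$'' detects the correct branch. But condition (2) is only needed when condition (1) fails, i.e.\ when $k^{\ast}=|J^{\ast}|/2$, and in exactly that case $\binom{|J^{\ast}|-1}{k^{\ast}-1}=\binom{|J^{\ast}|-1}{k^{\ast}}$, so there is no cardinality mismatch and your test is vacuous. The actual use of the fixed vertex is different: because $u'$ is fixed by every automorphism of $J^{\ast}$, the two candidate vertex maps (one per branch) agree at $u'$, so the common image $g(u')$ is known unconditionally; one then picks any $A\in W$ and compares whether $u'\in\psi(A)$ with whether $g(u')$ lies in the image of $A$ under each branch --- a membership test, not a counting one. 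As written, your argument for (2) does not go through.

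For condition (3): the binomial identity you invoke, $\binom{|G_1^{\ast}|-1}{k_1^{\ast}-1}=\binom{|G_1^{\ast}|-1}{k_1^{\ast}}$, fails to be contradictory precisely when $|G_1^{\ast}|=2k_1^{\ast}$, and condition (3) does not exclude that case. The paper's route is to compare the number of tokens $k_1'$ moving on the image of $J_1^{\ast}$ under the candidate $\psi'$ with $k_1^{\ast}$ (these are $k_1^{\ast}$ and $|J_1^{\ast}|-k_1^{\ast}$ in the two branches), which decides the matter when $|J_1^{\ast}|$ is odd; when $|J_1^{\ast}|$ is even one must pass to a further connected subregion $J_1^{\ast}\setminus u'$ and repeat. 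Your sketch omits this even case entirely. Two smaller points: Theorem~\ref{thm:main2} outputs its own graph $J'$, not an isomorphism onto the prescribed $F_{k^{\ast}}(J^{\ast})$ --- relating $J'$ to $J^{\ast}$ is exactly what the $\Gamma_{u'}$ machinery buys you, so you cannot simply ``use the graph $J^{\ast}$ from (c)'' as the target; and ``iterating through a polynomial list of candidates'' for $\theta$ is not available, since $\aut(J^{\ast})$ may be exponentially large --- only the direct read-off via Proposition~\ref{prop:boolean} is polynomial.
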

\begin{proof}

Let $\varphi^{\ast}$ be the map that sends every vertex $X \in  F^{\ast}$
to \[\varphi^{\ast}(X):=\varphi(X) \cap V(G^{\ast}).\]
Note that $\varphi^{\ast}$ is an isomorphism from $F^{\ast}$ to $F_{k^{\ast}}(G^{\ast})$. 
We use Theorem~\ref{thm:main} and induction on Theorem~\ref{thm:pi_ext}, with
$F^{\ast}$ as input, to obtain a graph $J'$, and an isomorphism 
$\psi':F^\ast \to  F_{k^\ast}(J')$ such that 
\[\varphi^{\ast}= \iota \left ( \phi \left ( J', \varphi^\ast \right ) \right ) \circ \psi' \textrm{ or }
\varphi^{\ast}= \iota \left ( \phi \left ( J', \varphi^\ast \right ) \right )\circ (\mathfrak{c} \circ  \psi').\]

 Note that exactly one of 
$\iota \left ( \phi \left ( J', \varphi^\ast \right ) \right ) \circ \psi'$
and $\iota \left ( \phi \left ( J', \varphi^\ast \right ) \right ) \circ (\mathfrak{c}\circ \psi')$ is equal to $\varphi^{\ast}$.
Let $g:=\phi(J',\varphi^\ast)^{-1} \circ \phi(J,\varphi)(u')$ considered as a map from $V(J^\ast)$ to $V(J')$.
Note that $g\in \iso(J^\ast, J').$ Thus, $\iota(g)$ is an isomorphism from $F_{k^{\ast}}(J^{\ast})$
to  $F_{k^{\ast}}(J')$.

For $A \in W$, let \[\psi^\ast(A)=\psi(A) \cap V(J^\ast).\]
We have that 
\[\varphi^{\ast}(A)= \iota \left ( \phi \left ( J, \varphi \right ) \right ) \circ \psi^\ast(A).\]
Thus, 
\[\psi'\circ{\psi^\ast}^{-1}(A)=\iota(g)(A) \textrm{ or } (\mathfrak{c}\circ \psi')\circ{\psi^\ast}^{-1}(A)=\iota(g)(A).\] 
By induction on Theorem~\ref{thm:main3}, $F_{k^{\ast}}(J^{\ast})$ is uniquely
reconstructible as the $k^\ast$-token graph of $J^{\ast}$. By Proposition~\ref{prop:boolean},  for every vertex
$u' \in J^{\ast}$ we have that
\[g(\{u'\})=g(\operatorname{eval}(\Gamma_{u'}))=\operatorname{eval}(\Gamma_{u'}(\iota(g))).\]

For every vertex $u' \in J^{\ast}$, let 
\begin{equation*}
  g_1(u)   :=\
    \begin{cases}
    \textrm{ the only vertex in } \operatorname{eval}(\Gamma_{u'}(\psi'\circ{\psi^\ast}^{-1}))  & \textrm { if }
    |\Gamma_{u'}(\psi'\circ{\psi^\ast}^{-1})|=1.\\
    \textrm{ undefined }  & \textrm{ otherwise.}
    \end{cases}
\end{equation*}
\begin{equation*}
  g_2(u) :=\
    \begin{cases}
    \textrm{ the only vertex in } \operatorname{eval}(\Gamma_{u'}((\mathfrak{c}\circ \psi' )\circ{\psi^\ast}^{-1}))  & \textrm { if }
    |\Gamma_{u'}((\mathfrak{c}\circ \psi' )\circ{\psi^\ast}^{-1})|=1.\\
    \textrm{ undefined }  & \textrm{ otherwise.}
    \end{cases}
\end{equation*}
Note that exactly one of $g_1$ and $g_2$ is equal to $g$. If only one of $g_1$ and $g_2$ defines an isomorphism 
from $J^\ast$ to $J'$, then we have computed $g$. Suppose that both $g_1$ and $g_2$ are isomorphisms 
from $J^\ast$ to $J'$. We use the conditions of the lemma to determine which of $g_1$ and $g_2$
is equal to $g$.

\begin{itemize}
    \item Suppose that $(1)$ holds.
 
    Since $k^{\ast} \neq |G^{\ast}|/2$, for every
    vertex $A \in F^\ast$, we have that $|\mathfrak{c} \circ \varphi^{\ast}(A)| =|G^\ast|/2-k^\ast \neq k^\ast$.
    Therefore,
    \[\varphi^{\ast}= \iota \left ( \phi \left ( J', \varphi^\ast \right ) \right ) \circ \psi'.\]
    Thus, $g=g_1$ in this case.

    \item Suppose that $(2)$ holds.
  
    Let $A$ be any vertex in $W$. Note that $g(u')=g_1(u')=g_2(u')$. Suppose that $u'\in \psi(A)$.
    We have that $g(u') \in \iota(g)(A)$. If $g(u') \in \psi'\circ {\psi^\ast}^{-1}(A)$, then $g=g_1$; 
    and if $g(u') \in (\mathfrak{c} \circ \psi')\circ {\psi^\ast}^{-1}(A)$, then $g=g_2$.
    Suppose that $u'\notin \psi(A)$.
    We have that $g(u') \notin \iota(g)(A)$. If $g(u') \in \psi'\circ {\psi^\ast}^{-1}(A)$, then $g=g_2$; 
    and if $g(u') \in (\mathfrak{c} \circ \psi')\circ {\psi^\ast}^{-1}(A)$, then $g=g_1$.

    \item Suppose that $(3)$ holds.

    Let $J_1^{\ast}$ be the subgraph  of $J^\ast$ such that $\phi(J,\varphi)(J_1^{\ast})=G_1^{\ast}$. 
    Since $V(F_1^\ast)\subset W$, $V(J_1^\ast)$ is equal to
    \[\{ u': u'\in \psi(A) \setminus \psi(B), \textrm{ for some } A,B \in V(F_1^\ast)\}.\]
    Thus, we can compute $J_1^\ast$ in polynomial time.
    We compute the subgraph $J_1'$ of $J'$ 
    such that $\psi'(F_1^{\ast})$ is generated by moving some $k_1'$ tokens on the vertices
    of $J_1'$ while leaving the remaining tokens fixed. We have that $k_1'=k_1^{\ast}$ or
    $k_1'=|J_1^{\ast}|-k_1^{\ast}$.
    Suppose that $|J_1^{\ast}|$ is odd. Note that  $k_1^{\ast}\neq |J_1^{\ast}|-k_1^{\ast}$.
    If $k_1'=k_1^{\ast}$, then 
    \[\varphi^{\ast}= \iota \left ( \phi \left ( J', \varphi^\ast \right ) \right ) \circ \psi' \textrm{ and }
    g=g_1.\]
    If $k_1'\neq k_1^{\ast}$, then
    \[\mathfrak{c} \circ \varphi^{\ast}= \iota \left ( \phi \left ( J', \varphi^\ast \right ) \right ) \circ \psi'
     \textrm{ and } g=g_2.\]
      Suppose that $|J_1^{\ast}|$ is even. Let $u'$ be a vertex of $J_1^{\ast}$ such that 
    $J_2^{\ast}=J_1^{\ast}\setminus u'$ is connected. Let $F_2^{\ast}$ be the subgraph
    of $F_1^{\ast}$ such that in all vertices of $\psi'(F_2^{\ast})$ there is a token
    at $u'$. Let $G_2^{\ast}=\varphi(J_2^{\ast})$. Note that $F_2^{\ast}$ and $G_2^{\ast}$
    satisfy condition $(d)$. Thus, we may proceed as in the case of when $|J_1^{\ast}|$ is odd.
\end{itemize}
In what follows suppose that we have computed $g$. If $g=g_2$, then replace $\psi'$
with $\mathfrak{c} \circ \psi$, so that we have 
\[\varphi^{\ast}= \iota \left ( \phi \left ( J', \varphi^\ast \right ) \right ) \circ \psi'.\]

We now define $\psi$ on every vertex of $F^\ast$. Let $A$ be a vertex of $F^\ast$. Let 
\[\psi(A):=(\iota(g^{-1}) \circ \psi'(A)) \cup T^{\ast}.\] We define $\widehat{\pi}(A)$, accordingly, by setting
\[\widehat{\pi}(A)(i)=\psi(A) \cap V(J_i),\]
for $1 \le i \le r$.
We have that
\begin{align*}
 \varphi(A) & = \varphi^{\ast}(A) \cup T  \\
            & = \iota \left ( \phi \left ( J', \varphi^\ast \right ) \right ) \circ \psi'(A)\cup T \\
            & = \iota(\phi(J,\varphi)) \circ \iota(\phi(J,\varphi))^{-1}\circ \iota \left ( \phi \left ( J', \varphi^\ast \right ) \right ) \circ \psi'(A)\cup T \\
            & = \iota(\phi(J,\varphi)) \circ \iota(g^{-1}) \circ \psi'(A)\cup T \\
            & = \iota(\phi(J,\varphi))(\iota(g^{-1}) \circ \psi'(A)\cup T^\ast ) \\
            & = \iota(\phi(J,\varphi))\circ \psi (A).
\end{align*}
The result follows.
\end{proof}

\begin{lemma}\label{lem:split}
 Let $1 \le i \le r$ such that $J\setminus J_i$ is connected, and with at least three vertices. Suppose that there exists an integer
 $0 \le s \le |J_i|$ that satisfies the following.
 \begin{enumerate}
  \item There exists a vertex $A \in F$, with $|\widehat{\pi}(A)(i)|=s$,  for which we have defined $\psi$ on all the vertices of $\mathbf{Move}(A,i)$.
  
  \item Let $W$ be the set of vertices of $\mathbf{Fix}(A,i)$  for which we have defined $\psi$.
  \begin{itemize}
  \item[a)] 
  There exist an induced
 subgraph $F'$ of $\mathbf{Fix}(A,i)$, and an induced connected subgraph $G'$ of $G\setminus G_i$, with the following properties. 
 \begin{itemize}    
    \item $V(F') \subset W$; 
    \item $|G'| \ge 3$; and
    \item $\varphi(F')$ is generated by moving 
        $1 \le k'\le |G'|-1$ tokens on the vertices of $G'$, while leaving the remaining tokens fixed. 
 \end{itemize}
  
 \item[b)] For every vertex $u' \in J \setminus J_i$, we can compute
 in polynomial time a Boolean combination $\Gamma_{u'}$ of
 elements in $\{\psi(B):B \in W\}$,  such that 
 \[ \left \{ u' \right \}=V(J\setminus J_i) \cap \operatorname{eval}(\Gamma_{u'}).\] 
 \end{itemize}
 
 \item If $s=0$, then we have defined $\psi$ on all vertices of $\mathbf{Split}(1,i)$;
 and  if $s=|J_i|$, then we have defined $\psi$ on all vertices of $\mathbf{Split}(|J_i|-1,i)$.
 \end{enumerate}
Then we can define $\psi$ on every vertex of $\mathbf{Split}(s,i)$.
\end{lemma}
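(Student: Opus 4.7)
The plan is to apply Lemma~\ref{lem:extend_pi} to $F^\ast := \mathbf{Fix}(A,i)$ with $G^\ast := G\setminus G_i$, $k^\ast := k-s$, $J^\ast := J\setminus J_i$, $T := \varphi(A)\cap V(G_i)$, and $T^\ast := \widehat{\pi}(A)(i)$, obtaining $\psi$ on every vertex of $\mathbf{Fix}(A,i)$, and then propagate along ladder classes of $G_i$-edges to cover the rest of $\mathbf{Split}(s,i)$, which decomposes as the Cartesian product $\mathbf{Move}(A,i)\square\mathbf{Fix}(A,i)$.

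I first verify that $(\mathbf{Fix}(A,i),\{\Gamma'_{u'}\}_{u'\in V(J^\ast)})$ is a definable pair. For property $(a)$, using that $\psi$ is defined on all of $\mathbf{Move}(A,i)$ we can identify the ladder classes of $G_i$-edges; by Proposition~\ref{prop:ladder} the remaining ladder classes incident to $A$ come from edges of $G$ not in $G_i$, and $\mathbf{Fix}(A,i)$ is the connected component of $A$ in the subgraph of $F$ whose edges lie in the ladder classes of edges of $G\setminus G_i$. Property $(b)$ is immediate from the definition of $\mathbf{Fix}(A,i)$ and the hypothesis that $J\setminus J_i$ is connected with at least three vertices; $(c)$ holds because both $J^\ast$ and $T^\ast = \psi(A)\cap V(J_i)$ are known. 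For $(d)$, starting from the formulas $\Gamma_{u'}$ of hypothesis~$2(b)$, substitute each term $\psi(B)$ by $\psi(B)\setminus T^\ast = \psi(B)\cap V(J^\ast)$ and reinterpret the operations inside $V(J^\ast)$; since restriction to $V(J^\ast)$ commutes with union, intersection, and complementation (the latter once $\lnot$ is reinterpreted within $V(J^\ast)$), the resulting $\Gamma'_{u'}$ satisfies $\operatorname{eval}(\Gamma'_{u'}) = V(J^\ast)\cap\operatorname{eval}(\Gamma_{u'}) = \{u'\}$. With the definable pair in place, hypothesis~$2(a)$ supplies exactly the induced subgraphs $F_1^\ast := F'$ and $G_1^\ast := G'$ needed by condition~$(3)$ of Lemma~\ref{lem:extend_pi}, which therefore defines $\psi$ on every vertex of $\mathbf{Fix}(A,i)$.

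To finish, if $s\in\{0,|J_i|\}$ then $\mathbf{Split}(s,i) = \mathbf{Fix}(A,i)$ and we are done; condition~$3$ of the lemma is what makes hypothesis~$2(a)$ achievable in these boundary cases, by first propagating $\psi$ from $\mathbf{Split}(1,i)$ (respectively $\mathbf{Split}(|J_i|-1,i)$) across $G_i$--$(G\setminus G_i)$ edges into $\mathbf{Fix}(A,i)$. For $s\not\in\{0,|J_i|\}$, given $B\in\mathbf{Split}(s,i)$ follow $G_i$-ladder classes from $B$ until the $G_i$-tokens match those of $A$, obtaining a unique $B_2\in\mathbf{Fix}(A,i)$, and follow $(G\setminus G_i)$-ladder classes from $B$ until the $(G\setminus G_i)$-tokens match those of $A$, obtaining a unique $B_1\in\mathbf{Move}(A,i)$; then set $\widehat{\pi}(B)(i) := \widehat{\pi}(B_1)(i)$ and $\widehat{\pi}(B)(j) := \widehat{\pi}(B_2)(j)$ for $j\neq i$. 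Well-definedness is immediate from Proposition~\ref{prop:ladder} and the commutation of token moves within $G_i$ with token moves within $G\setminus G_i$, while the identity $\varphi(B) = \iota(\phi(J,\varphi))\circ\psi(B)$ follows from $\varphi(B)\cap V(G_i) = \varphi(B_1)\cap V(G_i)$ and $\varphi(B)\cap V(G\setminus G_i) = \varphi(B_2)\cap V(G\setminus G_i)$ together with the corresponding identities at $B_1$ and $B_2$. The main obstacle is the verification of property $(d)$: we must translate Boolean formulas in the ambient $V(J)$ over the family $\{\psi(B):B\in W\}$ into Boolean formulas in the ambient $V(J^\ast)$ over $\{\psi(B)\cap V(J^\ast):B\in W\}$ while preserving the evaluation $\{u'\}$, and although this reduces to the commutation facts above, careful bookkeeping is needed to keep the resulting formulas on the correct family of sets.
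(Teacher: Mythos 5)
Your overall architecture is reasonable, and parts of it (the restriction of the Boolean formulas to $V(J^\ast)$ for condition $(d)$, and the observation that $\mathbf{Split}(s,i)\simeq \mathbf{Move}(A,i)\,\square\,\mathbf{Fix}(A,i)$) are correct. But there is a genuine gap in your verification of condition $(a)$, i.e.\ the polynomial-time identification of $V(\mathbf{Fix}(A,i))$ inside $F$. You assert that, after removing the ladder classes of the $G_i$-internal moves (read off from $\mathbf{Move}(A,i)$), ``the remaining ladder classes incident to $A$ come from edges of $G$ not in $G_i$,'' and that $\mathbf{Fix}(A,i)$ is the component of $A$ over the ladder classes of edges of $G\setminus G_i$. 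The edges of $G$ ``not in $G_i$'' split into two kinds: edges internal to $G\setminus G_i$ and cross edges in $E(G_i,\,G\setminus G_i)$. The cross edges also appear among the edges of $F$ incident to $A$, their ladder classes are not represented in $\mathbf{Move}(A,i)$, and following even one of them changes $|\varphi(\cdot)\cap V(G_i)|$ and leaves $\mathbf{Split}(s,i)$ entirely. So the component you describe is strictly larger than $\mathbf{Fix}(A,i)$ unless you can first separate the internal $(G\setminus G_i)$-classes from the cross classes --- and you give no way to do that from $F$ alone. This is not a bookkeeping issue: it is the substantive combinatorial content of the lemma. The paper's proof handles it by fixing a closed walk $Q$ in $\mathbf{Move}(A,i)$ that visits every vertex of $F_s(J_i)$ and declaring $C\in\mathbf{Fix}(B,i)$ only if every vertex along a candidate path from $B$ to $C$ supports a closed walk whose edges lie in the same ladder classes as those of $Q$; the existence of such a mimicking walk is what certifies that no token has entered or left $G_i$ along the path. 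Your proposal has no analogue of this certificate. The same identification problem resurfaces in your propagation step, where you need to recognize which edges incident to an arbitrary $B\in\mathbf{Split}(s,i)$ are $G_i$-moves, which are $(G\setminus G_i)$-moves, and which are cross moves, and even to recognize $V(\mathbf{Split}(s,i))$ itself.

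Structurally you also diverge from the paper: you apply Lemma~\ref{lem:extend_pi} once, to $\mathbf{Fix}(A,i)$, and then extend to all of $\mathbf{Split}(s,i)$ via the product decomposition, whereas the paper applies Lemma~\ref{lem:extend_pi} separately to $\mathbf{Fix}(B,i)$ for each $B\in\mathbf{Move}(A,i)$, transporting the formulas $\Gamma_{u'}$ and the subgraph $F'$ along the walk $Q$. Your propagation idea is sound in principle (by Proposition~\ref{prop:ladder}, every $G_i$-move edge of $\mathbf{Split}(s,i)$ is ladder-equivalent to an edge of $\mathbf{Move}(A,i)$ and every internal $(G\setminus G_i)$-move edge to an edge of $\mathbf{Fix}(A,i)$, so once $\mathbf{Fix}(A,i)$ is correctly identified the projections $B\mapsto(B_1,B_2)$ are computable and consistent), and it would arguably be cleaner than re-invoking Lemma~\ref{lem:extend_pi} for each $B$. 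But it stands or falls with the identification of $\mathbf{Fix}(A,i)$, which is exactly the step that is missing. The boundary cases $s\in\{0,|J_i|\}$ are also dispatched too quickly: there $\mathbf{Split}(s,i)=\mathbf{Fix}(A,i)$, and hypothesis $3$ of the lemma is used precisely to establish condition $(a)$ (every vertex of $\mathbf{Split}(0,i)$ is adjacent only to vertices of $\mathbf{Split}(0,i)\cup\mathbf{Split}(1,i)$, and $\psi$ is already defined on the latter), not merely to make hypothesis $2(a)$ ``achievable.''
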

\begin{proof}

Suppose that $s=0$ or $s=|J_i|$. Let 
\begin{equation*}
    s^\ast :=\
    \begin{cases}
    1  & \textrm { if }    s=0.\\
     |J_i|-1 & \textrm{ if } s=|J_i|.
    \end{cases}
\end{equation*}
We use condition $(3)$ of Lemma~\ref{lem:extend_pi}, with $F^\ast:=\mathbf{Split}(s,i)$,
$G^\ast:=G\setminus G_i$, $k^*:=k-s$, and 
\begin{equation*}
    T^\ast :=\
    \begin{cases}
    \emptyset  & \textrm { if }    s=0.\\
     V(J_1) & \textrm{ if } s=|J_i|.
    \end{cases}
\end{equation*}
 Note that
we have conditions $(b)$ and $(c)$ and $(d)$ for $F^\ast$ being definable graph of $F$.
Note that every vertex in $\mathbf{Split}(s,i)$ is adjacent only to vertices
in $\mathbf{Split}(s,i) \cup \mathbf{Split}(s^\ast,i)$. Since $\mathbf{Split}(s,i)$ is connected and we have defined
$\psi$ on every vertex of $\mathbf{Split}(s^\ast,i)$, we can determine which vertices of $F$ are in 
$\mathbf{Split}(s,i)$; and we have condition $(a)$.

Assume that $0 < s < |J_i|.$
Since \[V(\mathbf{Split}(s,i))=\bigcup_{B \in \mathbf{Move}(A,i)} V(\mathbf{Fix}(B,i)),\]
it is sufficient to show that for every $B \in \mathbf{Move}(A,i)$, we can define $\psi$
on the vertices of $\mathbf{Fix}(B,i)$. Let  $B$ be a vertex of  $\mathbf{Move}(A,i)$.
We use condition $(3)$ Lemma~\ref{lem:extend_pi}, with $F^\ast:=\mathbf{Fix}(B,i)$,
$G^\ast:=G\setminus G_i$, $k^*:=k-s$, $T:=\widehat{\pi}(B)(i)$ and
$J^\ast:=J \setminus J_i$. Note that
we have conditions $(b)$ and $(c)$ for $F^\ast$ being a definable graph of $F$.
We show the remaining conditions for $F$ being a definable graph, and condition $(3)$ of Lemma~\ref{lem:extend_pi}.
Let \[Q:=(A=:B_1,B_2,\dots,B_l:=B)\] be a closed walk in 
$\mathbf{Move}(A,i)$, such that for every $A' \in V(F_s(J_i))$ there exists an $B_j$ 
such that $\widehat{\pi}(B_j)(i)=A'$. 
\begin{itemize}

\item \emph{condition $(a)$}

Let $E_B$ be the set of edges in $\mathbf{Move}(B,i)=\mathbf{Move}(A,i)$
incident to $B$. 
Let $Y$ be the set of vertices $C$ of $F$ with the following two properties.
\begin{itemize}
 \item[$1)$] No edge of $P$ is in the same ladder as an edge in $E_B$. 
 
 \item[$2)$] For every vertex $C' \in P$ there is a closed walk $Q':=(C'=D_1,D_2,\dots,D_l=C')$ such that for every $1 \le j \le l$,  the edge $D_jD_{j+1}$ is in the same ladder as $B_jB_{j+1}$.
\end{itemize}
 $1)$ and $2)$ imply that in the path $\varphi(P)$
no token was placed at or moved from $G_i$.
Therefore, $Y$ is precisely $V(\mathbf{Fix}(B,i))$.

\item \emph{condition $(d)$ and condition $(3)$ of Lemma~\ref{lem:extend_pi}}

For every $C \in W$, we proceed as follows. 
Let $(C=:C_1,\dots, C_{l}=: C')$ be the walk
in $F$ starting at $C$ such that for every $1 \le j < l$,
$C_jC_{j+1}$ is in the same ladder as $B_{j}B_{j+1}$. Note that $\varphi(C')\cap G_i=\varphi(B) \cap G_i$,
and $\varphi(C') \cap G_j= \varphi(C) \cap G_j$ for  all $j\neq i$.
 Thus, $C'\in \mathbf{Fix}(B,i)$. Let $u' \in J \setminus J_i$. Let $\Gamma'_{u'}$ be
the Boolean combination that results from replacing every term $\psi(C)$  in $\Gamma_{u'}$ with $\psi(C')$.
We have that 
\[ \left \{ u' \right \}=V(J\setminus J_i) \cap \operatorname{eval}(\Gamma'_{u'}).\]
Thus, we have condition $(d)$.
Let $F'_B$ be the graph induced by the set of vertices
$\{C':C \in F'\}$. Since $F'_B$ is a subgraph of $\mathbf{Fix}(B,i)$, we have 
condition $(3)$ of Lemma~\ref{lem:extend_pi}.

%
%
\end{itemize}
\end{proof}

\begin{lemma}\label{lem:H_split}
Suppose that $J \setminus J_i$ is connected and with at least three vertices.
Let $A$ be a vertex of $H$, and let $s:=|\widehat{\pi}(A)(i)|$.
Then we can define $\psi$ on every vertex of $\mathbf{Split}(s,i)$.
\end{lemma}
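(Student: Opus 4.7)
The plan is to invoke Lemma~\ref{lem:split} with the given vertex $A \in V(H)$ and $s := |\widehat{\pi}(A)(i)|$, and verify each of its three hypotheses. Two of them drop out immediately: $\mathbf{Move}(A,i)$ sits inside $H$ and is isomorphic to $H_i$, so hypothesis~$1$ follows from the fact that $\psi$ has already been defined on every vertex of $H$ through $\widehat{\pi}(B) := (\psi_1(\pi(B)(1)),\dots,\psi_r(\pi(B)(r)))$; and hypothesis~$3$ is vacuous, since $s = |\psi_i(\pi(A)(i))| = l_i$ satisfies $1 \le l_i \le |J_i|-1$ because $H_i$ has at least two vertices.

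For hypothesis~$2(b)$, I would set $W := V(\mathbf{Fix}(A,i)) \cap V(H)$ and, for each $u' \in V(J\setminus J_i)$, take $\Gamma_{u'} := \bigwedge_{B \in W,\ u' \in \psi(B)} \psi(B)$. The tokens $\psi_i(\pi(A)(i)) \subseteq V(J_i)$ are the same across all $B \in W$, so they survive the intersection, but the Cartesian-product freedom of $H_1 \square \cdots \square H_{i-1} \square H_{i+1} \square \cdots \square H_r$ inside $\mathbf{Fix}(A,i) \cap H$ means that for every $v' \in V(J\setminus J_i)\setminus\{u'\}$ some $B \in W$ contains $u'$ and omits $v'$, so $v'$ is eliminated. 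Hence $\operatorname{eval}(\Gamma_{u'}) \cap V(J\setminus J_i) = \{u'\}$, as required.

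The substantive step will be hypothesis~$2(a)$: I must produce an induced subgraph $F'$ of $\mathbf{Fix}(A,i)$ and an induced connected subgraph $G'$ of $G\setminus G_i$ with $|G'|\ge 3$, $V(F')\subseteq W$, and $\varphi(F')$ generated by moving $1 \le k'\le |G'|-1$ tokens on $G'$. In the generic case that some $G_j$ with $j\neq i$ has $|G_j|\ge 3$, I would take $G' := G_j$ and $F' := \mathbf{Move}(A,j)$: then $F' \subseteq \mathbf{Fix}(A,i)\cap H = W$ because the moves keep every $G_l$ ($l \neq j$) fixed, $\varphi(F') \simeq F_{k_j}(G_j)$, and $1 \le k_j \le |G_j|-1$ because $H_j$ has at least two vertices.

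The hard part will be the degenerate case where every $G_j$ with $j\neq i$ is of class~$1$, so each is an edge with $k_j = 1$. Here $G\setminus G_i$ is a connected arrangement of $r-1\ge 2$ disjoint edges linked by at most one edge between any two of them (by the $(C_4,\mathrm{diamond})$-freeness of $G$), and no single $G_j$ hosts a three-vertex $G'$. Handling this will require exhibiting $G'$ as an induced path of length two spanning two adjacent $G_j$'s, and picking $F'$ among the $H$-configurations of $\mathbf{Fix}(A,i)$ that keep the right endpoint outside $G'$ occupied, so that $F'$ still realizes a token-graph-like structure on $G'$ whose vertex set lies in $W$. Once $2(a)$ is verified, Lemma~\ref{lem:split} immediately yields that $\psi$ extends to every vertex of $\mathbf{Split}(s,i)$, proving the lemma.
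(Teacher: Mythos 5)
Your overall strategy coincides with the paper's: invoke Lemma~\ref{lem:split}, and your verifications of condition~$1$ (via $\mathbf{Move}(A,i)\subseteq H$), of condition~$3$ (vacuous since $s=l_i$ satisfies $1\le l_i\le |J_i|-1$), of condition~$2b$ (intersecting the $\psi$-images of the $H$-vertices of $\mathbf{Fix}(A,i)$ containing $u'$, using the Cartesian independence of the other factors), and of condition~$2a$ in the generic case where some $J_j$, $j\neq i$, has at least three vertices (take $G':=G_j$, $F':=\mathbf{Move}(A,j)$) all match the paper's argument.

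The gap is in the degenerate case where every $J_j$ with $j\neq i$ is of class~$1$. Your proposed $G'$ is an induced path on three vertices, say $a$-$b$-$c$ with $G_j=\{a,b\}$ and $G_l=\{c,d\}$, and you want $F'$ drawn from the $H$-configurations of $\mathbf{Fix}(A,i)$. This cannot satisfy condition~$2a$ as stated: that condition requires $\varphi(F')$ to be the \emph{full} token graph obtained by moving $k'$ tokens on $G'$ with the remaining tokens fixed outside. With the $G_l$-token fixed at $d$ and $k'=1$, the graph $F_1(G')$ contains the configuration with the roaming token at $c$, which has two tokens on $G_l$ and none on $G_j$ and hence lies outside $H$ — so either $V(F')\not\subseteq W$, or you must delete that vertex, in which case $\varphi(F')$ is no longer generated by moving a token on all of $G'$ (it degenerates to $F_1(G_j)$, and $|G_j|=2<3$). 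Fixing the $G_l$-token at $c\in G'$ instead does not fit the required form either. The paper resolves this differently: it takes two adjacent class-$1$ components $G_j$ and $G_l$, sets $G':=G_j\cup G_l$ (four vertices, connected) with $k'=2$, and \emph{first enlarges $W$} by explicitly defining $\widehat{\pi}$ (hence $\psi$) on the two configurations outside $H$ that place both tokens on a single one of these edges — these are reached from $H$ along edges of $E(H,F\setminus H)$ whose direction is read off from $\overrightarrow{\mathcal{D}}$, and their images are forced to be $\emptyset$ on one factor and $V(J_j)$ (resp.\ $V(J_l)$) on the other. Only after this extension is $F^\ast\simeq F_2(G_j\cup G_l)$ entirely contained in $W$, giving condition~$2a$. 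Your sketch is missing this preliminary extension of $\psi$ beyond $H$, and without it no admissible pair $(F',G')$ exists in the degenerate case.
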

\begin{proof}
We use Lemma~\ref{lem:split} to define $\psi$ on every vertex of $\mathbf{Split}(s,i)$.
Note that we have condition $1$
of Lemma~\ref{lem:split}. For every vertex $x' \in J\setminus J_i$, let 
\[S_{x'}:=\{B \in V(H): \widehat{\pi}(B)(i)=\widehat{\pi}(A)(i) \textrm{ and }x'\in \psi(B) \}.\]
Note that $S_{x'} \subset V(\mathbf{Fix}(A,i))$ and
\[\{x'\}=V(J \setminus J_i) \cap \left (\bigcap_{B \in S_{x'}} \psi(B) \right ).\]
Thus, we have condition $2b$. If some $J_j$ distinct from $J_i$, has at least three vertices
then we have condition $2a$. Suppose that all $J_j$ different from $J_i$ are of class $1$.
Let $J_j$ and $J_l$ be  adjacent vertices of $P$. Let $A \in V(H)$.
 Let $F^\ast$ be the subgraph of $F$ induced by the set of vertices $B \in F$
 such that \[|\varphi(B) \cap (G_j \cup G_l)|=2 \textrm{ and } \varphi(B) \cap G_t = \varphi(A) \cap G_t
 \textrm{ for all } t \neq i,j.\] 
 Let $AB \in E(H,F\setminus H)_{ij}$ such that $(AB,i \rightarrow j ) \in \overrightarrow{D}$.
 Note that $B \in F^\ast$ and $\varphi(B)$ is obtained from $\varphi(A)$ by moving
 a token from $G_i$ to $G_j$. We define $\widehat{\pi}(B)(i):=\emptyset$,
 $\widehat{\pi}(B)(j):=V(J_j)$ and $\widehat{\pi}(B)(t):=\widehat{\pi}(A)(t)$ for all $t \neq i,j$. 
 Let $AC \in E(H,F\setminus H)_{ij}$ such that $(AC,j \rightarrow i ) \in \overrightarrow{D}$.
 Note that $C \in F^\ast$ and $\varphi(C)$ is obtained from $\varphi(A)$ by moving
 a token from $G_j$ to $G_i$. We define $\widehat{\pi}(C)(i):=V(J_i)$,
 $\widehat{\pi}(C)(j):=\emptyset$ and $\widehat{\pi}(C)(t):=\widehat{\pi}(A)(t)$ for all $t \neq i,j$. 
 We have defined $\psi$ on all the vertices of $F^\ast$. Since $F^\ast$ is a subgraph of  $\mathbf{Fix}(A,i)$,
we have condition $2a$. Thus, we can define $\psi$ on all 
the vertices of $\mathbf{Split}(s,i)$.
\end{proof}

\subsection{Proof of Theorem~\ref{thm:pi_ext}}

We proceed by induction on $n$. Suppose that Theorem~\ref{thm:pi_ext} holds for smaller values of $n$. 
By Proposition~\ref{prop:n<=6}, we may assume that $n \ge 7$.  If $r = 1$, then there is nothing to show since $F=H$ in this case.
Assume that $r \ge 2$. We consider two cases: $r=2$ and $r > 2$.

\subsubsection*{\emph{Suppose that $r=2$.}}
Without loss of generality assume that $|J_1| \ge |J_2|$. 
Since we are assuming that $n \ge 7$, $J_1$ is not a triangle nor an edge.
For every $0 \le i \le \min\{|J_2|,k\}$, let $F_i$ be the subgraph of $F$
induced by the set of vertices $A \in V(F)$, such that
that \[|\varphi(A) \cap V(G_2)|=i.\]
Let $h$ be such that $H=F_h$. Let $z_1'z_2' \in E(J_1,J_2)$. 
Let \[S_1:=\{X \in V(H):z_1'\notin \widehat{\pi}(X)(1) \textrm{ and } z_2' \in  \widehat{\pi}(X)(2)\},\]
and
\[S_2:=\{X \in V(H):z_1'\in \widehat{\pi}(X)(1) \textrm{ and } z_2' \notin  \widehat{\pi}(X)(2)\}.\]
For every $X \in S_1 \cup S_2$, let $X'$ be the vertex in $F$ such that
 $\varphi(X')$ is obtained from $\varphi(X)$ by sliding a token along $z_1z_2$.
By Theorem~\ref{thm:label} we can find $X'$
in polynomial time. 
Let $S_1':=\{X': X \in S_1\}$ and  $S_2':=\{X': X \in S_2\}$.
For every $X'$ in $S_1'$, we
define \[\widehat{\pi}(X')(1):= \widehat{\pi}(X)(1) \cup \{z_1'\}  \textrm{ and } \widehat{\pi}(X')(2):= \widehat{\pi}(X)(1) \setminus \{z_2'\}.\]
For every $X'$ in $S_2'$, we
define \[\widehat{\pi}(X')(1):= \widehat{\pi}(X)(1) \setminus \{z_1'\}  \textrm{ and } \widehat{\pi}(X')(2):= \widehat{\pi}(X)(1) \cup \{z_2'\}.\]
Note that $S_1'\subset V(F_{h-1})$ and  $S_2' \subset V(F_{h+1})$. 

Before proceeding we make the following observations.

If $k-(h-1)<|J_1|$, we have that 
\begin{equation} \label{eq:u}
\{z_1'\}=\bigcap_{X' \in S_1'} \psi(X');
\end{equation}
and for every vertex $w'\in J_1 \setminus \{z_1'\}$,
\begin{equation} \label{eq:w_1}
\{w'\}=\left ( \bigcap_{X' \in S_1', w' \in \widehat{\pi}(X')(1)} \psi(X') \right ) \setminus \{z_1'\}.\\
\end{equation}
Since $k \le n/2$ and $|J_1| \ge |J_2|$, we have that  if $k-(h-1)=|J_1|$ , then $h-1=0$. In this case, there is only
one vertex $A$ in  $F_{h-1}$, and we have defined $\psi(A)$.

If $k-(h+1)\ge 1$, we have that 
\begin{equation} \label{eq:2u}
\{z_1'\}=V(J_1) \setminus \bigcup_{X' \in S_2'} \psi(X');
\end{equation}
and for every vertex $w'\in J_1 \setminus \{z_1'\}$,
\begin{equation} \label{eq:2w_1}
\{w'\}=\bigcap_{X' \in S_2', w' \in \widehat{\pi}(X')(1)} \psi(X').\\
\end{equation}

If $h+1<|J_2|$, we have that \begin{equation} \label{eq:v}
\{z_2'\}=\bigcap_{X' \in S_2'} \psi(X');
\end{equation}
and for every vertex $w'\in J_2 \setminus \{z_2'\}$,
\begin{equation} \label{eq:w_2}
\{w'\}=\left ( \bigcap_{X' \in S_2', w' \in \widehat{\pi}(X')(1)} \psi(X') \right ) \setminus \{z_2'\}.\\
\end{equation}

If $h-1 \ge 1$, we have that \begin{equation} \label{eq:2v}
\{z_2'\}=V(J_2) \setminus \bigcup_{X' \in S_1'} \psi(X');
\end{equation}
and for every vertex $w'\in J_2 \setminus \{z_2'\}$,
\begin{equation} \label{eq:2w_2}
\{w'\}=\bigcap_{X' \in S_1', w' \in \widehat{\pi}(X')(2)} \psi(X').\\
\end{equation}

Suppose that $k=2$.  Thus, $H=F_1$ in this case.
We define $\psi$ on all the vertices of $F_2$. 
If $J_2$ is an edge, then $F_2$ consists of a single vertex, for which we have already
defined $\psi$. Suppose that $|J_2| \ge 3$.
We use Lemma~\ref{lem:extend_pi} with $F^\ast=F_2$, $J^\ast=J_2$ and $T^\ast=\emptyset$ and $k^\ast=2$. 
Note that we have conditions $(b)$ and $(c)$ for $F_2$ being a definable subgraph of $F$.
Since there a no edges from $F_0$ to $F_2$ we also have condition $(a)$. From (\ref{eq:v}) and (\ref{eq:w_2})
we get condition $(d)$. If $|J_2| \neq 4$ then we have condition (1) of Lemma~\ref{lem:extend_pi}. Suppose
that $|J_2|=4$. If a component of $J_2 \setminus \{z_2'\}$ has three  
vertices, then we have condition $(3)$ of Lemma~\ref{lem:extend_pi}. Suppose that no 
component of $J_2 \setminus \{z_2'\}$ has three 
vertices. Suppose $J_2$ is not a path. Thus, every automorphism of $J_2$ leaves $z_2'$
fixed; and we have condition $(2)$ of Lemma~\ref{lem:extend_pi}. 
Suppose that $J_2$ is a path $(x_1', x_2',x_3', x_4')$, such that $z_2'$ is equal to $x_2'$ or $x_3'$.
Without loss of generality assume that $z_2'=x_2'$.
Note that we have defined $\psi(A)$ on all vertices $A \in F_2$ such that $z_2 \in \varphi(A)$. Let $A \in V(F_2)$ be
such that $\psi(A)=\{x_1',x_2'\}$. Let $B$ be the only neighbor of $A$ in $F_2$. Note that $\varphi(B)$ is obtained
from $\varphi(A)$ by sliding a token along $x_2x_3$. We define $\widehat{\pi}(B)(1)=\emptyset$ and 
$\widehat{\pi}(B)(2)=\{x_1', x_3'\}$. Let $C$ be the only neighbor of $B$ in $F_2$ for which we have not defined
$\psi$; that is, $\varphi(C)$ does not contain $x_2$. $\varphi(C)$ is obtained from $\varphi(B)$ by sliding 
along $x_3x_4$. We define $\widehat{\pi}(C)(1)=\emptyset$ and 
$\widehat{\pi}(C)(2)=\{x_1', x_4'\}$. Let $D$ be the only vertex of $F_2$ for which we have not defined
$\psi$. We define $\widehat{\pi}(D)(1)=\emptyset$ and 
$\widehat{\pi}(D)(2)=\{x_3', x_4'\}$.
Thus, we can define $\psi$ on every vertex of $F_2$. To define $\psi$ on every vertex of $F_0$ we proceed
in a similar way.
 
Assume that $k > 2$. Our assumption that $r=2$ and $k>2$ implies that $J$ cannot contain
$3$ disjoint edges. Therefore, $J_1$ and $J_2$ are either edges, triangles or stars. Thus,
$J_1$ is a star on at least four vertices. Let $u'$ be the center of $J_1$.
We proceed by cases on whether $J_2$ is an edge, a triangle or a star of at least three vertices.
\begin{itemize}
  \item \emph{$J_2$ is an edge.}
  
  Thus, $h=1$ in this case.
  We use Lemma~\ref{lem:extend_pi} with $F^\ast=F_0$, $J^\ast=J_1$ and $T^\ast=\emptyset$. 
  Note that we have conditions $(a)$, $(b)$ and $(c)$ for $F_0$ being a definable subgraph of $F$.
  From (\ref{eq:u}) and (\ref{eq:w_1})
  we get condition $(d)$. Since every automorphism of $J_1$ leaves $u'$ fixed we have 
  condition $(2)$ of Lemma~\ref{lem:extend_pi}. 
  Thus, we can define $\psi$ on every vertex of $F_0$.
   We use Lemma~\ref{lem:extend_pi} with $F^\ast=F_2$, $J^\ast=J_1$ and $T^\ast=V(J_2)$. 
  Note that we have conditions $(a)$, $(b)$ and $(c)$ for $F_0$ being a definable subgraph of $F$.
  From (\ref{eq:v}) and (\ref{eq:w_2})
  we get condition $(d)$. Since every automorphism of $J_1$ leaves $u'$ fixed we have 
  condition $(2)$ of Lemma~\ref{lem:extend_pi}. 
  Thus, we can define $\psi$ on every vertex of $F_2$. Since $V(F)=V(F_0) \cup V(F_1) \cup V(F_2)$,
  we are done in this case.

  \item \emph{$J_2$ is triangle.}
  
  A leaf of $J_1$ cannot be adjacent to a  vertex of $J_2$; otherwise
  $J$ contains three disjoint edges. Thus every $J_1-J_2$ edge contains
  $u'$ as an endpoint, in particular $u'=z_1'$. Note that $H$ is equal to $F_1$ or $F_2$. Suppose that
  $H$ is equal to $F_1$. 
  
  We use Lemma~\ref{lem:extend_pi} with $F^\ast=F_0$, $J^\ast=J_1$ and $T^\ast=\emptyset$. 
  Note that we have  conditions $(b)$ and $(c)$ for $F_0$ being a definable subgraph of $F$.
  Since all vertices in $F_0$ are adjacent to vertices in either to $F_0$ or $F_1$, we also have condition $(a)$. From (\ref{eq:u}) and (\ref{eq:w_1})
  we get condition $(d)$.Since every automorphism of $J_1$ leaves $u'$ fixed we have 
  condition $(2)$ of Lemma~\ref{lem:extend_pi}. Thus, we can define
  $\psi$ on every vertex of $F_0$. 
  
  Let $w'$ and $x'$ be the vertices of $J_2$ distinct from $z_2'$. Let $F_{z_2'}, F_{w'}, F_{x'}$ be subgraphs of $F_2$ 
  induced by the vertices $A \in F_2$ such that $\varphi(A)$ does not contain $z_2'$, $w'$ and $x'$, respectively.
  We use Lemma~\ref{lem:extend_pi} with $F^\ast=F_{x'}$, $J^\ast=J_1$ and $T^\ast=\{z_2',w'\}$. 
  Note that we have conditions $(b)$ and $(c)$ for $F_{x'}$ being a definable subgraph of $F$.
   From (\ref{eq:2u}) and (\ref{eq:2w_1})   we get condition $(d)$.
  Let $A \in V(F_{x'}) \cap S_2'$. Note that a neighbor $B \notin F_{w'} \cup F_1$ of 
  $A$ is in $F_{x'} \cup F_{z_2'}$. Let $C \in V(F_{w'})$ be such that $\widehat{\pi}(C)(1)=\widehat{\pi}(A)(1)$.
  We have that $B$ is a common neighbor of $A$ and $C$ if and only if $B \in F_{z_2'}$. In this way 
  we can determine the remaining vertices of $F_{x'}$ and we have condition $(a)$.  By condition $(2)$ of Lemma~\ref{lem:extend_pi} we can define
  $\psi$ on every vertex of $F_{x'}$. By analogous arguments we can define $\psi$ on every vertex of $F_{w'}$.
  Let $A_1 \in V(F_{w'})$ and $A_2 \in V(F_{x'})$ such that $\widehat{\pi}(A_1)(1)=\widehat{\pi}(A_2)(1)$.
  Note that all the neighbors of $A_1$ and $A_2$ are contained in $F_1 \cup F_2$.
  Let $B$ be a common neighbor of $A_1$ and $A_2$ in $F_2$. We define $\widehat{\pi}(B)(1)=\widehat{\pi}(A_1)(1)$
  and $\widehat{\pi}(B)(2)=\{w',x'\}$. For every such pair $(A_1, A_2$) we define 
  $\widehat{\pi}(B)$ accordingly. As these accounts for all the vertices of $F_{z_2'}$,
  we have defined $\psi$ on very vertex of $F_{z_2'}$. Thus, we have defined  $\psi$
  on every vertex of $F_2$.

  Let $S$ be the set of vertices $A\in F_{z_2'}$ such that $u'\in \widehat{\pi}(A)(1)$.
  For every vertex $A\in S$, let $A'$ be the vertex such 
  that $\varphi(A')$ results from $\varphi(A)$ from sliding a token 
  along $uv$. Note that $A'$ is the only neighbor of $A$ not in $F_2$.
  We define $\widehat{\pi}(A')(1):=\widehat{\pi}(A)(1)\setminus \{u'\}$ and
  $\widehat{\pi}(A')(2):=V(J_3)$. If $k=3$,
  then $F_3$ consists of a single vertex and we have defined $\psi$ on all vertices of $F_3$.
  Suppose that $k>3$.  We use Lemma~\ref{lem:extend_pi} with $F^\ast=F_{3}$, $J^\ast=J_1$ and $T^\ast=V(J_2)$. 
  Note that we have conditions $(b)$ and $(c)$ for $F_3$ being a definable subgraph of $F$.
  Since all the edges in $F_3$ go either to $F_2$ or to $F_3$, we also have condition $(a)$. 
   From (\ref{eq:2u}) and (\ref{eq:2w_1})   we get condition $(d)$.
   By condition $(2)$ of Lemma~\ref{lem:extend_pi}
we can define $\psi$ on every vertex of $F_3$. Thus, we have defined $\psi$ on every vertex 
of $F$. 
If $H=F_2$, then we proceed with similar arguments as above. In this case, we first define $\psi$ on 
every vertex of $F_3$; afterwards we define $\psi$ on every vertex of $F_0 \cup F_1$.

\item \emph{$J_2$ is a star on at least three vertices.}

Let $v'$ be the center of $J_2$. A leaf of $J_1$ cannot be adjacent to a leaf
of $J_2$; otherwise, $J$ would contain three disjoint edges. Therefore,
all $J_1-J_2$ edges contain $u'$ as an endpoint or all $J_1-J_2$ edges contain
 $v'$ as endpoint. Without loss of generality assume that all $J_1-J_2$
 edges contain $u'$ as an endpoint. Thus, $u'=z_1'$. No two leaves $x'$ and $y'$ of $J_2$ are adjacent simultaneously to $u'$. Otherwise, $x',v',u',y'$ are the vertices of a $4$-cycle in $J$; 
 which implies that $x'$ and $y'$ are adjacent, which contradicts the assumption that $J_2$ is a star.
  
 Suppose that we have defined $\psi$ on the vertices of $F_i$, for some $i \le k-1$.
 We show that we can define $\psi$ on the vertices of $F_{i-1}$;
 the proof that we can define $\psi$ on the vertices of $F_{i+1}$ for $i \ge 1$, follows by similar arguments.
 
 Let \[S:=\{X \in V(F_i):u'\notin \widehat{\pi}(X)(1) \textrm{ and } |\widehat{\pi}(X)(2) \cap N(u')|=1\}.\]
 Let $X \in S$. Let $y'$ be the only vertex in $\widehat{\pi}(X)(2) \cap N(u')$. Let $X'$ be the vertex of $F$
 such that $\varphi(X')$ is obtained from $\varphi(X)$ by sliding a token along $y'u'$.
 Thus, $X' \in F_{i-1}$. Note that $X'$ is the only neighbor of $X$ not in $F_i$. 
 We define $\widehat{\pi}(X')(1):=\widehat{\pi}(X)(1) \cup \{u'\}$ and  $\widehat{\pi}(X')(2):=\widehat{\pi}(X)(2) \setminus \{y'\}.$
 Let \[S':=\{X': X \in S\}.\]

Let $A \in S$. We use Lemma~\ref{lem:extend_pi} with $F^\ast=\mathbf{Move}(A',1)$, $J^\ast=J_1$ and $T^\ast=\widehat{\pi}(A')(2)$. 
We have conditions $(b)$ and $(c)$ for $\mathbf{Move}(A',1)$ being a definable subgraph of $F$.
Note that 
\[
\{u'\}=\bigcap_{X' \in S' \cap \mathbf{Move}(A',1)} \psi(X')
\]
and
\[
\{x'\}=\left ( \bigcap_{\substack{X' \in S' \cap \mathbf{Move}(A',1)' \textrm{ and }  \\ x' \in \widehat{\pi}(X')(1)}} \psi(X') \right ) \setminus \{u'\}.
\]
Thus, we have condition $(d)$. Let $B$ be a neighbor of $A'$, and $C'$ a neighbor of $B$ such that
\begin{itemize}
 \item[$(i)$] $B \notin F_i$;
 \item[$(ii)$] $C \in S \setminus \{A\}$; and
 \item[$(iii)$] $\widehat{\pi}(C')(2)=\widehat{\pi}(A')(2)$.
\end{itemize}
Property $(i)$ implies that $B \in F_{i-1}$. Properties
$(ii)$ and $(iii)$ imply that $\varphi(B)$ is obtained from $\varphi(A')$
by sliding a token along an edge of $G_1$. Thus $B' \in \mathbf{Move}(A',1)$.
By considering all such $B$ and $C'$ we can determine which vertices of $F$ belong to $\mathbf{Move}(A',1)$; and
we have condition $(a)$. By condition $(2)$ of Lemma~\ref{lem:extend_pi}
we can define $\psi$ on every vertex of $\mathbf{Move}(A',1)$. 

If $i-1=0$, then $\mathbf{Move}(A',1)=F_{i-1}$, and we have defined
$\psi$ on every vertex of $F_{i-1}$. Suppose that $i>1$.
Let now $B$ be a vertex of $\mathbf{Move}(A',1)$. We show that we can define $\psi$ on every
vertex of $\mathbf{Move}(B,2)$. We use Lemma~\ref{lem:extend_pi} with $F^\ast=\mathbf{Move}(B,2)$, 
$J^\ast=J_2$ and $T^\ast=\widehat{\pi}(B)(1)$. 
We have conditions $(b)$ and $(c)$ for $\mathbf{Move}(B,2)$ being a definable subgraph of $F$.

For every $A_1'\in S'$, consider every pair of vertices $B_1 \in \mathbf{Move}(A_1',1)$
and $C \notin \mathbf{Move}(A',1) \cup F_i$, such that $\widehat{\pi}(B_1)(1)=\widehat{\pi}(B)(1)$
and $C$ is a neighbor of $B_1$. Note that $C \in \mathbf{Move}(B,2)$.
If $|E(J_1,J_2)|=1$ then we have determined all the vertices of $F$ that belong to $\mathbf{Move}(B,2)$.
Suppose that $|E(J_1,J_2)|=2$. Let $A_1'$, $B_1$ and $C$ be chosen as above. 
 Let $C_1$ be a neighbor of $C$, and let $B_2 \in \mathbf{Move}(A_1',1)$
be a neighbor of $B_1$, such that $B_1, B_2, C$ and $C_1$ are in a common
induced $4$-cycle of $F$.  This implies that $\varphi(C_1)$ is obtained from $\varphi(C)$ by moving
a token along an edge of $G_2$. In this case we have that $C_2 \in \mathbf{Move}(B,2)$.
By considering all such $A_1',B_1, C, B_2$ and $C_1$, we determine all the vertices of $F$ that belong to $\mathbf{Move}(B,2)$.
Thus, we have property $(a)$. 

We now show condition $(d)$. 
For every $x' \in J_2$ not adjacent to $u'$, let $S_{x'}$ be the set
of all vertices $X \in \mathbf{Move}(B,2)$, for which we have defined $\psi$, and 
such that $x' \in \widehat{\pi}(X)(2)$. Note that 
\[\{x'\}=V(J_2)\cap \left ( \bigcap_{Y \in S_{x'}} \psi(Y) \right ).\]
If  $|E(J_1,J_2)|=1$, then 
\[\{v'\}=V(J_2)\setminus \bigcup_{x' \in V(J_2)\setminus \{v'\}} \{x'\};\]
and we have condition $(d)$ in this case. 

Suppose that $|E(J_1,J_2)|=2$, and let $w'$ the neighbor of $u'$ in $J_2$ distinct
from $v'$.
 Let $C \in \mathbf{Move}(B,2)$ 
 such that 
 \[ v,w \notin \varphi(C).\]
 Note that there exists $A_1 \in S$, such that $C \in \mathbf{Move}(A_1,1)$.
 Therefore, we have defined $\psi(C)$. If $u'\notin \psi(C)$, let $C'$ be a neighbor of $C$ in
  $\mathbf{Move}(A_1,1)$ such that $u'\in \psi(C)$; otherwise let $C':=C$.
Let $D \notin F_i$ be a neighbor of $C$, and $D'$ a neighbor of $C'$, such that
$CD$ and $C'D'$ are in the same ladder. Note that $D, D' \in F_{i-1}$ and 
$\varphi(C) \triangle \varphi(D)=\varphi(C') \triangle \varphi(D')$.
Let $X$ be neighbor of $D'$ in $F_i$. Let $y'$ be the only vertex of $\widehat{\pi}(C)(2)$
not in $\widehat{\pi}(X)(2)$. Note that $\varphi(D)$ is obtained from $\varphi(C)$ 
by sliding a token along $y'v'$. We define $\widehat{\pi}(D)(1):=\widehat{\pi}(B)(1)$
and $\widehat{\pi}(D)(2):=\widehat{\pi}(C)(2)\setminus\{y'\} \cup \{v'\}$. Let $S_1$
be the set of all such $D$ for every choice of $C$ as above.
We have that
\[\{v'\}=V(J_2)\cap \left ( \bigcap_{D \in S_1} \psi(D) \right ),\]
and
\[\{w'\}=V(J_2)\setminus \bigcup_{D \in S_1} \psi(D).\] 
We have condition $(d)$ in this case. 
By condition $(2)$ of Lemma~\ref{lem:extend_pi}
we can define $\psi$ on every vertex of $\mathbf{Move}(B,2)$. 
Since these accounts for all the vertices in $F_{i-1}$ we have defined $\psi$
on every vertex of $F_{i-1}$. Proceeding iteratively in this way we can define $\psi$
on all vertices of $F$.
\end{itemize}
 This completes the proof for when $r=2$.
 \subsubsection*{\emph{Suppose that $r>2$.}}
 
Let $P$ be the graph whose vertex set is $\{J_1,\dots,J_r\}$ and where $J_i$ is adjacent
 to $J_j$ if and only if $E(J_i,J_j)\neq \emptyset$. Since $F$ is connected, $P$ is connected.
 Therefore, there exists at least two vertices of $P$, say $J_1$ and $J_2$, such that 
 $P\setminus (J_1 \cup J_2)$ is connected. Without loss of generality assume  that $|J_1| \le |J_2|$.
By Lemma~\ref{lem:H_split} we can define
$\psi$ on all  the vertices of $\mathbf{Split}(k_1,1)$ and  $\mathbf{Split}(k_2,2)$

 \subsubsection*{\emph{Suppose  that $n-|J_1|-|J_2|\ge 3.$}}

\LabelQuote{for all 
 $0 \le s \le \min \{k-2,|J_1|\},$
 we can define $\psi$ on all the vertices of $\mathbf{Split}(s,1)$.}{$\ast$}
 
  We claim that:
 \begin{itemize}
\item[$a)$] if for  some $1 \le s \le k-2$ we have defined $\psi$
on every vertex of  $\mathbf{Split}(s,1)$,
then we can define $\psi$ on every vertex of $\mathbf{Split}(s-1,1)$; and

\item[$b)$] if for some $1   \le s \le \min\{k-3,|J_1|-1\}$ we have defined $\psi$
on every vertex of  $\mathbf{Split}(s,1)$, then we can define $\psi$ on every vertex of $\mathbf{Split}(s+1,1)$.
\end{itemize}

In both $a)$ and $b)$, we have that
\begin{align*}
 k-s & \le k-1 \\
     & \le \frac{|J|}{2}-1 \\
     & = \frac{|J|-|J_1|-|J_2|}{2}+\frac{|J_1|+|J_2|}{2}-1 \\
     & \le \frac{|J|-|J_1|-|J_2|}{2}+|J_2|-1 \\
     & = (|J|-|J_1|-|J_2|)-\frac{n-|J_1|-|J_2|}{2}+|J_2|-1 \\
      & \le |J|-|J_1|-\frac{5}{2}.
\end{align*}
Since $k-s$ is an integer we have that 
\[k-s \le |J|-|J_1|-3\]

We first prove $a)$.
Since $2 \le k-s \le |J|-|J_1|-3$  there exist $1 \le t' \le n-|J_1|-|J_2|-2$ and  $1 \le t \le |J_2|-1$, such that $t'+t=k-s$.
We use Lemma~\ref{lem:split} to define $\psi$ on every vertex of $\mathbf{Split}(t,2)$.
 Let $A \in \mathbf{Split}(s,1)$ such that 
 $|\widehat{\pi}(A)(2)|=t$. Since 
 $A \in \mathbf{Split}(s,1)$, we have defined $\psi$ on every vertex
 of $\mathbf{Move}(A,2)$; thus, we have condition $1$ of Lemma~\ref{lem:split}. 
 Since $n-|J_1|-|J_2|\ge 3$ and $1 \le t' \le  n-|J_1|-|J_2|-2$,
 we have condition $2a$. For every vertex $x' \in J\setminus \{J_1\}$, let 
\[S_{x'}:=\{B \in \mathbf{Split}(s,1) \cap \mathbf{Split}(t,2): \widehat{\pi}(B)(1)=\widehat{\pi}(A)(1) \textrm{ and }x'\in \psi(B) \}.\]
Note that 
\[\{x'\}=V(J \setminus J_1) \cap \left (\bigcap_{B \in S_{x'}} \psi(B) \right ).\]
Thus, we have condition $2b$ and we can define $\psi$ on all the vertices of $\mathbf{Split}(t,2)$.

We now use Lemma~\ref{lem:split} to define $\psi$ on every vertex of $\mathbf{Split}(s-1,1)$.
 Let $A \in \mathbf{Split}(t,2)$ such that 
 $|\widehat{\pi}(A)(1)|=s-1$. We have defined $\psi$ on every vertex
 of $\mathbf{Move}(A,1)$; thus, we have condition $1$ of Lemma~\ref{lem:split}. 
 Let $t'':=k-(s-1)-t=t'+1$. Since $n-|J_1|-|J_2|\ge 3$ and  $2 \le t'' \le  n-|J_1|-|J_2|-1$,
 we have condition $2a$. For every vertex $x' \in J\setminus \{J_2\}$, let 
\[S_{x'}:=\{B \in \mathbf{Split}(s-1,1) \cap \mathbf{Split}(t,2): \widehat{\pi}(B)(2)=\widehat{\pi}(A)(2) \textrm{ and }x'\in \psi(B) \}.\]
Note that 
\[\{x'\}=V(J \setminus J_2) \cap \left (\bigcap_{B \in S_{x'}} \psi(B) \right );\]
and we have condition $2b$. If $s-1=0$ then we also have condition $3)$.
Thus, we can define $\psi$ on all the vertices of $\mathbf{Split}(s-1,1)$.
This proves $a)$. 

We now prove $b)$. Since $3 \le k-s \le |J|-|J_1|-3$ ,  there exist $2 \le t' \le n-|J_1|-|J_2|-2$ and  $1 \le t \le |J_2|-1$, such that $t'+t=k-s$.
We use Lemma~\ref{lem:split} to define $\psi$ on every vertex of $\mathbf{Split}(t,2)$.
 Let $A \in \mathbf{Split}(s,1)$ such that 
 $|\widehat{\pi}(A)(2)|=t$. Since 
 $A \in \mathbf{Split}(s,1)$, we have defined $\psi$ on every vertex
 of $\mathbf{Move}(A,2)$; thus, we have condition $1$ of Lemma~\ref{lem:split}. 
 Since $n-|J_1|-|J_2|\ge 3$ and $2 \le t' \le  n-|J_1|-|J_2|-2$,
 we have condition $2a$. For every vertex $x' \in J\setminus \{J_1\}$, let 
\[S_{x'}:=\{B \in \mathbf{Split}(s,1) \cap \mathbf{Split}(t,2): \widehat{\pi}(B)(1)=\widehat{\pi}(A)(1) \textrm{ and }x'\in \psi(B) \}.\]
Note that 
\[\{x'\}=V(J \setminus J_1) \cap \left (\bigcap_{B \in S_{x'}} \psi(B) \right ).\]
Thus, we have condition $2b$ and we can define $\psi$ on all the vertices of $\mathbf{Split}(t,2)$.

We now use Lemma~\ref{lem:split} to define $\psi$ on every vertex of $\mathbf{Split}(s+1,1)$.
 Let $A \in \mathbf{Split}(t,2)$ such that 
 $|\widehat{\pi}(A)(1)|=s+1$. We have defined $\psi$ on every vertex
 of $\mathbf{Move}(A,1)$; thus, we have condition $1$ of Lemma~\ref{lem:split}. 
 Let $t'':=k-(s+1)-t=t'-1$. Since $n-|J_1|-|J_2|\ge 3$ and  $1 \le t'' \le  n-|J_1|-|J_2|-3$,
 we have condition $2a$. For every vertex $x' \in J\setminus \{J_2\}$, let 
\[S_{x'}:=\{B \in \mathbf{Split}(s-1,1) \cap \mathbf{Split}(t,2): \widehat{\pi}(B)(2)=\widehat{\pi}(A)(2) \textrm{ and }x'\in \psi(B) \}.\]
Note that 
\[\{x'\}=V(J \setminus J_2) \cap \left (\bigcap_{B \in S_{x'}} \psi(B) \right ); \]
and we have condition $2b$. If $s+1=|J_1|$ then we also have condition $3)$.
Thus, we can define $\psi$ on all the vertices of $\mathbf{Split}(s+1,1)$.
This proves $b)$. 
Statement $(\ast)$ now follows from successive applications of $a)$ and $b)$.

Suppose that $k-2 \ge |J_1|$. We have that
\[F=\bigcup_{s=0}^{|J_1| }\mathbf{Split}(s,1);\]
thus, by $(\ast)$ we have defined $\psi$ on every vertex of $F$. 
Suppose that $k-2< |J_1|$. 
Since,
\[\mathbf{Split}(2,2)=\bigcup_{s=\max\{0,(k-2)-(|J|-|J_1|-|J_2|)\}}^{k-2} \mathbf{Split}(s,1),\]
we have defined $\psi$ on all the vertices
of $\mathbf{Split}(2,2)$. With similar arguments as for the proof of $(\ast)$ we show that
we can define $\psi$ on all the vertices of $\mathbf{Split}(1,2)$. Let $A \in \mathbf{Split}(k-2,1)$ such that $\widehat{\pi}(A)(2)=1$. 
We have conditions $1$, $2a$, $2b$  and $3$ of Lemma~\ref{lem:split}.
Thus, we can define $\psi$ on all vertices of $\mathbf{Split}(1,2)$.
Let now $A \in \mathbf{Split}(k-2,1)$ such that $\widehat{\pi}(A)(2)=0$. 
We have conditions $1$, $2a$, $2b$  and $3$ of Lemma~\ref{lem:split}.
Thus, we can define $\psi$ on all vertices of $\mathbf{Split}(0,2)$.
Note that $\mathbf{Split}(k-1,1) \subset \mathbf{Split}(1,2) \cup \mathbf{Split}(0,2)$;
thus, we have defined $\psi$ on every vertex of $\mathbf{Split}(k-1,1)$.
If $k\le |J_1|$, then  $\mathbf{Split}(k,1) \subset \mathbf{Split}(1,2) \cup \mathbf{Split}(0,2)$;
and we have defined $\psi$ on every vertex of $\mathbf{Split}(k,1)$.
Since \[F=\bigcup_{s=0}^{\min\{k, |J_1|\} }\mathbf{Split}(s,1),\]
we have defined $\psi$ on every vertex of $F$.

\subsubsection*{\emph{Suppose that $n-|J_1|-|J_2|= 2.$}}

 We have that $r=3$ and that $J_3$ is an edge. Let $V(J_3)=:\{x', y'\}$.
 Since we are assuming that $n \ge 7$, we have that $|J_2| \ge 3$.
 If $E(J_1,J_2) \neq \emptyset$, then $J \setminus J_3$ is connected;
 in this case we may proceed as above with  the roles of $J_2$ and $J_3$  interchanged. 
 Assume that $E(J_1,J_2) = \emptyset$.

 Suppose that there exists an edge $u'v_1' \in E(J_1,J_3)$ 
 such that $J_1\setminus u'$ contains an edge $w_1'w_2'$. Let $v_2'$ be the neighbor of 
 $v_1'$ in $J_3$, and let $x_1'x_2'$ be an edge of $J_2$. 
 Let $A \in V(F)$ such that
 \[v_1, w_1, x_1  \in \varphi(A),\]
 and \[u, v_2, w_2, x_2 \notin \varphi(A).\]
  Note that $w_1w_2, x_1x_2$ and $v_1v_2$ is 
 a matching in $G_{\varphi(A)}$. Therefore, we may use $A$ for line $5$ of \textsc{Initialize}.
 Let $e_1, e_2,$ and $e_3$ be the edges in $F$ that correspond to $w_1w_2, x_1x_2$ and $v_1v_2$, respectively.
 Suppose that $e_1, e_2$ and $e_3$ are chosen in line $6$ of initialize, and that the order
 in which they are chosen is $e_3,e_1,e_2$.
 Let $J'$ be the graph isomorphic to $G$ that is obtained by following the previous construction with these choices.
 Let $J_1', J_2'$ and $J_3'$ be its subgraphs such that $J_i'$ corresponds to $e_i$. 
 Let $G_i'$ be the subgraph of $G$ that corresponds to $J_i$. 
 Note that $v_1, v_2$ and $u$ are vertices
 of $G_3'$. Since $E(G_1,G_2)=\emptyset$, 
 we have that  $G_1'$ is a subgraph of $G_1$
 and that  $G_2'$ is a subgraph of $G_2$. Therefore,
 $J' \setminus J_1'$ and $J' \setminus J_2'$  are connected.
 Since $|J_3'| \ge 3$ we may proceed as in the case when $n-|J_1|-|J_2| \ge 3.$
 To find such an $A$ we iterate over all vertices of $F$ and over all the orderings
 of $e_1, e_2$ and $e_3$. We may apply similar arguments with $J_2$ instead of $J_1$. 
 Thus, we may assume that 
 
 \MyQuote{if $J_1$ is not an edge, then $J_1$ is a star whose center $u_1'$
 is the only vertex of $J_1$ adjacent to a vertex of $J_3$;  $J_2$ is a star whose center $u_2'$
 is the only vertex of $J_2$ adjacent to a vertex of $J_3$.}

Suppose that for some \[0 < s < \min\{|J_1|, k\} \textrm{ and } t:=k-s-1,\]
we have defined $\psi$ on all the vertices of $\mathbf{Split}(s,1)$ and  $\mathbf{Split}(t,2)$.
We show that we can define $\psi$ on all the vertex of $\mathbf{Split}(s-1,1)$ and $\mathbf{Split}(s+1,1)$.
Let $A \in \mathbf{Split}(t,2)$ such that $\widehat{\pi}(A)(1)=s-1$. Since $\mathbf{Move}(A,1) \subset \mathbf{Split}(t,2)$ we have condition
$1$ of Lemma~\ref{lem:split}. Since $|J_2| \ge 3$ we have condition
$2a$. If $s-1=0$, then we have condition $3$. Similarly, let $A \in \mathbf{Split}(t,2)$ such that $\widehat{\pi}(A)(1)=s+1$. Since $\mathbf{Move}(A,1) \subset \mathbf{Split}(t,2)$ we have condition
$1$ of Lemma~\ref{lem:split}. Since $|J_2| \ge 3$ we have condition
$2a$. If $s+1=|J_1|$, then we have condition $3$. 

Let $v' \in V(J_2)$. Let \[S_{v'}:=\{B \in V(\mathbf{Split}(t,2)): v' \in \widehat{\pi}(B)(2)\}.\]
Note that \[\{v'\}:= V(J_2) \cap \left (\bigcap_{B \in S_{v'}}\psi(B) \right ).\]

\begin{itemize}
 \item Suppose that $J_1$ is an edge.
 
 Thus, $s=1$.
 
 \begin{itemize}
  \item Suppose that there exists a vertex $u' \in J_1$ with only one neighbor in $J_3$. 
  
  Without loss of generality assume that the neighbor of $u'$ in $J_3$ is $x'$. Since
  $G$ is a ($C_4$,diamond)-free graph and $J_3$ is an edge,
  this implies that $x'$ is the only vertex of $J_3$ adjacent
  to a vertex of $J_1$.
 Let $B\in V(\mathbf{Split}(1,1))$ such that $u'\in \widehat{\pi}(B)(1)$, and  $\widehat{\pi}(B)(3)=\emptyset.$
 Let $B'$ be the only neighbor of $B$ not in $\mathbf{Split}(1,1)$. We have that
 $\varphi(B')$ is obtained from $\varphi(B)$ by sliding a token along $ux$.  
 We define $\widehat{\pi}(B')(1):=\emptyset$, $\widehat{\pi}(B')(2):=\widehat{\pi}(B)(2)$ and
 $\widehat{\pi}(B')(3)=\{x'\}$.
 We have that 
 \[\{x'\}:=\psi(B') \setminus V(J_2),\]
 and
  \[\{y'\}:=V(J\setminus J_1) \setminus ( \{x'\} \cup V(J_2) ).\]
Thus, we have condition $2b$ of Lemma~\ref{lem:split}. Since we also have condition $3$. we can define $\psi$
on every vertex of $\mathbf{Split}(s-1,1)$.

Let now $B\in V(\mathbf{Split}(1,1))$ such that $u'\notin \widehat{\pi}(B)(1)$, and  $\widehat{\pi}(B)(3)=\{x',y'\}.$
 Let $B'$ be the only neighbor of $B$ not in $\mathbf{Split}(1,1)$. We have that
 $\varphi(B')$ is obtained from $\varphi(B)$ by sliding a token along $ux$.  
 We define $\widehat{\pi}(B')(1):=V(J_1)$, $\widehat{\pi}(B')(2):=\widehat{\pi}(B)(2)$ and
 $\widehat{\pi}(B')(3)=\{y'\}$.
 We have that 
 \[\{y'\}:=\left (\psi(B') \setminus V(J_2) \right ) \cap V(J\setminus J_1) ,\]
 and
  \[\{x'\}:=V(J \setminus J_1) \setminus (\{y'\} \cup V(J_2)).\]
Thus, we have condition $2b$ of Lemma~\ref{lem:split} and we can define $\psi$
on every vertex of $\mathbf{Split}(s+1,1)$.
 
\item Suppose that every vertex in $J_1$ has two neighbors in $J_3$.

This implies that 
there is only one edge between $J_3$ and $J_2$. Otherwise, since $G$ is ($C_4$,diamond)-free a vertex of $J_1$ would be
adjacent to a vertex of $J_2$; without loss of generality assume that $x'u_2'$ is the only edge between
$J_3$ and $J_2$.
Let $B\in V(\mathbf{Split}(t,2)) \cap V(\mathbf{Split}(0,1))$ such that $u_2'\notin \widehat{\pi}(B)(2)$.
Let $B'$ be the only neighbor of $B$ not in $\mathbf{Split}(t,2)$. Note that $\varphi(B')$
is obtained from $\varphi(B)$ by sliding a token along $x'u_2'$. 
We define $\widehat{\pi}(B')(1):=\emptyset$, $\widehat{\pi}(B')(2):=\widehat{\pi}(B)(2) \cup \{u_2'\}$ and
 $\widehat{\pi}(B')(3)=\{y'\}$.
 We have that 
 \[\{y'\}:=\psi(B') \setminus V(J_2),\]
 and
  \[\{x'\}:=V(J \setminus J_1) \setminus (\{y'\} \cup V(J_2)).\]
Thus, we have condition $2b$ of Lemma~\ref{lem:split} and we can define $\psi$
on every vertex of $\mathbf{Split}(s-1,1)$.

Let now $B\in V(\mathbf{Split}(t,2)) \cap V(\mathbf{Split}(2,1))$ such that $u_2'\in \widehat{\pi}(B)(2)$.
Let $B'$ be the only neighbor of $B$ not in $\mathbf{Split}(t,2)$. Note that $\varphi(B')$
is obtained from $\varphi(B)$ by sliding a token along $u_2'x'$. 
We define $\widehat{\pi}(B')(1):=V(J_1)$, $\widehat{\pi}(B')(2):=\widehat{\pi}(B)(2) \setminus \{u_2'\}$ and
 $\widehat{\pi}(B')(3)=\{x'\}$.
 We have that 
 \[\{x'\}:=V(J \setminus J_1) \cap (\psi(B') \setminus V(J_2)).\]
 and
  \[\{y'\}:=V(J \setminus J_1) \setminus (\{x'\} \cup V(J_2)).\]
Thus, we have condition $2b$ of Lemma~\ref{lem:split} and we can define $\psi$
on every vertex of $\mathbf{Split}(s+1,1)$.

\end{itemize}
\item Suppose that $J_1$ is a star on at least three vertices. 

If $u_1'$ and $u_2'$ have both two neighbors in $J_3$, then $u_1'$ and $u_2'$ are adjacent.
This contradicts our assumption $E(J_1,J_2)=\emptyset$. Assume without loss of generality 
that the only neighbor of $u_2'$  in $J_3$ is $x'$.  The proof for when $u_1'$ has only one neighbor in $J_3$ follows by similar arguments. 
Let $B \in \mathbf{Split}(s,1)$ such that $u_1'  \in \widehat{\pi}(B)(1)$,  
$u_2'  \in \widehat{\pi}(B)(2)$ and  $\widehat{\pi}(B)(3)=\emptyset$. 

Suppose that $u_1'$ has only
one neighbor $z'$ in $J_3$. Let $B'$ be the only neighbor of $B$ not in $\mathbf{Split}(s,1)$. 
Note that
$\varphi(B')$ is obtained from $\varphi(B)$ by sliding a token along $u_1'z'$. Let $w'$ be the vertex of $J_3$
distinct from $z'$. We define  $\widehat{\pi}(B')(1):=\widehat{\pi}(B) \setminus \{u_1'\}$, $\widehat{\pi}(B')(2):=\widehat{\pi}(B)(2)$   and
 $\widehat{\pi}(B')(3)=\{z'\}$. 
 We have that 
 \[\{z'\}:=V(J\setminus J_1) \cap \left (\psi(B') \setminus V(J_2) \right ).\]
 and
  \[\{w'\}:=V(J \setminus J_1) \setminus (\{z'\} \cup V(J_2)).\]
  Thus, we have condition $2b$ of Lemma~\ref{lem:split} and we can define $\psi$
on every vertex of $\mathbf{Split}(s-1,1)$. 
Let now $B \in \mathbf{Split}(s,1)$ such that $u_1'  \notin \widehat{\pi}(B)(1)$,  
$u_2'  \notin \widehat{\pi}(B)(2)$ and  $\widehat{\pi}(B)(3)=V(J_3)$. Let $B'$ be the only neighbor of $B$ not in $\mathbf{Split}(s,1)$. Note that
$\varphi(B')$ is obtained from $\varphi(B)$ by sliding a token along $u_1'z'$.
We define  $\widehat{\pi}(B')(1):=\widehat{\pi}(B) \cup \{u_1'\}$, $\widehat{\pi}(B')(2):=\widehat{\pi}(B)(2)$   and
 $\widehat{\pi}(B')(3)=\{w'\}$. 
 We have that 
 \[\{w'\}:=V(J\setminus J_1) \cap \left (\psi(B') \setminus V(J_2) \right ).\]
 and
  \[\{z'\}:=V(J \setminus J_1) \setminus (\{y'\} \cup V(J_2)).\]
  Thus, we have condition $2b$ of Lemma~\ref{lem:split} and we can define $\psi$
on every vertex of $\mathbf{Split}(s+1,1)$.

Suppose that $u_1'$ has two neighbors in $J_3$.
Let $C \in \mathbf{Split}(1,1)$ such that $\varphi(C)$ is obtained from $\varphi(B)$
by sliding a token along $u_2'x'$. Let $B' \notin \mathbf{Split}(S,1)$ be the neighbor of $B$ 
such that $BB'$ is not in the same ladder as $BC$. Note that $\varphi(B')$ is obtained from $\varphi(B)$ by sliding a token along $u_1'x'$. 
We define  $\widehat{\pi}(B')(1):=\widehat{\pi}(B) \setminus \{u_1'\}$, $\widehat{\pi}(B')(2):=\widehat{\pi}(B)(2)$   and
 $\widehat{\pi}(B')(3)=\{x'\}$. 
 We have that 
 \[\{x'\}:=V(J\setminus J_1) \cap \left (\psi(B') \setminus V(J_2) \right ).\]
 and
  \[\{y'\}:=V(J \setminus J_1) \setminus (\{x'\} \cup V(J_2)).\]
  Thus, we have condition $2b$ of Lemma~\ref{lem:split} and we can define $\psi$
on every vertex of $\mathbf{Split}(s-1,1)$.
Let now $B \in \mathbf{Split}(s,1)$ such that $u_1'  \notin \widehat{\pi}(B)(1)$,  
$u_2'  \notin \widehat{\pi}(B)(2)$ and  $\widehat{\pi}(B)(3)=V(J_3)$.
Let $C \in \mathbf{Split}(1,1)$ such that $\varphi(C)$ is obtained from $\varphi(B)$
by sliding a token along $x'u_2'$. Let $B' \notin \mathbf{Split}(1,1)$ be the neighbor of $B$ 
such that $BB'$ is not in the same ladder as $BC$. Note that in both cases
$\varphi(B')$ is obtained from $\varphi(B)$ by sliding a token along $u_1'x'$. 
We define  $\widehat{\pi}(B')(1):=\widehat{\pi}(B) \cup \{u_1'\}$, $\widehat{\pi}(B')(2):=\widehat{\pi}(B)(2)$   and
 $\widehat{\pi}(B')(3)=\{y'\}$. 
 We have that 
 \[\{y'\}:=V(J\setminus J_1) \cap \left (\psi(B') \setminus V(J_2) \right ).\]
 and
  \[\{x'\}:=V(J \setminus J_1) \setminus (\{y'\} \cup V(J_2)).\]
  Thus, we have condition $2b$ of Lemma~\ref{lem:split} and we can define $\psi$
on every vertex of $\mathbf{Split}(s+1,1)$. 
\end{itemize}

By similar arguments we can define $\psi$ on every vertex of $\mathbf{Split}(t-1,2)$ and
$\mathbf{Split}(t+1,2)$. Thus, starting from $s=k_1$ and $t=k_2$ we can define
$\psi$ on all vertices of  $\mathbf{Split}(s,1)$ for all 
\[0 \le s \le \min\{|J_1|, k\}. \]
Since \[F = \bigcup_{s=0}^{\min\{|J_1|, k\}} \mathbf{Split}(s,1),\]
we can define $\psi$ on all vertices of $F$.

\qed

\section{Disconnected Graphs}\label{sec:disconnected}

To finish the paper we consider the case when $G$ is disconnected. 
The first result in this direction is that there exist  non-isomorphic, disconnected, ($C_4$, diamond)-free
graphs $G$ and $H$, and integers $k \ne l$ such that $F_k(G) \simeq F_l(H)$. 
See Figure~\ref{fig:distinct-k},
for an example. This example was found by Trujillo-Negrete in her Master's Thesis~\cite{ana_laura}. 
\begin{figure}[h]
	\centering
	\includegraphics[width=0.9\textwidth]{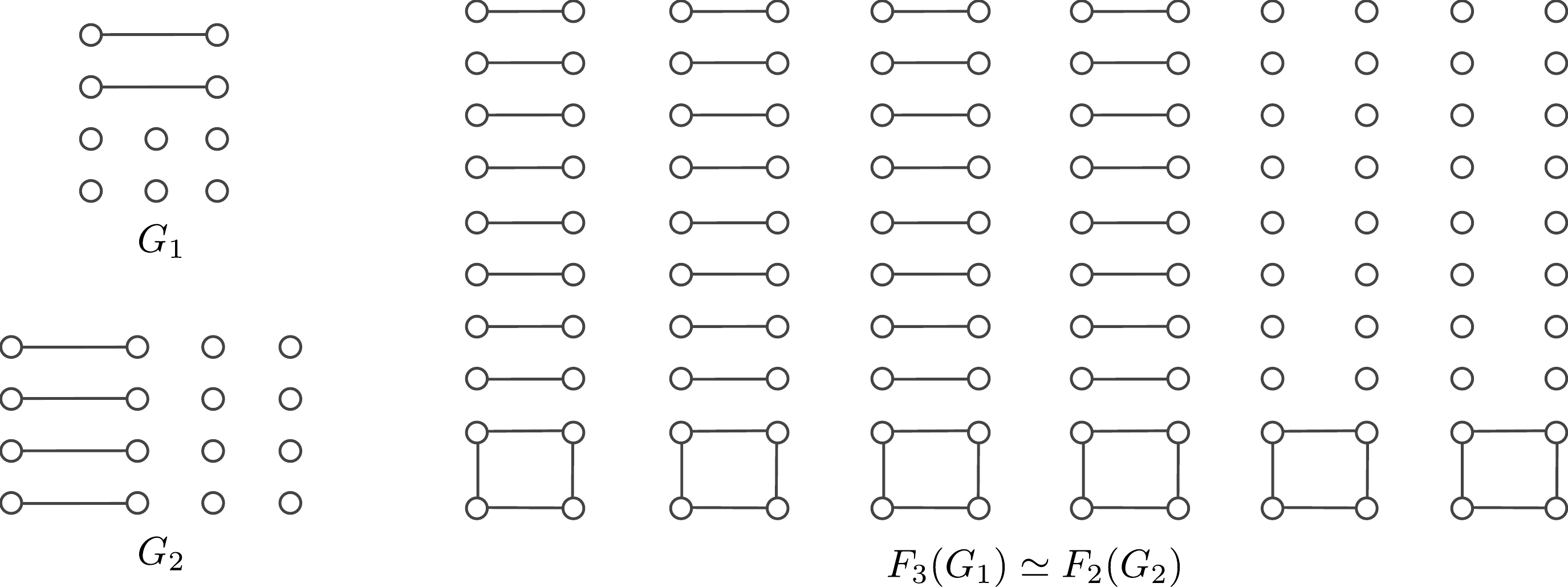}
	\caption{Two non-isomorphic graphs $G_1$ and $G_2$ for which $F_3(G_1)$ is isomorphic to $F_2(G_2)$.}
	\label{fig:distinct-k}
\end{figure}

On the positive side we have the following.
\begin{theorem} \label{thm:disconnected}
 Let $G$ and $H$ be two  ($C_4$, diamond)-free graphs. If $F_k(G)$ is isomorphic to $F_k(H)$, then $G$ is isomorphic to $H$.
\end{theorem}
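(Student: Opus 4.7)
The plan is to exploit the product structure that a disconnected $G$ induces on $F_k(G)$, combined with the Sabidussi--Vizing unique prime factorization theorem for connected graphs under the Cartesian product, and Theorem~\ref{thm:main}. Write $G = G_1 \sqcup \cdots \sqcup G_p$ and $H = H_1 \sqcup \cdots \sqcup H_q$ as disjoint unions of connected components; each $G_i$ and each $H_j$ is a connected ($C_4$, diamond)-free graph.

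Because every token move uses an edge of $G$ and no edge of $G$ crosses between components, the number of tokens lying in $G_i$ is an invariant of each connected component of $F_k(G)$. Hence the connected components of $F_k(G)$ are indexed by tuples $(k_1,\dots,k_p)$ with $0 \le k_i \le |G_i|$ and $\sum_i k_i = k$, and the component with index $(k_1,\dots,k_p)$ is isomorphic to $\square_{i=1}^{p} F_{k_i}(G_i)$, where factors with $k_i \in \{0, |G_i|\}$ are single vertices and contribute trivially. Now fix any connected component $C$ of $F_k(G)$, let $I := \{i : 0 < k_i < |G_i|\}$, and observe that $C \simeq \square_{i \in I} F_{k_i}(G_i)$ is a Cartesian product of connected graphs each with at least two vertices. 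By Corollary~\ref{cor:prime_connected}, each factor $F_{k_i}(G_i)$ is prime; by Sabidussi--Vizing, the multiset of prime factors of $C$ is determined by $C$ up to permutation. Theorem~\ref{thm:main} then reconstructs each $G_i$ with $i \in I$ up to isomorphism from its factor $F_{k_i}(G_i)$.

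Applying the same analysis to the component of $F_k(H)$ matched to $C$ under the given isomorphism $F_k(G) \simeq F_k(H)$ produces a multiset of components of $H$ that must coincide, as a multiset of isomorphism types, with $\{G_i : i \in I\}$. To recover every component of $G$, iterate this over the connected components of $F_k(G)$: since $1 \le k \le n-1$, for each $G_i$ with $|G_i| \ge 2$ there is a distribution $(k_1,\dots,k_p)$ with $0 < k_i < |G_i|$, so such a $G_i$ appears as a non-trivial prime factor in at least one component of $F_k(G)$ and is therefore matched to an isomorphic component of $H$. Isolated-vertex components of $G$ are accounted for by the identity $|G| = |H| = n$, which is recovered from $|F_k(G)| = \binom{n}{k}$. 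The main obstacle will be the multiplicity bookkeeping: when several $G_i$ are pairwise isomorphic, one must ensure that their multiplicities in $G$ and in $H$ match. This is handled by simultaneously comparing the multisets of prime factors of corresponding connected components of $F_k(G)$ and $F_k(H)$ across all index tuples, and, if needed, an induction on $p$ together with the observation that both the multiset $\{|G_i|\}$ and the multiset of prime factors of the full graph $F_k(G)$ are invariants of $F_k(G)$.
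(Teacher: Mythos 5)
Your overall route is the same as the paper's: decompose each connected component of $F_k(G)$ as $\square_{i} F_{k_i}(G_i)$ over a token distribution, invoke Corollary~\ref{cor:prime_connected} to see that each nontrivial factor is prime, apply Sabidussi--Vizing unique prime factorization to the (connected) component, and recover each $G_i$ from its factor via Theorem~\ref{thm:main} (which, usefully, does not need $k_i$ as input, so isomorphic factors force isomorphic components even when the token counts differ). Up to that point your argument matches the paper's and is sound.

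The genuine gap is the multiplicity step, which you flag as ``the main obstacle'' but do not actually resolve; it is precisely where the paper does its real work. Your proposed fix --- summing the multisets of prime factors over all components of $F_k(G)$ --- does not immediately yield a constant multiple of the multiset of components of $G$, because different $G_i$ appear as nontrivial factors in different numbers of components of $F_k(G)$: the number of index tuples with $0<k_i<|G_i|$ depends on $|G_i|$ and on the sizes of the remaining components (e.g.\ for $G=K_2\sqcup P_5$ and $k=3$, the edge is a nontrivial factor of one component of $F_3(G)$ while $P_5$ contributes a nontrivial factor to three of them). So ``the multiset of prime factors of the full graph'' over-counts nonuniformly, and the asserted invariance of the multiset $\{|G_i|\}$ with multiplicities is exactly what needs proof, not an available observation. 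The paper closes this by splitting on $r^{\ast}$, the maximum number of nontrivial prime factors over all components of $F_k(G)$ (after reducing to $k\le n/2$): if $r^{\ast}<k$, then $G$ has exactly $r^{\ast}$ nontrivial components and a single maximal component of $F_k(G)$ exhibits all of them simultaneously, so its factorization hands you the whole multiset at once; if $r^{\ast}=k$, then every maximal component arises from placing one token in each of $k$ chosen nontrivial components, so the number of maximal components equals $\binom{q(G)}{k}$ (determining $q(G)$) and each $G_i$ occurs as a factor of exactly $\binom{q(G)-1}{k-1}$ of them, so dividing the observed factor counts by $\binom{q(G)-1}{k-1}$ recovers the multiplicities. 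Some argument of this kind is needed to complete your proof; the rest of your outline, including the isolated-vertex count via $n=|G|=|H|$, then goes through.
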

\begin{proof}
 We proceed as follows. Suppose that we are given a graph $F$ and an integer $k$, such that
 $F$ is isomorphic to the $k$-token graph of  a ($C_4$, diamond)-free graph.
 We show that there is a unique $G$ (up to isomorphism) such that $F\simeq F_k(G)$.
 Since $F_k(G)$ is connected if and only if $G$ is connected~\cite{token_graph}, we may assume
 that $G$ is disconnected, as otherwise we are done by Theorem~\ref{thm:main}. 
 Since $F_k(G)$ has $\binom{|G|}{k}$ vertices, we can determine $n:=|G|$. 
 We may assume that $k \le n/2$.
 Let $G_1,\dots,G_{r}$ be the components of $G$.
 Let $C$ be a component of $F_k(G)$.  Note that there exist integers $k_1,\dots, k_{r}$,
 with $0 \le k_i \le |G_i|$  and $k=k_1+\cdots+k_{r}$, such that $C$ is generated by moving
 $k_i$ tokens on $G_i$. Moreover, we have that 
 \[C \simeq F_{k_1}(G_1)\square \cdots \square F_{k_{r}}(G_{r}).\]
 Note that since $G_i$ is connected, by Corollary~\ref{cor:prime_connected}, we have that if $0 < k_i < |G_i|$, then 
 $F_{k_i}(G_i)$ is a prime graph.
 Given $C$, there is a unique Cartesian decomposition (up to the order of the factors) such that 
 \[C\simeq F_{1} \square \cdots \square F_{r},\] and every $F_i$ is a non-trivial prime graph~\cite{graph_mult,vizing}. 
 This decomposition can be found in linear time~\cite{decomposition}. We compute the Cartesian decompositions
 of all components of $F$. Let $C^\ast$ be the component with the largest number, $r^\ast$, of terms; and let $F_{1} \square \cdots \square F_{r^{\ast}}$
 be this decomposition.
 We proceed by cases depending on the value of $r^\ast$.
 
 \begin{itemize}
  \item $\bm{r^{\ast} < k}$.
  
  Note that $G$ has exactly $r^\ast$ non trivial components. Let $G_1,\dots,G_{r^{\ast}}$ be these components.
  By Theorem~\ref{thm:main} we can reconstruct these components in polynomial time. Finally, the number
  of isolated vertices of $G$ is given by \[n-\sum_{i=1}^{r^\ast} |G_i|.\]
  
  \item $\bm{r^{\ast} = k}$.
  
    Suppose that $C^{\ast}$ is the only component of $F$ having $k$ factors in its decomposition.
    This implies that $G$ has exactly $k$ non-trivial components; and we may proceed as in the previous case. 
    Suppose now that there are at least two components of $F$ having $k$ factors in their decomposition. 
    Thus, $G$ has more than $k$ non-trivial components. Let $\mathcal{C}_F$ be the set of components of $F$
    with $k$ factors in its decomposition, and let $\mathcal{C}_G$ be the set of 
    non-trivial components of $G$. Let $q(F):=|\mathcal{C}_F|$ and $q(G):=|\mathcal{C}_G|$.  
    Since $q(F)=\binom{q(G)}{k}$, we can determine the value $q(G)$. Moreover, each $G_i\in \mathcal{C}_G$ is counted in exactly 
    $\binom{q(G)-1}{k-1}$ components of $\mathcal{C}_F$. 
    
    For every $C \in \mathcal{C}_F$,
    we use Theorem~\ref{thm:main} to compute a set of graphs $H_1', \dots, H_{k}'$ 
    such that $C \simeq H_1' \square \cdots \square H_k'$. Let $\mathcal{S}$ be the set of all such graphs.
    By testing for graph isomorphism we obtain a set of tuples $\mathcal{S}':=\{(G_1',t_1),\dots,(G_{s}',t_s)\}$,
    such that: the $G_i'$ are pairwise non-isomorphic; for every $H_i \in \mathcal{S}$ there exists a graph $G_j'$ such that
    $H_i \simeq G_j'$; and there are exactly $t_j$ graphs in $\mathcal{S}$ isomorphic to $G_j'$.

    Note that each $G_i$ gives way to $\binom{q(G)-1}{k-1}$ graphs in $\mathcal{S}$. Therefore,
    for every $G_i'$  there are exactly $t_i/\binom{q(G)-1}{k-1}$ components of $\mathcal{C}_G$ isomorphic
    to $G_i'.$ Thus we can determine the graphs in $\mathcal{C}_G$ up to isomorphism. Finally,
    the number of isolated vertices of $G$ is given by  
    \[n-\sum_{\left (G_i',t_i \right ) \in \mathcal{S}'} \frac{t_i}{\binom{q(G)-1}{k-1}}|G'|.\]
 \end{itemize}
\end{proof}

We point out that in contrast with the connected case we are unable to reconstruct $G$ in polynomial time.
The bottleneck of the algorithm implied in the proof of Theorem~\ref{thm:disconnected} is the Graph
Isomorphism Problem.

\bibliographystyle{plain}
\bibliography{token}

\begin{thebibliography}{10}

\bibitem{double_1}
Yousef Alavi, Mehdi Behzad, Paul Erd\H{o}s, and Don~R. Lick.
\newblock Double vertex graphs.
\newblock {\em J. Combin. Inform. System Sci.}, 16(1):37--50, 1991.

\bibitem{nonisomorph_alzaga}
Afredo Alzaga, Rodrigo Iglesias, and Ricardo Pignol.
\newblock Spectra of symmetric powers of graphs and the weisfeiler–lehman
  refinements.
\newblock {\em Journal of Combinatorial Theory, Series B}, 100(6):671 -- 682,
  2010.

\bibitem{godsil}
Koenraad Audenaert, Chris Godsil, Gordon Royle, and Terry Rudolph.
\newblock Symmetric squares of graphs.
\newblock {\em J. Combin. Theory Ser. B}, 97(1):74--90, 2007.

\bibitem{graph_iso_2}
L{\'a}szl{\'o} {Babai}.
\newblock {Graph Isomorphism in Quasipolynomial Time}.
\newblock {\em arXiv e-prints}, page arXiv:1512.03547, December 2015.

\bibitem{graph_iso_1}
L\'{a}szl\'{o} Babai and Eugene~M. Luks.
\newblock Canonical labeling of graphs.
\newblock STOC '83, page 171–183, New York, NY, USA, 1983. Association for
  Computing Machinery.

\bibitem{nonisomorph_barghi}
Amir~Rahnamai Barghi and Ilya Ponomarenko.
\newblock Non-isomorphic graphs with cospectral symmetric powers.
\newblock {\em the electronic journal of combinatorics}, 16(1):R120, 2009.

\bibitem{laplacian}
C.~Dalfó, F.~Duque, R.~Fabila-Monroy, M.~A. Fiol, C.~Huemer, A.~L.
  Trujillo-Negrete, and F.~J.~Zaragoza Martínez.
\newblock On the laplacian spectra of token graphs, 2020.

\bibitem{token_graph}
Ruy Fabila-Monroy, David Flores-Pe{\~n}aloza, Clemens Huemer, Ferran Hurtado,
  Jorge Urrutia, and David~R. Wood.
\newblock Token graphs.
\newblock {\em Graphs Combin.}, 28(3):365--380, 2012.

\bibitem{gap}
The GAP~Group.
\newblock {\em {GAP -- Groups, Algorithms, and Programming, Version 4.10.2}},
  2020.

\bibitem{manuel}
Sof{\'i}a Ibarra and Luis~Manuel Rivera.
\newblock The automorphism groups of some token graphs, 2019.
\newblock arXiv:1907.06008.

\bibitem{decomposition}
Wilfried Imrich and Iztok Peterin.
\newblock Recognizing cartesian products in linear time.
\newblock {\em Discrete Mathematics}, 307(3):472--483, 2007.
\newblock Algebraic and Topological Methods in Graph Theory.

\bibitem{jacob}
Jobby Jacob, Wayne Goddard, and Renu Laskar.
\newblock Double vertex graphs and complete double vertex graphs.
\newblock {\em CONGRESSUS NUMERANTIUM}, 188:161, 2007.

\bibitem{distance}
Garry~L. Johns.
\newblock {\em Generalized distance in graphs}.
\newblock ProQuest LLC, Ann Arbor, MI, 1988.
\newblock Thesis (Ph.D.)--Western Michigan University.

\bibitem{line_rec_2}
Philippe G.~H. Lehot.
\newblock An optimal algorithm to detect a line graph and output its root
  graph.
\newblock {\em J. ACM}, 21(4):569--575, October 1974.

\bibitem{matchings_gen}
Silvio Micali and Vijay~V. Vazirani.
\newblock An {$O(\sqrt{|V|}\cdot |E|)$} algoithm for finding maximum matching
  in general graphs.
\newblock In {\em Proceedings of the 21st Annual Symposium on Foundations of
  Computer Science}, SFCS '80, page 17–27, USA, 1980. IEEE Computer Society.

\bibitem{line_rec_1}
Nicholas~D. Roussopoulos.
\newblock A max {\{m,n\}} algorithm for determining the graph {H} from its line
  graph {G}.
\newblock {\em Information Processing Letters}, 2(4):108 -- 112, 1973.

\bibitem{rudolph}
T.~{Rudolph}.
\newblock {Constructing physically intuitive graph invariants}.
\newblock {\em eprint arXiv:quant-ph/0206068}, June 2002.

\bibitem{graph_mult}
Gert Sabidussi.
\newblock Graph multiplication.
\newblock {\em Math. Z.}, 72:446--457, 1959/60.

\bibitem{sage}
{The Sage Developers}.
\newblock {\em {S}ageMath, the {S}age {M}athematics {S}oftware {S}ystem
  ({V}ersion 9.4)}, 2021.
\newblock {\tt https://www.sagemath.org}.

\bibitem{ana_laura}
Ana~L. Trujillo-Negrete.
\newblock Estimaci{\'o}n de algunos par{\'a}metros de las gr{\'a}ficas de
  fichas.
\newblock Master's thesis, Universidad Aut{\'o}noma de Zacatecas, 2016.

\bibitem{vizing}
V.~G. Vizing.
\newblock The cartesian product of graphs.
\newblock {\em Vy\v{c}isl. Sistemy No.}, 9:30--43, 1963.

\bibitem{whitney}
Hassler Whitney.
\newblock Congruent graphs and the connectivity of graphs.
\newblock {\em American Journal of Mathematics}, 54(1):150--168, 1932.

\bibitem{n-tuple}
Bi~Wen Zhu, Jiuqiang Liu, Don~R. Lick, and Yousef Alavi.
\newblock {$n$}-tuple vertex graphs.
\newblock In {\em Proceedings of the {T}wenty-third {S}outheastern
  {I}nternational {C}onference on {C}ombinatorics, {G}raph {T}heory, and
  {C}omputing ({B}oca {R}aton, {FL}, 1992)}, volume~89, pages 97--106, 1992.

\end{thebibliography}

\end{document}